\newcommand{\btkz}{\begin{tikzpicture}}
\newcommand{\etkz}{\end{tikzpicture}}
\newcommand{\brk}[1]{\left(#1\right)}          
\newcommand{\Brk}[1]{\left[#1\right]}          
\newcommand{\BRK}[1]{\left\{#1\right\}}        
\newcommand{\Norm}[1]{\left\| #1 \right\|}     
\newcommand{\secref}[1]{Section~\ref{#1}}
\newcommand{\thmref}[1]{Theorem~\ref{#1}}
\newcommand{\defref}[1]{Definition~\ref{#1}}
\newcommand{\propref}[1]{Proposition~\ref{#1}}
\newcommand{\lemref}[1]{Lemma~\ref{#1}}
\newcommand{\corrref}[1]{Corollary~\ref{#1}}
\newcommand{\beq}{\begin{equation}}
\newcommand{\eeq}{\end{equation}}
\newcommand{\bsplit}{\begin{split}}
\newcommand{\esplit}{\end{split}}
\newcommand{\baligned}{\begin{aligned}}
\newcommand{\ealigned}{\end{aligned}}
\providecommand{\e}{\varepsilon}
\providecommand{\half}{\frac{1}{2}}
\providecommand{\R}{\bbR}
\newcommand{\smallhalf}{\tfrac{1}{2}}
\newcommand{\textand}{\quad\text{ and }\quad}
\newcommand{\Textand}{\qquad\text{ and }\qquad}
\providecommand{\vp}{\varphi}
\newcommand{\End}{{\operatorname{End}}}
\newcommand{\id}{{\operatorname{Id}}}
\newcommand{\image}{{\operatorname{Image}}}
\newcommand{\Hom}{{\operatorname{Hom}}}
\newcommand{\sgn}{{\operatorname{sgn}}}
\newcommand{\trace}{{\operatorname{Tr}}}
\newcommand{\calC}{{\mathcal C}}
\newcommand{\calE}{{\mathcal E}}
\newcommand{\calL}{{\mathcal L}}
\newcommand{\calM}{{\mathcal M}}
\newcommand{\calN}{{\mathcal N}}
\newcommand{\calP}{{\mathcal P}}
\newcommand{\calS}{{\mathcal S}}
\newcommand{\frakB}{\mathfrak{B}}
\newcommand{\frakF}{\mathfrak{F}}
\newcommand{\frakG}{\mathfrak{G}}
\newcommand{\frakR}{\mathfrak{R}}
\newcommand{\frakT}{\mathfrak{T}}
\newcommand{\frakX}{\mathfrak{X}}
\newcommand{\frakg}{\mathfrak{g}}
\newcommand{\frakh}{\mathfrak{h}}
\newcommand{\frakn}{\mathfrak{n}}
\newcommand{\frakt}{\mathfrak{t}}
\newcommand{\bbC}{{\mathbb C}}
\newcommand{\bbF}{{\mathbb F}}
\newcommand{\bbH}{{\mathbb H}}
\newcommand{\bbN}{{\mathbb N}}
\newcommand{\bbP}{{\mathbb P}}
\newcommand{\bbR}{{\mathbb R}}
\newcommand{\scrD}{\mathscr{D}}
\newcommand{\scrH}{\mathscr{H}}
\newtheorem{theorem}{Theorem}[section]
\newtheorem{lemma}[theorem]{Lemma}
\newtheorem{proposition}[theorem]{Proposition}
\newtheorem{corollary}[theorem]{Corollary}
\newtheorem{definition}[theorem]{Definition}
\newenvironment{proof}{{\flushleft \emph{Proof}:}}{\hfill\ding{110}}
\newenvironment{PROOF}[1]{{\flushleft {\bfseries Proof of #1}:}}{\hfill\ding{110}}
\newcommand{\M}{\calM}
\newcommand{\dM}{\partial\M}
\newcommand{\VF}{\frakX}
\newcommand{\N}{\calN}
\newcommand{\bra}{\langle}
\newcommand{\ket}{\rangle}
\newcommand{\nl}{\left\|}
\newcommand{\nr}{\right\|}
\newcommand{\gE}{\g_\mathbb{E}}
\newcommand{\E}{\mathbb{E}}
\newcommand{\jStar}{j^\star}
\newcommand{\jEpsStar}{j_\e^\star}
\newcommand{\Wsp}{W^{s,p}\Gamma(\E)}
\newcommand{\WspOk}{W^{s,p}\Omega^k(\M;\E)}
\newcommand{\g}{\frakg}
\newcommand{\G}{\frakG}
\newcommand{\GV}{\frakG_V}
\newcommand\gEps{{\g_\e}}
\newcommand{\h}{\frakh}
\renewcommand{\S}{\calS}
\newcommand{\II}{\operatorname{II}}
\newcommand{\Pt}{\bbP_\e^{\frakt}}
\newcommand{\Pn}{\bbP_\e^{\frakn}}
\newcommand{\Ptt}{\bbP_\e^{\frakt\frakt}}
\newcommand{\Ptn}{\bbP_\e^{\frakt\frakn}}
\newcommand{\Pnt}{\bbP_\e^{\frakn\frakt}}
\newcommand{\Pnn}{\bbP_\e^{\frakn\frakn}}
\newcommand{\ott}{\omega^{\frakt\frakt}}
\newcommand{\otn}{\omega^{\frakt\frakn}}
\newcommand{\ont}{\omega^{\frakn\frakt}}
\newcommand{\onn}{\omega^{\frakn\frakn}}
\newcommand{\ltt}{\lambda^{\frakt\frakt}}
\newcommand{\ltn}{\lambda^{\frakt\frakn}}
\newcommand{\lnt}{\lambda^{\frakn\frakt}}
\newcommand{\lnn}{\lambda^{\frakn\frakn}}
\newcommand{\ixi}{i_{\xi^{\sharp}}}
\newcommand{\ixiV}{i^{V}_{\xi^{\sharp}}}
\newcommand{\gD}{{\g_0}}
\newcommand{\PtD}{\bbP^{\frakt}}
\newcommand{\PnD}{\bbP^{\frakn}}
\newcommand{\PttD}{\bbP^{\frakt\frakt}}
\newcommand{\PtnD}{\bbP^{\frakt\frakn}}
\newcommand{\PntD}{\bbP^{\frakn\frakt}}
\newcommand{\PnnD}{\bbP^{\frakn\frakn}}
\newcommand{\nabg}{\nabla^\g}
\newcommand{\nabgEps}{\nabla^{\gEps}}
\newcommand{\nabgD}{\nabla^{\gD}}
\newcommand{\nabE}{\nabla^{\E}}
\newcommand{\dr}{\partial_r}
\newcommand{\idr}{i_{\partial_r}}
\newcommand{\al}{\alpha}
\newcommand{\be}{\beta}
\newcommand{\starG}{\star_\g}
\newcommand{\stargEps}{\star_\gEps}
\newcommand{\TT}{{\mathrm{TT}}}
\newcommand{\NT}{{\mathrm{NT}}}
\newcommand{\TN}{{\mathrm{TN}}}
\newcommand{\NN}{{\mathrm{NN}}}
\newcommand{\RR}{{\mathrm{R}}}
\newcommand{\Hg}{H_\g}
\newcommand{\bHg}{\mathbf{H}_\g}
\newcommand{\Fg}{F_\g}
\newcommand{\bFg}{\mathbf{F}_\g}
\newcommand{\Bg}{\mathbf{B}_\g}
\newcommand{\trhe}{\trace_{\frakh_\e}}
\newcommand{\ihe}{i_{\frakh_\e}}
\newcommand{\curl}{\operatorname{\nabla\times}}
\renewcommand{\trace}{\operatorname{tr}}
\renewcommand{\image}{\operatorname{Image}}
\newcommand{\Image}{\operatorname{Im}}
\newcommand{\Ric}{\operatorname{Ric}}
\newcommand{\Rm}{\operatorname{Rm}}
\renewcommand{\sgn}{\operatorname{sgn}}
\newcommand{\Hess}{\operatorname{Hess}}
\newcommand{\Graph}{\operatorname{Graph}}
\newcommand{\EE}{\mathcal{EE}}
\newcommand{\EC}{\mathcal{EC}}
\newcommand{\CE}{\mathcal{CE}}
\newcommand{\CC}{\mathcal{CC}}
\newcommand{\BH}{\mathcal{BH}}
\newcommand{\BHkm}{\BH^{k,m}(\M)}
\newcommand{\Riemann}{R}
\newcommand{\dg}{d^{\nabg}}
\newcommand{\deltag}{\delta^{\nabg}}
\newcommand{\dgEps}{d^{\nabgEps}}
\newcommand{\deltagEps}{\delta^{\nabgEps}}
\newcommand{\dgD}{d^{\nabgD}}
\newcommand{\deltagD}{\delta^{\nabgD}}
\newcommand{\dgV}{d^{\nabg}_V}
\newcommand{\deltagV}{\delta^{\nabg}_V}
\newcommand{\dgEpsV}{d^{\nabgEps}_V}
\newcommand{\deltagEpsV}{\delta^{\nabgEps}_V}
\newcommand{\dgDV}{d^{\nabgD}_V}
\newcommand{\deltagDV}{\delta^{\nabgD}_V}
\newcommand{\weak}{\rightharpoonup}
\newcommand{\EwR}[1]{\stackrel{\eqref{#1}}{=}}
\newcommand{\VectorFormsM}{\Omega^{*,*}(\M)}
\newcommand{\VectorFormsU}{\Omega^{*,*}(U)}
\newcommand{\VectorFormsPeps}{\Omega^{*,*}(\calP_\e)}
\newcommand{\VectorFormsdM}{\Omega^{*,*}(\dM)}
\newcommand{\VectorFormsKM}{\Omega^{k,m}(\M)}
\newcommand{\Volume}{\text{Vol}}
\newcommand{\VolumeG}{d\Volume_\g}
\newcommand{\VoldM}{d\Volume_{\jStar\g}}
\newcommand{\VolumeD}{d\Volume_{\gD}}
\newcommand*\owedge{\mathpalette\@owedge\relax}
\newcommand*\@owedge[1]{%
  \mathbin{%
    \ooalign{%
      $#1\m@th\bigcirc$\cr
      \hidewidth$#1\m@th\wedge$\hidewidth\cr
    }%
  }%
}
\numberwithin{equation}{section}
\begin{document}

\title{Double forms: Regular elliptic bilaplacian operators
}

\author{
Raz Kupferman and
Roee Leder  \footnote{
raz@math.huji.ac.il,
roee.leder@mail.huji.ac.il
}
\\
\\
Institute of Mathematics \\
The Hebrew University \\
Jerusalem 9190401 Israel
}
\maketitle

\begin{abstract}
Double forms are sections of the vector bundles $\Lambda^{k}T^*\M\otimes \Lambda^{m}T^*\M$, where in this work $(\M,\g)$ is a compact Riemannian manifold with boundary. We study graded second-order differential operators on double forms, which are used in physical applications. A Combination of these operators yields a fourth-order operator, which we call a double bilaplacian. 
We establish the regular ellipticity of the double bilaplacian for several sets of boundary conditions. Under additional conditions, we obtain a Hodge-like decomposition for double forms, whose components are images of the second-order operators, along with a biharmonic element. This analysis lays foundations for resolving several topics in incompatible elasticity, most prominently the existence of stress potentials and Saint-Venant compatibility.
\end{abstract}

\tableofcontents
\section{Introduction}

This work is motivated by problems in the theory of elasticity, and specifically incompatible elasticity, in which the setting is that of finding an optimal immersion of one Riemannian manifold $(\M,\g)$ (the body) into another manifold $(\tilde{\M},\tilde{\g})$ (space). The corresponding Euler-Lagrange equations form a boundary-value problem for the stress tensor, which is a vector-valued form $\sigma\in\Omega^1(\M;T^*\M)$, supplemented with so-called constitutive relations, which can be related to the metric discrepancy between $\g$ and $\tilde{\g}$. One approach to solving such systems is the use of stress potentials, a method which goes back to the 19th century \cite{Air63,Bel92,Tru59,Gur72}, and  recently revived in the context of incompatible elasticity \cite{MSK14,MSK15}. The use of stress potentials brings forward second-order differential operators known as curl-curl and its dual div-div (Gurtin \cite{Gur72}); after linearization, the resulting partial differential equation is an equation for the bilaplacian of the stress potential.

This paper studies the analytical foundations of bilaplacian theory in incompatible elasticity. We study double forms \cite{deR84, Cal61, Gra70, Kul72}, which are vector-valued forms of type $\Omega^{k,m}(\M) = \Omega^k(\M;\Lambda^m T^*\M)$ on a $d$-dimensional manifold, generalizing the physical case of $k=m=1$ and $d=2,3$. For $k=m=1$ and when the metric $\g$ is locally-flat, the familiar curl-curl operator can be identified as an operator $\Omega^{1,1}(\M) \to \Omega^{2,2}(\M)$, which is the two-fold anti-symmetrization of the second covariant derivative,
\[
\begin{split}
\curl\curl\sigma(X_{1},X_{2};Y_{1},Y_{2}) 
&= \nabla^2_{X_1,Y_1}\sigma(X_2;Y_2) - \nabla^2_{X_2,Y_1}\sigma(X_1;Y_2) \\
&- \nabla^2_{X_1,Y_2}\sigma(X_2;Y_1) + \nabla^2_{X_2,Y_2}\sigma(X_1;Y_1).
\end{split}
\]
For arbitrary $(k,m)$ and generally non-flat metric $\g$, this operator generalizes into
\[
\begin{split}
& \curl\curl\psi(X_1,\dots,X_{k+1};Y_1,\dots,Y_{m+1}) \\
&\quad = \half\sum_{i=1}^{k+1} \sum_{j=1}^{m+1}
(-1)^{i+j} \nabla^2_{(X_i,Y_j)} \psi(X_1,\dots,\hat{X}_i,\dots,X_{k+1};Y_1,\dots,\hat{Y}_j,\dots,Y_{m+1}),
\end{split}
\]
where $\nabla^2_{(X,Y)}$ is the symmetrized second derivative. Noting that the space of double forms $\Omega^{k,m}(\M)$ has an exterior algebra structure both on its ``form part" and on its ``vector part", the curl-curl operator can be represented as a two-fold application of the covariant exterior derivative $\dg$,
\[
\Omega^{k,m}(\M) 
\stackrel{\dg}{\to} 
\Omega^{k+1,m}(\M)
\stackrel{T}{\to} 
\Omega^{m,k+1}(\M)  
\stackrel{\dg}{\to} 
\Omega^{m+1,k+1}(\M)
\stackrel{T}{\to} 
\Omega^{k+1,m+1}(\M),
\]
where $(\cdot)^T$ stands for the algebraic transposition, switching between the two exterior algebras. 

The graded space $\bigoplus \Omega^{k,m}(\M)$ is a graded algebra, endowed with a natural wedge-product, and graded differential operators $\dg: \Omega^{k,m}(\M)\to\Omega^{k+1,m}(\M)$ and their duals $\deltag: \Omega^{k,m}(\M)\to\Omega^{k-1,m}(\M)$. These operators have counterparts acting via transposition on the external algebra of their vector part \cite{Cal61,Kul72}, $\dgV: \Omega^{k,m}(\M)\to\Omega^{k,m+1}(\M)$ and $\deltagV: \Omega^{k,m}(\M)\to\Omega^{k,m-1}(\M)$. With these definitions, the curl-curl operator takes the more compact form 
\[
\Hg = \tfrac12\brk{\dg\dgV + \dgV\dg}.
\]
We further introduce its dual and ``semi-duals"
\[
\begin{aligned}
\Hg^* &= \tfrac12\brk{\deltagV\deltag + \deltag\deltagV} \\
\Fg^* &= \tfrac12\brk{\deltag\dgV + \dgV\deltag} \\
\Fg &= \tfrac12\brk{\deltagV\dg + \dg\deltagV},
\end{aligned}
\]
which are generalizations of the operators known as div-div, div-curl and curl-div \cite{Gur72}. These differential operators commute with algebraic operations pertinent to double forms \cite{Gra70, Kul72}, resulting in certain symmetry preservations.

In a non-Euclidean setting, these second-order operators are altered by graded tensorial operators 
$D_\g:\Lambda^{k,m}T^*\M\rightarrow \Lambda^{k+1,m+1}T^*\M$ and 
$S_\g:\Lambda^{k,m}T^*\M\rightarrow \Lambda^{k+1,m-1}T^*\M$,
%
\[
\begin{aligned}
&\bHg=\Hg + D_\g    &\qquad& \bFg=\Fg +  S_\g \\
&\bHg^*=\Hg^* +  D_\g^* &\qquad& \bFg^*=\Fg^* + S_\g^*
\end{aligned}
\]
where $D_\g^*,S_\g^*$ are the metric duals. 


In analogy with the Hodge Laplacian,
$\Delta_\g = \dg\deltag + \deltag\dg$,
we define the double bilaplacian as the fourth-order linear operator,
\[
\Bg = \bHg\bHg^* + \bHg^*\bHg + \bFg^*\bFg + \bFg\bFg^*.
\]
There is ample reason to name this operator a bilaplacian. Its principal symbol is $|\xi|^4$. For example, when $D_\g,S_\g=0$, for scalar functions $f\in\Omega^{0,0}(\M)$, $\bHg=\Hg$ coincides with the Hessian and 
\[
\Bg f = \Delta_\g^2 f - \delta\Ric_\g(df),
\]
where $\Ric_\g\in\End(T^*\M)$
is the Ricci operator. Finally, in a locally-flat manifold and arbitrary $(k,m)$,  $\Bg = \Delta_\g^2$.

The physical context imposes that this analysis be performed on manifolds with boundaries. Like in second-order elliptic theory, natural boundary conditions arise from integration by parts formulas. To this end, we introduce mixed projections of tangential and normal boundary components,
\beq
\begin{aligned}
&\PttD:\Omega^{k,m}(\M)\to \Omega^{k,m}(\dM) 
&\qquad 
&\PntD:\Omega^{k,m}(\M)\to \Omega^{k-1,m}(\dM) \\ 
&\PtnD:\Omega^{k,m}(\M)\to \Omega^{k,m-1}(\dM) 
&\qquad 
&\PnnD:\Omega^{k,m}(\M)\to \Omega^{k-1,m-1}(\dM).
\end{aligned}
\label{eq:intro_zero_order_operators}
\eeq
The first superscript in $\frakt\frakt$, $\frakt\frakn$, $\frakn\frakt$ and $\frakn\frakn$ refers to the projection of the form part, whereas the second superscript refers to the projection of the vector part, which also has the structure of an exterior algebra. These are complemented by a set of first-order linear differential boundary operators,
\beq
\begin{aligned}
&\frakT:\Omega^{k,m}(\M)\to \Omega^{k,m}(\dM) 
&\qquad 
&\frakF^*:\Omega^{k,m}(\M)\to \Omega^{k-1,m}(\dM) \\ 
&\frakF:\Omega^{k,m}(\M)\to \Omega^{k,m-1}(\dM) 
&\qquad 
&\frakT^*:\Omega^{k,m}(\M)\to \Omega^{k-1,m-1}(\dM),
\end{aligned}
\label{eq:intro_first_order_operators}
\eeq
such that
\beq
\begin{aligned}
\bra \bHg\psi, \eta\ket &=  \bra \psi, \bHg^*\eta\ket + 
\int_{\dM}\Brk{(\PttD\psi,\frakT^* \eta)_\gD -  (\frakT\psi,\PnnD \eta)_\gD}\VolumeD  \\
\bra \bFg\psi, \eta\ket &=  \bra \psi, \bFg^*\eta\ket + 
\int_{\dM}\Brk{(\PtnD\psi,\frakF^*\eta)_\gD - (\frakF\psi,\PntD\eta)_\gD}\VolumeD,
\end{aligned}
\label{eq:integration_by_parts_intro}
\eeq
where $\bra\cdot,\cdot\ket$ denotes the natural $L^2$-inner-product of double forms, $\gD$ is the induced metric on the boundary and $\VolumeD$ is the corresponding volume form.

It is well-known \cite{Tay11a} that the boundary-value problem for a fourth-order strongly elliptic operator supplemented by Dirichlet boundary conditions,
\[
\psi|_{\dM}=\phi 
\Textand
\nabg_{\N}\psi|_{\dM}=\rho,
\]
is regular elliptic \cite{Hor07,Tay11a}. In the present context, this implies that the boundary-value problem,
\[
\begin{gathered}
\Bg\psi = \chi  \\
(\PttD,\PntD,\PtnD,\PnnD)\psi = (\phi_1,\phi_2,\phi_3,\phi_4) 
\qquad
(\frakT,\frakF^*,\frakF,\frakT^*)\psi = (\rho_1,\rho_2,\rho_3,\rho_4)
\end{gathered}
\]
is regular elliptic. This choice of boundary conditions, however, is not appropriate to the physics applications, which usually only include the boundary operators associated with the normal components. Instead, we consider several sets of boundary conditions, reminiscent to relative and absolute boundary conditions for the Hodge laplacian \cite{Tay11a,Tay11b, Sch95b}. Our first result is:

\begin{theorem}
\label{thm:intro1}
The equation
\[
\begin{gathered}
\Bg\psi = \chi 
\end{gathered}
\]
forms a regular elliptic boundary-value problem for each of the following sets of boundary operators,
\[
\begin{aligned}
&(\PnnD,\PntD,\PtnD,\frakT^*,\frakF^*,\frakF,\PnnD\bHg,\frakT^*\bHg) \\
&(\PttD,\PntD,\PtnD,\frakT,\frakF^*,\frakF,\PttD\bHg^*,\frakT\bHg^*) \\
&(\PntD,\PttD,\PnnD,\frakF^*,\frakT,\frakT^*,\PntD\bFg,\frakF^*\bFg) \\
&(\PtnD,\PttD,\PnnD,\frakF,\frakT,\frakT^*,\PtnD\bFg^*,\frakF\bFg^*).
\end{aligned}
\]
\end{theorem}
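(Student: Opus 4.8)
\emph{Proof strategy.} Regular ellipticity of a boundary value problem means proper ellipticity of the operator together with the Lopatinski--Shapiro (complementing) condition for the boundary operators at each point of $\dM$; see \cite{Hor07,Tay11a}. Since $\sigma_{\mathrm{pr}}(\Bg)(\xi)=|\xi|^{4}\,\id$, for a unit conormal $\nu$ and a nonzero tangential covector $\xi'$ the characteristic polynomial in $\tau$ is $(|\xi'|^{2}+\tau^{2})^{2}$, with double roots $\pm i|\xi'|$, so exactly half of the roots lie in the upper half plane and $\Bg$ is properly elliptic; the content is thus the complementing condition. To verify it I would freeze the coefficients of $\Bg$ at $p\in\dM$ in boundary-normal coordinates ($\M=\{t\ge0\}$, $\partial_{t}=\N$ at $p$), Fourier-transform tangentially with covector $\xi'\ne0$, and rescale to $|\xi'|=1$; then $\Bg$ becomes $(1-\partial_{t}^{2})^{2}$ on functions valued in $V:=\Lambda^{k,m}T_{p}^{*}\M\otimes\bbC$, whose solutions on $[0,\infty)$ decaying at infinity are $\calM^{+}=\BRK{\,t\mapsto e^{-t}(A+tB):A,B\in V\,}$, of complex dimension $2\dim_{\bbC}V=:2N$. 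The complementing condition for a list $(B_{1},\dots,B_{8})$ of boundary operators is that $u\mapsto\big(\sigma_{\mathrm{pr}}(B_{j})(\xi',\partial_{t})\,u|_{t=0}\big)_{j}$ maps $\calM^{+}$ isomorphically onto the sum of the corresponding boundary fibres; the identity $\binom{d-1}{k-1}+\binom{d-1}{k}=\binom{d}{k}$ shows the target has complex dimension $2N$ in all four cases, so it suffices to prove injectivity: if $u=e^{-t}(A+tB)$ has all eight boundary quantities vanishing at $t=0$, then $A=B=0$.

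Next I would reduce the four lists to the first. Let $\star_{F}$ and $\star_{V}$ denote the Hodge stars acting on the form part alone and on the vector part alone; these are parallel bundle isometries intertwining $\dg$ with $\pm\deltag$ and $\dgV$ with $\pm\deltagV$ respectively, and fixing the complementary exterior operators. Conjugation by $\star_{V}$, by $\star_{F}$ and by $\star_{F}\star_{V}$ permutes $(\bHg,\bHg^{*},\bFg,\bFg^{*})$ and, using the Green formula \eqref{eq:integration_by_parts_intro} to transport the first-order boundary operators, carries the boundary symbols of the first list onto those of the third, the fourth and the second list; since $\sigma_{\mathrm{pr}}(\Bg)=|\xi|^{4}\id$ is invariant and the complementing condition depends only on principal symbols, it suffices to treat the first list.

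For the first list I would compute the relevant principal symbols. From $\sigma_{\mathrm{pr}}(\dg)(\zeta)=i\epsilon_{\zeta}$, $\sigma_{\mathrm{pr}}(\dgV)(\zeta)=i\epsilon^{V}_{\zeta}$, $\sigma_{\mathrm{pr}}(\deltag)(\zeta)=-i\iota_{\zeta}$, $\sigma_{\mathrm{pr}}(\deltagV)(\zeta)=-i\iota^{V}_{\zeta}$ (the tensorial corrections $D_{\g},S_{\g}$ not entering the symbol) one gets $\sigma_{\mathrm{pr}}(\bHg)(\zeta)=-\epsilon_{\zeta}\epsilon^{V}_{\zeta}$ and analogous products for $\bHg^{*},\bFg,\bFg^{*}$; the symbols of $\PnnD,\PntD,\PtnD,\PttD$ are interior products with $\nu$ in the indicated slots followed by pullback to $\dM$; and, reading off \eqref{eq:integration_by_parts_intro}, the symbols of $\frakT^{*},\frakF^{*},\frakF$ are explicit combinations of the normal operation $\epsilon_{\nu}\leftrightarrow\partial_{t}$ with $\epsilon_{\xi'},\iota_{\xi'}$ on the two slots. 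Substituting $u=e^{-t}(A+tB)$, so that $u(0)=A$, $\partial_{t}u(0)=B-A$ and $\partial_{t}^{2}u(0)=A-2B$, each boundary condition becomes a linear relation among the blocks of $A$ and $B$ under the four-fold splitting $V=V^{\frakt\frakt}\oplus V^{\frakt\frakn}\oplus V^{\frakn\frakt}\oplus V^{\frakn\frakn}$ induced by $T_{p}^{*}\M=\bbC\nu\oplus\nu^{\perp}$ in each slot. I would then argue in stages: the three conditions $\PnnD u|_{0}=\PntD u|_{0}=\PtnD u|_{0}=0$ force $A\in V^{\frakt\frakt}$; the three first-order conditions $\frakF u|_{0}=\frakF^{*}u|_{0}=\frakT^{*}u|_{0}=0$ then express the blocks $B^{\frakt\frakn},B^{\frakn\frakt},B^{\frakn\frakn}$ through $A^{\frakt\frakt}$ (and annihilate them in the extreme bidegrees); and finally, after substituting these relations, the two second-order conditions $\PnnD\bHg u|_{0}=0$ and $\frakT^{*}\bHg u|_{0}=0$ collapse to a square linear system for the remaining blocks $A^{\frakt\frakt},B^{\frakt\frakt}$, which I expect to be invertible by the Hodge identity $\iota_{\xi'}\epsilon_{\xi'}+\epsilon_{\xi'}\iota_{\xi'}=|\xi'|^{2}$ on $\Lambda^{k,m}(\nu^{\perp})$ together with $\xi'\ne0$; this forces $A=B=0$.

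The hard part will be twofold. First, the operators $\frakT,\frakF,\frakF^{*},\frakT^{*}$ are defined only implicitly through \eqref{eq:integration_by_parts_intro}, so I must first extract explicit closed forms, and hence explicit principal symbols, for them; this is routine but error-prone bookkeeping. Second, and more substantively, I must establish the nondegeneracy of the final square system uniformly in $(k,m,d)$, including the degenerate bidegrees $k,m\in\{0,d\}$ in which some projections vanish identically and the upper stages become vacuous; should the first-order conditions fail to solve cleanly for the normal blocks of $B$ in certain bidegrees, the clean three-stage elimination must be replaced by a single linear-algebra computation on the $2N$-dimensional space $\calM^{+}$. Checking that curvature and other lower-order terms do not affect the outcome is by contrast immediate, since only the principal symbols enter the Lopatinski--Shapiro test.
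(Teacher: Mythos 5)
Your strategy matches the paper's proof of \thmref{thm:elliptic_regularity} essentially step for step: Lopatinskij--Shapiro, proper ellipticity via $P_{\Bg}=|\xi|^4\,\id$, identification of $\calM^+_{x,\xi}=\{\omega_0e^{-|\xi|s}+\lambda_0 s e^{-|\xi|s}\}$ of complex dimension $2\binom{d}{k}\binom{d}{m}$, the Pascal-identity dimension count so that injectivity of $\Xi_{x,\xi}$ suffices, reduction of the four lists to one by conjugating with $\starG$, $\starG^V$, $\starG\starG^V$ (precisely the paper's substitutions $\psi\mapsto\starG\psi$, $\psi\mapsto\starG^V\psi$, $\psi\mapsto\starG\starG^V\psi$, which is why Lemma~\eqref{eq:Hodge_star_normal_tangentAA} and identity \eqref{eq:second_order_boundary_operators_duality} are set up beforehand), and block elimination using $\sigma(0)=\omega_0$, $\dot\sigma(0)=\lambda_0-\omega_0$, etc. One misapprehension worth correcting: the boundary operators $\frakT,\frakF^*,\frakF,\frakT^*$ are not defined only implicitly through the Green formula; the paper gives closed expressions for them (Lemma~\ref{lem:4.15}, eq.~\eqref{eq:T_commutator}), from which the symbols \eqref{eq:first_order_boundary_symbols} follow by direct substitution, so the first ``hard part'' you flag is bookkeeping rather than a conceptual obstacle.

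The genuine gap is exactly where you say you stop short: the invertibility of the final reduced system is asserted, not proved, and this is the substance of the theorem. The paper carries the elimination through to completion. After the zeroth-order conditions kill $\ott,\otn,\ont$ and the first-order conditions give $\ltt=0$, $\ltn=\xi\wedge\onn$, $\lnt=\xi^T\wedge\onn$, the second- and third-order conditions reduce (with $|\xi|=1$) to the two-by-two block system
\[
2\lnn + \bigl(\id - \xi\wedge\ixi - \xi^T\wedge\ixiV\bigr)\onn = 0,
\qquad
2\onn + \bigl(\id - \xi\wedge\ixi - \xi^T\wedge\ixiV\bigr)\lnn = 0,
\]
and eliminating $\lnn$ leaves $\brk{4\,\id - P^2}\onn=0$ with $P=\id-\xi\wedge\ixi-\xi^T\wedge\ixiV$. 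Since $\xi\wedge\ixi$, $\ixi\circ(\xi\wedge)$, $\xi^T\wedge\ixiV$, $\ixiV\circ(\xi^T\wedge)$ are commuting orthogonal projections, $4\,\id-P^2$ is positive definite, forcing $\onn=0$ and then $\lnn=0$. Without exhibiting this explicit factorization and the projection argument you have an outline, not a proof; in particular, it is not a priori clear from the ``Hodge identity'' alone that the mixed cross-terms among the two slots cannot conspire to make the system singular for some $(k,m,d)$, and the paper's computation shows that they do not. The extreme bidegrees $k,m\in\{0,d\}$ that worry you are handled automatically, because in those cases the offending projections are the zero or identity map and the reduced operator remains a positive multiple of the identity.
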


For scalar functions $f\in\Omega^{0,0}(\M)$, where $\bHg f$ is the hessian of $f$, the only boundary operators that do not vanish trivially are $(\PttD,\frakT)$ and $(\PnnD \bHg,\frakT^*\bHg)$. Hence, the four sets of boundary operators reduce to two:
\[
(\PttD,\frakT)
\Textand 
(\PnnD\bHg ,\frakT^*\bHg ),
\]
which amount to determining Dirichlet boundary conditions,
\[
f|_{\dM}
\Textand 
\partial_{\calN} f|_{\dM},
\]
or Neumann boundary conditions,
\[
\partial_{\calN}^2f|_{\dM}
\Textand
\partial_{\calN}^3f|_{\dM}+\text{lower order terms}.
\]
The regular ellipticity of neither of these problems can be obtained from the elliptic theory of the laplacian, in contrast to other possible sets \cite{Wlo87}. 

\thmref{thm:intro1} has several implications: Firstly, we characterize the (finite-dimensional and smooth) kernels of the Fredholm operators provided by \thmref{thm:intro1},
\[
\begin{aligned}
\BH^{k,m}_\TT &= \ker{(\bHg,\bHg^*,\bFg^*,\bFg)}\cap \ker(\PttD,\frakT,\PntD,\frakF^*,\PtnD,\frakF) \\
\BH^{k,m}_\NN &= \ker{(\bHg,\bHg^*,\bFg^*,\bFg)}\cap \ker(\PnnD,\frakT^*,\PntD,\frakF^*,\PtnD,\frakF) \\
\BH^{k,m}_\TN &= \ker{(\bHg,\bHg^*,\bFg^*,\bFg)}\cap \ker(\PtnD,\frakF,\PttD,\frakT,\PnnD,\frakT^*) \\
\BH^{k,m}_\NT &= \ker{(\bHg,\bHg^*,\bFg^*,\bFg)}\cap \ker(\PntD,\frakF^*,\PttD,\frakT,\PnnD,\frakT^*),
\end{aligned}
\]
where for linear operators $T_1,T_2,\dots$, we denote by $\ker(T_1,T_2,\dots)$ the intersection of their kernels.
These in turn yield a characterization of the co-kernels, leading to existence and uniqueness results. 

Secondly, we obtain a Korn-like inequality,
\[
\begin{split}
\|\psi\|_{W^{2,2}(\M)} &\lesssim \|\bHg\psi\|_{L^2(\M)} + \|\bFg^*\psi\|_{L^2(\M)} + \|\bFg\psi\|_{L^2(\M)} + \|\bHg^*\psi\|_{L^2(\M)} \\
&+ \|\PnnD\psi\|_{H^{3/2}(\dM)} + \|\PntD\psi\|_{H^{3/2}(\dM)} + \|\PtnD\psi\|_{H^{3/2}(\dM)} \\
&+ \|\frakT^*\psi\|_{H^{1/2}(\dM)} + \|\frakF^*\psi\|_{H^{1/2}(\dM)} + \|\frakF\psi\|_{H^{1/2}(\dM)} \\
&+ \|\psi\|_{L^2(\M)},
\end{split}
\]
where $\lesssim$ denotes inequality up to a multiplicative constant. Three other such inequalities are obtained for the other choices of boundary conditions. 

For $k=m$, we consider the sub-module of $\Omega^{k,k}(\M)$ consisting of symmetric double forms; this is relevant to elasticity theory as the stress tensor is a symmetric element in $\Omega^{1,1}(\M)$ whereas the stress potential is a symmetric element in $\Omega^{2,2}(\M)$ satisfying the first Bianchi identity. Restricting the elliptic analysis to symmetric double forms, we obtain that symmetric data result in  symmetric solutions. 

Thus far, all the results apply to manifolds of arbitrary geometry and topology. In the final part of this work, we proceed under the assumption that for a specific $(k,m)$, the diagram 
\[
\begin{tikzcd}
&& {\Omega^{k+1,m+1}(\M)} \\
&& {} \\
{\Omega^{k-1,m+1}(\M)} && {\Omega^{k,m}(\M)} && {\Omega^{k+1,m-1}(\M)} \\
&& {} \\
&& {\Omega^{k-1,m-1}(\M)}
\arrow["{\bHg}"', curve={height=12pt}, from=5-3, to=3-3]
\arrow["{\bHg}"', curve={height=12pt}, from=3-3, to=1-3]
\arrow["{\bHg^*}", curve={height=-12pt}, tail reversed, no head, from=3-3, to=1-3]
\arrow["{\bHg^*}"', curve={height=12pt}, from=3-3, to=5-3]
\arrow["{\bFg}", curve={height=-12pt}, from=3-1, to=3-3]
\arrow["{\bFg^*}"', curve={height=12pt}, tail reversed, no head, from=3-1, to=3-3]
\arrow["{\bFg^*}", curve={height=-12pt}, from=3-5, to=3-3]
\arrow["{\bFg}", curve={height=-12pt}, from=3-3, to=3-5]
\end{tikzcd}
\]
satisfies the set of ``exactness" relations
\beq
\begin{gathered}
\bHg\bHg=0 \qquad \bFg^*\bFg^*=0 \qquad \bFg^*\bHg=0 \qquad \bHg\bFg^*=0 \\
\bHg^*\bHg^*=0 \qquad \bFg\bFg=0 \qquad \bFg\bHg^*=0 \qquad \bHg^*\bFg=0 \\
\bFg\bHg=0 \qquad \bHg^*\bFg^*=0 \qquad \bHg\bFg=0 \qquad \bFg\bHg=0.
\end{gathered}
\label{eq:exactness_into}
\eeq
In a locally-flat manifold, these relations hold for all $(k,m)$ with $S_\g,D_\g=0$; nontrivial geometries in which such a setting holds are addressed in the sequel to this work \cite{KL21b}. 
When combined with the integration by parts formulas, these relations imply the mutual orthogonality of the images of the  second-order operators with appropriate boundary conditions. 

We introduce the following subspaces of $\Omega^{k,m}(\M)$,
\[
\begin{aligned}
\EE^{k,m}(\M) &=  \BRK{\bHg\alpha ~:~ \alpha\in\ker(\PttD,\frakT)} \\
\CC^{k,m}(\M) &=  \BRK{\bHg^*\beta ~:~ \beta\in\ker(\PnnD,\frakT^*)} \\
\EC^{k,m}(\M) &= \BRK{\bFg\gamma ~:~ \gamma\in\ker(\PtnD,\frakF)} \\
\CE^{k,m}(\M) &= \BRK{\bFg^*\lambda ~:~ \lambda\in\ker(\PntD,\frakF^*)} \\
\BH^{k,m}(\M) &= \ker{(\bHg,\bFg^*,\bFg,\bHg^*)},
\end{aligned}
\]
where the symbol $\calE$ stands for ``exact" and the symbol $\calC$ stands for ``co-exact".
With a slight abuse of notations, we identify these spaces with their $L^2$-completions. 
Extending ideas from the Hodge decomposition \cite{Sch95b}, we prove: 

\begin{theorem}
\label{thm:decompostion_intro}
Let $(\M,\g)$ be a compact Riemannian manifold with boundary, and let $\Omega^{k,m}(\M)$ satisfy \eqref{eq:exactness_into}. Then $\Omega^{k,m}(\M)$ splits $L^2$-orthogonally into
\[
\Omega^{k,m}(\M)=\EE^{k,m}(\M)\oplus \CC^{k,m}(\M)\oplus\EC^{k,m}(\M)\oplus
\CE^{k,m}(\M)\oplus\BH^{k,m}(\M).
\]
Furthermore, the spaces in the decomposition are closed. An analogous decomposition holds in Sobolev regularity for $W^{s,p}\Omega^{k,m}(\M)$.
\end{theorem}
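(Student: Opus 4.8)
The plan is to mirror the Hodge--Morrey--Friedrichs decomposition \cite{Sch95b}, with the four second-order operators $\bHg,\bHg^*,\bFg,\bFg^*$ in the roles of $\dg$ and $\deltag$. Three ingredients are required: mutual $L^2$-orthogonality of the five summands; that together they span; and that the four ``exact/coexact'' summands are in fact already closed, with the splitting preserving Sobolev regularity.

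\emph{Orthogonality} is a finite check over ten pairs. In each pair at least one summand is of the form $T\alpha$, with $\alpha$ lying in the joint kernel of the attached zeroth- and first-order boundary operators. Pairing against the other summand and applying \eqref{eq:integration_by_parts_intro} \emph{in the direction that transfers the outermost operator onto the constrained factor} $\alpha$, the boundary integral vanishes by the defining boundary conditions, while the remaining interior pairing is $\langle\alpha,RS\beta\rangle$ with $RS$ one of the compositions appearing in \eqref{eq:exactness_into}, hence zero; e.g. $\EE^{k,m}\perp\CC^{k,m}$ uses $\bHg^*\bHg^*=0$, $\EE^{k,m}\perp\EC^{k,m}$ uses $\bHg^*\bFg=0$, $\EC^{k,m}\perp\CE^{k,m}$ uses $\bFg^*\bFg^*=0$, and orthogonality to $\BH^{k,m}$ is immediate since its elements are annihilated by all four operators. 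The only care is selecting, pair by pair, which of the two operators to integrate by parts — a choice dictated by which boundary conditions are at hand.

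\emph{Spanning.} Testing against interior-supported double forms shows that a form orthogonal to $\EE^{k,m}$ is annihilated (distributionally) by $\bHg^*$, and likewise $\CC^{k,m},\EC^{k,m},\CE^{k,m}$ force annihilation by $\bHg,\bFg^*,\bFg$. Hence a form orthogonal to all five summands is a distributional solution of $\bHg\psi=\bHg^*\psi=\bFg\psi=\bFg^*\psi=0$ that is $L^2$-orthogonal to $\BH^{k,m}$; since $\BH^{k,m}$ is exactly the $L^2$-space of such solutions, it is zero. As the five summands are mutually orthogonal and each $L^2$-closed, their sum equals $L^2\Omega^{k,m}(\M)$, the four non-biharmonic summands being understood a priori as their $L^2$-closures.

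\emph{Closedness, Sobolev regularity, and the main difficulty.} That each of $\EE^{k,m},\CC^{k,m},\EC^{k,m},\CE^{k,m}$ is in fact already closed amounts to the relevant second-order operator having closed range on its boundary-constrained domain, i.e. being bounded below there modulo a finite-dimensional kernel. This is a Korn/Poincaré-type estimate of the shape recorded after \thmref{thm:intro1}, and is extracted from the a priori estimate underlying the Fredholm property of the fourth-order problems of \thmref{thm:intro1}, just as closedness of exact forms in classical Hodge theory comes from Gaffney's inequality. For the $W^{s,p}$ statement one writes the $\EE^{k,m}$-component of $\psi$ as $\bHg\alpha$ with the potential $\alpha$ fixed in a ``co-exact gauge'' ($\bHg^*\alpha=0$, supplemented by complementary boundary conditions) so that $\alpha$ itself solves a regular elliptic boundary value problem whose data are controlled by $\bHg\alpha\in W^{s,p}$; elliptic regularity then gives $\alpha\in W^{s+2,p}$, placing the component in $W^{s,p}$, and uniqueness of all components follows from the $L^2$-orthogonality. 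I expect the crux to be precisely this last step: exhibiting, for the potential of each summand, a gauge plus boundary conditions that render it elliptically determined, and verifying consistency through the boundary identities that relate $\PttD\bHg^*$, $\frakT\bHg^*$ and their analogues to the boundary operators of \thmref{thm:intro1} — the analogue here of the commutation of boundary pullback with the exterior derivative. The $L^2$ orthogonality and the soft part of the spanning argument, by comparison, are essentially mechanical.
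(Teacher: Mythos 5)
Your outline rests on the same pillar as the paper's proof of \thmref{thm:decompostion_1} --- the regular ellipticity of the four bilaplacian boundary-value problems --- and your orthogonality and orthocomplement computations are exactly \propref{prop:boundary_conditions_orthogonality} and the second half of the paper's argument. The organization, however, is genuinely different. The paper does not run ``orthogonality $+$ trivial orthocomplement $+$ closedness'': it produces the decomposition constructively. For each $\RR$ the Green operator of \propref{prop:spectral_2} gives, via \eqref{eq:green_operator}, $\psi=\bHg\bHg^*G_\RR\psi+\bHg^*\bHg G_\RR\psi+\bFg\bFg^*G_\RR\psi+\bFg^*\bFg G_\RR\psi+P_\RR\psi$, and one cherry-picks one potential from each of the four problems, $\alpha_\psi=\bHg^*G_\TT\psi$, $\beta_\psi=\bHg G_\NN\psi$, $\gamma_\psi=\bFg^*G_\TN\psi$, $\lambda_\psi=\bFg G_\NT\psi$. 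This hands you in one stroke the gauge you correctly identify as the crux: $\alpha_\psi\in\Image\bHg^*$, satisfying the $\TT$-conditions, with $\Norm{\alpha_\psi}_{W^{s+2,p}}\lesssim\Norm{\psi}_{W^{s,p}}$ (\propref{prop:gauge_freedom_1}); Sobolev regularity of the components is then automatic, and closedness follows from a soft Pythagoras argument applied to the decomposition of the limit, not from a coercivity estimate. What the paper's route buys is that closedness becomes a corollary of the decomposition rather than a prerequisite for it.

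Two cautions about your version. First, your closedness step as phrased --- the second-order operator is ``bounded below on its boundary-constrained domain modulo a finite-dimensional kernel'' --- is not literally true: by the exactness relations, $\ker\bHg\cap\ker(\PttD,\frakT)$ contains the images of $\bHg$, $\bFg$ and $\bFg^*$ from adjacent bidegrees and is infinite-dimensional, and the available a priori estimate \eqref{eq:general_estimate_final} controls $\psi$ only through all four operators and boundary traces simultaneously. One must first pass to a gauge-fixed potential on which the other three operators and the complementary boundary data vanish; producing such a potential for an arbitrary element of $\EE^{k,m}(\M)$ without already having the decomposition in the adjacent bidegree is the nontrivial point (the paper's own \lemref{lemma:images_are_closed} runs precisely this coercivity argument, but only \emph{after} \thmref{thm:decompostion_1} is in hand). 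The Green-operator construction is what breaks this circle. Second, your spanning step tacitly identifies $\BH^{k,m}(\M)$ (completion of the smooth joint kernel) with the space of $L^2$ distributional solutions of the four equations; this is the intended reading of the residual summand, but it is an identification, not a tautology, and should be made explicit.
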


Further decompositions of the biharmonic module $\BH^{k,m}(\M)$ are studied, in analogy to the decompositions of the harmonic module in  Hodge theory. 
This analysis is the starting point for obtaining several results in elasticity (e.g., Saint-Venant compatibility and the existence of stress potentials) which are presented in the sequel article \cite{KL21b}.

The paper is organized as follows: 
In \secref{sec:preliminaries}, we introduce notations and state relevant facts pertinent to vector-valued forms. 
In \secref{sec:k,m_forms}, we present double forms and the second-order differential operators. 
In \secref{sec:boundary_construction}, we introduce the boundary operators \eqref{eq:intro_zero_order_operators} and \eqref{eq:intro_first_order_operators}, and study their properties, culminating in the integration by parts formulas \eqref{eq:integration_by_parts_intro}. 
In \secref{sec:elliptic_theory}, we establish the regular ellipticity of the double bilaplacian and derive some of its consequences.  
In \secref{sec:symmetric_forms}, we focus on symmetric forms. 
In \secref{sec:the_decompostion}, we prove  \thmref{thm:decompostion_intro}, 
and demonstrate how to solve related boundary-value problems. To facilitate readability, proofs of technical lemmas and propositions appearing in Sections~\ref{sec:k,m_forms} and \ref{sec:boundary_construction} and Subsection~\ref{subsec:extension} are collected in an appendix.

\paragraph{Acknowledgments}
We thank Matania Ben-Artzi, Cy Maor and Amitai Yuval for insights and advice.  We are grateful to Asaf Shachar for useful comments on the manuscript.
This research was partially supported by the Israel Science Foundation Grant No. 1035/17.

\section{Preliminaries}
\label{sec:preliminaries}
In this section we establish notations used throughout this paper, and review some definitions and facts in the context of sections of vector bundles.

\subsection{The covariant exterior derivative}

Let $(\M,\g)$ be a $d$-dimensional Riemannian manifold; we denote by $(\cdot,\cdot)_\g$ and $\nabg$ the inner-product and the Riemannian connection;
we denote by $\frakX(\M)$ and $\Omega^k(\M)$ the spaces of vector fields and $k$-forms over $\M$.
Let $\E\to\M$ be a vector bundle over $\M$ endowed with a connection $\nabE$.
We denote by $\Omega^k(\M;\E)$ the module of $k$-forms with values in $\E$; we denote by $\nabla^{\g,\E}$ the induced connection on $\Lambda^kT^*\M\otimes\E$.
The covariant exterior derivative 
\[
d^{\nabE}: \Omega^k(\M;\E) \to \Omega^{k+1}(\M;\E)
\]
is defined by
\[
\begin{split}
d^{\nabE}\sigma(X_1,\dots,X_{k+1}) &= 
\sum_{i=1}^{k+1} (-1)^{i+1} \nabla^{\g,\E}_{X_i} \sigma(X_1,\dots,\hat{X}_i,\dots,X_{k+1}).
\end{split}
\]

Let $f:\M\to\N$ be a smooth map between manifolds, and let $\E\to\N$ be a vector bundle; we denote by $f^\star\E$ the pullback bundle over $\M$, where for $p\in\M$, the fiber $(f^\star\E)_p$ is canonically identified with $\E_{f(p)}$. 
The pullback connection $\nabla^{f^\star\E}$ is defined such that for all $s\in\Omega^k(\N;\E)$,
\[
d^{\nabla^{f^\star\E}}f^\star s=f^\star d^{\nabE}s.
\]

The graded algebra $\Omega^*(\M)$ acts naturally on $\Omega^*(\M;\E)$ by a wedge product, both from the left and the right; it is defined by the linear extension of
\[
\eta\wedge(\omega\otimes V)=(\eta\wedge\omega)\otimes V,
\]
where $\eta,\omega\in \Omega^*(\M)$ and $V\in\Gamma(\E)$.
The exterior covariant derivative $d^{\nabE}$ is the unique graded operator on $\Omega^*(\M;\E)$ satisfying
\[
d^{\nabE}s=\nabE s
\]
for $s\in\Gamma(\E)$,
together with the Leibniz rule,
\beq
d^{\nabE}(\alpha\wedge\omega)=d\alpha\wedge\omega+(-1)^k\, \alpha\wedge d^{\nabE}\omega
\label{eq:lebnitz_wedge}
\eeq
for $\alpha\in\Omega^k(\M)$ and $\omega\in\Omega^*(\M;\E)$
\cite[p.~362]{Pet16}.

In this context, there is another natural wedge product,
\[
\wedge:\Omega^p(\M;\End(\E))\times\Omega^q(\M;\E)\to\Omega^{p+q}(\M;\E)
\]
also defined by linear extension: for $T\in\Gamma(\End(\E))$, $V\in\Gamma(\E)$ and $\omega,\eta \in \Omega^*(\M)$,
\beq
(\eta\otimes T)\wedge(\omega\otimes V)=(\eta\wedge\omega)\otimes T(V).
\label{eq:wedge_product_vector_valued_End_2}
\eeq

The connection $\nabE$ induces a natural connection on $\End(\E)$. The exterior covariant derivative $d^{\nabE}$ satisfies the following Leibniz rule with respect to the wedge product \eqref{eq:wedge_product_vector_valued_End_2},
\[
d^{\nabE}(T\wedge\omega)=d^{\nabE}T\wedge\omega+(-1)^p\, T\wedge d^{\nabE}\omega,
\]
for $T\in\Omega^p(\M;\End(\E))$ and $\omega\in\Omega^*(\M;\E)$.
With this notation,
\[
d^{\nabE}d^{\nabE}\omega=\Riemann^{\nabE}\wedge\omega,
\]
where $\Riemann^{\nabE}\in\Omega^2(\M;\End(\E))$ is the curvature endomorphism of the connection $\nabE$. 
In particular, $d^{\nabE} d^{\nabE}  = 0$ if and only if $\nabE$ has vanishing curvature.

\subsection{The Hodge-dual}

Let $(V,\g)$ be an oriented $d$-dimensional inner-product space. 
As usual, we denote by $\Lambda^k V$ the exterior algebra of $k$-vectors.
The Hodge-dual 
\[
\starG: \Lambda^k V \to \Lambda^{d-k}V,
\]
is defined by the relation
\[
\al\wedge \starG \be = (\al,\be)_\g\, \omega,
\]
for all $\al\in\Lambda^kV$,
where $\omega$ is the unit $d$-vector; that is, if $\{e_1,\dots,e_d\}$ is an oriented orthonormal basis for $V$, then
\[
\omega = e_1\wedge\dots\wedge e_d.
\]

In Riemannian geometry, it is more common to apply the Hodge-dual on covectors. The linear-algebraic definition extends naturally to vector bundles. For an oriented manifold $(\M,\g)$,
\[
\starG : \Omega^k(\M) \to \Omega^{d-k}(\M), 
\]
where 
\[
\al\wedge \starG \be = (\al,\be)_\g\, \VolumeG
\]
for all  $\alpha,\beta\in\Omega^k(\M)$, and $\VolumeG$ is the Riemannian volume form.
In particular, 
\[
\starG(1) = \VolumeG
\Textand
\starG(\VolumeG) = 1.
\]
The Hodge-dual is an isometry; it is its own inverse (modulo a sign), namely,
\beq
\starG^{-1} = (-1)^{dk+k} \starG.
\label{eq:Hodge_inverse}
\eeq

The definition of the Hodge-dual extends to vector-valued forms on oriented manifolds. Let $\E$ be a vector bundle over $\M$; then
\[
\starG : \Omega^k(\M;\E) \to \Omega^{d-k}(\M;\E)
\]
is defined by the linear extension of 
\[
\starG(\alpha\otimes\eta) = \starG\alpha\otimes\eta,
\] 
where $\alpha\in\Omega^*(\M)$ and $\eta\in\Gamma(\E)$.

\subsection{The covariant codifferential}
\label{sec:codifferential}

Let $\E\to\M$ be a vector bundle endowed with a metric $\g_\E$ and a metric connection $\nabla^\E$.
The covariant codifferential,
\[
\delta^{\nabla^\E}: \Omega^k(\M;\E) \to \Omega^{k-1}(\M;\E),
\]
is the $L^2$-adjoint of the covariant exterior derivative: for $\tau\in\Omega^k(\M;\E)$ and compactly supported $\sigma\in\Omega^{k-1}(\M;\E)$, 
\[
\int_\M  (\sigma,\delta^{\nabla^\E} \tau)_{\g,\g_\E} \,\VolumeG = \int_\M  (d^{\nabla^\E} \sigma,\tau)_{\g,\g_\E} \,\VolumeG.
\]
It has the following representation \cite[pp.~10--11]{EL83},
\beq
\delta^{\nabla^\E} \tau = (-1)^{dk+d+1} \starG d^{\nabla^\E} \starG \tau.
\label{eq:formula_delta_nabla}
\eeq
Moreover, let $(E_1,\dots,E_d)$ be an orthonormal frame for $T\M$. Then, for $\tau\in\Omega^k(\M;\E)$,
\[
\delta^{\nabE} \tau(X_1,\dots,X_{k-1}) = - \sum_{j=1}^d \nabla^{\g,\E}_{E_j}\tau (E_j,X_1,\dots,X_{k-1}).
\]
Unlike the exterior covariant derivative,  $\dg$, the exterior codifferential $\deltag$ does not  satisfy a simple rule with respect to wedge products.

\subsection{Sobolev sections of vectors bundles}
 
Let $\E\to\M$ be a vector bundle equipped with a fiber metric $\g_\E$. 
Denote by $\Gamma_{c}(\E)$ the space of smooth compactly-supported sections. It is equipped with a scalar product,
\[
\bra\sigma,\theta\ket = \int_{\M}(\sigma,\theta)_{\g_\E}\VolumeG.
\]
For $1\leq p < \infty$, the $L^p$-norm on $\Gamma_{c}(\E)$ is given by 
\[
\left\| \sigma \right\|_{L^p(\M)}^p=\int_{\M} (\sigma,\sigma)_{\g_\E}^{p/2}\VolumeG,
\]
and the space $L^p\Gamma(\E)$ is the completion of $\Gamma_{c}(\E)$ with respect to this norm.

To define Sobolev spaces of higher order, we fix a connection $\nabla^{\E}$ on $\E$. The Sobolev spaces $W^{k,p}\Gamma(\E)$ are defined by
\[
W^{k,p}\Gamma(\E) = \{\sigma\in L^p\Gamma(\E) ~:~ (\nabla^\E)^j\sigma\in  L^p\Gamma(\otimes^j T^*\M\otimes \E), \,\, j=0,\dots,k\},
\]
where the covariant derivatives are defined in a distributive sense \cite[Ch.~4]{Tay11a}.
The spaces $\Wsp$ are Banach spaces with respect to the norm
\[
\|\sigma\|^p_{W^{s,p}(\M)} = \sum_{j=0}^k \|(\nabla^\E)^j\sigma\|^p_{L^p}.
\]
The Hilbert spaces $W^{s,2}\Gamma(\E)$ are also denoted by $H^s\Gamma(\E)$. 

Sobolev sections of vector bundles satisfy the following properties 
\cite{Tay11c, Sch95b}:

\begin{enumerate}
\item {\bfseries Sobolev lemma}: For all integers $s\geq0$ and $t>0$, the embeddings
\beq
\begin{gathered}
W^{s+t,p}\Gamma(\E)\hookrightarrow W^{s,p}\Gamma(\E)
\end{gathered}
\label{eq:sobolev_lemma}
\eeq
are continuous and dense.
\item {\bfseries Rellich's embedding theorem}: The embeddings \eqref{eq:sobolev_lemma} are compact (here we rely on the compactness of $\M$). 
\end{enumerate}

We also have the following property of the trace operator \cite{Tay11c, Sch95b}: Let $s>0$, then the restriction $\sigma\mapsto\sigma|_{\dM}$ extends to a continuous linear surjection,
\[
W^{s+1/p,p}\Gamma(\E)\to W^{s,p}\Gamma(\E|_{\dM}).
\]

\subsection{Sobolev theory for vector-valued forms}

From here on we replace $\E$ with a vector bundle of the form $\Lambda^kT^*\M \otimes\E$, with fiber metric
$\g\oplus\gE$. Recall that
\[
\Omega^k(\M;\E)=  \Gamma(\Lambda^kT^*\M\otimes \E).
\]
Thus, the Sobolev theory of sections of general vectors bundles holds verbatim for $\WspOk$. In particular, the embedding and trace theorems remain valid. The differential operators $d^{\nabE}$ and $\delta^{\nabE}$ extend to vector-valued differential forms of  Sobolev classes, and define continuous mappings 
\[
\begin{gathered}
d^{\nabE}:W^{s+1,p}\Omega^k(\M;\E)\to W^{s,p}\Omega^{k+1}(\M;\E) \\
\delta^{\nabE}:W^{s+1,p}\Omega^k(\M;\E)\to W^{s,p}\Omega^{k-1}(\M;\E).
\end{gathered}
\] 

Furthermore, the Hodge dual $\starG:\WspOk\to W^{s,p}\Omega^{d-k}(\M;\E)$ is an isometry, 
\[
\nl\starG\sigma\nr_{W^{s,p}(\M)}=\nl\sigma\nr_{W^{s,p}(\M)}
\]
for all $\sigma\in\WspOk$.

The integral of a volume form of Sobolev class
is defined as the limit of the integrals of an approximating sequence in $\Omega^{d}_{c}(\M)$ \cite[pp.~59--60]{Sch95b}. With this definition, Green's theorem can be extended to vector-valued forms of Sobolev class:
Let $\omega\in W^{1,p}\Omega^{k-1}(\M;\E)$ and $\eta\in W^{1,q}\Omega^k(\M;\E)$, 
where $1/p+1/q=1$. Then,
\beq
\bra d^{\nabE}\omega,\eta\ket = \bra \omega,\delta^{\nabE}\eta\ket+\int_{\dM}(\jStar\omega,i_{\N}\eta)_{\jStar\g,\gE}\VoldM,
\label{eq:green's_formula}
\eeq
where $j:\dM\to \M$ is the natural embedding and $\N$ is the outer unit normal to the boundary.  
See the discussion building towards \cite[p.~178]{Tay11a} for the full formulation. 

As a final note, we use throughout this paper the notation $\lesssim$ to denote an inequality up to a constant multiplicative factor, which may vary from one line to another, however is independent of the expressions on both sides.

\section{Double forms and second-order differential operators}
\label{sec:k,m_forms}

\subsection{Double forms}

We hereafter assume that the vector bundle $\E\to\M$ is $\E=\Lambda^mT^*\M$ for some $m\in\bbN$, and that the fiber metric $\gE$ is the one naturally induced from $\g$ on multi-vectors. We denote the induced Levi-Civita connection on $\Lambda^kT^*\M\otimes\Lambda^mT^*\M$ by $\nabg$. We consider the spaces of sections,
\[
\VectorFormsKM = \Omega^k(\M;\Lambda^{m}T^*\M)= \Gamma(\Lambda^{k,m}T^*\M),
\]
where
\[
\Lambda^{k,m}T^*\M=\Lambda^kT^*\M\otimes\Lambda^{m}T^*\M.
\]
The graded algebra of double forms \cite{deR84,Cal61, Gra70,Kul72} is defined as.
\[
\VectorFormsM=\bigoplus_{k,m}\VectorFormsKM.
\]

Important examples of elements in $\VectorFormsM$ are scalar functions $f\in\Omega^{0,0}(\M)$, real-valued forms $\omega\in\Omega^{k,0}(\M)$, metrics and hessians of functions $\g,\Hess_\g f\in\Omega^{1,1}(\M)$, and the $(4,0)$-version of the Riemann curvature tensor $\Rm_\g\in\Omega^{2,2}(\M)$.

Double forms have a natural tensorial operation of transposition,
$(\cdot)^T:\VectorFormsKM\to \Omega^{m,k}(\M)$ defined by 
\[
\psi^T(Y_1,\dots, Y_{m};X_1,\dots ,X_k)=\psi(X_1,\dots ,X_k;Y_1,\dots ,Y_{m}),
\]
where the semicolon separates between the $k$ and $m$ parts. 
The transposition of double forms should not be confused with the musical isomorphisms $\#$ and $\flat$, which are metric operations; for example, if $\alpha\in\Omega^1(\M) \simeq \Omega^{1,0}(\M)$, then $\alpha^T\in\Omega^{0,1}(
M)\simeq \Gamma(T^*\M)$, whereas $\alpha^\#\in\VF(\M) =\Gamma(T\M)$.
A $(k,k)$-form $\psi$ satisfying $\psi^T=\psi$ is called symmetric; examples of symmetric forms are metrics, hessians, Ricci tensors and curvature tensors. 

Double forms are equipped with the $\R$-linear operator,  $\dg:\VectorFormsM\to\VectorFormsM$. Transposition commutes with $\nabg_X$ for $X\in\frakX(\M)$, however, transposition does not commute with $\nabg$, hence neither with $\dg$. To illustrate this point, consider a real-valued form $\omega\in\Omega^{k,0}(\M)\simeq\Omega^k(\M)$. Then, $(\dg\omega)^T=(d\omega)^T \in\Omega^{0,k+1}(\M)$, whereas $\dg\omega^T=\nabg\omega^T\in\Omega^{1,k}(\M)$.

The vector bundle $\Lambda^{k,m} T^*\M$ has a vector part which is also an exterior algebra, hence all the operators acting on vector-valued forms can be defined on the vector part through transposition. We introduce the Hodge dual and the interior product
\beq
\starG^V \psi = (\starG \psi^T)^T
\Textand
i_X^V\psi = (i_X\psi^T)^T.
\label{eq_HodgeV}
\eeq 
One readily sees that $\starG\starG^V = \starG^V\starG$ and $i_X^V i_Y = i_Y i_X^V$.
We also note that the metric contraction of the first couple of covariant indices, $\trace_\g:\VectorFormsKM\to \Omega^{k-1,m-1}(\M)$,  is given locally by
\[
\trace_\g\psi=\sum_i i_{E_i}^V i_{E_i}\psi,
\]
where $\{E_i\}$ is a local orthonormal frame. 

We further define the exterior covariant operations,
\[
\dgV\psi = (\dg\psi^T)^T
\Textand
\deltagV\psi = (\deltag\psi^T)^T.
\]
It is readily verified that $\deltagV$ is the $L^2$-adjoint of $\dgV$. Furthermore, for $\psi\in\VectorFormsKM$ (cf. with \eqref{eq:formula_delta_nabla}),
\[
\deltagV\psi = (-1)^{dm+d+1} \starG^V \dgV \starG^V \psi.
\]
The following additional formula are presented in \cite{Kul72},
\beq
\begin{aligned}
& \deltagV = -\trace_\g\dg-\dg\trace_\g \\
& \deltag = -\trace_\g\dgV-\dgV\trace_\g.
\end{aligned}
\label{eq:co-differential_trace}
\eeq

Like $\Lambda^*T^*\M$, the vector bundles $\Lambda^{*,*}T^*\M$ have a natural graded wedge product,
\[
\wedge:\Lambda^{*,*}T\M\times\Lambda^{*,*}T\M\to \Lambda^{*,*}T\M,
\] 
defined by the linear extension of
\beq
(\omega\otimes F)\wedge(\alpha\otimes Q)=(\omega\wedge\alpha)\otimes(F\wedge Q),
\label{eq:def_wedge}
\eeq
turning $\Lambda^{*,*}T\M$ (and hence $\VectorFormsM$) into a graded algebra \cite{deR84}. 

The exterior covariant derivative $\dg:\VectorFormsKM\to\Omega^{k+1,m}(\M)$ satisfies a Leibniz rule with respect to this wedge product: for $\eta\in\VectorFormsKM$ and $\omega\in\VectorFormsM$,
\beq
\begin{aligned}
\dg(\eta\wedge\omega) &= \dg\eta\wedge\omega+(-1)^k\eta\wedge \dg\omega \\
\dgV(\eta\wedge\omega) &= \dgV\eta\wedge\omega+(-1)^m\eta\wedge \dgV\omega.
\end{aligned}
\label{eq:leibnitz_rule_vector}
\eeq
Likewise, for $X\in\frakX(\M)$, $\omega\in\VectorFormsKM$ and $\eta\in\VectorFormsM$,
\[
\begin{aligned}
i_X(\omega\wedge\eta) &= (i_X\omega)\wedge\eta+(-1)^k\omega\wedge i_X\eta \\
i_X^V(\omega\wedge\eta) &= (i_X^V\omega)\wedge\eta+(-1)^m\omega\wedge i_X^V\eta.
\end{aligned}
\]
Finally, since $\nabg_X$ commutes with $\starG$ and $\starG^V$: 

\begin{proposition}
\label{prop:dg_commutes_starV}
$\dg$ commutes with $\starG^V$ and $\dgV$ commutes with $\starG$,
\beq
\dg\starG^V = \starG^V\dg
\Textand
\dgV\starG = \starG\dgV.
\label{eq:dg_commutes_starV}
\eeq
\end{proposition}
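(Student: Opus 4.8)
The plan is to establish the first identity $\dg\starG^V=\starG^V\dg$ directly from the definition of $\dg$, using that $\starG^V$ acts only on the ``vector'' part of a double form and is parallel, and then to obtain the second identity $\dgV\starG=\starG\dgV$ from the first by transposition.

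First I would unwind the definitions to record that $\starG^V$ is the bundle isomorphism $\id_{\Lambda^kT^*\M}\otimes\starG\colon\Lambda^{k,m}T^*\M\to\Lambda^{k,d-m}T^*\M$: it is the Hodge dual on the factor $\Lambda^mT^*\M$ and the identity on $\Lambda^kT^*\M$, so that both $\starG^V\dg$ and $\dg\starG^V$ carry $\Omega^{k,m}(\M)$ into $\Omega^{k+1,d-m}(\M)$. As recorded immediately before the statement, $\nabg_X$ commutes with $\starG^V$; since the induced connection $\nabla^{\g,\E}$ on $\Lambda^kT^*\M\otimes\E$ is the tensor-product connection, the same commutation holds for $\nabla^{\g,\E}_X$ — the correction terms involve only form slots, which $\starG^V$ leaves alone.

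Next, reading off the defining formula for $\dg$ (equivalently, writing $\dg\psi=\sum_a\varepsilon^a\wedge\nabla^{\g,\E}_{E_a}\psi$ for a local orthonormal coframe $\{\varepsilon^a\}$ dual to $\{E_a\}$), the operator $\dg$ is assembled from two ingredients: covariant differentiation $\nabla^{\g,\E}_{E_a}$, and alternation over the form arguments, i.e.\ left wedge multiplication by real-valued $1$-forms. The operator $\starG^V$ commutes with both: with the former by the previous paragraph, and with the latter because for a real-valued $1$-form $\alpha$ one has $\alpha\wedge(\omega\otimes V)=(\alpha\wedge\omega)\otimes V$, so that left wedging does not interact with the $\starG$ acting on $V$. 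Hence $\starG^V\dg=\dg\starG^V$, which is the first identity in \eqref{eq:dg_commutes_starV}.

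Finally, for $\dgV\starG=\starG\dgV$ I would transpose. From the definitions one checks $(\starG\psi)^T=\starG^V(\psi^T)$ and $(\starG^V\eta)^T=\starG(\eta^T)$ for any double form $\eta$, whence
\[
\dgV(\starG\psi)=\bigl(\dg((\starG\psi)^T)\bigr)^T=\bigl(\dg(\starG^V\psi^T)\bigr)^T=\bigl(\starG^V(\dg\psi^T)\bigr)^T=\starG\bigl((\dg\psi^T)^T\bigr)=\starG(\dgV\psi),
\]
the third equality being the first identity applied to $\psi^T$. I do not anticipate a genuine obstacle; the only point deserving care is the passage from ``$\nabg_X$ commutes with $\starG^V$'' to ``$\nabla^{\g,\E}_X$ commutes with $\starG^V$'', which is a routine unwinding of the tensor-product structure of the induced connection together with the parallelism of $\starG$ on $\Lambda^mT^*\M$.
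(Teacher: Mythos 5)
Your proof is correct and follows essentially the same route as the paper's: both reduce the claim to the observation that $\starG^V$ acts only on the vector factor and is parallel with respect to $\nabg$, the paper invoking the Leibniz rule on a decomposable $\psi=\omega\otimes V$ where you instead use the coframe formula $\dg=\sum_a\varepsilon^a\wedge\nabla^{\g,\E}_{E_a}$. You also spell out the transposition step yielding the second identity, which the paper's proof leaves implicit.
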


Another algebraic operation on double forms is the Bianchi sum \cite{Gra70,Kul72}, which is the bundle map $\G:\Lambda^{k,m}T^*\M\rightarrow  \Lambda^{k+1,m-1}T^*\M$ given by
\[
\G\psi(X_{1},...,X_{k+1};Y_{1},...,Y_{m-1})=\sum_{j=1}^{k+1}(-1)^{j+1}\psi(X_{1},...,\hat{X}_{j},...,X_{k+1};X_{j},Y_{1},...,Y_{m-1}).
\]
We also define $\G_V:\Lambda^{k,m}T^*\M\rightarrow  \Lambda^{k-1,m+1}T^*\M$, given by 
\[
\G_V\psi=(\G\psi^T)^T.
\]

The algebraic operators $\trace_\g$, $\G$  and $\G_V$ are closely related. We describe this relationship in a more general context, which will be used later on. 

Let $A\in\Lambda^{l,l}T^*\M$ be a symmetric tensor of rank $1\leq l\leq d$; it can be wedge-multiplied with an element in $\Lambda^{k,m} T^*\M$ to yield an operator
\[
(A \wedge) : \Lambda^{k,m}T^*\M \to \Lambda^{k+l,m+l}T^*\M. 
\]

\begin{definition}
\label{def:3.2}
The graded linear operators
\[
\begin{aligned}
\trace_A &: \Lambda^{k,m}T^*\M \to \Lambda^{k-l,m-l}T^*\M \\
i_A &: \Lambda^{k,m}T^*\M \to \Lambda^{k+l,m-l}T^*\M \\
i^*_A &: \Lambda^{k,m}T^*\M \to \Lambda^{k-l,m+l}T^*\M
\end{aligned}
\]
are defined by 
\beq
\begin{aligned}
\trace_A &= (-1)^{dk+dm} \starG\starG^V (A \wedge) \starG\starG^V  \\
i_A  &= (-1)^{dm + d} \starG^V (A \wedge)  \starG^V  \\
i^*_A &= (-1)^{dk+d} \starG (A \wedge) \starG.
\end{aligned}
\label{eq:formulas_traceA}
\eeq
\end{definition}

\begin{lemma}
\label{lem:3.2}
Let $\alpha\in\Omega^1(\M)$ and $\omega\in\Omega^k(\M)$. Then,
\beq
\starG(\alpha\wedge\omega) = (-1)^k i_{\alpha^{\sharp}} \starG\omega.
\label{eq:star_alpha_omega}
\eeq
\end{lemma}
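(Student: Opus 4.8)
The plan is to prove \eqref{eq:star_alpha_omega} pointwise and then test it against an arbitrary form of complementary degree. Fix $p\in\M$; both sides are $(d-k-1)$-forms, so it suffices to show that for every $\beta\in\Omega^{k+1}(\M)$ the top-degree forms $\beta\wedge\starG(\alpha\wedge\omega)$ and $\beta\wedge\brk{(-1)^k i_{\alpha^{\sharp}}\starG\omega}$ coincide, since the pairing $\Omega^{k+1}(\M)\times\Omega^{d-k-1}(\M)\to\Omega^d(\M)$ given by the wedge product is non-degenerate at each point. The one nontrivial algebraic ingredient is the standard pointwise adjointness between interior multiplication by $\alpha^{\sharp}$ and exterior multiplication by $\alpha$, namely $(i_{\alpha^{\sharp}}\beta,\omega)_\g=(\beta,\alpha\wedge\omega)_\g$ for all $\omega\in\Omega^k(\M)$ and $\beta\in\Omega^{k+1}(\M)$.

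First I would compute the left-hand side directly from the defining relation of the Hodge-dual: $\beta\wedge\starG(\alpha\wedge\omega)=(\beta,\alpha\wedge\omega)_\g\,\VolumeG$. For the right-hand side, I would observe that $\beta\wedge\starG\omega$ has degree $(k+1)+(d-k)=d+1$ and hence vanishes; applying the antiderivation $i_{\alpha^{\sharp}}$ (the interior product obeys the Leibniz rule recorded above for double forms, specialized here to the $m=0$ case) gives $0=(i_{\alpha^{\sharp}}\beta)\wedge\starG\omega+(-1)^{k+1}\,\beta\wedge i_{\alpha^{\sharp}}\starG\omega$, so that $\beta\wedge i_{\alpha^{\sharp}}\starG\omega=(-1)^k(i_{\alpha^{\sharp}}\beta)\wedge\starG\omega$. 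Invoking the defining relation of $\starG$ once more, now on $\Omega^k(\M)$, the right-hand side equals $(-1)^k(i_{\alpha^{\sharp}}\beta,\omega)_\g\,\VolumeG$. Therefore $\beta\wedge\brk{(-1)^k i_{\alpha^{\sharp}}\starG\omega}=(i_{\alpha^{\sharp}}\beta,\omega)_\g\,\VolumeG$, and the adjointness identity rewrites this as $(\beta,\alpha\wedge\omega)_\g\,\VolumeG$, matching the left-hand side. Since $\beta$ was arbitrary, \eqref{eq:star_alpha_omega} follows at $p$, hence everywhere.

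I do not expect a genuine obstacle: the argument is short, and the only point requiring care is sign bookkeeping together with, if one wants a fully self-contained proof, a quick verification of the adjointness $(i_{\alpha^{\sharp}}\beta,\omega)_\g=(\beta,\alpha\wedge\omega)_\g$ — done by evaluating both sides on a $\g$-orthonormal coframe $\{e^1,\dots,e^d\}$ and tracking which basis covectors occur. Alternatively one can dispense with the testing argument entirely: by linearity in $\alpha$ it suffices to take $\alpha=e^1$ for such a coframe, and then compare $\starG(e^1\wedge e^I)$ with $i_{e_1}\starG e^I$ directly from the definition of $\starG$ for each increasing multi-index $I$, the factor $(-1)^k$ appearing precisely as the sign of commuting $e^1$ past a $k$-form; but the invariant computation above is cleaner and avoids case analysis.
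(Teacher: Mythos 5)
Your proof is correct, and it shares the same overall structure as the paper's (a pointwise testing argument hinging on the adjointness of $(\alpha\wedge)$ and $i_{\alpha^\sharp}$), but the computational route is genuinely different in a way worth noting. The paper pairs the two sides against an arbitrary $\beta\in\Omega^{d-1-k}(\M)$ using the $L^2$-density $(\cdot,\cdot)_\g\,\VolumeG$ and then chases signs through two applications of $\starG\starG$, invoking \eqref{eq:Hodge_inverse} explicitly; that sign bookkeeping is precisely what produces the factor $(-1)^k$. You instead pair against $\beta\in\Omega^{k+1}(\M)$ via the wedge product and, rather than tracking $\starG\starG$ signs, exploit the fact that $\beta\wedge\starG\omega\in\Omega^{d+1}(\M)$ vanishes identically, so the Leibniz rule for $i_{\alpha^\sharp}$ hands you $\beta\wedge i_{\alpha^\sharp}\starG\omega = (-1)^k(i_{\alpha^\sharp}\beta)\wedge\starG\omega$ for free. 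This replaces the paper's sign-chase with a single application of the antiderivation property and is, if anything, cleaner; the trade-off is that you invoke non-degeneracy of the wedge pairing $\Omega^{k+1}\times\Omega^{d-k-1}\to\Omega^d$ rather than non-degeneracy of the metric, though of course these are equivalent in the presence of $\starG$. Both routes use the same nontrivial ingredient (that $\alpha\wedge$ is the metric adjoint of $i_{\alpha^\sharp}$), so neither buys a reduction in prerequisites, but your version has fewer opportunities for a sign error.
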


\begin{lemma}
\label{lem:3.4}
The pairs of operators $(A \wedge)$, $\trace_A$ and $i_A$, $i^*_A$ are mutually dual with respect to the induced metric on $\Lambda^{*,*}T^*\M$, namely,
\[
(\trace_A \psi,\vp)_\g = (\psi,A\wedge \vp)_\g 
\Textand
(i_A^*\psi,\vp)_\g = (\psi,i_A\vp)_\g.
\]
\end{lemma}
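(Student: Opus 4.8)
The plan is to prove both duality statements by reducing them, via the defining formulas \eqref{eq:formulas_traceA}, to the single fact that the Hodge duals $\starG$ and $\starG^V$ are isometries of $\Lambda^{*,*}T^*\M$, together with the elementary observation that $(A\wedge)$ is its own "formal transpose" in a suitable sense. Concretely, I would first establish a one-line lemma: for $A\in\Lambda^{l,l}T^*\M$ symmetric and any $\psi\in\Lambda^{k,m}T^*\M$, $\varphi\in\Lambda^{k+l,m+l}T^*\M$, one has $(A\wedge\psi,\varphi)_\g = (\psi, \text{(something)})_\g$; but in fact the cleanest route is to avoid identifying the transpose of $(A\wedge)$ directly and instead use the isometry property of $\starG\starG^V$ to move the operators around.

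For the first identity, $(\trace_A\psi,\varphi)_\g = (\psi, A\wedge\varphi)_\g$, I would start from the right-hand side, write $(\psi,A\wedge\varphi)_\g = (\starG\starG^V\psi, \starG\starG^V(A\wedge\varphi))_\g$ using that $\starG\starG^V$ is an isometry (it is a composition of two isometries). Now the goal is to recognize $\starG\starG^V(A\wedge\varphi)$ in terms of $\trace_A$. The key structural input is that $\starG\starG^V$ squares to $\pm\id$ with a sign depending on $d,k,m$ — this follows from \eqref{eq:Hodge_inverse} applied to each factor and the commutativity $\starG\starG^V=\starG^V\starG$ from \eqref{eq_HodgeV}. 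Inserting $(\starG\starG^V)^2 = \pm\id$ in the right place lets me replace $A\wedge\varphi$ by $(\starG\starG^V)(\trace_A\text{-shaped expression})$, after which the isometry property collapses everything to $(\trace_A\psi,\varphi)_\g$, provided the accumulated sign from \eqref{eq:formulas_traceA} and from the two applications of \eqref{eq:Hodge_inverse} cancels. The second identity, $(i_A^*\psi,\varphi)_\g=(\psi,i_A\varphi)_\g$, is handled identically but using only the single isometry $\starG$ (for $i_A^*$) or $\starG^V$ (for $i_A$), and noting that $\starG$ and $\starG^V$ intertwine the two: from the formulas, $i_A = (-1)^{dm+d}\starG^V(A\wedge)\starG^V$ and $i_A^* = (-1)^{dk+d}\starG(A\wedge)\starG$, so one reduces to the self-duality of $(A\wedge)$ composed with a Hodge conjugation, exactly as before.

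The one genuine ingredient beyond bookkeeping is the \emph{self-adjointness of $(A\wedge)$} relative to the appropriate Hodge conjugate — i.e. that $(\starG\starG^V(A\wedge)\starG\starG^V)$ is, up to sign, the adjoint of $(A\wedge)$. This in turn rests on the fact that for a \emph{symmetric} $A$, wedging is compatible with the metric in the same way the interior product is adjoint to the exterior product for ordinary forms; the symmetry of $A$ is what makes the "form-side" and "vector-side" wedgings line up so the two Hodge stars can be applied coherently. I expect the main obstacle to be purely the sign computation: tracking the powers of $(-1)$ coming from \eqref{eq:formulas_traceA}, from $\starG^{-1}=(-1)^{dk+k}\starG$ applied on the relevant degrees, and from the graded commutativity of the two wedge structures, and checking they conspire to give exactly $+1$. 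I would organize this by first doing the scalar-function or $l=1$ case as a sanity check, where $A\wedge = \g\wedge$ and $\trace_A=\trace_\g$, recovering the familiar adjointness $(\trace_\g\psi,\varphi)_\g=(\psi,\g\wedge\varphi)_\g$, and then arguing the general $l$ case follows by the same manipulation with degree-shifts $k\mapsto k\pm l$, $m\mapsto m\pm l$ throughout.
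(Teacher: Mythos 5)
Your detailed manipulation is circular. Writing $(\psi, A\wedge\vp)_\g = (\starG\starG^V\psi,\ \starG\starG^V(A\wedge\vp))_\g$ and then inserting $(\starG\starG^V)^2=\pm\id$ only converts the right-hand side into $\pm(\starG\starG^V\psi,\ \trace_A\starG\starG^V\vp)_\g$, i.e.\ into another instance of the very pairing you are trying to identify (now with $\trace_A$ in the second slot at shifted degrees); collapsing that to $(\trace_A\psi,\vp)_\g$ already presupposes that $\trace_A$ is adjoint to $(A\wedge)$. You concede this when you isolate ``the one genuine ingredient'': that $\starG\starG^V(A\wedge)\starG\starG^V$, with the exact sign of \eqref{eq:formulas_traceA}, is the adjoint of $(A\wedge)$ --- but that assertion \emph{is} the lemma, and the proposal offers no proof of it beyond the remark that wedging is ``compatible with the metric'' the way interior multiplication is adjoint to exterior multiplication. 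Likewise the proposed $l=1$ sanity check $(\trace_\g\psi,\vp)_\g=(\psi,\g\wedge\vp)_\g$ is a special case of the statement, not an independent input, so nothing in the outline actually discharges the key step.

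What is missing is the concrete computation, and it is exactly what the paper supplies: reduce by bilinearity to decomposable $\psi=\alpha\otimes V$, $\vp=\beta\otimes W$, argue pointwise, and use the symmetry of $A$ to diagonalize it in an orthonormal eigenframe, $A=\sum_i a_i\,\vartheta^i\otimes(\vartheta^i)^T$ with $a_i=i_{E_i}i^V_{E_i}A$. Then $(\psi,A\wedge\vp)_\g=\sum_i a_i\,(i_{E_i}i^V_{E_i}\psi,\vp)_\g$, because on each tensor factor $(\vartheta^i\wedge)$ is adjoint to $i_{E_i}$, while \lemref{lem:3.2} (formula \eqref{eq:star_alpha_omega}) applied inside the definition \eqref{eq:formulas_traceA} gives $\trace_A\psi=\sum_i a_i\, i_{E_i}i^V_{E_i}\psi$, so the two sides agree with the correct sign; the second identity for $i_A,i_A^*$ follows by the same scheme. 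If you prefer to avoid diagonalization, you could instead decompose $A$ into decomposable pieces and invoke the general-degree analogue of \lemref{lem:3.2} factorwise, but some wedge/contraction identity of this kind is indispensable --- the Hodge-isometry bookkeeping alone cannot produce the adjointness.
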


\begin{lemma}
\label{lem:3.5}
For $\psi\in\VectorFormsKM$,
\[
(\trace_A \psi)^T = \trace_A\psi^T 
\Textand
(i_A \psi)^T = i_A^*\psi^T.
\]
\end{lemma}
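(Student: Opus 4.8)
The plan is to derive both identities directly from the defining formulas \eqref{eq:formulas_traceA}, using the two basic commutation facts already available: transposition commutes with neither $\starG$ nor $\starG^V$ individually, but by the very definitions in \eqref{eq_HodgeV} we have $(\starG\psi)^T = \starG^V(\psi^T)$ and $(\starG^V\psi)^T = \starG(\psi^T)$. The only other ingredient is that transposition is an involution that commutes with the wedge product in the sense that $(\eta\wedge\omega)^T = \eta^T\wedge\omega^T$ when $A$ is symmetric — more precisely, since $A=A^T$, one checks on decomposables $(\omega\otimes F)$ that $\big(A\wedge(\omega\otimes F)\big)^T = A\wedge(\omega\otimes F)^T$, because transposition merely swaps the two exterior-algebra slots and $A$ is unchanged by that swap. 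Thus $(A\wedge)$ commutes with $(\cdot)^T$.

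For the first identity, I would start from $\trace_A = (-1)^{dk+dm}\starG\starG^V (A\wedge)\starG\starG^V$ and compute $(\trace_A\psi)^T$ by transposing the whole composite, working from the outside in. Each $\starG$ turns into a $\starG^V$ and vice versa when moved across $(\cdot)^T$, while $(A\wedge)$ passes through unchanged; since $\starG$ and $\starG^V$ commute with each other (noted just after \eqref{eq_HodgeV}), the factor $\starG^V\starG$ can be rewritten as $\starG\starG^V$, and one recovers exactly $(-1)^{dk+dm}\starG\starG^V(A\wedge)\starG\starG^V$ applied to $\psi^T$. A small bookkeeping point: when $\psi\in\Lambda^{k,m}$, the inner $\starG\starG^V$ produces an element of $\Lambda^{d-k,d-m}$, so the sign picked up by conjugating $(A\wedge)$ and re-expressing the Hodge duals must be tracked; I expect the exponent $dk+dm$ was chosen precisely so that this works out, i.e.\ the sign is symmetric under $k\leftrightarrow m$, which is why $(\trace_A\psi)^T = \trace_A\psi^T$ with no extra sign.

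For the second identity, I would transpose $i_A = (-1)^{dm+d}\starG^V(A\wedge)\starG^V$. Moving $(\cdot)^T$ through from the outside: the outer $\starG^V$ becomes $\starG$, then $(A\wedge)$ is unchanged, then the inner $\starG^V$ becomes $\starG$, giving $(-1)^{dm+d}\starG(A\wedge)\starG$ applied to $\psi^T$. Now for $\psi\in\Lambda^{k,m}$ we have $\psi^T\in\Lambda^{m,k}$, so comparing with the definition $i^*_A = (-1)^{dk+d}\starG(A\wedge)\starG$ — whose sign should be evaluated on an element of form-degree $m$, i.e.\ with $k$ replaced by $m$ — the signs match, yielding $(i_A\psi)^T = i^*_A\psi^T$. (Equivalently, one can read this off Lemma~\ref{lem:3.4}: $i_A$ and $i^*_A$ are mutual adjoints, transposition is an $L^2$-isometry, and $(A\wedge)$, $\trace_A$ are transposition-invariant, so adjointness is preserved after transposing.)

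The only genuine obstacle is the sign bookkeeping: one must be careful that the exponents in \eqref{eq:formulas_traceA} are evaluated at the correct form-degrees after transposition (at $\psi^T\in\Lambda^{m,k}$ rather than at $\psi\in\Lambda^{k,m}$), and that the parity $(-1)^{dk+dm}$ is indeed invariant under $k\leftrightarrow m$ while $(-1)^{dm+d}$ versus $(-1)^{dk+d}$ correctly accounts for the swapped roles. Once the convention that $(\cdot)^T$ carries $\starG\leftrightarrow\starG^V$ and fixes $(A\wedge)$ is in place, the computation is a two-line conjugation in each case; I would present it as such rather than expanding on decomposable tensors.
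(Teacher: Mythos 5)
Your proposal is correct and follows essentially the same route as the paper: transpose the defining composites in \eqref{eq:formulas_traceA}, use $(\starG\psi)^T=\starG^V\psi^T$ from \eqref{eq_HodgeV} together with the symmetry of $A$ to pass $(\cdot)^T$ through $(A\wedge)$, and check that the sign exponents agree after the swap $k\leftrightarrow m$. The sign bookkeeping you flag works out exactly as you anticipate ($dk+dm$ is symmetric in $k,m$, and $i_A^*$ on $\Lambda^{m,k}$ carries the exponent $dm+d$), so no further argument is needed.
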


The Bianchi sums $\G$ and $\GV$ are derived from these definitions by taking $A = \g$:

\begin{lemma}
\label{lem:bianchiG}
With $i_A$ and $i_A^*$ defined as above,
\[
i_\g =\G 
\Textand
i^*_\g = \G_V.
\]
\end{lemma}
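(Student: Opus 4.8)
The plan is to prove the first identity, $i_\g = \G$, by a direct computation in a local orthonormal coframe, and then to deduce the second, $i^*_\g = \G_V$, immediately from \lemref{lem:3.5}. Both identities are pointwise and tensorial, so I would fix an arbitrary point of $\M$ and a local orthonormal coframe $\{e^i\}_{i=1}^d$ near it, writing $e_i = (e^i)^\sharp$ and $\g = \sum_i e^i\otimes e^i$. By the definition \eqref{eq:def_wedge} of the wedge product on double forms, $(\g\wedge)$ then decomposes as $\sum_i$ of the composition of exterior multiplication by $e^i$ on the form part with exterior multiplication by $e^i$ on the vector part. These two multiplications commute, since they act on independent tensor factors, and the form-part multiplication likewise commutes with $\starG^V$. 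Hence the operator $\starG^V(\g\wedge)\starG^V$ appearing in \defref{def:3.2} equals $\sum_i$ of (exterior multiplication by $e^i$ on the form part) composed with $\big(\starG^V\circ(\text{exterior multiplication by }e^i\text{ on the vector part})\circ\starG^V\big)$, so that all the nontrivial work is confined to the vector part.

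On the vector part I would invoke the transpose of \lemref{lem:3.2}, which converts $\starG^V(e^i\wedge\cdot)$ into $\pm\, i^V_{e_i}\starG^V$, together with the identity $\starG^V\starG^V = (-1)^{m(d+1)}\id$ on $\Lambda^{k,m}T^*\M$ that follows from \eqref{eq:Hodge_inverse}. This shows $\starG^V\circ(e^i\wedge\cdot\text{ on the vector part})\circ\starG^V$ equals $i^V_{e_i}$ up to an explicit power of $-1$ depending only on $d$ and $m$. Tracking this power against the normalization $(-1)^{dm+d}$ in \eqref{eq:formulas_traceA}, I expect everything to cancel, leaving $i_\g\psi = \sum_{i=1}^d e^i\wedge i^V_{e_i}\psi$, where $e^i$ is wedged onto the form part.

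It then remains to recognize this frame sum as the Bianchi sum. I would evaluate $\sum_i e^i\wedge i^V_{e_i}\psi$ on vectors $X_1,\dots,X_{k+1};Y_1,\dots,Y_{m-1}$: expanding the form-part wedge produces the sum $\sum_{j=1}^{k+1}(-1)^{j+1}e^i(X_j)\,(i^V_{e_i}\psi)(X_1,\dots,\hat X_j,\dots,X_{k+1};Y_1,\dots,Y_{m-1})$, and since $i^V_{e_i}$ inserts $e_i$ into the first vector slot, summing over $i$ and using $\sum_i e^i(X_j)\,e_i = X_j$ collapses the inner sum to exactly the defining expression for $\G\psi$. This gives $i_\g = \G$. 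Finally, applying \lemref{lem:3.5} with $A=\g$ to $\psi^T$ gives $i^*_\g\psi = (i_\g\psi^T)^T = (\G\psi^T)^T = \G_V\psi$, which is the second identity.

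The only real obstacle is the sign bookkeeping: one must transpose \lemref{lem:3.2} carefully so that it acts on the vector part (it is stated there for ordinary forms), and then verify that the powers of $-1$ coming from the two uses of that lemma, from $\starG^V\starG^V$, and from the prefactor in \defref{def:3.2} indeed cancel. There is no conceptual difficulty beyond this; in particular, one should check that commuting the form-part multiplication past the vector-part operations picks up no Koszul sign, which is immediate because they act on separate tensor factors.
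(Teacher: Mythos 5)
Your proposal is correct and follows essentially the same route as the paper's proof: expand $\g\wedge$ in an orthonormal (co)frame, push $\starG^V$ through using Lemma~\ref{lem:3.2} and \eqref{eq:Hodge_inverse} to cancel the sign prefactor in Definition~\ref{def:3.2}, arrive at $i_\g\psi=\sum_i e^i\wedge i^V_{e_i}\psi$, identify this with the Bianchi sum, and obtain $i^*_\g=\G_V$ from Lemma~\ref{lem:3.5}. The only difference is cosmetic — you spell out the pointwise evaluation identifying the frame sum with $\G$ and the deduction of the second identity, both of which the paper leaves implicit.
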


Since $\dg \g=0$, it follows from \eqref{eq:leibnitz_rule_vector} that
\beq
\begin{aligned}
& \dg(\g\wedge\psi) = -\g\wedge\dg\psi \\
& \dgV(\g\wedge\psi) = -\g\wedge\dgV\psi.
\end{aligned}
\label{eq:g_wedge_commute_with_dg}
\eeq

\begin{lemma}
\label{lem:commute_d_G}
The following relations hold,
\beq
\begin{aligned}
& \G\dg=-\dg\G 
& \qquad 
& \G_V\dgV=-\dg_{V}\G_V \\
& \trace_\g\deltag=-\deltag\trace_\g 
& \qquad 
& \trace_\g\deltagV=-\deltagV\trace_\g.
\end{aligned}
\label{eq:commutations_dg_G}
\eeq
and
\beq
\begin{aligned} 
& \dg=\G\dgV+\dgV\G=-\deltagV(\g\wedge)-\g\wedge\deltagV \\
& \dgV=\G_V\dg+\dg\G_V=-\deltag(\g\wedge)-\g\wedge\deltag.
&
\end{aligned}
\label{eq:commutations_dg_bianchi}
\eeq
\end{lemma}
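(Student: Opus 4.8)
I would organise the identities of \lemref{lem:commute_d_G} into three groups. First, the relations in the right-hand column of \eqref{eq:commutations_dg_G} and the ``$V$''-relations of \eqref{eq:commutations_dg_bianchi} need no separate treatment: $\dgV$, $\deltagV$ and $\GV$ are by definition the conjugates of $\dg$, $\deltag$ and $\G$ by the involutive transposition $(\cdot)^T$, while $\trace_\g$ and the operator $(\g\wedge)$ commute with $(\cdot)^T$---the former by \lemref{lem:3.5} with $A=\g$, the latter because $(\alpha\wedge\beta)^T=\alpha^T\wedge\beta^T$ (immediate from \eqref{eq:def_wedge}) and $\g^T=\g$. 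Conjugating each left-hand relation by $(\cdot)^T$ then produces its ``$V$''-partner, so it remains to establish $\G\dg=-\dg\G$, $\trace_\g\deltag=-\deltag\trace_\g$, $\dg=\G\dgV+\dgV\G$, and $\dg=-\deltagV(\g\wedge)-(\g\wedge)\deltagV$.

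Two of these I would obtain by duality from facts already in hand. The formal $L^2$-adjoint of $\dg(\g\wedge)=-(\g\wedge)\dg$ (from \eqref{eq:g_wedge_commute_with_dg}), using that $\deltag$ is the adjoint of $\dg$ and, by \lemref{lem:3.4} with $A=\g$, that $\trace_\g$ is the adjoint of $(\g\wedge)$, is precisely $\trace_\g\deltag=-\deltag\trace_\g$. Likewise the adjoint of the second line of \eqref{eq:co-differential_trace}, namely of $\deltag=-\trace_\g\dgV-\dgV\trace_\g$, is $\dg=-\deltagV(\g\wedge)-(\g\wedge)\deltagV$, using $(\dgV)^*=\deltagV$, $(\trace_\g)^*=(\g\wedge)$, and that adjunction reverses the order of composition.

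For the two remaining algebraic--differential identities I would pass to a pointwise Clifford calculus. Fix $p\in\M$ and an orthonormal frame $\{E_a\}$ parallel at $p$ (so $\nabg E_a|_p=0$), with coframe $\{e^a\}$; let $\epsilon_a,\iota_a$ (resp.\ $\epsilon_a^V,\iota_a^V$) denote left exterior multiplication by $e^a$ and interior multiplication by $E_a$ on the form part (resp.\ the vector part). These satisfy $\{\epsilon_a,\iota_b\}=\delta_{ab}=\{\epsilon_a^V,\iota_b^V\}$ and $\{\epsilon_a,\epsilon_b\}=\{\iota_a,\iota_b\}=0$, together with their $V$-analogues; the form-operators commute with the vector-operators; and at $p$ each $\nabla_{E_a}$ commutes with all of them. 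Unwinding the definitions, for any orthonormal frame one has $\dg=\sum_a\epsilon_a\nabla_{E_a}$, $\dgV=\sum_a\epsilon_a^V\nabla_{E_a}$, $(\g\wedge)=\sum_a\epsilon_a\epsilon_a^V$, and---expanding the defining formula for $\G$, in parallel with the local identity $\trace_\g=\sum_a\iota_a^V\iota_a$---also $\G=\sum_a\epsilon_a\iota_a^V$. Evaluating at $p$, one finds $\G\dg=\sum_{a,b}\epsilon_a\epsilon_b\,\iota_a^V\nabla_{E_b}$ and $\dg\G=\sum_{a,b}\epsilon_b\epsilon_a\,\iota_a^V\nabla_{E_b}$, which are negatives of each other since $\epsilon_a\epsilon_b=-\epsilon_b\epsilon_a$; hence $\G\dg=-\dg\G$ at $p$, and therefore everywhere. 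Likewise $\G\dgV+\dgV\G=\sum_{a,b}\epsilon_a\{\iota_a^V,\epsilon_b^V\}\nabla_{E_b}=\sum_a\epsilon_a\nabla_{E_a}=\dg$ at $p$, and therefore everywhere. The ``$V$''-versions fall out of the same computation with the two exterior algebras interchanged, consistently with the transposition step above.

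I expect the only delicate part to be the sign- and bidegree-bookkeeping in identifying $\G$ with $\sum_a\epsilon_a\iota_a^V$ and in verifying that $\nabla_{E_a}$ commutes with the algebraic factors at $p$---routine, but easy to slip on. As an invariant cross-check, $\G\dg=-\dg\G$ also follows from $\G=i_\g=(-1)^{dm+d}\starG^V(\g\wedge)\starG^V$ (\defref{def:3.2}, \lemref{lem:bianchiG}) together with $\dg\starG^V=\starG^V\dg$ (\propref{prop:dg_commutes_starV}) and \eqref{eq:g_wedge_commute_with_dg}; and $\dg=\G\dgV+\dgV\G$ can be recovered from the already-proven $\dg=-\deltagV(\g\wedge)-(\g\wedge)\deltagV$ by conjugating with $\starG^V$ and invoking $\deltagV=(-1)^{dm+d+1}\starG^V\dgV\starG^V$---at the price of the same bookkeeping, which is why I would use the frame computation.
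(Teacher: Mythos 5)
Your argument is correct, and it takes a genuinely different route from the paper's. The paper proves $\G\dg=-\dg\G$ by conjugating the relation $\dg(\g\wedge)=-(\g\wedge)\dg$ with Hodge duals, using $\G=i_\g=(-1)^{dm+d}\starG^V(\g\wedge)\starG^V$ and $\dg\starG^V=\starG^V\dg$; it disposes of the remaining relations in \eqref{eq:commutations_dg_G} ``similarly,'' and obtains the first line of \eqref{eq:commutations_dg_bianchi} by writing $\dg=(-1)^{dk+d+1}\starG\deltag\starG$, invoking \eqref{eq:co-differential_trace}, and converting $\trace_\g$ into $\G$ through further star conjugations---an argument entirely in the language of Hodge duality and therefore heavy on bidegree-dependent sign bookkeeping. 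You instead isolate $\G\dg=-\dg\G$ and $\dg=\G\dgV+\dgV\G$, prove them by a pointwise computation in a geodesic orthonormal frame (with $\dg=\sum_a\epsilon_a\nabla_{E_a}$, $\G=\sum_a\epsilon_a\iota_a^V$, the form and vector exterior algebras commuting with one another as operators on the tensor product, and the signs coming from $\epsilon_a\epsilon_b+\epsilon_b\epsilon_a=0$ and $\iota_a^V\epsilon_b^V+\epsilon_b^V\iota_a^V=\delta_{ab}$ rather than from chains of $(-1)^{dk+\cdots}$ factors), and obtain $\trace_\g\deltag=-\deltag\trace_\g$ and $\dg=-\deltagV(\g\wedge)-(\g\wedge)\deltagV$ by formal $L^2$-adjunction from \eqref{eq:g_wedge_commute_with_dg} and \eqref{eq:co-differential_trace}, with all $V$-versions then following by transposition. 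Your route makes the signs transparent and leans harder on the adjunction and transposition symmetries, at the cost of verifying the local identifications of $\dg$, $\dgV$, $(\g\wedge)$, $\trace_\g$, $\G$ (all of which you state correctly). The one point worth making explicit in a write-up: the adjunction step is legitimate on a manifold with boundary because the target identity is a pointwise identity of differential operators---pairing against compactly supported test sections in the interior suffices to identify the two sides as operators, after which the identity extends everywhere by smoothness.
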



\subsection{Second-order differential operators}

We next consider the second-order differential operators $\dgV\dg,\dg\dgV:\VectorFormsKM\to\Omega^{k+1,m+1}(\M)$. 
For a scalar function $f\in \Omega^{0,0}(\M)$,
\[
\dg\dgV f=\dgV\dg f=\nabg (df)^T=\Hess_\g f\in \Omega^{1,1}(\M).
\]
For general values of $k$ and $m$, $\dg$ and $\dgV$ do not commute. For example, let $f\in \Omega^{0,0}(\M)$ be a scalar function, then
\[
\dgV\dg df = 0,
\]
whereas
\[
\dg\dgV df = R_\g^*\circ (df)^T,
\]
where $R^*_\g\in\Omega^2(\M;\End(T^*\M))$ is the curvature endomorphism of $T^*\M$.
Applying \eqref{eq:commutations_dg_bianchi} twice we find,
\beq
\begin{split}
&\Brk{\dg,\dgV} = \dg\dg\G_V-\G_V\dg\dg \\
&\Brk{\deltag,\dgV} = \dgV\dgV\trace_\g-\trace_\g\dgV\dgV.
\end{split}
\label{eq:dg_dgV_commutators}
\eeq

For $\psi\in\VectorFormsKM$ and $\vp\in\VectorFormsM$, 
\beq
\begin{split}
\dgV\dg(\psi\wedge\vp) &= \dgV\dg\psi\wedge\vp + (-1)^k \dgV\psi \wedge (\dg\vp)  \\
& + (-1)^m (\dg\psi)\wedge\dgV\vp +(-1)^{k+m} \psi\wedge \dgV\dg\vp.
\label{eq:leibenitz_rule_dvd}
\end{split}
\eeq
\beq
\begin{split}
\dg\dgV(\psi\wedge\vp) &= \dg\dgV\psi\wedge\vp + (-1)^{m} \dg\psi \wedge (\dgV\vp)  \\
& + (-1)^k (\dgV\psi)\wedge\dg\vp +(-1)^{k+m} \psi\wedge \dg\dgV\vp,
\label{eq:leibenitz_rule_ddv}
\end{split}
\eeq
as obtained by applying \eqref{eq:leibnitz_rule_vector} twice.

This leads to the following definitions:

\begin{definition} 
The graded, second-order differential operators
\[
\begin{aligned}
&\Hg : \VectorFormsKM\to\Omega^{k+1,m+1}(\M) 
&\qquad
&\Hg^* : \VectorFormsKM\to\Omega^{k-1,m-1}(\M) \\
&\Fg^* : \VectorFormsKM\to\Omega^{k-1,m+1}(\M) 
&\qquad
&\Fg : \VectorFormsKM\to\Omega^{k+1,m-1}(\M),
\end{aligned}
\]
are defined by 
\beq
\begin{aligned}
&\Hg = \smallhalf(\dgV\dg+\dg\dgV)                     
&\qquad
&\Hg^* = \smallhalf(\deltag\deltagV+\deltagV\deltag)   \\
&\Fg^* = \smallhalf(\dgV\deltag+\deltag\dgV)                 
&\qquad
&\Fg = \smallhalf(\dg\deltagV+\deltagV\dg).
\end{aligned}
\label{eq:HHFF}
\eeq
\end{definition}

In accordance with the physics terminology, 
we call $\Hg$, $\Hg^*$, $\Fg^*$ and $\Fg$ the curl-curl, div-div, curl-div and div-curl operators. 
It is easy to see that as the notation suggests, $\Hg$ and $\Hg^*$, and $\Fg^*$ and $\Fg$ are mutually dual with respect to the $L^2$-inner-product.
The operator $\Hg$ commutes with the transpose, i.e.,
\[
\Hg(\psi^T)=(\Hg\psi)^T,
\]
and by duality, so does $\Hg^*$. 
On the other hand,
\[
(\Fg^*\psi^T)^T = \Fg\psi.
\]

Combining \eqref{eq:leibenitz_rule_dvd} and \eqref{eq:leibenitz_rule_ddv},
\[
\begin{split}
\Hg(\psi\wedge\varphi)&=\Hg\psi\wedge \varphi+(-1)^k\dgV\psi\wedge\dg\varphi\\&+(-1)^{m}\dg\psi\wedge\dgV\varphi
+(-1)^{k+m}\psi\wedge \Hg\varphi. 
\end{split}
\]

\begin{proposition}
\label{prop:duals_stars}
For $\psi\in\VectorFormsKM$,
\beq
\begin{aligned}
\Hg^*\psi &= (-1)^{dk+dm}\starG \starG^{V} \Hg \starG \starG^{V}\psi \\
\Fg^*\psi &= (-1)^{dk+d+1} \starG \Hg \starG \psi \\
\Fg\psi &= (-1)^{dm+d+1} \starG^{V} \Hg \starG^{V}\psi.
\end{aligned}
\label{eq:duals_stars}
\eeq
\end{proposition}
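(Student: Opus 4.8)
The plan is to substitute the Hodge-dual formula \eqref{eq:formula_delta_nabla} for the two codifferentials into the symmetric combinations \eqref{eq:HHFF} defining $\Hg^*$, $\Fg^*$ and $\Fg$, and then to use the commutations of \propref{prop:dg_commutes_starV} together with $\starG\starG^{V}=\starG^{V}\starG$ to transport all Hodge duals to the outside. The two inputs are $\deltag\psi=(-1)^{dk+d+1}\starG\dg\starG\psi$ for $\psi\in\VectorFormsKM$, and its transpose $\deltagV\psi=(-1)^{dm+d+1}\starG^{V}\dgV\starG^{V}\psi$; the point to keep in mind is that $\dg\psi$ has the same form-degree as $\psi$ and $\dgV\psi$ the same vector-degree, so these formulas compose without any change of sign.

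For the second identity I would write $\Fg^{*}\psi=\smallhalf(\dgV\deltag\psi+\deltag\dgV\psi)$ and apply $\deltag=(-1)^{dk+d+1}\starG\dg\starG$ in both summands, legitimate since $\psi$ and $\dgV\psi$ both have form-degree $k$. Using that $\dgV$ commutes with $\starG$ by \eqref{eq:dg_commutes_starV}, the two summands become $(-1)^{dk+d+1}\starG\,\dgV\dg\,\starG\psi$ and $(-1)^{dk+d+1}\starG\,\dg\dgV\,\starG\psi$, whose average is exactly $(-1)^{dk+d+1}\starG\Hg\starG\psi$. The third identity then follows by transposition: applying $(\cdot)^{T}$ to the second identity for $\psi^{T}$ and invoking $(\starG\eta)^{T}=\starG^{V}\eta^{T}$ (from \eqref{eq_HodgeV}), $(\Hg\eta)^{T}=\Hg(\eta^{T})$ and $(\Fg^{*}\psi^{T})^{T}=\Fg\psi$ yields $\Fg\psi=(-1)^{dm+d+1}\starG^{V}\Hg\starG^{V}\psi$; alternatively one repeats the direct computation with the roles of the form and vector parts exchanged.

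For $\Hg^{*}$ I would write $\Hg^{*}\psi=\smallhalf(\deltag\deltagV\psi+\deltagV\deltag\psi)$, substitute both Hodge-dual formulas, and repeatedly apply $\dg\starG^{V}=\starG^{V}\dg$, $\dgV\starG=\starG\dgV$ and $\starG\starG^{V}=\starG^{V}\starG$ to pull all four Hodge duals to the flanks. Tracking the signs, the overall factor is $(-1)^{(dk+d+1)+(dm+d+1)}=(-1)^{dk+dm}$, the first summand collapses to $(-1)^{dk+dm}\,\starG\starG^{V}\,\dg\dgV\,\starG\starG^{V}\psi$ and the second to $(-1)^{dk+dm}\,\starG\starG^{V}\,\dgV\dg\,\starG\starG^{V}\psi$, and averaging gives $(-1)^{dk+dm}\starG\starG^{V}\Hg\starG\starG^{V}\psi$.

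The main obstacle is not any deep argument but careful bookkeeping of the Hodge signs combined with one structural subtlety: since $\dg$ and $\dgV$ do not commute, neither summand in $\Hg^{*}$, $\Fg^{*}$ or $\Fg$ is individually a conjugate of $\Hg$ — it is only the symmetric average, in which $\dg\dgV$ and $\dgV\dg$ enter on equal footing, that reproduces $\Hg=\smallhalf(\dg\dgV+\dgV\dg)$. One must also be vigilant to apply each codifferential-to-star formula at the correct form/vector degree (in $\Fg^{*}$, say, $\deltag$ acts on $\psi$ and on $\dgV\psi$, both of form-degree $k$, and not on $\starG\psi$), so that no spurious signs are introduced.
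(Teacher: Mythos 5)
Your proof is correct and follows essentially the same route as the paper's: both reduce the identities to \eqref{eq:formula_delta_nabla} combined with the commutation relations of \propref{prop:dg_commutes_starV} and $\starG\starG^{V}=\starG^{V}\starG$, the paper merely running the computation from the conjugated side $\starG\starG^{V}\dgV\dg\starG\starG^{V}$ back to the codifferentials. (One slip in your prose: it is $\dgV\psi$ that shares the form-degree of $\psi$ and $\dg\psi$ that shares its vector-degree, not the other way around; your actual computations use the correct convention.)
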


\begin{proposition}
\label{prop:commute_H_G}
The following commutation relations hold:
\begin{enumerate}[itemsep=0pt,label=(\alph*)]
\item $\Hg$ commutes with $(\g\wedge)$, $\G$ and $\GV$.
\item $\Hg^*$ commutes with $\trace_\g$, $\G$ and $\GV$.
\item $\Fg^*$ commutes with $(\g\wedge)$, $\trace_\g$ and $\GV$.
\item $\Fg$ commutes with $(\g\wedge)$, $\trace_\g$ and $\G$.
\end{enumerate}
\end{proposition}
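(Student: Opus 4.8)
The plan is to derive all twelve commutation statements from the algebraic identities already collected in Lemma~\ref{lem:commute_d_G} and in \eqref{eq:g_wedge_commute_with_dg}, together with the duality relations of Proposition~\ref{prop:duals_stars}, so that in fact only part~(a) needs genuine computation and parts~(b)--(d) follow formally.

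\textbf{Step 1: $\Hg$ commutes with $(\g\wedge)$.} From \eqref{eq:g_wedge_commute_with_dg} we have $\dg(\g\wedge\psi)=-\g\wedge\dg\psi$ and $\dgV(\g\wedge\psi)=-\g\wedge\dgV\psi$. Composing, $\dgV\dg(\g\wedge\psi)=-\dgV(\g\wedge\dg\psi)=\g\wedge\dgV\dg\psi$, and similarly $\dg\dgV(\g\wedge\psi)=\g\wedge\dg\dgV\psi$; averaging gives $\Hg(\g\wedge\psi)=\g\wedge\Hg\psi$. (The two sign flips cancel — this is the reason the \emph{second}-order operator commutes while the first-order ones anticommute.)

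\textbf{Step 2: $\Hg$ commutes with $\G$ and $\GV$.} Recall from \eqref{eq:commutations_dg_G} that $\G\dg=-\dg\G$ and $\GV\dgV=-\dgV\GV$. I also need the mixed relations $\G\dgV=-\dgV\G$ and $\GV\dg=-\dg\GV$; these I would obtain by transposing the established ones (since $(\cdot)^T$ interchanges $\dg\leftrightarrow\dgV$ and $\G\leftrightarrow\GV$ up to the transpose, and using $\dgV\psi=(\dg\psi^T)^T$, etc.), or directly from the identity $\dg=\G\dgV+\dgV\G$ of \eqref{eq:commutations_dg_bianchi}: applying $\G$ on the left and using $\G\G$-type nilpotency together with $\G\dg=-\dg\G$ isolates $\G\dgV+\dgV\G$ anticommuting appropriately. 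Granting $\G$ anticommutes with both $\dg$ and $\dgV$, it follows that $\G$ \emph{commutes} with the second-order compositions $\dgV\dg$ and $\dg\dgV$, hence with $\Hg$; the same argument with $\GV$ gives $\Hg\GV=\GV\Hg$. This completes part~(a).

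\textbf{Step 3: parts (b), (c), (d) by duality.} Here is where Proposition~\ref{prop:duals_stars} does the work. Since $\Hg^*=(-1)^{dk+dm}\starG\starG^V\Hg\starG\starG^V$, and since $(\g\wedge)$ is dual to $\trace_\g$ while $\G=i_\g$ is dual to $\GV=i^*_\g$ (Lemmas~\ref{lem:3.4} and \ref{lem:bianchiG}), conjugating the identities of part~(a) by $\starG\starG^V$ — equivalently, taking $L^2$-adjoints of the relations in part~(a) — turns "$\Hg$ commutes with $(\g\wedge)$" into "$\Hg^*$ commutes with $\trace_\g$", "$\Hg$ commutes with $\G$" into "$\Hg^*$ commutes with $\G$" (note $\G$ and $\GV$ are swapped by adjunction, but since $\Hg$ commutes with \emph{both}, $\Hg^*$ does too), and likewise for $\GV$; this is part~(b). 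For part~(c), $\Fg^*=(-1)^{dk+d+1}\starG\Hg\starG$: conjugating by $\starG$ alone, and noting that $\starG(\g\wedge)\starG^{-1}$ relates $(\g\wedge)$ to $i^*_\g=\GV$ type operations on the form part only while leaving the vector-part operators essentially fixed, one reads off that $\Fg^*$ commutes with whichever of $(\g\wedge),\trace_\g,\GV$ are fixed by this single conjugation; the precise bookkeeping of which operator maps to which under $\starG(\cdot)\starG$ follows from the defining formulas \eqref{eq:formulas_traceA}. Part~(d) is identical with $\starG^V$ in place of $\starG$, using $\Fg=(-1)^{dm+d+1}\starG^V\Hg\starG^V$, which exchanges the roles of $\G$ and $\GV$ relative to part~(c).

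\textbf{Main obstacle.} The only real subtlety is Step~3's bookkeeping: verifying exactly which of the three operators $(\g\wedge)$, $\trace_\g$, $\GV$ (resp.\ $\G$) is \emph{preserved} under conjugation by a single Hodge star $\starG$ (resp.\ $\starG^V$), as opposed to being traded for its partner. This is governed by Definition~\ref{def:3.2} and Lemma~\ref{lem:3.2}: conjugation by $\starG$ sends $(A\wedge)$ on the form part to $i^*_A$ and fixes operations touching only the vector part, so e.g.\ $(\g\wedge)$ is \emph{not} generally preserved by $\starG$ alone — which is why part~(c) lists $(\g\wedge)$ only because it happens to commute through for the combined operator $\Fg^*$, and this must be checked against the explicit formula $\Fg^*=\smallhalf(\dgV\deltag+\deltag\dgV)$ using \eqref{eq:commutations_dg_G} directly rather than purely by conjugation. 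I would therefore keep the direct argument (using $\trace_\g\deltag=-\deltag\trace_\g$, $\G\dg=-\dg\G$, etc., and their transposes/adjoints) as a fallback for any of the nine statements in (b)--(d) whose duality derivation is awkward; each is a two-line anticommutator computation once the first-order anticommutation relations are in hand.
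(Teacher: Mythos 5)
Your overall strategy --- prove part (a) directly and deduce (b)--(d) by conjugation with Hodge stars and by taking adjoints --- is exactly the paper's, and your Steps 1 and 3 are sound. (The bookkeeping you worry about in Step 3 in fact works out cleanly: by \eqref{eq:formulas_traceA}, conjugation by $\starG$ sends $(\g\wedge)\mapsto\GV$, $\G\mapsto\trace_\g$ and $\GV\mapsto(\g\wedge)$ up to signs, which is precisely the list appearing in part (c); the $\starG^V$-conjugates give part (d).)

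However, Step 2 rests on a false premise. The mixed relations $\G\dgV=-\dgV\G$ and $\GV\dg=-\dg\GV$ that you assert do \emph{not} hold: equation \eqref{eq:commutations_dg_bianchi}, which you cite in their support, says precisely that the anticommutator is $\dg$, i.e.\ $\G\dgV+\dgV\G=\dg\neq 0$. Transposition does not produce them either: transposing $\G\dg=-\dg\G$ (using $(\G\eta)^T=\GV\eta^T$ and $(\dg\eta)^T=\dgV\eta^T$) yields $\GV\dgV=-\dgV\GV$, which is already the second relation of \eqref{eq:commutations_dg_G}, not a mixed one. As a consequence, your intermediate claim that $\G$ commutes with each of $\dgV\dg$ and $\dg\dgV$ \emph{separately} is also false: with the correct identities one finds $\G\dg\dgV-\dg\dgV\G=-\dg\dg$ and $\G\dgV\dg-\dgV\dg\G=+\dg\dg$, and $\dg\dg$ is a curvature term that does not vanish in general. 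What is true --- and what the paper proves --- is that these two error terms cancel upon summation: writing $\G\dgV=\dg-\dgV\G$ and using $\G\dg=-\dg\G$,
\[
\G\dg\dgV=-\dg\G\dgV=-\dg\dg+\dg\dgV\G,
\qquad
\G\dgV\dg=\dg\dg-\dgV\G\dg=\dg\dg+\dgV\dg\G,
\]
and adding gives $2\,\G\Hg=2\,\Hg\G$. The statement for $\GV$ then follows by transposition, since $\Hg$ commutes with $(\cdot)^T$ and transposition exchanges $\G$ and $\GV$. So the conclusion of your Step 2 is correct, but the argument must be replaced by the one above; as written it would equally well ``prove'' the false statement that $\G$ commutes with $\dg\dgV$ alone.
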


\section{Boundary constructions for double forms}
\label{sec:boundary_construction}
\subsection{Boundary constructions for vector-valued forms}
\label{section:boundary constructions and boundary operators} 

In treating boundary-value problems for double forms, 
we need the notion of a boundary normal neighborhood and constructions associated with it. 
The basic setting is quite standard and can be found e.g. in  \cite{Lee18} and \cite{Pet16}. The starting point is the following: let $(\M,\g)$ be a $d$-dimensional Riemannian manifold and let $\calP\hookrightarrow\M$ be an embedded submanifold.  There exists a normal neighborhood of $\calP$, $\calP\subseteq U\subseteq \M$, in which one can define  a distance function $r: U \to \R_{\geq0}$, measuring the metric distance of a point in $U$ from $\calP$; the function $r$ is smooth on $U\setminus\calP$ having smooth one-sided  derivatives on $\calP$. 
One can show that $\dr=(dr)^{\#}$ is a unit vector field orthogonal to the level sets of $r$; moreover, $\nabg_{\dr}\dr=0$. 

From here on, we restrict our attention to the case $\calP=\dM$, which is a submanifold of codimension one. 
For $\e\geq 0$, we  define  the level set $\calP_\e=r^{-1}(\lbrace\e\rbrace)$ and denote by $j_\e:\calP_\e \hookrightarrow\M$ the inclusion. In particular, $\calP_0=\dM$. 
We denote the pullback metric $j_\e^*\g$ on $\calP_\e$ be $\gEps$.
Since $r: U \to \R_{\geq0}$ has smooth one-sided directional derivatives on $\dM$, it follows that $\dr$ is a smooth vector field in $U$, and restricts on the boundary to its inward pointing normal, which we denote by $\N = -\dr|_{\dM}$. 


With that, we return to the setting of vector-valued differential forms. In a normal neighborhood $U$ of $\dM$, every vector $v\in TU$ decomposes orthogonally into
\[
v=v^\bot+v^\parallel,
\]
where
\[
v^\bot=dr(v) \,\dr 
\Textand
v^\parallel=v-v^\bot.
\]
As these projections are smooth, $TU$ decomposes orthogonally into smooth sub-bundles,
\[
TU=(TU)^\bot\oplus (TU)^\parallel.
\]
Since $\dr$ is perpendicular to the level sets of $r$, 
\[
(TU)^\parallel|_{\calP_\e} \simeq T\calP_\e,
\]
where $dj_\e: T\calP_\e\to (TU)^\parallel|_{\calP_\e}$ is the isomorphism, and
\[
(TU)^\bot|_{\calP_\e} = N\calP_\e.
\]
In particular, $(TU)^\parallel|_{\dM} \simeq T\dM$ and $(TU)^\bot|_{\dM}=N\dM$.

The module of vector fields near the boundary $\frakX(U)$ also decomposes smoothly and orthogonally into
\[
\frakX(U)=\frakX^\parallel(U)\oplus\frakX^\bot(U),
\]
where $\frakX^\parallel(U)$ is the space of vector fields tangent to the level sets of $r$, and $\frakX^\bot(U)\parallel \dr$; moreover, $\frakX^\parallel(U)|_{\calP_\e} \simeq \frakX(\calP_\e)$.  Gauss formula \cite[p.~228]{Lee18} asserts that for $X,Y\in\frakX(\calP_\e)$ extended arbitrarily to $X,Y\in\frakX(U)$,
\[
\nabla^\g_XY=\nabla^\gEps_XY  +\II(X,Y),
\]
where $\II:(TU)^\parallel\times(TU)^\parallel\to(TU)^\bot$ is the second fundamental form of the level sets of $r$, and is given by,
\[
\II(X,Y)=(\nabg_XY)^\bot.
\]

Rather than working with the second fundamental form, we work with the shape operator $S\in\Omega^1(U;TU)$ and the scalar second fundamental form, $\frakh\in\Omega^{1,1}(U)$ defined by
\[
S=\nabg\dr
\Textand
\frakh = \Hess_\g r=\dg(dr)^T.
\]
They satisfy the following properties
(see \cite[p.~236]{Lee18},  noting the sign difference as $\dr$ points in our case inward):
 
\begin{enumerate}
\item Their specification is equivalent to the specification of the second fundamental form,
\[
\II(X,Y) = -(S(X),Y)_\g\,\dr=-\frakh(X;Y)\,\dr.
\]
\item$\frakh$ is symmetric,
\[
\frakh^T=\frakh.
\]
\item $\frakh$ has no normal components,
\[
\idr \frakh = 0
\Textand
i^{V}_{\dr}\frakh = 0,
\]
so that on every level set $\calP_\e$, $\frakh$ can be identified with $\frakh_\e = \jEpsStar(\jEpsStar \frakh^T)^T\in  \Omega^{1,1}(\calP_\e)$, and $S$ can be identified with $S_\e\in \Omega^1(\calP_\e; T\calP_\e)$ given by $(S_\e(X),Y)_{\g_\e}=\frakh_\e(X;Y)$. 
\item The Gauss equation reads
\beq
\Rm_\gEps= \jEpsStar(\jEpsStar(\Rm_\g^T)^T) + \half\frakh_\e\wedge \frakh_\e,
\label{eq:Gauss_equation_Invariant}
\eeq
where $\Rm_\gEps\in\Omega^{2,2}(\calP_\e)$ is the curvature tensor of the level set $(\calP_\e,\gEps)$.  
\item The Codazzi equation reads
\beq
d^{\nabla^\gEps}\frakh_\e= -\jEpsStar(\jEpsStar(i_{\dr}^V \Rm_\g)^T)^T.
\label{eq:Codazzi_equation}
\eeq
\end{enumerate}

Thus, if $X,Y, \in\frakX(\calP_\e)$ are extended arbitrarily into vector fields on $U$, then
\beq
\label{eq:connections_with_S}
\nabg_XY|_{\mathcal{P}_\e}  = \nabla^{\gEps}_X Y - \frakh_\e(X;Y) \, \dr|_{\mathcal{P}_\e}.
\eeq

We further extend the decomposition to $\Lambda^kT^*U|_{U}$, obtaining an orthogonal decomposition
\[
\Lambda^kT^*U=(\frakt\Lambda^kT^*U)\oplus (\frakn\Lambda^kT^*U),
\]
where
\[
\frakt\omega(X_1,\dots ,X_k)=\omega(X_1^\parallel,\dots ,X_k^\parallel) 
\textand
\frakn\omega=\omega-\frakt\omega.
\]
From the alternation property,
\[
\begin{split}
&(\frakt\Lambda^kT^*U)|_{\calP_\e} \simeq \Lambda^kT^*\calP_\e.
\end{split} 
\]

\begin{lemma}
\label{lem:4.2}
Let $\omega\in\Omega^k(U)$. Then, on $\calP_\e$, 
\beq
\jStar_\e \frakt\omega=\jStar_\e\omega,
\label{eq:tangent_operator_and_pullback}
\eeq
and
\beq
\frakn\omega=dr\wedge \idr\omega.
\label{eq:normal_operator_and_interior}
\eeq
Moreover, $\frakt\omega$ is completely determined by $\jEpsStar\omega$, and
$\frakn\omega$ is completely determined by $\jEpsStar \idr\omega$, which are both forms on $\calP_\e$.
\end{lemma}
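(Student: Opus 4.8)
The plan is to prove both identities \eqref{eq:tangent_operator_and_pullback} and \eqref{eq:normal_operator_and_interior} by reducing everything to how the pointwise orthogonal decomposition $v = v^\parallel + v^\bot$ interacts with the exterior algebra, and then to obtain the ``complete determination'' statements as formal consequences of the first two.

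\emph{Step 1: The pullback identity \eqref{eq:tangent_operator_and_pullback}.} First I would unwind both sides at a point $p\in\calP_\e$ on vectors $X_1,\dots,X_k\in T_p\calP_\e$. By definition of the pullback, $(\jStar_\e\frakt\omega)(X_1,\dots,X_k) = (\frakt\omega)(dj_\e X_1,\dots,dj_\e X_k)$. The key observation is that $dj_\e$ maps $T_p\calP_\e$ isomorphically onto $(T_pU)^\parallel$, so each $dj_\e X_i$ already equals its own tangential part: $(dj_\e X_i)^\parallel = dj_\e X_i$. Hence $(\frakt\omega)(dj_\e X_1,\dots,dj_\e X_k) = \omega((dj_\e X_1)^\parallel,\dots) = \omega(dj_\e X_1,\dots,dj_\e X_k) = (\jStar_\e\omega)(X_1,\dots,X_k)$. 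This is essentially immediate once the isomorphism $(TU)^\parallel|_{\calP_\e}\simeq T\calP_\e$ established earlier in the section is invoked.

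\emph{Step 2: The normal identity \eqref{eq:normal_operator_and_interior}.} Here I would argue that on $U$ every covector $\xi\in T^*_pU$ decomposes as $\xi = \xi^\parallel + (dr(X_\xi))\,dr$ in a suitable sense; more usefully, since $\Lambda^kT^*U = (\frakt\Lambda^kT^*U)\oplus(\frakn\Lambda^kT^*U)$ is an orthogonal splitting, it suffices to check that $dr\wedge\idr\omega$ lies in $\frakn\Lambda^kT^*U$ and that $\omega - dr\wedge\idr\omega$ lies in $\frakt\Lambda^kT^*U$, i.e.\ is annihilated by $\idr$. For the first point, any form of the shape $dr\wedge(\cdot)$ vanishes when all arguments are tangential (because $dr$ evaluated on a tangential vector is zero), so $dr\wedge\idr\omega$ has zero tangential part, hence equals its normal part. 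For the second point, use the Leibniz/anti-derivation rule $\idr(dr\wedge\idr\omega) = (\idr dr)\,\idr\omega - dr\wedge\idr\idr\omega = \idr\omega - 0 = \idr\omega$, using $\idr dr = dr(\dr) = |dr|^2 = 1$ and $\idr\idr = 0$; therefore $\idr(\omega - dr\wedge\idr\omega) = 0$, so $\omega - dr\wedge\idr\omega$ is tangential. Orthogonal uniqueness of the decomposition then forces $\frakn\omega = dr\wedge\idr\omega$.

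\emph{Step 3: Complete determination.} For $\frakt\omega$: by \eqref{eq:tangent_operator_and_pullback}, $\jStar_\e\frakt\omega = \jStar_\e\omega$, and since $\frakt\omega$ is, fiberwise on $\calP_\e$, an element of $\Lambda^kT^*\calP_\e$ under the identification $(\frakt\Lambda^kT^*U)|_{\calP_\e}\simeq\Lambda^kT^*\calP_\e$, it is recovered from its value on tangential vectors, i.e.\ from $\jEpsStar\omega$. For $\frakn\omega$: by \eqref{eq:normal_operator_and_interior} it equals $dr\wedge\idr\omega$, and $\idr\omega$ is a $(k-1)$-form on $U$ with $\idr(\idr\omega)=0$, hence tangential, hence (by the same reasoning) determined by its pullback $\jEpsStar\idr\omega$; wedging back with $dr$ recovers $\frakn\omega$. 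I would phrase this last paragraph carefully to note that $dr|_{\calP_\e}$ is the conormal, so the reconstruction map $\beta\mapsto dr\wedge(\text{extension of }\beta)$ is well-defined on $\calP_\e$.

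\emph{Main obstacle.} The computations themselves are routine anti-derivation bookkeeping; the only place demanding care is keeping the distinction between objects living on $U$, their restrictions to $\calP_\e$, and their pullbacks under $j_\e$ — in particular making the phrase ``completely determined by'' precise, namely exhibiting the explicit reconstruction maps $\jEpsStar\omega\mapsto\frakt\omega$ and $\jEpsStar\idr\omega\mapsto\frakn\omega$ and checking they are inverse to the restriction/pullback maps. This is bookkeeping rather than a genuine difficulty, so I expect the proof to be short.
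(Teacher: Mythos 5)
Your proof is correct. Step~1 and Step~3 coincide with the paper's argument. Step~2, however, takes a genuinely different route. The paper proves \eqref{eq:normal_operator_and_interior} by a direct multilinear expansion: it writes
\[
\frakn\omega(X_1,\dots,X_k)=\omega(X_1,\dots,X_k)-\omega(X_1^\parallel,\dots,X_k^\parallel),
\]
expands this by multilinearity, discards all terms containing two or more normal factors (they vanish because each $X_i^\bot$ is a multiple of $\dr$ and $\omega$ is alternating), and recognizes the surviving single-normal-factor sum as $(dr\wedge\idr\omega)(X_1,\dots,X_k)$. Your argument instead checks that $\omega - dr\wedge\idr\omega$ is annihilated by $\idr$ (via the anti-derivation identity $\idr(dr\wedge\idr\omega)=\idr\omega$) and that $dr\wedge\idr\omega$ has vanishing tangential part, and then invokes uniqueness of the splitting $\Lambda^kT^*U=\frakt\Lambda^kT^*U\oplus\frakn\Lambda^kT^*U$. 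This is equally valid and cleaner in that it avoids bookkeeping indices, but it tacitly relies on the characterization $\ker\idr=\operatorname{range}(\frakt)$, i.e.\ that a $k$-form with no $\dr$-component is the same thing as a purely tangential $k$-form. That fact is true and easy (the argument is the same multilinear expansion the paper performs), but it should be stated explicitly, since it is precisely the bridge between the decomposition of covectors and the decomposition of $k$-covectors; without it your ``orthogonal uniqueness'' step is not quite a one-liner. The paper's direct expansion sidesteps this by proving the pointwise identity outright.
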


We introduce the following level-set projections,
\[
\Pt:\Omega^k(U)\to \Omega^k(\calP_\e)
\Textand
\Pn:\Omega^k(U)\to \Omega^{k-1}(\calP_\e)
\]
by
\[
\Pt\omega = \jEpsStar\omega
\Textand 
\Pn\omega = \jEpsStar\idr\omega.
\]
It follows from \lemref{lem:4.2} that $\omega$ is uniquely determined by $\Pt\omega$ and $\Pn\omega$. These projections  naturally extend to $\E$-valued forms. For $\alpha = \Omega^k(U)$ and $\beta = \Omega^*(U)$,
\beq
\begin{aligned}
\Pt(\alpha\wedge\beta) &= \Pt\alpha\wedge\Pt\beta \\
\Pn(\alpha\wedge\beta) &= \Pn\alpha\wedge\Pt\beta + (-1)^k \, \Pt\alpha\wedge\Pn\beta.
\end{aligned}
\label{eq:Pt_Pn_wedge}
\eeq

The level-set projection operators satisfy the following commutation relations with the Hodge-dual:

\begin{lemma}
\label{lem:PtPnStar}
For $\omega\in\Omega^k(\M)$,
\beq
\label{eq:Hodge_star_normal_tangentA} 
\begin{aligned}
\Pt  \starG\omega &= (-1)^{d+1}  \stargEps \Pn \omega \\
\Pn\starG\omega &= (-1)^{d+k+1}  \stargEps \Pt \omega.
\end{aligned}
\eeq
where $\starG$ and $\stargEps$ are the respective Hodge-dual operators of $\g$ and $\gEps$.
\end{lemma}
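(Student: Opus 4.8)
The plan is to reduce the statement to the already-established relation between $\Pt,\Pn$ and the Hodge-dual on the level set, namely \lemref{lem:4.2}, together with the defining property of $\starG$. First I would recall that $\omega$ is uniquely determined by $\Pt\omega$ and $\Pn\omega$, so it suffices to verify both identities of \eqref{eq:Hodge_star_normal_tangentA} after applying $\jEpsStar$ and $\jEpsStar\idr$; alternatively, and more cleanly, I would argue directly using \eqref{eq:tangent_operator_and_pullback}, \eqref{eq:normal_operator_and_interior} and the pointwise decomposition $\Lambda^kT^*U = (\frakt\Lambda^kT^*U)\oplus(\frakn\Lambda^kT^*U)$. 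Since both sides are $\R$-linear in $\omega$ and both operators respect this decomposition, it is enough to check the two cases $\omega = \frakt\omega$ (purely tangential) and $\omega = dr\wedge\tau$ with $\tau$ tangential (purely normal, by \eqref{eq:normal_operator_and_interior}).

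The key computation is to track how $\starG$ interacts with the splitting $\dr \perp T\calP_\e$. Writing the $\g$-volume form near $\calP_\e$ as $\VolumeG = dr \wedge \VolumeEps$ (up to the orientation conventions fixed in the text, which may introduce the overall signs $(-1)^{d+1}$, $(-1)^{d+k+1}$), I would show: for a tangential form $\omega$, $\starG\omega = \pm\, dr\wedge(\stargEps\omega)$, hence $\Pt\starG\omega = \jEpsStar\starG\omega = 0$ matches $\stargEps\Pn\omega = \stargEps\jEpsStar\idr\omega$ only after noting $\idr\omega = 0$ for tangential $\omega$ — wait, one must be careful: the correct statement pairs $\Pt\starG\omega$ with $\Pn\omega$, so I would instead compute $\Pn\starG\omega = \jEpsStar\idr(\pm dr\wedge\stargEps\omega) = \pm\jEpsStar\stargEps\omega = \pm\stargEps\jEpsStar\omega = \pm\stargEps\Pt\omega$, giving the second identity; and for $\omega = dr\wedge\tau$, $\starG\omega = \pm\stargEps\tau$ is tangential, so $\Pt\starG\omega = \pm\stargEps\jEpsStar\tau = \pm\stargEps\jEpsStar\idr\omega = \pm\stargEps\Pn\omega$, giving the first identity. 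Combining the two cases via linearity and the direct-sum decomposition yields \eqref{eq:Hodge_star_normal_tangentA} in general.

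The bookkeeping of signs is the main obstacle: one must carefully reconcile the orientation induced on $\calP_\e$ by $\dr$ (recall $\dr$ points \emph{inward} in our convention, so $\N = -\dr|_{\dM}$), the degree-dependent sign in $i_{\dr}(dr\wedge\tau) = \tau - dr\wedge\idr\tau$ applied to tangential $\tau$, and the conventions in \eqref{eq:Hodge_inverse} for $\starG^{-1}$. I would fix these by testing on the unit tangential $(d-1)$-form and on $1\in\Omega^0$, pinning down the constants, and then propagate them through the two cases above. A secondary point to address is that $\starG$ here must be the Hodge-dual of the ambient metric restricted to $U$, while $\stargEps$ is that of the induced metric $\gEps$ on $\calP_\e$; the identity $\VolumeG|_{\calP_\e} = dr\wedge\VolumeEps$ (which follows from $\dr$ being a unit normal and $\gEps = \jEpsStar\g$) is exactly what makes the two Hodge operators compatible, and I would state this explicitly before the case analysis. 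Everything else is routine multilinear algebra performed fiberwise.
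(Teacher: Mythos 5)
Your approach is essentially the same as the paper's: both proofs are fiberwise linear algebra with an orthonormal frame adapted to $\partial_r$, exploiting the splitting of $\Lambda^k T^*U$ near the level set and the compatibility of $\VolumeG$ with $dr\wedge\VolumeEps$. The paper proves the first identity by a direct permutation-sign computation and then obtains the second by substituting $\omega\mapsto\starG\omega$ and using \eqref{eq:Hodge_inverse}, whereas you split $\omega$ into tangential and normal parts and treat the two cases; these are interchangeable.

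Two small points you should tighten. First, your case split establishes the second identity for tangential $\omega$ and the first for normal $\omega$; to conclude by linearity you also need to observe that in the complementary cases both sides vanish (for tangential $\omega$, $\Pn\omega = 0$ and $\starG\omega$ is a multiple of $dr$ so $\Pt\starG\omega = 0$; dually for normal $\omega$). This is immediate but should be stated. Second, and more substantively, your plan to ``pin down the constants'' by testing on $1\in\Omega^0$ and the unit tangential $(d-1)$-form cannot work as stated: the sign in the second identity is $(-1)^{d+k+1}$, which depends on $k$, so a check at one or two specific degrees does not determine it. You need to carry out the sign bookkeeping with $k$ general, either via the explicit permutation computation the paper uses, or by writing, in an adapted orthonormal coframe $\{\vartheta^1,\dots,\vartheta^{d-1},dr\}$, a general tangential $k$-form $\omega = \vartheta^{i_1}\wedge\dots\wedge\vartheta^{i_k}$ and tracking the sign of moving the $dr$ factor past the $k$ tangential covectors when comparing $\starG$ with $\stargEps$.
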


The next lemma is concerned with the commutation between the level-set projection operator $\Pt$ and covariant differentiation:

\begin{lemma}
\label{lem:4.3}
For all $\omega\in\Omega^*(U;\E|_U)$ and $X\in \frakX^\parallel(U)  \simeq \frakX(\calP_\e)$,
\beq
\Pt\nabE_X\omega = \nabE_X \Pt\omega + (i_X^{V}\frakh_\e)\wedge \Pn\omega.
\label{eq:commutator_Pt_nabla}
\eeq
\end{lemma}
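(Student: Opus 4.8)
The plan is to reduce the identity to an unwinding of the definition of $\frakt$ together with the Gauss-type formula \eqref{eq:connections_with_S}. First I would work pointwise on a fixed level set $\calP_\e$, and fix $p\in\calP_\e$. Let $X\in\frakX^\parallel(U)$, which on $\calP_\e$ corresponds to a tangent vector field on $\calP_\e$, and let $\omega\in\Omega^q(U;\E|_U)$ (the $\E$-factor plays no role beyond riding along, since $\nabE$ acts diagonally on the $\Lambda^q T^*U\otimes\E$ decomposition and $\frakt$ touches only the form part). Since the claimed identity is tensorial in $X$ and the two sides are $\E$-valued $q$-forms on $\calP_\e$, it suffices to evaluate both sides on an arbitrary $q$-tuple $Y_1,\dots,Y_q\in\frakX^\parallel(U)\simeq\frakX(\calP_\e)$, extended off $\calP_\e$ in any convenient way (e.g.\ as $\dr$-parallel or $r$-independent extensions, which is legitimate because the final statement lives on $\calP_\e$).

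The key computation is then the following. By definition of $\frakt$ and the fact that each $Y_i$ is tangential, $(\Pt\omega)(Y_1,\dots,Y_q)=(\frakt\omega)(Y_1,\dots,Y_q)=\omega(Y_1,\dots,Y_q)$ along $\calP_\e$; differentiating,
\[
\bigl(\nabE_X\Pt\omega\bigr)(Y_1,\dots,Y_q)
= X\bigl(\omega(Y_1,\dots,Y_q)\bigr) - \sum_{i=1}^q \omega\bigl(Y_1,\dots,\nabla^{\gEps}_X Y_i,\dots,Y_q\bigr),
\]
where the Levi-Civita connection of $\calP_\e$ appears because $\Pt\omega$ is a form on $\calP_\e$ and the $Y_i$ are tangential. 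On the other hand,
\[
\bigl(\Pt\nabE_X\omega\bigr)(Y_1,\dots,Y_q)
= X\bigl(\omega(Y_1,\dots,Y_q)\bigr) - \sum_{i=1}^q \omega\bigl(Y_1,\dots,\nabla^{\g}_X Y_i,\dots,Y_q\bigr),
\]
with the ambient connection $\nabla^{\g}$. Subtracting and invoking \eqref{eq:connections_with_S}, $\nabla^{\g}_X Y_i = \nabla^{\gEps}_X Y_i - \frakh_\e(X;Y_i)\,\dr$, the difference is
\[
\bigl(\Pt\nabE_X\omega - \nabE_X\Pt\omega\bigr)(Y_1,\dots,Y_q)
= \sum_{i=1}^q \frakh_\e(X;Y_i)\;\omega\bigl(Y_1,\dots,\dr,\dots,Y_q\bigr),
\]
the $\dr$ sitting in the $i$-th slot. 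The remaining step is purely algebraic: one recognizes the right-hand side as $\bigl((i_X^V\frakh_\e)\wedge\Pn\omega\bigr)(Y_1,\dots,Y_q)$. Indeed $i_X^V\frakh_\e\in\Omega^1(\calP_\e)$ is the covector $Y\mapsto\frakh_\e(X;Y)$, while $\Pn\omega=\jEpsStar i_{\dr}\omega$ by definition, so $i_{\dr}\omega(Y_1,\dots,Y_{q-1})=\omega(\dr,Y_1,\dots,Y_{q-1})$; expanding the wedge product of the $1$-form $i_X^V\frakh_\e$ with the $(q-1)$-form $\Pn\omega$ reproduces exactly the signed sum above, with the signs matching because moving $\dr$ into the $i$-th argument costs the same sign as the wedge-product shuffle. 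This yields \eqref{eq:commutator_Pt_nabla}.

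The main obstacle is bookkeeping rather than conceptual: getting the signs and the placement of $\dr$ correct in the final algebraic identification, and making sure the extensions of $X$ and the $Y_i$ off $\calP_\e$ are chosen so that the normal components do not contribute spurious terms (e.g.\ ensuring $\nabla^{\gEps}_X Y_i$ really is the intrinsic covariant derivative, which is why one may as well take the $Y_i$ to be $r$-independent so their normal derivatives vanish and only the tangential part of $\nabla^{\g}_X Y_i$ on $\calP_\e$ is seen). One should also note that the identity is independent of these choices since both sides of \eqref{eq:commutator_Pt_nabla} are manifestly tensorial expressions on $\calP_\e$; hence it is enough to verify it for one convenient choice of extensions. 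The $\E$-valued case follows verbatim since $\frakt$, $\Pn$, $i_X^V$, and the wedge all act only on the form part and $\nabE$ is compatible with the tensor-product splitting.
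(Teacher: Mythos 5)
Your proof is correct and follows essentially the same route as the paper's: evaluate both sides on tangential $Y_1,\dots,Y_q$, apply the Leibniz rule with the ambient connection $\nabg$ on one side and the induced connection $\nabla^{\gEps}$ on the other, substitute the Gauss formula $\nabla^{\g}_X Y_i = \nabla^{\gEps}_X Y_i - \frakh_\e(X;Y_i)\,\dr$, and identify the resulting sum $\sum_i \frakh_\e(X;Y_i)\,\omega(Y_1,\dots,\dr,\dots,Y_q)$ with $\bigl((i_X^V\frakh_\e)\wedge\Pn\omega\bigr)(Y_1,\dots,Y_q)$. The paper organizes this by rewriting $\Pt\nabE_X\omega$ directly rather than subtracting two expansions, but the underlying computation and ingredients are identical.
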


The next two lemmas are concerned with the commutation between the level-set projection operators and the exterior differential and co-differential:

\begin{lemma}
\label{lem:4.4}
For all $\omega\in\Omega^*(U;\E|_U)$,
\[
\begin{aligned}
\Pt  d^{\nabE}\omega &=   d^{\nabE} \Pt \omega \\
\Pn \delta ^{\nabE} \omega &=  - \delta^{\nabE} \Pn\omega.
\end{aligned}
\]
\end{lemma}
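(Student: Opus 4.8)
The plan is to prove the tangential identity $\Pt d^{\nabE}\omega = d^{\nabE}\Pt\omega$ directly from \lemref{lem:4.3} (equivalently, from the naturality of the covariant exterior derivative under the pullback $j_\e^{*}=\Pt$ recorded in \secref{sec:preliminaries}), and then to deduce the normal identity $\Pn\delta^{\nabE}\omega = -\delta^{\nabE}\Pn\omega$ from it by conjugating with the Hodge-dual, using \lemref{lem:PtPnStar} and formula \eqref{eq:formula_delta_nabla}.

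For the first identity, I would expand $d^{\nabE}\omega$ through the invariant formula and evaluate $\Pt d^{\nabE}\omega=j_\e^{*}(d^{\nabE}\omega)$ on vector fields $X_1,\dots,X_{k+1}\in\frakX^\parallel(U)\simeq\frakX(\calP_\e)$; since $\Pt$ retains only tangential arguments and $(\nabla_{X_i}\omega)$ agrees with $\Pt(\nabla_{X_i}\omega)$ on such arguments, applying \lemref{lem:4.3} term by term gives
\[
\Pt d^{\nabE}\omega = d^{\nabE}\Pt\omega + \sum_{i=1}^{k+1}(-1)^{i+1}\,\bigl((i_{X_i}^{V}\frakh_\e)\wedge\Pn\omega\bigr)(X_1,\dots,\hat{X}_i,\dots,X_{k+1}),
\]
where the first term is recognised as the covariant exterior derivative on $\calP_\e$ (the torsion-freeness of $\nabla^{\gEps}$ making the $\nabla$-expansion valid there). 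It then remains to check that the correction sum vanishes; unfolding the wedge reduces this to an identity of the shape $\sum_i(-1)^{i+1}\sum_{j\ne i}(\pm)\,\frakh_\e(X_i;X_j)\,\Pn\omega(\dots,\hat{X}_i,\dots,\hat{X}_j,\dots)=0$, which holds because $\frakh_\e$ is symmetric ($\frakh_\e=\Hess_\g r$ restricted to $\calP_\e$), so that the $(i,j)$ and $(j,i)$ contributions cancel in pairs. This cancellation — the general fact that antisymmetrising a symmetric $2$-tensor against a form annihilates it — is the only genuinely substantive point in the first identity.

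For the second identity, I would substitute \eqref{eq:formula_delta_nabla}, $\delta^{\nabE}\omega=(-1)^{dk+d+1}\starG d^{\nabE}\starG\omega$, commute $\Pn$ past $\starG$ using the second line of \lemref{lem:PtPnStar}, apply the first identity to $\Pt d^{\nabE}(\starG\omega)=d^{\nabE}\Pt(\starG\omega)$, commute $\Pt$ back past $\starG$ using the first line of \lemref{lem:PtPnStar}, and finally recognise $\stargEps d^{\nabE}\stargEps$ as $\delta^{\nabE}$ on the $(d-1)$-dimensional manifold $\calP_\e$ via \eqref{eq:formula_delta_nabla} again. Collecting the signs — $(-1)^{dk+d+1}$ from the first $\delta$, $(-1)^{d+(d-k+1)+1}$ and $(-1)^{d+1}$ from the two invocations of \lemref{lem:PtPnStar}, and $(-1)^{(d-1)(k-1)+d}$ from the second $\delta$ — produces an overall exponent $2dk+1$, hence net sign $-1$, as claimed.

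I expect the main obstacle to be the sign bookkeeping in the second identity: the signs in \lemref{lem:PtPnStar} and in \eqref{eq:formula_delta_nabla} depend on the ambient dimension and the form degree, and here they must be applied on both $\M$ and $\calP_\e$ and to forms of several different degrees, so it is easy to slip; conceptually, however, everything is forced once the tangential identity is in hand.
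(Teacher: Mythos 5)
Your proof is correct, but the route you take for the tangential identity is genuinely different from the paper's. The paper reduces to decomposable forms $\omega=\alpha\otimes V$ with $V\in\Gamma(\E)$ and pushes $\Pt$ through the Leibniz expansion $d^{\nabE}\omega=d\alpha\otimes V+(-1)^k\,\alpha\wedge\nabE V$ using \eqref{eq:Pt_Pn_wedge}; the crucial point there is that \lemref{lem:4.3} is only ever applied to the zero-form part $V$, for which $\Pn V=0$, so the second-fundamental-form correction never appears, and the scalar part is handled by the naturality of $d$ under pullback. You instead apply \lemref{lem:4.3} to the full form $\omega$ through the invariant alternating-sum formula, which produces a genuine correction $\sum_i(-1)^{i+1}\bigl((i_{X_i}^{V}\frakh_\e)\wedge\Pn\omega\bigr)(X_1,\dots,\hat{X}_i,\dots,X_{k+1})$ that you then eliminate by pairing the $(i,j)$ and $(j,i)$ contributions and invoking the symmetry of $\frakh_\e$ — a correct cancellation (antisymmetrizing a symmetric $2$-tensor), but one the paper's factorization sidesteps entirely. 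The paper's argument is shorter and essentially sign-free; yours is more computational but makes explicit why the extrinsic geometry drops out of the commutator with $d^{\nabE}$ even though it survives in the commutator with $\nabE_X$. For the normal identity both arguments are the identical duality computation via \eqref{eq:formula_delta_nabla} and \lemref{lem:PtPnStar}; your stated aggregate exponent differs from mine by an even integer, so the conclusion (net sign $-1$) stands.
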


\begin{lemma}
\label{lem:4.5}
For all $\omega\in\Omega^*(U;\E|_{U})$,
\beq
\begin{aligned}
\Pn d^{\nabE}\omega &= - d^{\nabE} \Pn\omega + \Pt \nabE_{\dr}\omega -  \calS_\e \Pt \omega \\
\Pt \delta^{\nabE} \omega &=   \delta^{\nabE}  \Pt \omega -
\Pn \nabE_{\dr} \omega  + \calS_\e^* \Pn  \omega,
\end{aligned}
\label{eq:exterior_covariant_derivative_and_normal} 
\eeq
where  $\calS_\e\in\End(\Lambda^kT^*\calP_\e\otimes\E|_{\calP_\e})$ is
given by
\beq
(\calS_\e \omega)(X_1,\dots,X_k) = \sum_{j=1}^k \omega(X_1,\dots,S_\e(X_j),\dots,X_k),
\label{eq:def_calSe}
\eeq
and  $\calS^*\in\End(\Lambda^kT^*\calP_\e\otimes\E|_{\calP_\e})$ is given by
\[
\calS_\e^*\omega =  (-1)^{dk} \stargEps \calS_\e \stargEps \omega.
\]
\end{lemma}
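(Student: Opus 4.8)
The plan is to derive both identities from the two already-established commutation facts: the first-order formula \eqref{eq:commutator_Pt_nabla} of \lemref{lem:4.3}, and the decomposition $d^{\nabE}\omega(\dots) = \sum (-1)^{i+1}\nabE_{X_i}\omega(\dots)$, splitting the sum according to whether each frame vector is tangential or proportional to $\dr$. First I would fix a point $p\in\calP_\e$ and choose a local orthonormal frame $(E_1,\dots,E_{d-1},\dr)$ adapted to the level set, with $(E_1,\dots,E_{d-1})$ tangent to $\calP_\e$; this lets me evaluate $\Pn d^{\nabE}\omega = \jEpsStar i_{\dr} d^{\nabE}\omega$ on tangential arguments $X_1,\dots,X_{k}\in\frakX^\parallel(U)$. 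In the expansion of $d^{\nabE}\omega(\dr,X_1,\dots,X_{k})$, the term where $\dr$ is differentiated is $\nabE_{\dr}\omega(X_1,\dots,X_k)$, which restricts to $\Pt\nabE_{\dr}\omega$; the remaining $k$ terms are $\sum_j(-1)^{j}\nabE_{X_j}\omega(\dr,X_1,\dots,\hat X_j,\dots,X_k)$. Here I would use that $\nabg_{X_j}\dr = S_\e(X_j)$ is \emph{tangential} (by the definition $S = \nabg\dr$ and $\frakh$ having no normal component), so that $\nabE_{X_j}\bigl(\omega(\dr,\cdots)\bigr) = (\nabE_{X_j}\omega)(\dr,\cdots) + \omega(S_\e(X_j),\cdots)$. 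The first piece reassembles, up to sign, into $-d^{\nabE}(i_{\dr}\omega)(X_1,\dots,X_k) = -d^{\nabE}\Pn\omega$ after pulling back, and the second piece assembles precisely into $-\calS_\e\Pt\omega$ by the definition \eqref{eq:def_calSe}; I would also need \lemref{lem:4.3} to replace $\Pt\nabE_{X_j}\omega$-type terms by $\nabE_{X_j}\Pt\omega$ plus an $(i_{X_j}^V\frakh_\e)\wedge\Pn\omega$ correction, but since we only differentiate along tangential directions and the $\frakh_\e$-corrections cancel against the $\nabE_{\dr}$ term's analogous contribution, tracking the signs is the delicate point.

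For the second identity I would \emph{not} repeat this computation but instead deduce it from the first by Hodge duality, using \eqref{eq:formula_delta_nabla}, which expresses $\delta^{\nabE} = \pm\starG d^{\nabE}\starG$, together with \lemref{lem:PtPnStar}, which interchanges $\Pt\starG$ and $\stargEps\Pn$ (and $\Pn\starG$ with $\stargEps\Pt$) up to signs. Conjugating the first identity by $\starG$ and using $\calS_\e^* = (-1)^{dk}\stargEps\calS_\e\stargEps$ as given, the $-d^{\nabE}\Pn$ term turns into $+\delta^{\nabE}\Pt$, the $\Pt\nabE_{\dr}$ term turns into $-\Pn\nabE_{\dr}$ (here one also needs that $\nabE_{\dr}$ commutes with $\starG$, which follows since $\starG$ is parallel and $\nabg_{\dr}\dr = 0$), and the $-\calS_\e\Pt$ term becomes $+\calS_\e^*\Pn$, giving exactly the claimed formula. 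I expect the main obstacle to be bookkeeping of the various sign factors $(-1)^{dk}$, $(-1)^{d+k+1}$, etc., coming from \eqref{eq:Hodge_inverse} and \lemref{lem:PtPnStar}, and making sure the duality argument is applied to $\omega$ of the correct degree so that the indices in $\calS_\e^*$ match; the geometric content — that $S_\e$ being tangential is what produces the $\calS_\e$ correction — is straightforward once the adapted frame is in place.
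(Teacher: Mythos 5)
Your proposal is correct and in all essentials the same as the paper's: both arguments isolate the normal-derivative term and the shape-operator correction in $i_{\dr}d^{\nabE}\omega$, reassemble the remaining terms into $-d^{\nabE}i_{\dr}\omega$, and then obtain the codifferential identity by conjugating with the Hodge star via \eqref{eq:formula_delta_nabla} and \lemref{lem:PtPnStar}. The only presentational difference is that the paper first reduces to scalar-valued forms and packages your adapted-frame expansion into Cartan's magic formula $i_{\dr}d\omega = -d\,i_{\dr}\omega + \calL_{\dr}\omega$ together with the comparison of $\calL_{\dr}$ with $\nabla_{\dr}$ and the shape operator, which absorbs the sign bookkeeping you rightly flag as the delicate point (and makes your appeal to \lemref{lem:4.3} unnecessary, since the cancellation of the $\frakh_\e$-corrections is exactly the content of \lemref{lem:4.4}).
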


\subsection{Boundary constructions for double forms}

In the case of elements in $\VectorFormsKM$, the tangential and normal decomposition near the boundary can be repeated for the vector part as well.
Every $\psi\in\VectorFormsKM$ is uniquely determined near the boundary by
\[
\begin{aligned}
&\Pt(\Pt \psi^T)^T\in\Omega^{k,m}(\calP_\e)
&\qquad
&\Pn(\Pt \psi^T)^T\in\Omega^{k-1,m}(\calP_\e)
\\
&\Pt(\Pn\psi^T)^T\in\Omega^{k,m-1}(\calP_\e)
&\qquad
&\Pn(\Pn \psi^T)^T\in\Omega^{k-1,m-1}(\calP_\e).
\end{aligned}
\]

We define the level-set projection operators, $\Omega^{*,*}(U) \to \VectorFormsPeps$,
\beq
\begin{aligned}
\Ptt\psi &= \Pt(\Pt \psi^T)^T  = (\Pt(\Pt \psi)^T)^T \\
\Pnt\psi &= \Pn(\Pt \psi^T)^T  =  (\Pt(\Pn \psi)^T)^T  \\
\Ptn\psi &= \Pt(\Pn \psi^T)^T = (\Pn (\Pt \psi)^T)^T  \\
\Pnn\psi &=\Pn(\Pn \psi^T)^T = (\Pn(\Pn \psi)^T)^T.
\end{aligned}
\label{eq:Ptt_et_al}
\eeq
With a slight abuse of notation,
\[
\psi|_{\calP_\e} = \Ptt\psi + dr\wedge\Pnt\psi + (dr)^T\wedge\Ptn\psi + dr\wedge(dr)^T\wedge\Pnn\psi
\]
(strictly speaking, $\psi|_{\calP_\e} \in \VectorFormsM|_{\calP_\e}$ whereas e.g., $\Ptt\psi\in\VectorFormsPeps$).
We note the following immediate identities
\[
\begin{aligned}
&(\Ptt \psi)^T = \Ptt \psi^T
&\qquad 
&(\Pnn \psi)^T = \Pnn \psi^T \\
&(\Ptn \psi)^T = \Pnt \psi^T
&\qquad 
&(\Pnt \psi)^T = \Ptn \psi^T.
\end{aligned}
\]

As a direct consequence of \eqref{eq:Pt_Pn_wedge},
these projection operators combine with the wedge product in $\VectorFormsU$ as follows:
for $\psi\in \Omega^{k,m}(U)$ and $\xi\in\Omega^{*,*}(U)$ that
\beq
\begin{aligned}
\Ptt(\psi\wedge\xi) &= \Ptt\psi\wedge\Ptt\xi \\
\Ptn(\psi\wedge\xi) &= \Ptn\psi\wedge\Ptt\xi  + (-1)^m \, \Ptt\psi\wedge\Ptn\xi  \\
\Pnt(\psi\wedge\xi) &=   \Pnt\psi\wedge\Ptt\xi  + (-1)^k \, \Ptt\psi\wedge\Pnt\xi  \\
\Pnn(\psi\wedge\xi) &=   \Pnn\psi\wedge\Ptt\xi  + (-1)^k \, \Ptn\psi\wedge\Pnt\xi  \\
&\qquad+ (-1)^m \, \Pnt\psi\wedge\Ptn\xi  + (-1)^{k+m} \, \Ptt\psi\wedge\Pnn\xi .
\end{aligned}
\label{eq:Ptt_Ptn_Pnt_Pnn_wedge}
\eeq

In this context, $\frakh_\e\in\Omega^{1,1}(\calP_\e)$ is related to the scalar second fundamental form $\frakh\in\Omega^{1,1}(U)$ via
\[
\frakh_\e = \Ptt \frakh,
\]  
and the Gauss-Codazzi equations \eqref{eq:Gauss_equation_Invariant},\eqref{eq:Codazzi_equation} take the form
\beq
\Rm_\gEps= \Ptt\Rm_\g + \half\frakh_\e\wedge \frakh_\e
\Textand
d^{\nabla^\gEps}\frakh_\e= -\Ptn \Rm_\g.
\label{eq:Codazzi_equation2}
\eeq
For the sake of completion, we note that
\[
\Pnn \Rm_\g=\Ptt\nabg_{\dr}\frakh+\calS_\e\frakh_\e,
\]
where $\calS_\e$ is given by \eqref{eq:def_calSe} \cite[p.~97]{Pet16}. 
As an immediate consequence of \lemref{lem:PtPnStar}:

\begin{lemma}
For $\psi\in\VectorFormsKM$,
\beq
\label{eq:Hodge_star_normal_tangentAA} 
\begin{aligned}
&\Ptt  \starG\psi  =  (-1)^{d+1} \stargEps \Pnt \psi
&\qquad
&\Ptn  \starG\psi  = (-1)^{d+1}  \stargEps \Pnn \psi \\
&\Pnt\starG\psi  = (-1)^{d+k+1} \stargEps \Ptt \psi
&\qquad
&\Pnn\starG\psi = (-1)^{d+k+1}  \stargEps \Ptn \psi \\
&\Ptt  \starG^V\psi  =  (-1)^{d+1} \stargEps^V \Ptn \psi
&\qquad
&\Pnt  \starG^V \psi  = (-1)^{d+1}  \stargEps^V \Pnn \psi \\
&\Ptn\starG^V\psi  = (-1)^{d+m+1} \stargEps^V \Ptt \psi
&\qquad
&\Pnn\starG^V\psi = (-1)^{d+m+1}  \stargEps^V \Pnt \psi.
\end{aligned}
\eeq
\end{lemma}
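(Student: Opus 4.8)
The plan is to reduce each of the eight identities to two applications of \lemref{lem:PtPnStar}, one for each ``slot.'' Recall that for $\psi\in\VectorFormsKM$ the transposition interchanges the form part and the vector part, that $\starG$ acts only on the form part while $\starG^V=(\cdot)^T\starG(\cdot)^T$ acts only on the vector part, and that by \eqref{eq:Ptt_et_al} each of $\Ptt,\Pnt,\Ptn,\Pnn$ is obtained by applying $\Pt$ or $\Pn$ to the form part and $\Pt$ or $\Pn$ to the vector part (the two being applied to the two factors of $\Lambda^{k,m}T^*U=\Lambda^kT^*U\otimes\Lambda^mT^*U$). In particular the ``form-slot'' operators $\Pt,\Pn,\starG,\stargEps$ commute with the ``vector-slot'' operators, and commuting a Hodge dual past a projection that acts on the other factor introduces no sign. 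Since all these operators are $C^\infty(\calP_\e)$-linear bundle maps, it suffices to verify the identities on decomposable sections $\psi=\alpha\otimes\eta$, $\alpha\in\Omega^k(U)$, $\eta\in\Omega^m(U)$, where each claim becomes literally a pair of instances of \lemref{lem:PtPnStar}.

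First I would record the vector-slot analogue of \lemref{lem:PtPnStar}. From $\starG^V\psi=(\starG\psi^T)^T$ one gets $(\starG^V\psi)^T=\starG\psi^T$; applying \lemref{lem:PtPnStar} to the ordinary form $\psi^T$, whose form-degree is the vector-degree $m$ of $\psi$, yields
\[
\bigl(\Pt(\starG^V\psi)^T\bigr)^T=(-1)^{d+1}\,\stargEps^V\bigl(\Pn\psi^T\bigr)^T,
\qquad
\bigl(\Pn(\starG^V\psi)^T\bigr)^T=(-1)^{d+m+1}\,\stargEps^V\bigl(\Pt\psi^T\bigr)^T,
\]
i.e.\ the transpose of \lemref{lem:PtPnStar} with $k$ replaced by $m$, since $\starG$ is now applied to a form of degree $m$.

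Next I would assemble the eight identities slot by slot. For the four involving $\starG$: commute $\starG$ past the vector-part projection (sign-free), then apply \lemref{lem:PtPnStar} to the form-part projection, reading off the sign $(-1)^{d+1}$ when the outer form-part projection is $\Pt$ (cases $\Ptt,\Ptn$) and $(-1)^{d+k+1}$ when it is $\Pn$ (cases $\Pnt,\Pnn$), the exponent $k$ being the unchanged form-degree of $\psi$; the untouched vector-part projection then slides past $\stargEps$ freely. For the four involving $\starG^V$: commute $\starG^V$ past the form-part projection and apply the vector-slot version just derived to the vector-part projection, obtaining $(-1)^{d+1}$ when the outer vector-part projection is $\Pt$ (cases $\Ptt,\Pnt$) and $(-1)^{d+m+1}$ when it is $\Pn$ (cases $\Ptn,\Pnn$). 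This reproduces exactly the signs in \eqref{eq:Hodge_star_normal_tangentAA}.

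I do not anticipate a genuine obstacle: this is the promised immediate consequence of \lemref{lem:PtPnStar}, with no analytical content beyond it. The only point demanding care is the sign bookkeeping — keeping straight that $\starG$ contributes a $k$-dependent sign while $\starG^V$ contributes an $m$-dependent one, and that moving a Hodge dual past the ``foreign'' projection costs nothing because the two operations act on different tensor factors.
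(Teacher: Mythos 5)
Your proof is correct, and it is exactly the argument the paper implies but does not spell out: the lemma is stated without proof, introduced only by the remark that it is ``an immediate consequence of Lemma~\ref{lem:PtPnStar}.'' Your slot-by-slot application of Lemma~\ref{lem:PtPnStar} (with the form degree replaced by $m$ after transposition for the $\starG^V$ cases), together with the sign-free commutation of operators acting on different tensor factors, is precisely the bookkeeping the authors intend.
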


The next definitions are pertinent to the scalar second fundamental form $\frakh_\e$. In the constructions of \defref{def:3.2}, we replace $(\M,\g)$ with $(\calP_{\e},\gEps)$, and set $A=\frakh_\e$,
to obtain the graded tensorial operators,
\[
\begin{split}
(\frakh_\e \wedge) &: \Lambda^{k,m}T^*\calP_\e \to \Lambda^{k+1,m+1}T^*\calP_\e \\
\trhe &: \Lambda^{k,m}T^*\calP_\e \to \Lambda^{k-1,m-1}T^*\calP_\e \\
\ihe &: \Lambda^{k,m}T^*\calP_\e \to \Lambda^{k+1,m-1}T^*\calP_\e \\
\ihe^* &: \Lambda^{k,m}T^*\calP_\e \to \Lambda^{k-1,m+1}T^*\calP_\e
\end{split}
\]
which satisfy as in \lemref{lem:3.4} and \lemref{lem:3.5},
\[
(\trhe \psi,\vp)_{\gEps} = (\psi,\frakh_\e\wedge \vp) _{\gEps}
\Textand
(\ihe^*\psi,\vp)_{\gEps} = (\psi,\ihe\vp)_{\gEps}
\]
and
\[
(\trhe \psi)^T = \trhe\psi^T 
\Textand
(\ihe \psi)^T = \ihe^*\psi^T.
\]

\subsection{Commutation between level-set projections and exterior differential operators}
\label{sec:commutation_d_delta}

In this section we derive commutation relations between the level-set projection operators and the exterior covariant differentials and co-differentials.

\begin{lemma}
\label{lem:4.11}
Let $V\in\Omega^{0,*}(U)$. Then,
\beq
\Ptt \nabg V = \nabgEps \Ptt V + \frakh_\e\wedge \Ptn V.
\label{eq:Ptt_nabla_V}
\eeq
\end{lemma}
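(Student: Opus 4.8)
The plan is to verify \eqref{eq:Ptt_nabla_V} pointwise on each level set $\calP_\e$ by evaluating both sides on tuples of vector fields tangent to $\calP_\e$. By $\R$-linearity it is enough to take $V\in\Omega^{0,m}(U)$ for a fixed $m$. Both members of \eqref{eq:Ptt_nabla_V} lie in $\Omega^{1,m}(\calP_\e)$: the left-hand side by the definition of $\Ptt$, and on the right $\nabgEps\Ptt V\in\Omega^{1,m}(\calP_\e)$ while $\frakh_\e\wedge\Ptn V\in\Omega^{1,m}(\calP_\e)$ since $\frakh_\e\in\Omega^{1,1}(\calP_\e)$ and $\Ptn V\in\Omega^{0,m-1}(\calP_\e)$. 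As a $(1,m)$-form on $\calP_\e$ is determined by its values on vector fields on $\calP_\e$, it suffices to compare the two sides on $(X;Y_1,\dots,Y_m)$ for $X,Y_1,\dots,Y_m\in\frakX^\parallel(U)\simeq\frakX(\calP_\e)$ and then restrict to $\calP_\e$.

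First I would expand the left-hand side. Since $V$ has form-degree zero, $(\nabg V)(X;Y_1,\dots,Y_m)=(\nabg_X V)(Y_1,\dots,Y_m)$, which because $X$ and the $Y_i$ are tangent restricts on $\calP_\e$ to $(\Ptt\nabg V)(X;Y_1,\dots,Y_m)$. Writing out the covariant derivative,
\[
(\nabg_X V)(Y_1,\dots,Y_m)=X\bigl(V(Y_1,\dots,Y_m)\bigr)-\sum_{i=1}^{m}V(Y_1,\dots,\nabg_X Y_i,\dots,Y_m),
\]
I would then insert the Gauss formula \eqref{eq:connections_with_S}, $\nabg_X Y_i|_{\calP_\e}=\nabgEps_X Y_i-\frakh_\e(X;Y_i)\,\dr$. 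Each summand splits accordingly into a tangential piece $-V(Y_1,\dots,\nabgEps_X Y_i,\dots,Y_m)$ and a normal piece $\frakh_\e(X;Y_i)\,V(Y_1,\dots,\dr,\dots,Y_m)$. Since $\nabgEps_X Y_i$ is again tangent and $V$ evaluated on tangential arguments restricts on $\calP_\e$ to $\Ptt V$, the term $X\bigl(V(Y_1,\dots,Y_m)\bigr)$ together with the tangential pieces assembles into $(\nabgEps_X\Ptt V)(Y_1,\dots,Y_m)$.

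It then remains to match the normal pieces with $\frakh_\e\wedge\Ptn V$. By antisymmetry of the vector part of $V$,
\[
\sum_{i=1}^m\frakh_\e(X;Y_i)\,V(Y_1,\dots,\dr,\dots,Y_m)=\sum_{i=1}^m(-1)^{i-1}\frakh_\e(X;Y_i)\,(i^V_{\dr}V)(Y_1,\dots,\hat{Y}_i,\dots,Y_m),
\]
and from the definitions \eqref{eq:Ptt_et_al} one has $\Ptn V=(\Pn V^T)^T=j_\e^*(i^V_{\dr}V)$, so on $\calP_\e$ this sum equals $\sum_i(-1)^{i-1}\frakh_\e(X;Y_i)\,(\Ptn V)(Y_1,\dots,\hat{Y}_i,\dots,Y_m)$. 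On the other hand, unwinding the definition \eqref{eq:def_wedge} of the double-form wedge product, $(\frakh_\e\wedge\Ptn V)(X;Y_1,\dots,Y_m)$ is the value at $(Y_1,\dots,Y_m)$ of the vector-part wedge of the $1$-form $\frakh_\e(X;\cdot)$ with $\Ptn V$, which is that same sum. Combining this with the previous step proves \eqref{eq:Ptt_nabla_V}.

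The only genuinely delicate point should be this last paragraph: keeping the signs produced by the antisymmetrization of $V$ consistent with those in \eqref{eq:def_wedge}, and applying the identity $\Ptn V=j_\e^*(i^V_{\dr}V)$ with the transposition in the correct slot. An equivalent alternative is to apply \lemref{lem:4.3} with $\E=\Lambda^m T^*U$ — which, since $\Pn V=0$ for a zero-form, only says $\Pt\nabg_X V=\nabg_X\Pt V$ for the ambient connection — and then convert the ambient connection on $\Lambda^m T^*U|_{\calP_\e}$ into $\nabgEps$ by the same use of \eqref{eq:connections_with_S}; the direct computation above is more transparent.
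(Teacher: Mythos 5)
Your direct computation is correct: expanding $(\nabg_X V)(Y_1,\dots,Y_m)$, inserting the Gauss formula \eqref{eq:connections_with_S}, and regrouping, you recover $(\nabgEps_X\Ptt V)(Y_1,\dots,Y_m)$ together with a normal contribution which you correctly identify with $(\frakh_\e\wedge\Ptn V)(X;Y_1,\dots,Y_m)$, signs included. This is, however, a genuinely different route from the paper's. You are, in effect, replaying inside this special case the computation that proves \lemref{lem:4.3}. The paper instead invokes \lemref{lem:4.3} once, applied to the transpose $V^T\in\Omega^{m,0}(U)\simeq\Omega^m(U)$, obtaining $\Pt\nabg_X V^T = \nabgEps_X\Pt V^T + (i_X^V\frakh_\e)\wedge\Pn V^T$, and then transposes this identity back; the $\frakh_\e$-correction is supplied automatically because after transposition the $m$ factors of $V$ sit in the \emph{form} slot, which is where the correction in \lemref{lem:4.3} acts. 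Your closing alternative --- applying \lemref{lem:4.3} with $\E=\Lambda^m T^*U$ --- pinpoints exactly why keeping $V$ in the vector slot fails: then $\Pn V=0$, the lemma degenerates to a tautology, and the Gauss formula must be redone by hand, as you yourself note. Transposing first is the paper's way around that, and it is the reusable move throughout \secref{sec:commutation_d_delta}. Both arguments are sound; yours is self-contained and elementary, the paper's is shorter and more systematic.
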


\begin{lemma}
\label{lem:4.12}
For $\psi\in\VectorFormsU$, 
\beq
\begin{aligned}
\Ptt\dg\psi &= \dgEps \Ptt\psi + \frakh_\e \wedge \Ptn\psi \\
\Ptt\dgV\psi &= \dgEpsV \Ptt\psi + \frakh_\e \wedge \Pnt\psi \\
\Ptn \dg\psi &= \dgEps \Ptn\psi - \ihe \Ptt \psi  \\
\Pnt \dgV\psi &= \dgEpsV \Pnt\psi - \ihe^* \Ptt \psi.
\end{aligned}
\label{eq:dnabla_pure_tangent} 
\eeq
\end{lemma}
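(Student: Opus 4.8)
The plan is to reduce all four identities to the first one, $\Ptt\dg\psi = \dgEps\Ptt\psi + \frakh_\e\wedge\Ptn\psi$, and within that to a single substantive case. First, the other three follow from the first by transposition and Hodge duality. Applying the first identity to $\psi^T$ and transposing — using $(\Ptt\psi)^T=\Ptt\psi^T$, $(\Ptn\psi)^T=\Pnt\psi^T$, $\frakh_\e^T=\frakh_\e$, $(\alpha\wedge\beta)^T=\alpha^T\wedge\beta^T$, $(\dgEps\zeta)^T=\dgEpsV\zeta^T$ and $(\ihe\zeta)^T=\ihe^*\zeta^T$ — turns the first identity into the second, and the same transposition applied to the third identity yields the fourth. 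The third follows from the first by conjugation with $\starG^V$: this is legitimate since $\dg$ and $\dgEps$ commute with $\starG^V$ and $\stargEps^V$ by \propref{prop:dg_commutes_starV}, while the Hodge relations \eqref{eq:Hodge_star_normal_tangentAA} trade $\Ptt \leftrightarrow \stargEps^V\Ptn$ and $\Ptn\leftrightarrow\stargEps^V\Ptt$, and the analogue of \eqref{eq:formulas_traceA} on $(\calP_\e,\gEps)$ gives $\stargEps^V(\frakh_\e\wedge)\stargEps^V=\pm\ihe$; writing $\psi=\pm\starG^V\starG^V\psi$ and unwinding then converts the first identity into the third, the signs reconciling through \eqref{eq:Hodge_inverse}.

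For the first identity itself, both sides depend only on $\psi|_{\calP_\e}$ — the left-hand side because $\Ptt\dg\psi$ and $\Ptn\dg\psi$ only involve covariant derivatives along $\calP_\e$ — so by linearity and the decomposition $\psi|_{\calP_\e}=\Ptt\psi+dr\wedge\Pnt\psi+(dr)^T\wedge\Ptn\psi+dr\wedge(dr)^T\wedge\Pnn\psi$ (\lemref{lem:4.2}) it suffices to verify it when $\psi$ is of pure type, i.e.\ one of $\omega$, $dr\wedge\omega$, $(dr)^T\wedge\omega$, $dr\wedge(dr)^T\wedge\omega$ with $\omega$ pulled back from $\calP_\e$. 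For the $dr\wedge\omega$ and $dr\wedge(dr)^T\wedge\omega$ types both sides vanish, since $\Ptt$ kills anything carrying a $dr$ in the form slot while $\Ptt\psi=\Ptn\psi=0$. For $(dr)^T\wedge\omega$, the Leibniz rule \eqref{eq:leibnitz_rule_vector} together with $\dg(dr)^T=\frakh$ and $\dg(dr)=0$ gives $\dg((dr)^T\wedge\omega)=\frakh\wedge\omega+(dr)^T\wedge\dg\omega$; applying $\Ptt$ and the wedge-projection rules \eqref{eq:Ptt_Ptn_Pnt_Pnn_wedge}, the second summand drops and the first yields $\Ptt\frakh\wedge\Ptt\omega=\frakh_\e\wedge\Ptn\psi$, which matches the right-hand side (whose $\dgEps\Ptt\psi$ term is zero). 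The one real case is $\psi=\omega$ tangential in both slots, where $\Ptn\psi=0$ and the claim reads $\Ptt\dg\psi=\dgEps\Ptt\psi$: here \lemref{lem:4.4}, applied with $\E=\Lambda^mT^*\M$, commutes the form-slot projection past $\dg$, and what is left is the comparison between the pulled-back connection on $\Lambda^mT^*\M|_{\calP_\e}$ and the intrinsic Levi-Civita connection on $\Lambda^mT^*\calP_\e$; by the Gauss formula \eqref{eq:connections_with_S} these agree up to a normal term that is annihilated once the vector slot is projected to $\frakt$ — an argument parallel to \lemref{lem:4.3} and \lemref{lem:4.11}.

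The main obstacle is exactly this last step: the shape operator couples the tangential and normal parts of the fibre $\Lambda^mT^*\M|_{\calP_\e}$, so the vector-slot level-set projection does not commute with the exterior covariant derivative on the nose, and one must carefully re-run the Gauss-formula computation in the vector slot — essentially re-deriving the vector-slot counterpart of \lemref{lem:4.3} — and keep precise track of the signs, verifying that after projecting both slots of $\dg\psi$ to $\frakt$ the surviving term is exactly $\dgEps\Ptt\psi$. Once the first identity is secured this way, the transposition and Hodge-duality reductions above make the remaining three a matter of bookkeeping.
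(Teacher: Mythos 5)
Your reduction of the last three identities to the first, via transposition and conjugation with $\starG^V$, is exactly the paper's approach and is correct. For the first identity you then take a genuinely different route: you slice $\psi|_{\calP_\e}$ into the four pure types $\Ptt\psi$, $dr\wedge\Pnt\psi$, $(dr)^T\wedge\Ptn\psi$, $dr\wedge(dr)^T\wedge\Pnn\psi$, and verify each slice, whereas the paper decomposes $\psi=\omega\otimes V$ into a scalar $k$-form tensor a vector-valued $0$-form and proves the identity in one pass using the Leibniz rule \eqref{eq:lebnitz_wedge}, the wedge-projection rules \eqref{eq:Ptt_Ptn_Pnt_Pnn_wedge}, and \lemref{lem:4.11}. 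Your slicing has a real pedagogical benefit: the correction term $\frakh_\e\wedge\Ptn\psi$ is traced to a single slice, and your $(dr)^T\wedge\omega$ computation via $\dg(dr)^T=\frakh$ makes it transparent where the second fundamental form enters.

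The issue is that the one non-trivial slice, the $\Ptt$-pure case, is left as an acknowledged gap, and your outline of how to close it is not quite right. \lemref{lem:4.4} moves $\Pt$ past $\dg$ in the form slot, and \lemref{lem:4.11} — which \emph{is} the "vector-slot counterpart of \lemref{lem:4.3}" you propose to re-derive; it is already in the paper, obtained by transposing the $0$-form case of \lemref{lem:4.3} — gives the Gauss statement $\Ptt\nabg V=\nabgEps\Ptt V+\frakh_\e\wedge\Ptn V$ for $V\in\Omega^{0,*}$. Neither applies directly to $\Pt\psi\in\Omega^k(\calP_\e;\Lambda^mT^*\M|_{\calP_\e})$ with $k\geq 1$: to combine the form-slot and vector-slot statements you must still split $\Pt\psi=\sum\alpha_i\otimes V_i$, apply the Leibniz rule, and then use \lemref{lem:4.11} termwise on each $V_i$. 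That Leibniz manipulation is precisely the paper's whole proof, carried out once for general $\psi$ rather than only on the tangential slice. So the case split defers rather than simplifies the substantive work, and what you call "the main obstacle" is closed by citing \lemref{lem:4.11} and supplying the same three-line Leibniz computation the paper gives — there is no new Gauss-formula bookkeeping to redo.
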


\begin{lemma}
\label{lem:4.13}
For $\psi\in\VectorFormsM$,
\beq
\begin{aligned}
\Pnt \dg\psi &= - \dgEps \Pnt\psi - \frakh_\e\wedge \Pnn\psi - \calS_\e \Ptt\psi + \Ptt \nabg_{\dr}\psi \\
\Ptn \dgV\psi &= - \dgEpsV \Ptn\psi - \frakh_\e\wedge \Pnn\psi - (\calS_\e (\Ptt\psi)^T)^T + \Ptt \nabg_{\dr}\psi \\
\Pnn\dg\psi &= -\dgEps\Pnn\psi + \ihe\Pnt \psi  - \calS_\e \Ptn\psi + \Ptn \nabg_{\dr}\psi \\
\Pnn\dgV\psi &= -\dgEpsV\Pnn\psi + \ihe^*\Ptn \psi  - (\calS_\e (\Pnt\psi)^T)^T + \Pnt \nabg_{\dr}\psi.
\end{aligned}
\label{eq:dnabla_tangent_normal} 
\eeq
\end{lemma}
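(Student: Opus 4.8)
The plan is to derive all four identities from the results already established for vector-valued forms, namely \lemref{lem:4.4}, \lemref{lem:4.5}, and \lemref{lem:4.12}, together with the transposition trick that relates the $V$-operators to the ordinary ones. I would treat the first identity, for $\Pnt\dg\psi$, as the master case, and obtain the remaining three by duality/transposition and by iterating the $\Pt/\Pn$ decomposition on the vector part.

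For the first identity: by definition $\Pnt\dg\psi = \Pn\bigl((\Pt(\dg\psi)^T)^T\bigr)$, but it is cleaner to use the alternative expression $\Pnt = (\Pt(\Pn\,\cdot\,)^T)^T$ from \eqref{eq:Ptt_et_al}. Writing $\dg$ acting on the form part, I would first apply \lemref{lem:4.5} (in its scalar-bundle incarnation, with $\E$ playing the role of the vector part $\Lambda^m T^*\M$) to get
\[
\Pn\dg\psi = -\dg\Pn\psi + \Pt\nabg_{\dr}\psi - \calS_\e\Pt\psi,
\]
where here $\dg$ is the covariant exterior derivative acting only on the form slot and $\Pt,\Pn$ act only on the form slot. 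Then I would push the remaining $\Pt$ (acting on the vector slot, i.e.\ $\Pt^V$, equivalently transposition followed by $\Pt$) through each of the three terms. The term $-\dg\Pn\psi$ becomes $-\dgEps\Pnt\psi$ after applying \lemref{lem:4.12} — but the commutator in \lemref{lem:4.12} produces an extra $\frakh_\e\wedge$ term hitting the complementary vector-normal component, which is exactly where the $-\frakh_\e\wedge\Pnn\psi$ summand comes from. The term $\Pt^V\Pt\nabg_{\dr}\psi$ is $\Ptt\nabg_{\dr}\psi$ because $\nabg_{\dr}$ commutes with all the tangential/normal projections (they are built from $\dr$, which is parallel along itself: $\nabg_{\dr}\dr=0$), so no correction appears. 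The term $-\Pt^V\calS_\e\Pt\psi = -\calS_\e\Ptt\psi$ since $\calS_\e$ acts on the form slot and commutes with vector-slot projections. Collecting the four contributions gives precisely the stated identity.

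For the second identity, I would apply the first one to $\psi^T$ and transpose, using $(\Pnt\psi)^T = \Ptn\psi^T$, $(\dg\psi^T)^T = \dgV\psi$, $(\Pnn\psi)^T=\Pnn\psi^T$, $(\calS_\e\omega)^T$ relating to the $\calS_\e$ action on the other slot, and the fact that $\frakh_\e$ is symmetric so $(\frakh_\e\wedge\,\cdot\,)^T$ plays symmetrically on the two slots. This yields the $\Ptn\dgV\psi$ line, with the $(\calS_\e(\Ptt\psi)^T)^T$ term appearing because $\calS_\e$ originally acted on what is now the vector slot. The third and fourth identities follow the same pattern but starting from the case of $\Pnn\dg\psi = \Pn^V\Pn\dg\psi$: apply \lemref{lem:4.5} to move $\dg$ past the form-$\Pn$, then \lemref{lem:4.12} (specifically the $\Pnt\dgV$ / $\Pnt\dg$ lines, reread with roles of slots swapped, or equivalently \lemref{lem:4.5} again on the vector slot) to move it past the vector-$\Pn$; the $\ihe$ term now arises from $\frakn = dr\wedge i_{\dr}$ interacting with $\dg$, i.e.\ from the part of \lemref{lem:4.5} that in the present notation is recorded as the $-\ihe$ correction in \lemref{lem:4.12}. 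For the fourth line one transposes the third.

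The main obstacle I anticipate is bookkeeping of signs and of \emph{which slot} each correction operator acts on: both $\dg$ (form slot) and $\dgV$ (vector slot) generate corrections, and when one commutes the form-slot differential past a vector-slot projection — or vice versa — the relevant lemma is \lemref{lem:4.12} applied "in the other variable", which requires care because $\Ptn\dg$ in \lemref{lem:4.12} involves $\ihe$ whereas $\Ptt\dg$ involves $\frakh_\e\wedge$. The cleanest route, and the one I would write up, is to reduce everything to \lemref{lem:4.5} applied twice (once per slot) with the intermediate term handled by \lemref{lem:4.4}/\lemref{lem:4.12}, and to verify the signs on scalar functions and on decomposable double forms $\omega\otimes\theta$ as a consistency check rather than carrying symbolic indices throughout.
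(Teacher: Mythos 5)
Your approach for the first identity matches the paper exactly: write $\Pnt\dg\psi=(\Pt(\Pn\dg\psi)^T)^T$, apply Lemma~\ref{lem:4.5} to the form slot, and then clear up the remaining term $(\Pt(\dg\Pn\psi)^T)^T$. Where your sketch is imprecise is in citing Lemma~\ref{lem:4.12} for that last step: Lemma~\ref{lem:4.12} gives formulas for $\Ptt\dg\psi$ etc.\ applied to a double form on $U$, whereas here you need to push the vector-slot projection $\Pt^V$ past $\dg$ acting on an object whose form slot has already been projected by $\Pn$. The paper does this by hand: it decomposes $\psi=\omega\otimes V$, expands $\dg(\Pn\omega\otimes V)$ by Leibniz, and invokes Lemma~\ref{lem:4.11} (not \ref{lem:4.12}) on $\Ptt\nabg V$ to produce the $\frakh_\e\wedge\Pnn\psi$ correction. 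Since Lemma~\ref{lem:4.12} is itself proved by the same decomposition and Lemma~\ref{lem:4.11}, the mechanism you invoke is the right one, but the lemma reference does not directly apply and you would have to redo the Leibniz-and-project computation.

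For the remaining three identities the paper and your proposal diverge. You obtain the second from the first by transposition, same as the paper. But for the third you propose iterating the $\Pt/\Pn$ decomposition on both slots directly and extracting the $\ihe\Pnt\psi$ term from ``the $-\ihe$ correction in Lemma~\ref{lem:4.12} reread with the slots swapped.'' The paper instead substitutes $\psi\mapsto\starG^V\psi$ into the first (resp.\ second) identity and uses Proposition~\ref{prop:dg_commutes_starV}, Eq.~\eqref{eq:Hodge_star_normal_tangentAA}, and the definition~\eqref{eq:formulas_traceA} of $\ihe$ as the $\stargEps^V$-conjugate of $\frakh_\e\wedge$. The Hodge-dual route is a genuine shortcut here: it converts $\frakh_\e\wedge$ to $\ihe$ and converts the projection types mechanically, with no new Leibniz bookkeeping and no need for a vector-slot analogue of Lemma~\ref{lem:4.5}. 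Your proposed direct re-derivation is in principle correct but carries exactly the sign and slot bookkeeping risk you flag yourself; if you pursue it, you should expect to reprove a vector-slot version of Lemma~\ref{lem:4.11} for $\Ptn\nabg V$, and you should note that transposition of the third (your route to the fourth) needs $(\ihe\phi)^T=\ihe^*\phi^T$ from Lemma~\ref{lem:3.5}, which you did not cite.
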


The following commutation relations follow by substituting $\psi\mapsto \starG\psi$ or $\psi\mapsto \starG^V\psi$ in \eqref{eq:dnabla_pure_tangent} and \eqref{eq:dnabla_tangent_normal}:

\begin{lemma}
For $\psi\in\VectorFormsM$,
\beq
\begin{aligned}
\Pnn \deltag\psi &=-\deltagEps\Pnn\psi - \trhe \Pnt\psi \\
\Pnn \deltagV\psi &=-\deltagEpsV\Pnn\psi - \trhe \Ptn\psi \\
\Pnt \deltag\psi &= -\deltagEps\Pnt\psi +  \ihe^*  \Pnn \psi  \\
\Ptn \deltagV\psi &= -\deltagEpsV\Ptn\psi +  \ihe  \Pnn \psi \\
\Ptn\deltag\psi &= \deltagEps\Ptn\psi + \trhe\Ptt\psi + \calS_\e^* \Pnn \psi  - \Pnn\nabg_{\dr}\psi \\
\Pnt\deltagV\psi &= \deltagEpsV\Pnt\psi + \trhe\Ptt\psi + (\calS_\e^* (\Pnn \psi)^T)^T  - \Pnn\nabg_{\dr}\psi \\
\Ptt\deltag\psi &= \deltagEps\Ptt\psi - \ihe^* \Ptn\psi  + \calS^*_\e  \Pnt\psi -  \Pnt\nabg_{\dr}\psi \\
\Ptt\deltagV\psi &= \deltagEpsV\Ptt\psi - \ihe \Pnt\psi  + (\calS^*_\e  (\Ptn\psi)^T)^T -  \Ptn\nabg_{\dr}\psi.
\end{aligned}
\label{eq:delta_pure_normal} 
\eeq
\end{lemma}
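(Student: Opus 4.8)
The proof is a systematic application of the intertwining of the codifferentials with the Hodge duals, combined with the commutation relations for $\dg$ and $\dgV$ already established in Lemmas \ref{lem:4.12} and \ref{lem:4.13}, and with the behaviour of the level-set projections under the Hodge stars recorded in \eqref{eq:Hodge_star_normal_tangentAA}. The plan is to start from the representation $\deltag\psi = (-1)^{dk+d+1}\starG\dg\starG\psi$ for $\psi\in\Omega^{k,m}(\M)$ (equation \eqref{eq:formula_delta_nabla} with $\E=\Lambda^mT^*\M$) and its transposed counterpart $\deltagV\psi = (-1)^{dm+d+1}\starG^V\dgV\starG^V\psi$. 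To obtain a relation for a given level-set projection of $\deltag\psi$, I apply that projection and push it through the outer $\starG$: by \eqref{eq:Hodge_star_normal_tangentAA} this converts it into $\pm\stargEps$ times a complementary projection (swapping the tangential and normal label of the form part). Then I rewrite the resulting projection of $\dg\starG\psi$ using the appropriate line of \eqref{eq:dnabla_pure_tangent} or \eqref{eq:dnabla_tangent_normal}, now applied to the $(d-k,m)$-form $\starG\psi$. Finally I push the remaining inner $\starG$ back across the projections of $\starG\psi$, once more by \eqref{eq:Hodge_star_normal_tangentAA}, re-expressing everything in terms of the four projections of $\psi$ itself. The eight identities in \eqref{eq:delta_pure_normal} split into the four $\deltag$-relations, obtained from the $\starG$-representation together with Lemmas \ref{lem:4.12}--\ref{lem:4.13}, and the four $\deltagV$-relations, obtained from the $\starG^V$-representation together with the transposed versions of the same lemmas.

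In the last step several families of operators get conjugated by $\stargEps$ (or $\stargEps^V$), and each conjugate is identified by a result already proved: the tangential differential produces $\stargEps\dgEps\stargEps=\pm\deltagEps$ (and $\stargEps^V\dgEpsV\stargEps^V=\pm\deltagEpsV$) by \eqref{eq:formula_delta_nabla} on $(\calP_\e,\gEps)$; the wedge operator $(\frakh_\e\wedge)$ and the contractions $\ihe$, $\ihe^*$, $\trhe$ are exchanged among themselves via \defref{def:3.2} applied to $A=\frakh_\e$ on $(\calP_\e,\gEps)$, using that $\stargEps$ and $\stargEps^V$ commute; the shape-operator endomorphism $\calS_\e$ is carried to $\calS_\e^*$ by its definition in \lemref{lem:4.5}; and since $\nabg_{\dr}$ commutes with $\starG$ and $\starG^V$, a term $\calS_\e$ or $\nabg_{\dr}$ hitting $\starG\psi$ simply becomes $\stargEps$ times the corresponding object applied to $\psi$. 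Collecting these substitutions term by term yields exactly the right-hand sides of \eqref{eq:delta_pure_normal}.

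The only real difficulty is the sign bookkeeping. Passing to $\starG\psi$ changes the form degrees ($(k,m)\rightsquigarrow(d-k,m)$, and one more degree after applying $\dg$), so every exponent in \eqref{eq:Hodge_star_normal_tangentAA}, in the involution rule \eqref{eq:Hodge_inverse}, and in the codifferential formula must be evaluated at the shifted degree; one must also keep track of the dimension drop $d\to d-1$ when working on $\calP_\e$. I expect these signs to cancel cleanly --- precisely as the statement asserts --- but checking the cancellation is the bulk of the work. Since the computation is purely mechanical and essentially identical across the eight cases, I would carry it out in full for one representative relation, say $\Pnn\deltag\psi=-\deltagEps\Pnn\psi-\trhe\Pnt\psi$ (which runs through $\Pnn\starG$, line three of \eqref{eq:dnabla_pure_tangent}, and $\Ptt\starG$, $\Ptn\starG$), and then note that the remaining seven follow \emph{mutatis mutandis}, deferring the exhaustive case-by-case verification to the appendix.
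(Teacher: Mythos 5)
Your proposal correctly unpacks the paper's one-line justification, which is exactly the remark immediately preceding the lemma: the relations are obtained by substituting $\psi\mapsto\starG\psi$ or $\psi\mapsto\starG^V\psi$ into \eqref{eq:dnabla_pure_tangent} and \eqref{eq:dnabla_tangent_normal}, pushing the level-set projections through the Hodge stars via \eqref{eq:Hodge_star_normal_tangentAA}, and identifying the conjugated operators $\stargEps\dgEps\stargEps$, $\stargEps(\frakh_\e\wedge)\stargEps$, $\stargEps\calS_\e\stargEps$, and so on by the duality formulas already in place. The strategy, the lemmas invoked, and the term-by-term identifications all coincide with the paper's route.
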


\subsection{Commutation between level-set projections and second-order operators}
\label{sec:commutation relations}

We consider next the commutators of the second-order differential operators $\Hg,\Hg^*,\Fg^*,\Fg$, and the level-set projection operators $\Ptt,\Ptn,\Pnt,\Pnn$.

\begin{lemma}
\label{lem:4.15}
Let $\psi\in\VectorFormsU$. Then,
\beq
\begin{aligned}
& \Ptt \Hg\psi = H_{\gEps}\Ptt \psi + \frakh_\e \wedge\frakT_\e\psi  + \frakR_\g \psi \\
& \Pnn \Hg^*\psi = H_{\gEps}^*\Pnn \psi + \trhe \frakT_\e^*\psi
+ (-1)^{dk + dm} \stargEps\stargEps^V \frakR_\g \stargEps^V\stargEps \psi \\
& \Pnt \Fg^*\psi = -F_{\gEps}\Pnt \psi +  \ihe^* \frakF^*_\e \psi
+ (-1)^{dk} \stargEps \frakR_\g \stargEps \psi \\
& \Ptn \Fg\psi = -F_{\gEps}^*\Ptn \psi + \ihe \frakF_{\e}\psi
+ (-1)^{dm} \stargEps^V \frakR_\g \stargEps^V \psi,
\end{aligned}
\label{eq:hessian_and_tangent_parts} 
\eeq
where the commutators 
\[
\begin{aligned}
&\frakT_\e : \Omega^{k,m}(U)\to\Omega^{k,m}(\calP_\e) 
&\qquad
&\frakT_\e^* : \Omega^{k,m}(U)\to\Omega^{k-1,m-1}(\calP_\e) \\
&\frakF^*_\e : \Omega^{k,m}(U)\to\Omega^{k-1,m}(\calP_\e) 
&\qquad
&\frakF_{\e} : \Omega^{k,m}(U)\to\Omega^{k,m-1}(\calP_\e),
\end{aligned}
\]
are the first-order differential operators
\beq
\begin{aligned}
\frakT_\e\psi &= \smallhalf \brk{\Pnt \dg\psi-\dgEps \Pnt\psi}+\smallhalf \brk{\Ptn\dgV\psi- \dgEpsV \Ptn\psi} \\
\frakT_\e^*\psi &=  -\smallhalf\brk{ \Ptn\deltag\psi+\deltagEps \Ptn\psi} - \smallhalf\brk{\Pnt\deltagV\psi+\deltagEpsV\Pnt\psi} \\
\frakF^*_\e\psi &= \smallhalf\brk{\Pnn \dgV\psi- \dgEpsV \Pnn \psi} -\smallhalf\brk{\Ptt \deltag \psi+\deltagEps \Ptt  \psi} \\
\frakF_{\e}\psi &= \smallhalf\brk{\Pnn \dg\psi-  \dgEps \Pnn \psi} -\smallhalf\brk{\Ptt\deltagV \psi+ \deltagEpsV \Ptt  \psi},
\end{aligned}
\label{eq:T_commutator} 
\eeq
and 
\[
\frakR_\g \psi = \smallhalf\brk{\Pnt\Rm_\g \wedge \Ptn\psi + \Ptn\Rm_\g \wedge \Pnt\psi}.
\]
Moreover,
\beq
\begin{split}
& \frakT^*_\e= (-1)^{dk+dm + k +m} \stargEps\stargEps^{V}\frakT_\e\starG\starG^{V} \\
& \frakF^*_\e=(-1)^{dk+k} \stargEps\frakT_\e\starG\\
& \frakF_\e= (-1)^{dm + m} \stargEps^{V}\frakT_\e\starG^{V}.
\end{split}
\label{eq:second_order_boundary_operators_duality}
\eeq
\end{lemma}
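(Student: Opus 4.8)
The plan is to establish the first line of \eqref{eq:hessian_and_tangent_parts} by a direct computation, and then to deduce the remaining three lines, together with the duality relations \eqref{eq:second_order_boundary_operators_duality}, by conjugating with the Hodge duals $\starG$ and $\starG^V$.

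For the first line I would write $\Hg = \smallhalf(\dgV\dg + \dg\dgV)$ and apply $\Ptt$ to each summand. In $\Ptt\dgV\dg\psi$ I would first peel off the outer $\dgV$ with the second identity of \lemref{lem:4.12}, $\Ptt\dgV\eta = \dgEpsV\Ptt\eta + \frakh_\e\wedge\Pnt\eta$ taking $\eta = \dg\psi$, and then substitute $\Ptt\dg\psi$ from the first identity of \lemref{lem:4.12} and $\Pnt\dg\psi$ from the first identity of \lemref{lem:4.13}; the summand $\Ptt\dg\dgV\psi$ is handled by the mirror argument, peeling the outer $\dg$ with the first identity of \lemref{lem:4.12} and then invoking the second identities of \lemref{lem:4.12} and \lemref{lem:4.13}. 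Expanding $\dgEpsV\bigl(\dgEps\Ptt\psi + \frakh_\e\wedge\Ptn\psi\bigr)$ and its mirror with the Leibniz rule \eqref{eq:leibnitz_rule_vector}, the purely tangential contributions add up to $\smallhalf(\dgEpsV\dgEps + \dgEps\dgEpsV)\Ptt\psi = H_{\gEps}\Ptt\psi$. It then remains to sort the lower-order terms: those that carry a factor $\frakh_\e\wedge$ in front of a first-order expression in $\psi$ — namely $-\frakh_\e\wedge\dgEps\Pnt\psi$, $-\frakh_\e\wedge\dgEpsV\Ptn\psi$, $\frakh_\e\wedge\Ptt\nabg_{\dr}\psi$, the $\calS_\e$-terms, and $-\frakh_\e\wedge\frakh_\e\wedge\Pnn\psi$ — combine, on comparison with \eqref{eq:T_commutator}, into $\frakh_\e\wedge\frakT_\e\psi$, while the genuinely zeroth-order leftovers are precisely the terms $(\dgEpsV\frakh_\e)\wedge\Ptn\psi$ and $(\dgEps\frakh_\e)\wedge\Pnt\psi$ produced by differentiating $\frakh_\e$, which the Codazzi equation \eqref{eq:Codazzi_equation2} (and its transpose, using $(\Ptn\psi)^T = \Pnt\psi^T$ and $\Rm_\g^T = \Rm_\g$, so that $\dgEps\frakh_\e = -\Ptn\Rm_\g$ and $\dgEpsV\frakh_\e = -\Pnt\Rm_\g$) rewrites so that after the overall factor $\smallhalf$ they assemble into $\frakR_\g\psi$.

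For the duality relations \eqref{eq:second_order_boundary_operators_duality}, I would rewrite the co-differentials in the defining formulas \eqref{eq:T_commutator} of $\frakT_\e^*$, $\frakF_\e^*$, $\frakF_\e$ in terms of exterior derivatives via \eqref{eq:formula_delta_nabla} and its vector-part analogue — so that $\dg\starG = \pm\starG\deltag$ while, by \propref{prop:dg_commutes_starV}, $\dgV\starG = \starG\dgV$ and $\dg\starG^V = \starG^V\dg$, and likewise on $\calP_\e$ — and then push each $\starG$, $\starG^V$ past the boundary projections using \eqref{eq:Hodge_star_normal_tangentAA}. The four building blocks $\Pnt\dg\psi-\dgEps\Pnt\psi$ and $\Ptn\dgV\psi-\dgEpsV\Ptn\psi$ of $\frakT_\e$ then reappear, conjugated, inside the formulas for $\frakF_\e^*$, $\frakF_\e$ and $\frakT_\e^*$, and matching signs yields $\frakT_\e^* = (-1)^{dk+dm+k+m}\stargEps\stargEps^V\frakT_\e\starG\starG^V$, $\frakF_\e^* = (-1)^{dk+k}\stargEps\frakT_\e\starG$ and $\frakF_\e = (-1)^{dm+m}\stargEps^V\frakT_\e\starG^V$.

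Finally, to obtain the second line of \eqref{eq:hessian_and_tangent_parts} I would apply the first line to $\starG\starG^V\psi$ and conjugate by $\starG\starG^V$: by \propref{prop:duals_stars} and \eqref{eq:Hodge_star_normal_tangentAA} the left-hand side becomes $\Pnn\Hg^*\psi$ up to the sign $(-1)^{dk+dm}$; the tangential term becomes $H_{\gEps}^*\Pnn\psi$ by the version of \propref{prop:duals_stars} on $(\calP_\e,\gEps)$; the factor $\stargEps\stargEps^V(\frakh_\e\wedge\,\cdot\,)$ becomes $\trhe$ by the instance of \eqref{eq:formulas_traceA} on $\calP_\e$ (\defref{def:3.2}); and $\stargEps\stargEps^V\frakT_\e\starG\starG^V = \pm\frakT_\e^*$ by the relation just proved, while the curvature term is simply recorded in the conjugated form $(-1)^{dk+dm}\stargEps\stargEps^V\frakR_\g\stargEps^V\stargEps\psi$. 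Conjugating instead with $\starG$ alone, respectively $\starG^V$ alone, gives the third and fourth lines, with $F_{\gEps}$, $\ihe^*$, $\frakF_\e^*$, respectively $F_{\gEps}^*$, $\ihe$, $\frakF_\e$, in the appropriate slots. The main obstacle is purely the bookkeeping in the first line: carrying the $(-1)^k$, $(-1)^m$ signs from the two Leibniz applications and checking that, once the $\frakh_\e\wedge\frakT_\e\psi$ block is extracted, the Codazzi-generated curvature contributions are exactly $\frakR_\g\psi$ with nothing left over — everything in the last two steps is then routine Hodge-dual manipulation.
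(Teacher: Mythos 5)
Your plan matches the paper's proof: the paper likewise peels the outer $\dgV$ with the second line of \lemref{lem:4.12}, expands $\Ptt\dg\psi$ with the first line of \lemref{lem:4.12}, applies the Leibniz rule \eqref{eq:leibnitz_rule_vector} to $\dgEpsV\brk{\frakh_\e\wedge\Ptn\psi}$ and invokes the Codazzi equation \eqref{eq:Codazzi_equation2}, obtains the $\dg\dgV$ half by the substitution $\psi\mapsto\psi^T$ (equivalent to your ``mirror'' computation), and then derives the remaining three lines of \eqref{eq:hessian_and_tangent_parts} together with \eqref{eq:second_order_boundary_operators_duality} by the $\starG$, $\starG^V$, $\starG\starG^V$ conjugations. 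The one superfluous step you propose is expanding $\Pnt\dg\psi$ and $\Ptn\dgV\psi$ via \lemref{lem:4.13}: since $\frakT_\e$ is \emph{defined} in \eqref{eq:T_commutator} as $\smallhalf\brk{\Pnt\dg\psi-\dgEps\Pnt\psi}+\smallhalf\brk{\Ptn\dgV\psi-\dgEpsV\Ptn\psi}$, the paper leaves those terms unexpanded and the block $\frakh_\e\wedge\frakT_\e\psi$ falls out directly, whereas your route expands them only to have to re-substitute \lemref{lem:4.13} inside \eqref{eq:T_commutator} when matching.
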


\subsection{Boundary integrals}
\label{sec:integral_theroy} 

We complete the graded algebra $\Omega^{*,*}_{c}(\M)$ in Sobolev norms to obtain the graded Banach spaces $W^{s,p}\VectorFormsM$. The Sobolev theory for vector-valued forms holds verbatim for $W^{s,p}\VectorFormsM$, as $\Omega^{k,m}_{c}(\M)=\Gamma_{c}(\Lambda^{k,m}T^*\M)$. In particular, the embedding and trace theorems are valid. Thus, $\Hg,\Fg^*$ and their duals all extend naturally to second-order differential graded operators of type $W^{s,p}\VectorFormsM\to W^{s-2,p}\VectorFormsM$.

Henceforth, $\PttD$, $\PtnD$, $\PntD$ and $\PnnD$ and $\frakT$, $\frakT^*$, $\frakF^*$ and $\frakF$ denote the boundary operators corresponding to $\calP_0=\dM$. We denote the inclusion by $j = j_0:\dM\to\M$, and by $\gD = j^\star\g$ the induced metric at the boundary. In view of our choice of $\calN = -\partial_r|_{\dM}$,  Green's formula \eqref{eq:green's_formula} for double forms can be written as
\beq
\begin{split}
\bra\dg\psi,\eta\ket &= \bra\psi,\deltag\eta\ket - \int_{\dM} (\PtD\psi,\PnD\eta)_{\gD,\g}\,\VolumeD \\
&= \bra\psi,\deltag\eta\ket - \int_{\dM}\Brk{(\PttD\psi,\PntD\eta)_\gD + (\PtnD\psi,\PnnD\eta)_\gD}\,\VolumeD.
\end{split}
\label{eq:Green2}
\eeq
Replacing $\psi$ and $\eta$ by their transpose,
\beq
\bra\dgV\psi,\eta\ket= \bra\psi,\deltagV\eta\ket - \int_{\dM}\Brk{(\PttD\psi,\PtnD\eta)_\gD +
(\PntD\psi,\PnnD\eta)_\gD}\,\VolumeD.
\label{eq:Green3}
\eeq

\begin{lemma}
\label{lem:4.16}
Let  $\psi\in W^{2,p}\VectorFormsM$ and $\eta\in W^{2,q}\VectorFormsM$ with $1/p+1/q=1$. Then,
\beq
\begin{aligned}
\bra \Hg\psi, \eta\ket &= \bra \psi, \Hg^*\eta\ket +  \int_{\dM}\Brk{(\PttD\psi,\frakT^* \eta)_\gD - (\frakT\psi,\PnnD \eta)_\gD}\VolumeD \\
\bra \Fg\psi, \eta\ket &= \bra \psi, \Fg^*\eta\ket + \int_{\dM}\Brk{(\PtnD\psi,\frakF^*\eta)_\gD-(\frakF\psi,\PntD\eta)_\gD}\VolumeD .
\end{aligned}
\label{eq:H_general_integral}
\eeq
\end{lemma}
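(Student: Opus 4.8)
The plan is to prove the first identity by expanding $\Hg=\tfrac12(\dgV\dg+\dg\dgV)$, applying the double-form Green formulas \eqref{eq:Green2} and \eqref{eq:Green3} twice to each summand so as to transfer both covariant derivatives onto $\eta$, and then matching the resulting boundary integral against the explicit expressions for $\frakT=\frakT_0$ and $\frakT^*=\frakT_0^*$ from \lemref{lem:4.15}. Concretely: in $\bra\dgV\dg\psi,\eta\ket$ first apply \eqref{eq:Green3} to move $\dgV$ off $\dg\psi$ (producing $\bra\dg\psi,\deltagV\eta\ket$ plus a boundary integral), then \eqref{eq:Green2} to move $\dg$ off $\psi$ onto $\deltagV\eta$; in $\bra\dg\dgV\psi,\eta\ket$ do the analogous thing with the roles of $\dg,\dgV$ and of \eqref{eq:Green2},\eqref{eq:Green3} interchanged. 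The hypothesis $\psi\in W^{2,p}\VectorFormsM$, $\eta\in W^{2,q}\VectorFormsM$ is exactly what licenses these two rounds of Green's formula. Summing and using $\Hg^*=\tfrac12(\deltag\deltagV+\deltagV\deltag)$, the interior terms collapse to $\bra\psi,\Hg^*\eta\ket$ and one is left with
\[
\bra\Hg\psi,\eta\ket-\bra\psi,\Hg^*\eta\ket=-\tfrac12\int_{\dM}\Sigma\,\VolumeD,
\]
where $\Sigma$ is an explicit sum of eight pointwise pairings of $\PttD,\PntD,\PtnD,\PnnD$ applied to $\psi,\dg\psi,\dgV\psi$ against the same projections of $\eta,\deltag\eta,\deltagV\eta$.

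It then remains to compute $\int_{\dM}\bigl[(\PttD\psi,\frakT^*\eta)_\gD-(\frakT\psi,\PnnD\eta)_\gD\bigr]\VolumeD$ directly from the definitions \eqref{eq:T_commutator} taken at $\e=0$ and to check that it equals $-\tfrac12\int_{\dM}\Sigma\,\VolumeD$. Since $\dM$ is a closed manifold, $\deltagD$ and $\deltagDV$ are genuine $L^2(\dM)$-adjoints of $\dgD$ and $\dgDV$; this lets me integrate by parts on $\dM$ the two terms $\deltagD\PtnD\eta$, $\deltagDV\PntD\eta$ occurring in $\frakT^*\eta$ and the two terms $\dgD\PntD\psi$, $\dgDV\PtnD\psi$ occurring in $\frakT\psi$. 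I then invoke the $\e=0$ cases of the tangential commutation relations: from \lemref{lem:4.12}, $\dgD\PttD\psi=\PttD\dg\psi-\frakh_0\wedge\PtnD\psi$ and $\dgDV\PttD\psi=\PttD\dgV\psi-\frakh_0\wedge\PntD\psi$; and from \eqref{eq:delta_pure_normal}, $\deltagD\PnnD\eta=-\PnnD\deltag\eta-\trace_{\frakh_0}\PntD\eta$ and $\deltagDV\PnnD\eta=-\PnnD\deltagV\eta-\trace_{\frakh_0}\PtnD\eta$, where $\frakh_0$ is the scalar second fundamental form of $\dM$. After these substitutions the eight genuine boundary pairings reproduce $-\tfrac12\Sigma$ term by term, while the leftover algebraic pieces cancel in pairs by the duality $(\frakh_0\wedge a,b)_\gD=(a,\trace_{\frakh_0}b)_\gD$ from \lemref{lem:3.4}. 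This gives the first formula.

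The second formula follows by the same recipe applied to $\Fg=\tfrac12(\dg\deltagV+\deltagV\dg)$, $\Fg^*=\tfrac12(\dgV\deltag+\deltag\dgV)$, now using the $\e=0$ definitions of $\frakF_0,\frakF_0^*$ in \eqref{eq:T_commutator}; alternatively one substitutes $\psi\mapsto\starG^V\psi$, $\eta\mapsto\starG^V\eta$ into the first formula and uses $\Fg=(-1)^{dm+d+1}\starG^V\Hg\starG^V$ from \propref{prop:duals_stars}, the isometry of $\starG^V$, the Hodge-dual identities \eqref{eq:Hodge_star_normal_tangentAA}, and $\frakF_\e=(-1)^{dm+m}\stargEps^V\frakT_\e\starG^V$ from \eqref{eq:second_order_boundary_operators_duality}. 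The main obstacle is the bookkeeping in the middle step: the eight boundary pairings produced by the iterated Green's formula must be matched exactly against the asymmetrically weighted expressions defining $\frakT$ and $\frakT^*$, and one must verify that every $\frakh_0$- and $\trace_{\frakh_0}$-term introduced by the tangential commutation relations is killed by the duality of \lemref{lem:3.4}. Everything else is routine once the order of integration by parts is fixed and the $\e=0$ specializations of the earlier commutation lemmas are recorded.
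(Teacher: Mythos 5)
Your proposal is correct and follows essentially the same route as the paper: two applications of the Green formulas \eqref{eq:Green2}--\eqref{eq:Green3}, followed by integration by parts on the closed manifold $\dM$ together with the $\e=0$ commutation relations \eqref{eq:dnabla_pure_tangent} and \eqref{eq:delta_pure_normal}, with the second-fundamental-form terms cancelling via the duality of \lemref{lem:3.4}, and the second identity obtained by Hodge duality. The only cosmetic difference is that the paper carries out the computation for $\dgV\dg$ alone and recovers the $\dg\dgV$ contribution by transposing $\psi$ and $\eta$, whereas you treat the two summands symmetrically; these are equivalent.
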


Assume graded tensorial operations $D_\g:\Lambda^{k,m}T^*\M\rightarrow \Lambda^{k+1,m+1}T^*\M$ and $S_\g:\Lambda^{k,m}T^*\M\rightarrow \Lambda^{k+1,m-1}T^*\M$ satisfying
\[
D_\g\psi^T=(D_\g\psi)^T 
\Textand 
S_\g\psi=(S_\g^*\psi^T)^T,
\]
where $D_\g^*:\Lambda^{k,m}T^*\M\rightarrow \Lambda^{k-1,m-1}T^*\M$ and $S_\g^*:\Lambda^{k,m}T^*\M\rightarrow \Lambda^{k-1,m+1}T^*\M$ are their metric duals. 

The introduction of the tensorial operators $D_\g$ and $S_\g$ allows us to analyze a larger family of second-order graded differential operators.  
We introduce the operators $\bHg:\Omega^{k,m}(\M)\rightarrow\Omega^{k+1,m+1}(\M)$ and  $\bFg:\Omega^{k,m}(\M)\rightarrow\Omega^{k+1,m-1}(\M)$, along with their $L^2$-duals,
\beq
\begin{gathered}
\bHg=\Hg+  D_\g \qquad\qquad \bHg^*=\Hg^*+  D_\g^* \\
\bFg^*=\Fg^*+  S_\g^* \qquad\qquad \bFg=\Fg+  S_\g.
\end{gathered}
\label{eq:extended_F_H}
\eeq

Note that $\bHg$ and $\bHg^*$ commute with transposition and $\bFg=(\bFg^*(\cdot)^T)^T$. 
As tensorial operators do not yield boundary terms in integration by parts, formulas \eqref{eq:H_general_integral} turn into
\beq
\begin{aligned}
\bra \bHg\psi,\eta\ket &=
\bra \psi,\bHg^*\eta\ket+\int_{\dM}\Brk{(\PttD\psi,\frakT^*\eta)_{\gD}-(\frakT\psi,\PnnD\eta)_{\gD}}\VolumeD \\
\bra \bFg\psi,\eta\ket &=
\bra \psi,\bFg^*\eta\ket+\int_{\dM}\Brk{(\PtnD\psi,\frakF^*\eta)_{\gD}-(\frakF\psi,\PntD\eta)_{\gD}}\VolumeD.
\end{aligned}
\label{eq:integration_by_parts_F_H_extended}
\eeq

To shorten notations, we adopt the following convention: let $\{T_1,\dots,T_n\}$ be a set of linear operators on a linear space; then
\[
\ker(T_1,\dots,T_n) = \bigcap_{i=1}^n \ker T_i.
\]

As a direct consequence of the integration by parts formula:

\begin{corollary}
\label{cor:integration_by_parts_second_order}
Let $\psi\in W^{2,p}\VectorFormsM$ and $\eta\in W^{2,q}\VectorFormsM$ (the type is implied in each context) with $1/p+1/q=1$. Then,
\[
\begin{aligned}
&\psi \in \ker(\PttD,\frakT)
\qquad &\text{implies}& \qquad 
&\bra \bHg\psi, \eta\ket= \bra\psi,\bHg^*\eta\ket \\
&\psi \in \ker(\PnnD,\frakT^*)
\qquad &\text{implies}& \qquad 
&\bra \bHg^*\psi, \eta\ket = \bra\psi,\bHg\eta\ket \\
&\psi \in \ker(\PtnD,\frakF)
\qquad &\text{implies}& \qquad 
&\bra \bFg\psi, \eta\ket = \bra\psi,\bFg^*\eta\ket \\
&\psi \in \ker(\PntD,\frakF^*)
\qquad &\text{implies}& \qquad 
&\bra \bFg^*\psi, \eta\ket= \bra\psi,\bFg\eta\ket.
\end{aligned}
\]
\end{corollary}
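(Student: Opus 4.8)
The plan is to read off all four implications directly from the integration-by-parts identities \eqref{eq:integration_by_parts_F_H_extended}, the only inputs being the symmetry of the real $L^2$-pairing $\bra\cdot,\cdot\ket$ on $\VectorFormsM$ and the fact that the regularity hypothesis $1/p+1/q=1$ is symmetric in $p$ and $q$.

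First I would treat the first implication: taking the first line of \eqref{eq:integration_by_parts_F_H_extended} with the given $\psi,\eta$, the assumption $\psi\in\ker(\PttD,\frakT)$ forces $\PttD\psi=0$ and $\frakT\psi=0$, so the boundary integral vanishes identically and one is left with $\bra\bHg\psi,\eta\ket=\bra\psi,\bHg^*\eta\ket$. The third implication is proved verbatim using instead the second line of \eqref{eq:integration_by_parts_F_H_extended} together with the vanishing of $\PtnD\psi$ and $\frakF\psi$.

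For the second implication I would apply the first line of \eqref{eq:integration_by_parts_F_H_extended} with the roles of $\psi$ and $\eta$ interchanged, which is legitimate since the constraint $1/q+1/p=1$ is symmetric and both $\psi,\eta$ lie in the relevant $W^{2,\cdot}$ spaces; this gives
\[
\bra\bHg\eta,\psi\ket=\bra\eta,\bHg^*\psi\ket+\int_{\dM}\Brk{(\PttD\eta,\frakT^*\psi)_{\gD}-(\frakT\eta,\PnnD\psi)_{\gD}}\VolumeD.
\]
The hypothesis $\psi\in\ker(\PnnD,\frakT^*)$ annihilates both boundary contributions, and symmetry of the $L^2$-pairing then yields $\bra\bHg^*\psi,\eta\ket=\bra\eta,\bHg^*\psi\ket=\bra\bHg\eta,\psi\ket=\bra\psi,\bHg\eta\ket$. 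The fourth implication follows identically from the second line of \eqref{eq:integration_by_parts_F_H_extended} together with $\psi\in\ker(\PntD,\frakF^*)$.

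There is no genuine obstacle here; the corollary is pure bookkeeping on top of \eqref{eq:integration_by_parts_F_H_extended}. The single point that deserves a word is the swap of $\psi$ and $\eta$ used in the second and fourth implications, which is harmless precisely because $\bHg$ and $\bHg^*$ (resp. $\bFg$ and $\bFg^*$) are mutual $L^2$-adjoints, $\bra\cdot,\cdot\ket$ is symmetric, and the exponent constraint is symmetric in $p$ and $q$.
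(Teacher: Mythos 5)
Your proposal is correct and follows exactly the route the paper intends: the corollary is stated as a direct consequence of \eqref{eq:integration_by_parts_F_H_extended}, with the boundary integrals vanishing under the stated kernel conditions, and the second and fourth cases obtained by interchanging $\psi$ and $\eta$ (legitimate since $1/p+1/q=1$ is symmetric and the pairing is symmetric). Nothing further is needed.
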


\section{Elliptic theory of double bilaplacians}
\label{sec:elliptic_theory}

We introduce the fourth-order linear differential operator
\[
\Bg = \bHg \bHg^* + \bHg^* \bHg + \bFg^* \bFg + \bFg \bFg^*,
\]
which we a double bilaplacian, in analogy with the Hodge laplacian $\Delta = d\delta + \delta d = d d^* + d^* d$.
In this section we show that the differential equation $\Bg\psi = \chi$ equipped with several sets of boundary conditions is regular elliptic.

\subsection{Extension of boundary data}
\label{subsec:extension}

On several occasions we will need to extend boundary data to a neighborhood of the boundary. 

\begin{lemma}
\label{lemma:prescribe_boundary_conditions}
Let $\phi_i,\rho_i\in \VectorFormsdM$, $i=1,\dots,4$ (the type is implied by the context). Then, there exists a vector-valued form $\lambda\in\VectorFormsM$ satisfying
\[
\begin{gathered}
(\PttD,\PntD,\PtnD,\PnnD)\lambda = (\phi_1,\phi_2,\phi_3,\phi_4) \\
(\frakT,\frakF^*,\frakF,\frakT^*)\lambda = (\rho_1,\rho_2,\rho_3,\rho_4).
\end{gathered}
\]
Moreover, if $\phi_i\in W^{s-1/p,p}\VectorFormsdM$ and $\rho_i\in W^{s-1-1/p,p}\VectorFormsdM$, 
then $\lambda$ can be chosen such that
\beq
\|\lambda\|_{W^{s,p}(\M)} \lesssim \sum_{i=1}^4 \brk{ \|\phi_i\|_{W^{s-1/p,p}(\dM)} +
\|\rho_i\|_{W^{s-1-1/p,p}(\dM)} }.
\label{eq:estimate_1st_bc}
\eeq
\end{lemma}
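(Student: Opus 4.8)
The plan is to reduce the statement to two facts about $1$-jets of double forms along $\dM$. First, that an arbitrary $1$-jet along $\dM$ is the $1$-jet of some $\lambda\in W^{s,p}\VectorFormsKM$ obeying the asserted bound — this is the trace theorem for $W^{s,p}$. Second, that the eight boundary quantities $(\PttD\psi,\PntD\psi,\PtnD\psi,\PnnD\psi,\frakT\psi,\frakF^*\psi,\frakF\psi,\frakT^*\psi)$ depend on $\psi$ only through its $1$-jet along $\dM$, through an invertible \emph{triangular} transformation which, together with its inverse, is continuous in the relevant norms. Granting these, the conclusion is immediate: given data $(\phi_i,\rho_i)$, invert the transformation to read off a target $1$-jet along $\dM$ — its $0$-jet (restriction to $\dM$) being $(\phi_i)$, and its normal-derivative part being $(\rho_i)$ corrected by tangential derivatives of the $\phi_i$ — realize that jet by some $\lambda$ as in the first fact, and finally multiply $\lambda$ by a cutoff $\chi(r)$ supported in a boundary normal neighborhood $U$, with $\chi(0)=1$ and $\chi'(0)=0$, extended by zero outside $U$. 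This last step leaves the $1$-jet along $\dM$ unchanged, hence all eight boundary quantities, while producing a form defined on all of $\M$. In the smooth category the same argument applies, using a smooth extension of a smooth $1$-jet.

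To establish the triangular structure, fix a boundary normal neighborhood $U$ and trivialize $\Lambda^{k,m}T^*U$ by parallel transport along the unit geodesic field $\dr$. Because $\nabg_{\dr}\dr=0$, the tangential/normal splitting of $T^*U$ and of its vector part are $\nabg_{\dr}$-parallel, so $\nabg_{\dr}$ commutes with the level-set projections and, in this trivialization, acts as $\partial_r$ on each of the four components $\Ptt\psi,\Pnt\psi,\Ptn\psi,\Pnn\psi$. The zeroth-order operators $(\PttD,\PntD,\PtnD,\PnnD)$ recover the $0$-jet by definition. For the first-order operators, substitute the commutation relations of \lemref{lem:4.12}, \lemref{lem:4.13} and the relations \eqref{eq:delta_pure_normal} into the formulas \eqref{eq:T_commutator} of \lemref{lem:4.15}: the interior operators $\dgEps,\dgEpsV,\deltagEps,\deltagEpsV$ and the tensorial terms ($(\frakh_\e\wedge)$, $\ihe$, $\calS_\e$, $\calS_\e^*$ and transposes) combine so that $\frakT_\e\psi,\ \frakF^*_\e\psi,\ \frakF_\e\psi,\ \frakT^*_\e\psi$ equal $\Ptt\nabg_{\dr}\psi,\ \Pnt\nabg_{\dr}\psi,\ \Ptn\nabg_{\dr}\psi,\ \Pnn\nabg_{\dr}\psi$ respectively, plus a first-order \emph{tangential} differential operator (with smooth, $\frakh$-dependent coefficients) applied only to the four $0$-jet components; the last two identities also follow from the first by the Hodge-dual relations \eqref{eq:second_order_boundary_operators_duality} and transposition. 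Setting $\e=0$ and using the trivialization, this is exactly the asserted triangular form, and since a first-order tangential operator maps $W^{s-1/p,p}\VectorFormsdM\to W^{s-1-1/p,p}\VectorFormsdM$, the inverse transformation incurs precisely the loss of regularity appearing in \eqref{eq:estimate_1st_bc}.

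For the first fact, the map $\lambda\mapsto(\lambda|_{\dM},\nabg_{\N}\lambda|_{\dM})$ from $W^{s,p}\VectorFormsKM$ to $W^{s-1/p,p}\VectorFormsdM\times W^{s-1-1/p,p}\VectorFormsdM$ is a continuous surjection admitting a bounded linear right inverse; this is the standard trace theory for Sobolev spaces (extending the surjectivity of the restriction map recalled in \secref{sec:preliminaries}), and is trivial for smooth data. Composing this right inverse with the inverse of the triangular transformation and with the cutoff multiplication above yields $\lambda$ together with the estimate \eqref{eq:estimate_1st_bc}. The one genuinely technical step here is the second fact: verifying, after substituting the commutation lemmas, that \emph{no} first-order boundary operator picks up more than the single corresponding normal-derivative component and that the remainder is a bona fide first-order operator in the $0$-jet alone, so that the change of variables is triangular with a regularity-matching inverse. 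The collar construction, the cutoff, and the trace theorem itself are all routine.
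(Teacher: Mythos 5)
Your proposal is correct and follows essentially the same route as the paper's proof: the paper likewise substitutes the commutation relations of Section~\ref{sec:commutation_d_delta} into \eqref{eq:T_commutator} to exhibit each of $\frakT,\frakF^*,\frakF,\frakT^*$ as the matching normal-derivative projection ($\PttD\nabg_{\dr}\lambda$, $\PntD\nabg_{\dr}\lambda$, etc.) plus a tangential first-order operator in the $0$-jet, then solves this triangular system for $(\lambda|_{\dM},\nabg_{\dr}\lambda|_{\dM})$ and realizes the resulting $1$-jet via the trace theorem (in the form of a non-characteristic radial ODE) together with a cutoff near $\dM$. The only cosmetic difference is that you package the jet-realization step as a bounded right inverse of the $1$-jet trace map rather than as the explicit extension-plus-integration along $\dr$.
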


\begin{lemma}
\label{lemma:prescribed_boundary_data_second_order}
Let $\phi_i,\rho_i\in \VectorFormsdM$, $i=1,\dots,4$ (the type is implied by the context). Then, there exists a $\lambda\in \VectorFormsM$, satisfying
\beq
\begin{gathered}
\lambda|_{\dM}=0, \qquad \nabg_{\dr}\lambda|_{\dM}=0 \\
(\PttD\bHg^*,\PtnD\bFg^*,\PntD\bFg,\PnnD\bHg)\lambda = (\phi_1,\phi_2,\phi_3,\phi_4) \\
(\frakT\bHg^*,\frakF\bFg^*,\frakF^*\bFg,\frakT^*\bHg)\lambda = (\rho_1,\rho_2,\rho_3,\rho_4).
\end{gathered}
\label{eq:prescribed_boundary_data_second_order}
\eeq
Moreover, if $\phi_i\in W^{s-1/p,p}\VectorFormsdM$ and $\rho_i\in W^{s-1-1/p,p}\VectorFormsdM$, then $\lambda$ can be chosen such that,
\beq
\begin{split}
\Norm{\lambda}_{W^{s+2,p}(\M)}\lesssim \sum_{i=1}^4\brk{\Norm{\phi_i}_{W^{s-1/p,p}(\dM)}+\Norm{\rho_i}_{W^{s-1-1/p,p}(\dM)}}.
\end{split}
\label{eq:sobolev_estimate_boundary_data}
\eeq
\end{lemma}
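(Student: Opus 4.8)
The plan is to transfer the eight boundary conditions -- which have orders two and three in $\lambda$ -- to the \emph{normal Cauchy data} of $\lambda$, solve a triangular algebraic system there, and then invoke a normal multi-jet extension for the regularity estimate. Write $\mu_j=(\nabg_{\dr})^{j}\lambda|_{\dM}\in\Gamma(\Lambda^{k,m}T^*\M|_{\dM})$ for $j=0,1,2,3$. The trace theory recalled in \secref{sec:preliminaries}, applied componentwise in a boundary normal chart and patched with a partition of unity, furnishes a bounded right inverse $E$ to $\lambda\mapsto(\mu_0,\mu_1,\mu_2,\mu_3)$, with $\Norm{E(\mu_0,\dots,\mu_3)}_{W^{s+2,p}(\M)}\lesssim\sum_{j}\Norm{\mu_j}_{W^{s+2-j-1/p,p}(\dM)}$; we shall use it with $\mu_0=\mu_1=0$, so that $\lambda=O(r^2)$ near $\dM$ and the first two equations of \eqref{eq:prescribed_boundary_data_second_order} hold automatically.

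Since $\lambda$ vanishes to second order at $\dM$, its full first jet vanishes there, so when the eight data are evaluated on $\dM$ every tangential derivative and every tensorial term (in particular all contributions of $D_\g,S_\g$, which carry a factor $r^2$) drops out, and only pure normal derivatives survive. Using the boundary-normal expressions $\dg=dr\wedge\nabg_{\dr}+(\text{tang.})$, $\dgV=(dr)^T\wedge\nabg_{\dr}+(\text{tang.})$, $\deltag=-\idr\nabg_{\dr}+(\text{tang.})$, $\deltagV=-i^V_{\dr}\nabg_{\dr}+(\text{tang.})$ in \eqref{eq:HHFF}, a direct computation gives, up to signs depending on $k,m,d$,
\[
\begin{aligned}
\bHg^*\lambda|_{\dM} &= \idr i^V_{\dr}\mu_2, &\qquad
\bFg^*\lambda|_{\dM} &= (dr)^T\wedge\idr\mu_2, \\
\bFg\lambda|_{\dM} &= dr\wedge i^V_{\dr}\mu_2, &\qquad
\bHg\lambda|_{\dM} &= dr\wedge(dr)^T\wedge\mu_2 .
\end{aligned}
\]
Applying $\PttD,\PtnD,\PntD,\PnnD$ and using \lemref{lem:4.2} and \eqref{eq:Ptt_et_al}, the zeroth-order block becomes $\mathcal A(\Pnn,\Pnt,\Ptn,\Ptt)\mu_2$, where $\mathcal A$ re-identifies, up to sign, the decomposition $\mu_2=\Ptt\mu_2+dr\wedge\Pnt\mu_2+(dr)^T\wedge\Ptn\mu_2+dr\wedge(dr)^T\wedge\Pnn\mu_2$ with the four target bundles $\Lambda^{k-1,m-1},\Lambda^{k-1,m},\Lambda^{k,m-1},\Lambda^{k,m}$ over $\dM$ (whose dimensions sum to $\binom{d}{k}\binom{d}{m}$), hence is an isomorphism.

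For the first-order block $\frakT\bHg^*\lambda,\frakF\bFg^*\lambda,\frakF^*\bFg\lambda,\frakT^*\bHg\lambda$: by \lemref{lem:4.13} and \lemref{lem:4.15}, each of $\frakT,\frakF,\frakF^*,\frakT^*$ splits into a term $\sim\Ptt\nabg_{\dr}(\cdot)$ carrying a normal derivative plus terms that are tangential or tensorial in the boundary jet of their argument. Since $\nabg_{\dr}(\bHg^*\lambda)|_{\dM}$ equals $\idr i^V_{\dr}\mu_3$ plus a first-order (tangential/curvature) expression in $\mu_2$, and likewise for the other three, the first-order block has the form $\mathcal C(\Pnn,\Pnt,\Ptn,\Ptt)\mu_3+\mathcal B\mu_2$ with $\mathcal C$ an isomorphism of the same type as $\mathcal A$ and $\mathcal B$ a first-order operator on $\dM$. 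The system is therefore triangular: set $\mu_2:=\mathcal A^{-1}(\phi_1,\phi_2,\phi_3,\phi_4)$ and $\mu_3:=\mathcal C^{-1}\big((\rho_1,\rho_2,\rho_3,\rho_4)-\mathcal B\mu_2\big)$, and take $\lambda:=E(0,0,\mu_2,\mu_3)$ (smooth data yield smooth $\mu_2,\mu_3$, hence smooth $\lambda$). Then \eqref{eq:prescribed_boundary_data_second_order} holds by construction. For the estimate, $\mu_2\in W^{s-1/p,p}(\dM)$ because it is algebraic in the $\phi_i$, and $\mu_3\in W^{s-1-1/p,p}(\dM)$ because $\mathcal B$ costs one derivative; since $s-1/p=s+2-2-1/p$ and $s-1-1/p=s+2-3-1/p$, the bound on $E$ gives exactly \eqref{eq:sobolev_estimate_boundary_data}.

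The main obstacle is the bookkeeping behind the last two paragraphs: one must verify that the ``leading'' maps $\mathcal A,\mathcal C$ are genuinely invertible and, crucially, that every curvature, Christoffel and tensorial correction lands in the strictly lower slot of the triangular system -- that is, that the zeroth-order block depends on $\mu_2$ alone and that $\mu_3$ enters the first-order block only through $\mathcal C$, so that the dependence graph is acyclic. This amounts to a careful reading of the commutator identities of \lemref{lem:4.13} and \lemref{lem:4.15}, keeping the form-part and vector-part wedge and interior operations separate throughout; by contrast, the normal multi-jet extension $E$ is routine given the trace theory of \secref{sec:preliminaries}.
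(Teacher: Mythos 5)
Your proposal is correct and follows essentially the same route as the paper's proof. Both reduce the problem to prescribing the normal Cauchy data $(\mu_0,\mu_1,\mu_2,\mu_3)=\bigl(\lambda,\nabg_{\dr}\lambda,(\nabg_{\dr})^2\lambda,(\nabg_{\dr})^3\lambda\bigr)\big|_{\dM}$, observe that under $\mu_0=\mu_1=0$ all tangential and tensorial contributions vanish on $\dM$ so the eight conditions reduce to the four projections $(\Pnn,\Pnt,\Ptn,\Ptt)$ of $\mu_2$ and the same four projections of $\mu_3$ up to a first-order correction in $\mu_2$ (your $\mathcal B$, the paper's $Q_i$), solve the resulting triangular system, and extend by a right inverse to the normal multi-jet map; the paper packages that last step as a radial third-order non-characteristic ODE, but it is the same construction and yields the same Sobolev indices.
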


\begin{corollary}
\label{cor:full_problem_boundary}
Let $\phi_i,\rho_i,\mu_i,\nu_i\in \VectorFormsdM$, $i=1,\dots,4$ (the type is implied by the context). Then, there exists a $\lambda\in \VectorFormsM$ satisfying
\[
\begin{gathered}
(\PttD,\PntD,\PtnD,\PnnD)\lambda = (\phi_1,\phi_2,\phi_3,\phi_4) \\
(\frakT,\frakF^*,\frakF,\frakT^*)\lambda = (\rho_1,\rho_2,\rho_3,\rho_4) \\
(\PttD\bHg^*,\PtnD\bFg^*,\PntD\bFg,\PnnD\bHg)\lambda = (\mu_1,\mu_2,\mu_3,\mu_4) \\
(\frakT\bHg^*,\frakF\bFg^*,\frakF^*\bFg,\frakT^*\bHg)\lambda = (\nu_1,\nu_2,\nu_3,\nu_4).
\end{gathered}
\]
Moreover, if $s\geq 4$ and,
\[
\begin{gathered}
\phi_i\in W^{s-1/p,p}\VectorFormsdM, \qquad \rho_i\in W^{s-1-1/p,p}\VectorFormsdM,
\\ \mu_i\in W^{s-2-1/p,p}\VectorFormsdM, \qquad \nu_i\in W^{s-3-1/p,p}\VectorFormsdM
\end{gathered}
\] 
then $\lambda$ can be chosen such that
\beq
\begin{split}
\Norm{\lambda}_{W^{s,p}(\M)}&\lesssim \sum_{i=1}^4\brk{\Norm{\phi_i}_{W^{s-1/p,p}(\dM)}+\Norm{\rho_i}_{W^{s-1-1/p,p}(\dM)}} \\
&+\sum_{i=1}^4\brk{\Norm{\mu_i}_{W^{s-2-1/p,p}(\dM)}+\Norm{\nu_i}_{W^{s-3-1/p,p}(\dM)}}.
\end{split}
\label{eq:sobolev_estimate_boundary_data_full}
\eeq
\end{corollary}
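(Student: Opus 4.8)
The plan is to build $\lambda$ by superposition of two pieces: one, call it $\lambda_1$, furnished by \lemref{lemma:prescribe_boundary_conditions} and realizing the zeroth- and first-order boundary data $(\phi_i,\rho_i)$, and a correction $\lambda_2$, furnished by \lemref{lemma:prescribed_boundary_data_second_order}, which adjusts the remaining second-order data $(\mu_i,\nu_i)$ without disturbing the lower-order data. All the operators in sight are linear, so superposition is legitimate. First I would apply \lemref{lemma:prescribe_boundary_conditions} to obtain $\lambda_1\in\VectorFormsM$ with
\[
\begin{gathered}
(\PttD,\PntD,\PtnD,\PnnD)\lambda_1 = (\phi_1,\phi_2,\phi_3,\phi_4), \\
(\frakT,\frakF^*,\frakF,\frakT^*)\lambda_1 = (\rho_1,\rho_2,\rho_3,\rho_4),
\end{gathered}
\]
and $\|\lambda_1\|_{W^{s,p}(\M)}\lesssim \sum_i\brk{\|\phi_i\|_{W^{s-1/p,p}(\dM)}+\|\rho_i\|_{W^{s-1-1/p,p}(\dM)}}$.

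Next I would record the second-order boundary data already carried by $\lambda_1$, setting
\[
\begin{gathered}
(\tilde\mu_1,\dots,\tilde\mu_4) = (\PttD\bHg^*,\PtnD\bFg^*,\PntD\bFg,\PnnD\bHg)\lambda_1, \\
(\tilde\nu_1,\dots,\tilde\nu_4) = (\frakT\bHg^*,\frakF\bFg^*,\frakF^*\bFg,\frakT^*\bHg)\lambda_1.
\end{gathered}
\]
By \eqref{eq:extended_F_H} the operators $\bHg^*,\bFg^*,\bFg,\bHg$ are second-order differential graded operators, hence bounded $W^{s,p}\VectorFormsM\to W^{s-2,p}\VectorFormsM$; composing with the zeroth-order projections $\PttD,\PtnD,\PntD,\PnnD$ and invoking the trace theorem yields $\tilde\mu_i\in W^{s-2-1/p,p}\VectorFormsdM$, while composing with the first-order boundary operators $\frakT,\frakF,\frakF^*,\frakT^*$ yields $\tilde\nu_i\in W^{s-3-1/p,p}\VectorFormsdM$, together with $\sum_i\brk{\|\tilde\mu_i\|_{W^{s-2-1/p,p}(\dM)}+\|\tilde\nu_i\|_{W^{s-3-1/p,p}(\dM)}}\lesssim\|\lambda_1\|_{W^{s,p}(\M)}$.

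Then I would apply \lemref{lemma:prescribed_boundary_data_second_order}, with $s$ replaced by $s-2$ (legitimate since $s\geq 4$), to the data $\mu_i-\tilde\mu_i\in W^{s-2-1/p,p}\VectorFormsdM$ and $\nu_i-\tilde\nu_i\in W^{s-3-1/p,p}\VectorFormsdM$, obtaining $\lambda_2\in\VectorFormsM$ with $\lambda_2|_{\dM}=0$, $\nabg_{\dr}\lambda_2|_{\dM}=0$,
\[
\begin{gathered}
(\PttD\bHg^*,\PtnD\bFg^*,\PntD\bFg,\PnnD\bHg)\lambda_2 = (\mu_1-\tilde\mu_1,\dots,\mu_4-\tilde\mu_4), \\
(\frakT\bHg^*,\frakF\bFg^*,\frakF^*\bFg,\frakT^*\bHg)\lambda_2 = (\nu_1-\tilde\nu_1,\dots,\nu_4-\tilde\nu_4),
\end{gathered}
\]
and $\|\lambda_2\|_{W^{s,p}(\M)}\lesssim\sum_i\brk{\|\mu_i-\tilde\mu_i\|_{W^{s-2-1/p,p}(\dM)}+\|\nu_i-\tilde\nu_i\|_{W^{s-3-1/p,p}(\dM)}}$. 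I then set $\lambda=\lambda_1+\lambda_2$. To verify the boundary identities I would use that $\PttD,\PntD,\PtnD,\PnnD$ depend only on $\lambda_2|_{\dM}$ (they are built from pullback and interior multiplication, cf.\ \eqref{eq:Pt_Pn_wedge}) and that $\frakT,\frakF^*,\frakF,\frakT^*$ depend only on the full $1$-jet of $\lambda_2$ along $\dM$ (cf.\ \eqref{eq:T_commutator}), which vanishes since $\lambda_2|_{\dM}=0$ annihilates the value and all tangential derivatives while $\nabg_{\dr}\lambda_2|_{\dM}=0$ annihilates the normal derivative; hence all six of these operators annihilate $\lambda_2$, so the first two lines of data for $\lambda$ coincide with those of $\lambda_1$, while the two second-order lines become $(\tilde\mu_i)+(\mu_i-\tilde\mu_i)=(\mu_i)$ and likewise $(\nu_i)$. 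Finally \eqref{eq:sobolev_estimate_boundary_data_full} follows from the triangle inequality, from $\|\lambda_2\|_{W^{s,p}}\lesssim\sum_i\brk{\|\mu_i\|+\|\tilde\mu_i\|+\|\nu_i\|+\|\tilde\nu_i\|}$, and from the bounds $\sum_i\brk{\|\tilde\mu_i\|+\|\tilde\nu_i\|}\lesssim\|\lambda_1\|_{W^{s,p}}\lesssim\sum_i\brk{\|\phi_i\|+\|\rho_i\|}$ established in the first two steps.

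The construction is a routine superposition and presents no essential obstacle. The only point that needs care is the bookkeeping of Sobolev exponents: applying the second-order operators $\bHg^*,\bFg^*,\bFg,\bHg$ costs exactly two derivatives, which matches the two-derivative gain built into the estimate of \lemref{lemma:prescribed_boundary_data_second_order}, so the correction $\lambda_2$ returns to $W^{s,p}(\M)$; this, together with the requirement that all boundary-trace exponents stay nonnegative (i.e.\ $s-3-1/p\geq 0$) and that \lemref{lemma:prescribed_boundary_data_second_order} be applicable at regularity level $s-2$, is precisely what forces the hypothesis $s\geq 4$.
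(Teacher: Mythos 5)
Your proposal is correct and is essentially identical to the paper's proof: both construct $\lambda=\lambda_1+\lambda_2$ with $\lambda_1$ from Lemma~\ref{lemma:prescribe_boundary_conditions} realizing $(\phi_i,\rho_i)$ and $\lambda_2$ from Lemma~\ref{lemma:prescribed_boundary_data_second_order} realizing $(\mu_i,\nu_i)$ minus the second-order traces of $\lambda_1$, the vanishing of $\lambda_2$ and its normal derivative on $\dM$ guaranteeing that the lower-order data are undisturbed. Your extra bookkeeping of the Sobolev exponents and the role of $s\geq 4$ is a correct elaboration of what the paper leaves implicit.
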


In the above construction, the specification of this particular choice of 16 boundary operators amounts to the specification at the boundary of the value of the form, along with its first three normal derivatives.

\subsection{Elliptic theory} 

There are several methods for establishing the ellipticity of boundary-value problems. The approach we use here is the so-called Lopatinskij-Shapiro criterion, which also provides a direct computational approach \cite[Sec.~20.1]{Hor07}. 

Let $\M$ be a compact manifold with boundary, let $\E_1\to\M$ and $\E_2\to\M$ be Riemannian vector bundles over $\M$, and let $\BRK{\mathbb{F}_j\to\dM}_{j=1}^{\l}$ be a collection of Riemannian vector bundles over $\dM$. 
Let $A:\Gamma(\E_1)\to\Gamma(\E_2)$ be a differential operator of order $m$, and let $B_j:\Gamma(\E_1)\to\Gamma(\mathbb{F}_j)$ be differential operators of order $m_j< m$. For data $\eta\in \Gamma(\E_2)$ and $\theta_j\in\Gamma(\mathbb{F}_j)$, we study a boundary-value problem for $\sigma\in \Gamma(\E_1)$,
\beq
\begin{aligned}
&A\sigma=\eta \qquad \text{on $\M$} \\ 
&B_j\sigma=\theta_j \qquad \text{on $\dM$} \qquad j=1,\dots,l. 
\end{aligned}
\label{eq:boundary_value_problem}
\eeq

The Lopatinskij-Shapiro criterion is local, stated by means of the principal symbols of the differential operators \cite[Sec.~1.6]{Sch95b}, \cite[Ch.~2, Sec.~9]{Tay11a}.
Principal symbols can be defined in a coordinate-free manner using the language of jet bundles; we follow the more common approach using trivializations and coordinates. 

Using trivializations for the vector bundles $\E_1,\E_2$, and 
local coordinates for $\M$, a differential operator $L:\Gamma(\E_1)\to\Gamma(\E_2)$ of order $m$ takes the form
\beq
L\sigma(x)=\sum_{|\alpha|\leq m} a_{\alpha}(x) D^{\alpha}\sigma(x),
\label{eq:L_local_form}
\eeq
where, $D^{\alpha}=D_1^{\alpha_1}...D_d^{\alpha_d}$ and $D_j=-\imath\, \partial_j$; the coefficients $a_{\alpha}(x)$ are of type $a_{\alpha}(x) \in\Hom(\R^{N_1},\R^{N_2})$, where $N_1,N_2$ are the ranks of $\E_1,\E_2$.

\begin{definition}
Let $L:\Gamma(\E_1)\to\Gamma(\E_2)$ have local form \eqref{eq:L_local_form}. 
For $(x,\xi)\in T^*\M$, the linear map $P_L(x,\xi):\E_1|_x\to \E_2|_x$, given in local trivializations by
\[
P_L(x,\xi)\sigma=\sum_{|\alpha|=m} a_{\alpha}(x)\xi^{\alpha}\sigma,
\]
is called the principal symbol of $L$ (on the right-hand side, $\xi\in\R^d$ is the coordinate representation of the covector, and $\xi^\alpha = \xi_1^{\alpha_1}\dots\xi_d^{\alpha_d}$). 
\end{definition}

It can be shown \cite[pp.~176--177]{Tay11a} that for all $(x,\xi)\in T^*\M$, $P_L(x,\xi):\E_1|_x\to \E_2|_x$ is well-defined and has a local characterization,
\beq
\lim_{\lambda\to\infty} \lambda^{-m}L(e^{i\lambda\phi} \sigma) = P_L(x,d\phi|_x)\sigma,
\label{eq:formula_symbol}
\eeq
where $\phi$ is a smooth real-valued function in a neighborhood of $x$, and $\sigma$ on the left-hand side has been extended in a neighborhood of $x$.
If $L_1:\Gamma(\E_1)\to\Gamma(\E_2)$ and $L_2:\Gamma(\E_2)\to\Gamma(\E_3)$ then,
\beq
P_{L_2\circ L_1}(x,\xi)=P_{L_2}(x,\xi)\circ P_{L_1}(x,\xi).
\label{eq:principel_homomo}
\eeq
Furthermore, if the vector bundles are endowed with metrics, then one can select a normal frame at $x$ and show that the symbol commutes with metric operations once localized. The following symbols are easy to calculate by means of \eqref{eq:formula_symbol} (see \cite[p.~181]{Tay11a} for the symbols of $d$ and $\delta$):
\[
\begin{aligned}
& -\imath\, P_{\dg}(x,\xi)\sigma=\xi\wedge\sigma 
& \qquad 
& -\imath\, P_{\deltag}(x,\xi)\sigma=- \ixi\sigma \\
& -\imath\, P_{\dgV}(x,\xi)\sigma=\xi^T\wedge\sigma 
& \qquad 
& -\imath\, P_{\deltagV}(x,\xi)\sigma=- \ixiV\sigma.
\end{aligned}
\]

Combining with \eqref{eq:principel_homomo}, and noting how zero-order terms do not affect the symbol, we obtain the principal symbols of the second-order operators \eqref{eq:extended_F_H},
\beq
\begin{aligned}
& P_{\bHg}(x,\xi)\sigma=-\xi^T\wedge\xi\wedge\sigma 
& \qquad 
& P_{\bHg^*}(x,\xi)\sigma= -\ixi\ixiV\sigma \\
& P_{\bFg^*}(x,\xi)\sigma=\xi^T\wedge \ixi\sigma 
& \qquad 
& P_{\bFg}(x,\xi)\sigma=\xi\wedge \ixiV\sigma.
\end{aligned}
\label{eq:symbols_of_H_H^*_F_F^*}
\eeq
The symbol of the associated double bilaplacian is
\[
P_{\Bg}(x,\xi)\sigma=|\xi|^4\sigma. 
\]

Back to the boundary-value problem \eqref{eq:boundary_value_problem}, we proceed with the following manipulation \cite[pp.~53--54]{Sch95b}. Let $U$ be a normal neighborhood of $\dM$ and let $x\in U$. Every $\xi\in T^*_x\M$ can be decomposed into
\[
\xi= \frakt\xi + \xi_d \, dr.
\] 
In the vicinity of the boundary, one can apply on $A$ a partial Fourier transform along the normal direction. Formally, this is done by replacing $\xi_d$ with $\imath\,\partial_s$, where $s\in\R$ is a smooth parameter.
This yields an ordinary differential operator on $\mathbb{C}\otimes\E_1|_x$-valued functions,
\[
P_A(x,\frakt\xi+\imath\, \partial_s\, dr):C^{\infty}(\R, \mathbb{C}\otimes\E_1|_x)\to C^{\infty}(\R, \mathbb{C}\otimes\E_2|_x).
\]
The differential equation
\[
P_A(x,\frakt\xi+\imath\,\partial_s\, dr)\sigma(s)=0
\]
is a linear ODE, hence has globally defined solutions. 
We denote its globally $\R_+$-bounded solutions by
\[
\calM^+_{x,\xi}=\BRK{\sigma\in C^{\infty}(\R, \mathbb{C}\otimes\E_1|_x) : \sup_{s\geq 0}|\sigma(s)|<\infty, \ \ P_A(x,\xi+\imath\,\partial_s \, dr)\sigma(s)=0}.
\]

The Lopatinskij-Shapiro condition \cite[p.~233]{Hor07} can at last be stated:

\begin{definition}
\label{def:regular_elliptic}
The boundary-value problem \eqref{eq:boundary_value_problem} is called regular elliptic (in the sense of Lopatinskij-Shapiro) if the following conditions are fulfilled:
\begin{enumerate}
\item For every $(x,\xi)\in T^*\M$, the principal symbol,
\[
P_A(x,\xi):\E_1|_x\to \E_2|_x,
\]
is a linear isomorphism if and only if $\xi\ne 0$.

\item For all $(x,\xi)\in\frakt T^*\M|_{\dM}\simeq T^*\dM$, $\xi\ne 0$, the map $\Xi_{x,\xi}:\calM^+_{x,\xi}\to \mathbb{C}\otimes(\oplus_j \mathbb{F}_j|_x)$ given by
\[
\sigma(s)\mapsto \brk{P_{B_1}(x,\xi+\imath\,\partial_s\, dr)\sigma(0),...,P_{B_l}(x,\xi+\imath\,\partial_s\,dr)\sigma(0)}
\]
is a linear isomorphism.
\end{enumerate}
\end{definition}

Regular elliptic problems satisfy the following key properties:

\begin{theorem}
\label{thm:regular_elliptic_estimates}
Let $A$ and $B_j$ be as in \eqref{eq:boundary_value_problem},  and associate with them the differential operator 
\[
(A,B_1,...,B_l):\Gamma(\E_1)\to \Gamma(\E_2)\times \oplus_j\Gamma(\mathbb{F}_j).
\]
If \eqref{eq:boundary_value_problem} is regular elliptic in the sense of \defref{def:regular_elliptic}, then
\begin{enumerate}
\item The operator
\beq
(A,B_1,...,B_l):W^{s,p}\Gamma(\E_1)\to W^{s-m,p}\Gamma(\E_2)\times \bigoplus_jW^{s-m_j-1/p,p}\Gamma(\mathbb{F}_j)
\label{eq:Fredholm_ellipticity}
\eeq
is Fredholm (i.e., has finite-dimensional kernel and co-kernel) for all $s\geq m$ and $1\leq p < \infty$.

\item There is an elliptic a-priori estimate for all $\sigma \in W^{s,p}\Gamma(\E_1)$,
\beq
\Norm{\sigma}_{W^{s,p}(\M)}\lesssim \Norm{A\sigma}_{W^{s-m,p}(\M)}+\sum_j \Norm{B_j\sigma}_{W^{s-m_j-1/p,p}(\dM)}+\Norm{\sigma}_{L^q(\M)},
\label{eq:elliptic_estimate}
\eeq
where $1\leq q\leq p$ can be chosen arbitrarily. 

\item In particular, if $\sigma$ is a solution to the boundary-value problem \eqref{eq:boundary_value_problem} and the prescribed data $\eta,\theta_j$ is smooth, then $\sigma$ is smooth.

\item The (finite-dimensional) kernel and co-kernel of the Fredholm operator \eqref{eq:Fredholm_ellipticity} are independent of $s$ and $p$. 
\end{enumerate}
\end{theorem}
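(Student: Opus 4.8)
The plan is to follow the classical parametrix construction for boundary-value problems, as in \cite[Sec.~20.1]{Hor07} and \cite[Ch.~5]{Tay11a}, with the $L^p$-mapping properties taken from \cite{Tay11c}. First I would fix a finite atlas of $\M$ with a subordinate partition of unity, splitting the charts into interior ones and ones meeting $\dM$. On an interior chart, hypothesis (1) of \defref{def:regular_elliptic} — that $P_A(x,\xi)$ is invertible for every $\xi\ne0$ — yields by the standard symbol calculus a pseudodifferential parametrix $Q_{\mathrm{int}}$ with $Q_{\mathrm{int}}A-I$ smoothing, hence the interior estimate $\Norm{\sigma}_{W^{s,p}}\lesssim\Norm{A\sigma}_{W^{s-m,p}}+\Norm{\sigma}_{L^p}$ on compact interior subsets.

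For a boundary chart centered at $x_0\in\dM$ I would freeze the coefficients at $x_0$ and pass to the half-space model on $\{x_d\ge0\}$ with the constant-coefficient operators $P_A(x_0,D)$ and $P_{B_j}(x_0,D)$, then Fourier-transform in the tangential variables. For each frozen tangential covector $\xi\ne0$ this reduces the model problem to the linear ODE $P_A(x_0,\xi+\imath\,\partial_s\,dr)\sigma(s)=0$ on $\bbR_+$ together with $P_{B_j}(x_0,\xi+\imath\,\partial_s\,dr)\sigma(0)=\theta_j$; ellipticity of $P_A$ forces the characteristic roots off the real axis and (here automatically, since the symbol of $A$ is $|\xi|^4$, with roots $\pm\imath|\xi|$ of multiplicity two) splits them evenly between the half-planes, so that $\calM^+_{x_0,\xi}$ has the dimension the boundary system is calibrated to match. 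Hypothesis (2) — that the boundary map $\Xi_{x_0,\xi}$ is an isomorphism — is exactly the statement that the model problem is then uniquely solvable, with an inverse depending smoothly on $\xi$ and homogeneous of the right degree; this is the data needed to manufacture a Poisson operator and a boundary parametrix $Q_\partial$, whose boundedness on $W^{s,p}$ follows from Mikhlin-type multiplier bounds.

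I would then patch the interior and boundary parametrices by the partition of unity and absorb the error introduced by freezing coefficients through a Neumann series — legitimate since that error is of lower order and so gains a derivative relative to the leading parametrix. The result is a two-sided global parametrix $Q$ for $(A,B_1,\dots,B_l)$ modulo operators mapping into $C^\infty$, which are compact between the relevant Sobolev spaces by Rellich's theorem. This gives the Fredholm property (part~1) and an a~priori estimate with a remainder $\Norm{\sigma}_{W^{s-1,p}}$; since the kernel is finite-dimensional and, by elliptic regularity, consists of smooth sections, all norms are equivalent on it, which lets one replace the remainder by $\Norm{\sigma}_{L^q}$ for any $q\le p$, yielding part~2. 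Part~3 is the usual bootstrap: the a~priori estimate read on a finer Sobolev scale, together with a difference-quotient/Friedrichs-mollifier argument legitimizing the higher regularity, shows that a solution inherits the regularity of its data, and iteration gives smoothness for smooth data.

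For part~4, the $s$- and $p$-independence of the kernel is immediate from part~3: any kernel element has smooth (indeed vanishing) data, hence is a smooth section lying in every $W^{s,p}\Gamma(\E_1)$, so the kernel is one fixed finite-dimensional space. For the cokernel I would use the standard fact that the formal adjoint $A^*$, together with an associated system of adjoint boundary operators, again satisfies the Lopatinskij-Shapiro condition (the complementing condition is self-dual in this sense); via the Green-type pairing the cokernel of $(A,B_1,\dots,B_l)$ at each level is isomorphic to the kernel of this adjoint boundary-value problem, which by the same regularity argument is again a single space of smooth data. The main obstacle throughout is the boundary parametrix: verifying that the Lopatinskij-Shapiro condition delivers invertibility of the model problem \emph{with symbol estimates uniform in $\xi$}, and that the attendant Poisson and boundary operators are $W^{s,p}$-bounded — this is where $p\ne2$ genuinely needs multiplier theory rather than a Hilbert-space energy identity — together with pinning down the correct adjoint boundary system required for the cokernel statement.
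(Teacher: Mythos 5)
The paper does not prove this theorem at all: it is quoted as a known result, with the proof attributed to \cite[Ch.~20]{Hor07}. Your sketch is the standard parametrix proof that that reference (and \cite[Ch.~5]{Tay11a}) carries out — interior symbol inversion, coefficient freezing and tangential Fourier transform at the boundary, the Lopatinskij--Shapiro condition supplying unique solvability of the half-line model problem and hence a Poisson operator and boundary parametrix, patching, and the adjoint boundary-value problem for the cokernel. So in spirit you are reproducing exactly the argument the authors are outsourcing, and the overall architecture is sound.

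One step is justified incorrectly. To pass from the parametrix estimate with remainder $\Norm{\sigma}_{W^{s-1,p}}$ to the stated estimate \eqref{eq:elliptic_estimate} with remainder $\Norm{\sigma}_{L^q}$, you invoke equivalence of norms on the finite-dimensional kernel. That only controls kernel elements; estimate \eqref{eq:elliptic_estimate} must hold for \emph{every} $\sigma\in W^{s,p}\Gamma(\E_1)$, and no property of the kernel can convert a $W^{s-1,p}$ remainder into an $L^q$ remainder for arbitrary $\sigma$. The correct mechanism is Ehrling's interpolation inequality (or the equivalent compactness--contradiction argument): since $W^{s,p}\hookrightarrow W^{s-1,p}$ is compact by Rellich and $W^{s-1,p}\hookrightarrow L^q$ is continuous for $q\le p$ on a compact manifold, one has $\Norm{\sigma}_{W^{s-1,p}}\le \e\Norm{\sigma}_{W^{s,p}}+C_\e\Norm{\sigma}_{L^q}$, and the $\e$-term is absorbed into the left-hand side. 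With that substitution your outline matches the cited proof; the remaining items (bootstrap for part 3, smoothness of the kernel and the adjoint problem for part 4) are argued correctly.
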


A proof is given in \cite[Ch.~20]{Hor07}. 

\subsection{Ellipticity of the double bilaplacian}
\label{sec:proof_of_ellipticiity}

In this section we apply the machinery of regular elliptic operators to the double bilaplacian:

\begin{theorem}
\label{thm:elliptic_regularity}
The boundary-value problems defined by each of the operators
\beq
\begin{aligned}
\frakB_\TT &= (\Bg,\PttD,\PtnD,\PntD,\frakT,\frakF^*,\frakF,\PttD\bHg^*,\frakT\bHg^*) \\
\frakB_\NN &= (\Bg,\PnnD,\PtnD,\PntD,\frakT^*,\frakF^*,\frakF,\PnnD\bHg,\frakT^*\bHg) \\
\frakB_\NT &= (\Bg,\PntD,\PnnD,\PttD,\frakF^*,\frakT,\frakT^*,\PntD\bFg,\frakF^*\bFg) \\
\frakB_\TN &= (\Bg,\PtnD,\PnnD,\PttD,\frakF,\frakT,\frakT^*,\PtnD\bFg^*,\frakF\bFg^*)
\end{aligned}
\label{eq:boundary_data_TT}
\eeq
are regular elliptic in the sense of \defref{def:regular_elliptic}.
\end{theorem}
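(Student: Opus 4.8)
We first observe that regular ellipticity in the sense of \defref{def:regular_elliptic} — and hence \thmref{thm:elliptic_regularity} — depends only on principal symbols; since $D_\g$ and $S_\g$ are tensorial (order zero), we may assume throughout that $D_\g=S_\g=0$, so that $\bHg=\Hg$, $\bHg^*=\Hg^*$, $\bFg=\Fg$, $\bFg^*=\Fg^*$, and \propref{prop:duals_stars}, \eqref{eq:second_order_boundary_operators_duality} and \eqref{eq:Hodge_star_normal_tangentAA} hold as \emph{exact} operator identities. The first condition of \defref{def:regular_elliptic} is then immediate: by \eqref{eq:symbols_of_H_H^*_F_F^*} the principal symbol of $\Bg$ is $P_{\Bg}(x,\xi)\sigma=|\xi|^4\sigma$, a linear isomorphism precisely when $\xi\ne 0$. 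Moreover, using \propref{prop:duals_stars} together with \eqref{eq:second_order_boundary_operators_duality} and \eqref{eq:Hodge_star_normal_tangentAA}, one checks that the fibrewise (zeroth-order) maps $\starG$, $\starG^V$, $\starG\starG^V$ and the transposition $\psi\mapsto\psi^T$ intertwine $\frakB_\TT$ (in degree $(k,m)$) with $\frakB_\NT$ (via $\starG$, in degree $(d-k,m)$), with $\frakB_\TN$ (via the transpose of the previous, in degree $(m,d-k)$), and with $\frakB_\NN$ (via $\starG\starG^V$, in degree $(d-k,d-m)$). As these intertwiners are fibrewise isomorphisms compatible with the relevant principal symbols, it suffices to prove the theorem for $\frakB_\TT$ in every degree $(k,m)$.

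So fix $x\in\dM$, $0\ne\xi\in\frakt T^*_x\M\simeq T^*_x\dM$, and write $\zeta=|\xi|_\g>0$, $\nu=dr|_x$. Since $P_{\Bg}(x,\xi+\imath\partial_s\,dr)=(\zeta^2-\partial_s^2)^2$, the space $\calM^+_{x,\xi}$ of its $\R_+$-bounded solutions is exactly $\{\sigma(s)=(a+bs)e^{-\zeta s}:a,b\in\mathbb{C}\otimes\Lambda^{k,m}T^*_x\M\}$, of complex dimension $2\binom dk\binom dm$. A direct count shows the eight boundary operators of $\frakB_\TT$ take values in bundles of total rank
\[
2\binom{d-1}{k}\binom{d-1}{m}+2\binom{d-1}{k}\binom{d-1}{m-1}+2\binom{d-1}{k-1}\binom{d-1}{m}+2\binom{d-1}{k-1}\binom{d-1}{m-1}=2\binom dk\binom dm,
\]
by Pascal's rule, matching $\dim_{\mathbb{C}}\calM^+_{x,\xi}$. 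Hence the second condition of \defref{def:regular_elliptic} reduces to showing that the map $\Xi_{x,\xi}$ there is \emph{injective}.

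To prove injectivity, take $\sigma(s)=(a+bs)e^{-\zeta s}\in\ker\Xi_{x,\xi}$, fix an orthonormal coframe $(e^1,\dots,e^{d-1},\nu)$ with $\xi=\zeta e^1$, and decompose $a,b$ into their $\frakt\frakt$, $\frakt\frakn$, $\frakn\frakt$, $\frakn\frakn$ parts. The three zeroth-order conditions $P_{\PttD}\sigma(0)=P_{\PtnD}\sigma(0)=P_{\PntD}\sigma(0)=0$ annihilate the first three parts of $a$, so $a=\nu\wedge\nu^T\wedge\hat a$ for some $\hat a$. The first-order conditions $\frakT,\frakF^*,\frakF$ — whose principal symbols I would read off from \eqref{eq:T_commutator} and the principal symbols of $\dg,\dgV,\deltag,\deltagV$ (linear combinations of $e^1\wedge$, $(e^1)^T\wedge$, the dual interior products, $\nu\wedge$, $\imath\partial_s$, composed with the tangential and normal projections) — evaluated using $\partial_s\sigma(0)=b-\zeta a$, then express the tangential parts of $b$ through $\hat a$ and force them to vanish. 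Finally, the conditions coming from $P_{\PttD}\bigl(P_{\Hg^*}(x,\xi+\imath\partial_s\,dr)\sigma\bigr)(0)=0$ (order two) and $P_{\frakT}\bigl(P_{\Hg^*}(x,\xi+\imath\partial_s\,dr)\sigma\bigr)(0)=0$ (order three), computed from $P_{\Hg^*}(x,\eta)\sigma=-i_{\eta^\sharp}i^V_{\eta^\sharp}\sigma$ and from $\partial_s^2\sigma(0)=\zeta^2a-2\zeta b$, $\partial_s^3\sigma(0)=3\zeta^2b-\zeta^3a$, yield the remaining constraints and force $a=b=0$. Then $\Xi_{x,\xi}$ is injective, hence bijective by the dimension count, so \defref{def:regular_elliptic} holds for $\frakB_\TT$, and the symmetry reduction of the first paragraph settles $\frakB_\NN$, $\frakB_\NT$, $\frakB_\TN$.

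The main obstacle is precisely the final verification: writing out the principal symbols of $\frakT,\frakF,\frakF^*$ and of their compositions with $\Hg^*$, and checking that the resulting $2\binom dk\binom dm\times2\binom dk\binom dm$ complex linear system has only the trivial solution. I would organize this by further grading $\Lambda^{k,m}T^*_x\M$ by incidence with $e^1$ and with $\nu$ in both the form and the vector factors: with respect to this grading all of $e^1\wedge$, $(e^1)^T\wedge$, $i_{e_1}$, $i^V_{e_1}$, $\nu\wedge$, $\nu^T\wedge$, $i_{\partial_r}$, $i^V_{\partial_r}$ are block (anti)diagonal, so $\Xi_{x,\xi}$ decouples into small blocks that can be inverted by hand. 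Alternatively one may exploit that $P_{\Bg}(x,\xi)=P_{\mathcal D}(x,\xi)^2$ for the Dirac-type operator $\mathcal D=\Hg+\Hg^*+\Fg+\Fg^*$ — which follows, using \eqref{eq:symbols_of_H_H^*_F_F^*}, from the vanishing of all off-type products of the four symbols — thereby reducing the fourth-order ODE analysis to a first-order one. Once injectivity is established, \thmref{thm:regular_elliptic_estimates} immediately yields the Fredholm property, the a-priori estimates, and elliptic regularity for all four problems.
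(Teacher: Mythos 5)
Your framework agrees with the paper's: localize to principal symbols (so one may take $D_\g=S_\g=0$), invoke the $\starG$, $\starG^V$ and transposition dualities of \propref{prop:duals_stars}, \eqref{eq:second_order_boundary_operators_duality} and \eqref{eq:Hodge_star_normal_tangentAA} to reduce to $\frakB_\TT$, count dimensions to see that $\Xi_{x,\xi}$ maps between complex spaces of equal dimension $2\binom{d}{k}\binom{d}{m}$, and thereby reduce the problem to proving injectivity. All of that is correct, and your observation that $P_{\Bg}=P_{\mathcal{D}}^2$ for the Dirac-type $\mathcal{D}=\Hg+\Hg^*+\Fg+\Fg^*$ is also valid. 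However, you do not carry out the injectivity computation --- you explicitly call it ``the main obstacle'' and offer only two unexecuted routes. This is a genuine gap, since that computation is the content of the theorem. Moreover your sketch contains a mis-step: the first-order conditions do \emph{not} force all tangential parts of $b$ to vanish. With $|\xi|=1$ and in the paper's notation, one finds $\ltt=0$ but $\ltn=\xi\wedge\onn$ and $\lnt=\xi^T\wedge\onn$, nonzero multiples of the $\frakn\frakn$-component of $a$; the coupling of these parts of $b$ to $\onn$, $\lnn$ via the second- and third-order boundary operators is what makes the final step work.

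What the paper does, and what is missing here, is the elimination: the second- and third-order equations, after substituting $\ltn$, $\lnt$, yield a two-by-two system in $\onn$, $\lnn$, whose reduction gives $\bigl(4\,\id + (\ixi\circ\xi\wedge)(\ixiV\circ\xi^T\wedge) + (\xi\wedge\ixi)(\xi^T\wedge\ixiV)\bigr)\onn=0$. The decisive point is that for unit $\xi$ the four operators $\ixi\circ\xi\wedge$, $\xi\wedge\ixi$, $\ixiV\circ\xi^T\wedge$, $\xi^T\wedge\ixiV$ are projections of operator norm one, so the bracketed operator is positive definite (indeed $\geq 2\,\id$) and hence invertible. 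This forces $\onn=0$, then $\lnn=0$, and with the zeroth- and first-order equations $a=b=0$. Without this positivity argument --- or a fully executed replacement such as the block decomposition or the Dirac-square factorization you gesture at --- the injectivity of $\Xi_{x,\xi}$, and hence regular ellipticity, is not established.
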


\begin{proof}
The proof is technical. It requires calculating  the symbols of all the operators and verifying the satisfaction of \defref{def:regular_elliptic}. We prove for the first set of operators; the other three follow by replacing $\psi\mapsto\starG\psi$, $\psi\mapsto\starG^{V}\psi$, $\psi\mapsto\starG\starG^{V}\psi$ and recalling  \eqref{eq:duals_stars},\eqref{eq:Hodge_star_normal_tangentAA}, and \eqref{eq:second_order_boundary_operators_duality}
($\bHg$ and $\bFg$ can be replaced interchangeably with $\Hg$ and $\Fg$ as they have the same symbols).

In the context of \defref{def:regular_elliptic}, $\E_1 = \E_2 = \Lambda^{k,m}T^*\M$, i.e.,
\[
\dim_\R\E_1 = \dim_\R\E_2 = \binom{d}{k} \binom{d}{m}.
\]
The fact that $P_{\Bg}(x,\xi) = |\xi|^4$ is an isomorphism if and only if $\xi\ne 0$ is immediate. We therefore  only need to verify the second condition in \defref{def:regular_elliptic}. 
We start by noting that for all $(x,\xi)\in T^*\M|_{\dM}$,
\[
\begin{split}
& P_{\PttD}(x,\xi):\Lambda^{k,m}T^*_x\M\to \Lambda^{k,m}T^*_x\dM  \equiv \bbF_1|_x \\
& P_{\frakT}(x,\xi):\Lambda^{k,m}T^*_x\M\to \Lambda^{k,m}T^*_x\dM  \equiv \bbF_2|_x \\ 
& P_{\PntD}(x,\xi):\Lambda^{k,m}T^*_x\M\to \Lambda^{k-1,m}T^*_x\dM  \equiv \bbF_3|_x \\
& P_{\frakF^*}(x,\xi):\Lambda^{k,m}T^*_x\M\to \Lambda^{k-1,m}T^*_x\dM  \equiv \bbF_4|_x \\
& P_{\PtnD}(x,\xi):\Lambda^{k,m}T^*_x\M\to \Lambda^{k,m-1}T^*_x\dM  \equiv \bbF_5|_x \\
& P_{\frakF}(x,\xi):\Lambda^{k,m}T^*_x\M\to \Lambda^{k,m-1}T^*_x\dM  \equiv \bbF_6|_x \\
& P_{\PttD H^*_\g}(x,\xi):\Lambda^{k,m}T^*_x\M\to \Lambda^{k-1,m-1}T^*_x\dM  \equiv \bbF_7|_x \\
& P_{\frakT H^*_\g}(x,\xi):\Lambda^{k,m}T^*_x\M\to \Lambda^{k-1,m-1}T^*_x\dM  \equiv \bbF_8|_x. 
\end{split}
\]
Using Pascal's formula, 
\[
\begin{aligned}
\dim_\R(\bbF_1 \oplus \bbF_3) &= \dim_\R(\bbF_2 \oplus \bbF_4) = \binom{d}{k} \binom{d-1}{m} \\
\dim_\R(\bbF_5 \oplus \bbF_7) &= \dim_\R(\bbF_6 \oplus \bbF_8) = \binom{d}{k} \binom{d-1}{m-1}, 
\end{aligned}
\]
so that
\[
\dim_\R \oplus_{j=1}^8 \bbF_j = \dim_\bbC \oplus_{j=1}^8 (\bbC\otimes \bbF_j) = 2 \binom{d}{k} \binom{d}{m}.
\]

We proceed to calculate the set $\calM^+_{x,\xi}$ for $x\in\dM$ and $\xi\in T_x^*\dM$.
Noting that
\[
P_{\Bg}(x,\xi + \imath\,\partial_s \, dr) = (|\xi|^2 - \partial^2_s)^2 = \partial^4_s - 2|\xi|^2 \partial^2_s + |\xi|^4,
\]
we obtain that
\[
\calM^+_{x,\xi}=\BRK{\omega_0\exp{(-|\xi|s)}+\lambda_0s\exp{(-|\xi|s)} ~:~ \omega_0,\lambda_0\in\mathbb{C}\otimes\Lambda^{k,m}T^*_x\M}.
\]
In particular,
\[
\dim_{\bbC} \calM^+_{x,\xi} = 2 \binom{d}{k} \binom{d}{m}.
\]
Thus, the map $\Xi_{x,\xi}:\calM^+_{x,\xi} \to \mathbb{C}\otimes(\oplus_{j=1}^8 \bbF_j|_x)$ is a linear map between complex vector spaces of the same dimension. It is therefore bijective if and only if it is injective, namely, if and only if it has a trivial kernel. The proof of \thmref{thm:elliptic_regularity} will be completed by showing that for $\sigma\in \calM^+_{x,\xi}$, 
\[
\sigma(s) = \omega_0\exp{(-|\xi|s)}+\lambda_0s\exp{(-|\xi|s)},
\]
it holds that
\[
\Xi_{x,\xi} \sigma = 0
\qquad\text{if and only if}\qquad 
\omega_0 = \lambda_0 = 0. 
\]
One direction is obvious, leaving the ``only if" part to be proved. 

To this end we need to calculate the symbols of the operators 
$\PttD$, $\PtnD$, $\PntD$, $\frakT$, $\frakF^*$, $\frakF$, 
$\PttD \bHg^*$ and $\frakT \bHg^*$.
We start with the symbols of the zeroth-order operators,
\[
\begin{split}
P_{\PttD}(x, \xi + \xi_d\, dr) \sigma &=  \PttD\sigma \\
P_{\PtnD}(x, \xi + \xi_d\, dr) \sigma &=  \PtnD\sigma \\
P_{\PntD}(x, \xi + \xi_d\, dr) \sigma &=  \PntD\sigma \\
P_{\PnnD}(x, \xi + \xi_d\, dr) \sigma &=  \PnnD\sigma,
\end{split}
\]
along with the symbols of the three relevant first-order operators,
\beq
\begin{split}
-\imath\, P_{\frakT}(x,\ \xi + \xi_d\, dr) \sigma &=  \PttD\xi_d\sigma -  \xi\wedge \PntD\sigma  -  \xi^T\wedge \PtnD\sigma \\
-\imath\, P_{\frakF^*}(x,\ \xi + \xi_d\, dr) \sigma &=   \PntD\xi_d\sigma -  \xi^T\wedge \PnnD\sigma  +  \ixi \PttD\sigma \\
-\imath\, P_{\frakF}(x,\ \xi + \xi_d\, dr) \sigma &=   \PtnD\xi_d\sigma -  \xi \wedge \PnnD\sigma   +  \ixiV \PttD\sigma.
\end{split}
\label{eq:first_order_boundary_symbols}
\eeq
The symbols of the only relevant second-order operator is
\[
\begin{split}
P_{\PttD \bHg^*}(x,\ \xi + \xi_d\, dr) \sigma &=  -\PnnD\xi_d^2\sigma -  \ixi \PtnD\xi_d\sigma -
\ixiV \PntD\xi_d\sigma - \ixi \ixiV \PttD\sigma.
\end{split}
\]

For the symbol of the third-order operator $\frakT \bHg^*$, we combine \eqref{eq:first_order_boundary_symbols}, \eqref{eq:symbols_of_H_H^*_F_F^*} and \eqref{eq:principel_homomo} to obtain
\[
\begin{split}
-\imath\, P_{\frakT \bHg^*}(x,\xi+\xi_d\,dr)\sigma&=\xi\wedge\PntD(\ixi\ixiV\sigma+i_{\dr}\ixiV\xi_d\sigma+\ixi i_{\dr}^V\xi_d\sigma+i_{\dr}i^V_{\dr}\xi_d^2\sigma) \\
&+\xi^T\wedge\PtnD(\ixi\ixiV\sigma+i_{\dr}\ixiV\xi_d\sigma+\ixi i_{\dr}^V\xi_d\sigma+i_{\dr}i^V_{\dr}\xi_d^2\sigma) \\
&-\xi_d\, \PttD(\ixi\ixiV\sigma+i_{\dr}\ixiV\xi_d\sigma+\ixi i_{\dr}^V\xi_d\sigma
+i_{\dr}i^V_{\dr}\xi_d^2\sigma).
\end{split}
\]
Noting that
\[
\begin{aligned}
&\PntD \ixi\ixiV\sigma=-\ixi\ixiV\PntD\sigma
&\qquad &\PntD i_{\dr}\ixiV\xi_d\sigma=0
\\&\PntD \ixi i_{\dr}^V\xi_d\sigma=-\ixi\PnnD\xi_d\sigma 
&\qquad &\PntD i_{\dr}i_{\dr}^V\xi_d^2\sigma=0
\\
\\&\PtnD \ixi\ixiV\sigma=-\ixi\ixiV\PtnD\sigma
&\qquad &\PtnD i_{\dr}\ixiV\xi_d\sigma=-\ixiV\PnnD\xi_d\sigma
\\&\PtnD \ixi i_{\dr}^V\xi_d\sigma=0
&\qquad &\PtnD i_{\dr}i_{\dr}^V \xi_d^2\sigma=0
\\ 
\\&\PttD \ixi\ixiV\sigma=\ixi\ixiV\PttD\sigma
&\qquad &\PttD i_{\dr}\ixiV\xi_d\sigma=\ixiV\PntD\xi_d\sigma
\\&\PttD \ixi i_{\dr}^V\xi_d\sigma=\ixi\PtnD\xi_d\sigma
&\qquad &\PttD i_{\dr}i_{\dr}^V\xi_d^2\sigma=\PnnD\xi_d^2\sigma,
\end{aligned}
\] 
we obtain
\[
\begin{split}
-\imath\, P_{\frakT \bHg^*}(x,\xi+\xi_d\,dr)\sigma&=-\xi\wedge \ixi\ixiV\PntD\sigma
-\xi\wedge \ixi\PnnD\xi_d\sigma 
\\&-\xi^T\wedge \ixi\ixiV\PtnD\sigma-\xi^T\wedge \ixiV\PnnD\xi_d\sigma
\\&-\ixi\ixiV\PttD\xi_d\sigma
-\ixiV\PntD\xi_d^2\sigma
-\ixi\PtnD\xi_d^2\sigma
-\PnnD\xi_d^3\sigma.
\end{split}
\]

Let $\sigma\in \calM^+_{x,\xi}$ be given by
\[
\sigma(s) = \omega_0\exp{(-|\xi|s)}+\lambda_0s\exp{(-|\xi|s)}.
\]
We further note that we may restrict the analysis to $\xi\ne0$ having unit norm. Then,
\[
\begin{split}
\sigma(0) &= \omega_0 \\
\dot{\sigma}(0) &= -\omega_0+\lambda_0 \\
\ddot{\sigma}(0) &= \omega_0-2\lambda_0 \\
\dddot{\sigma}(0) &= -\omega_0+3\lambda_0.
\end{split}
\]
To calculate $\Xi_{x,\xi}\sigma$, we write the symbols of $\PttD$, $\PtnD$, $\PntD$, $\frakT$, $\frakF^*$, $\frakF$,  $\PttD \bHg^*$ and $\frakT \bHg^*$, and substitute,
\[
\begin{split}
\sigma &\mapsto \omega_0 \\
\xi_d \sigma &\mapsto -\imath\,\omega_0+\imath\,\lambda_0 \\
\xi_d^2 \sigma &\mapsto -\omega_0+2\,\lambda_0\\
\xi_d^3 \sigma &\mapsto \imath\,\omega_0-3\imath\,\lambda_0.
\end{split}
\]
To simply notations, set
\[
\begin{gathered}
\onn=\PnnD\omega_0, \qquad \ott=\PttD\omega_0, \qquad \ont = \imath\,\PntD\omega_0
\qquad
\otn = \imath\,\PtnD\omega_0 \\
\lnn=\PnnD\lambda_0, \qquad \ltt=\PttD\lambda_0, \qquad \lnt = \imath\,\PntD\lambda_0
\qquad
\ltn = \imath\,\PtnD\lambda_0.
\end{gathered}
\]
The zeroth-order equations yield 
\[
\ott = 0 \qquad \otn=0 \qquad \ont=0.
\]
The first-order equations (taking into account the zeroth-order equations) yield
\[
\ltt=0 \qquad \ltn=\xi\wedge\onn \qquad \lnt=\xi^T\wedge\onn,
\]
The second-order equation yields 
\[
\onn-2\lnn-\ixi\ltn-\ixiV\lnt=0.
\]
Finally, the third-order equation yields
\[
(\xi\wedge\ixi + \xi^T\wedge\ixiV)(\lnn - \onn) - 2\ixiV\lnt - 2\ixi\ltn + \onn - 3\lnn=0.
\]

The real and complex components decouple, hence we may restrict the analysis to sections of real vector bundles. By eliminating $\ltn,\lnt$ we get,
\[
\begin{gathered}
\onn-2\lnn- (\ixi\circ \xi\wedge + \ixiV\circ \xi^T\wedge)\onn=0
\\
(\xi\wedge\ixi+ \xi^T\wedge\ixiV)(\lnn - \onn)  -2(\ixiV\circ \xi^T\wedge+ \ixi\circ \xi\wedge)\onn + \onn - 3\lnn=0.
\end{gathered}
\]
The proof is complete if we prove that $\onn=\lnn=0$.
Noting that $\xi\wedge\ixi = \id - \ixi\circ \xi\wedge$ and $\xi^T\wedge\ixiV = \id - \ixiV\circ \xi^T\wedge$, the second equation takes the form
\[
(\xi\wedge\ixi+ \xi^T\wedge\ixiV)\lnn  - (\ixiV\circ \xi^T\wedge+ \ixi\circ \xi\wedge)\onn - \onn - 3\lnn=0,
\]
which combining with the first equation reduces the system into
\[
\begin{aligned}
2\lnn + (\id - \xi\wedge \ixi  - \xi^T\wedge \ixiV)\onn &= 0 \\
2\onn + (\id - \xi\wedge\ixi - \xi^T\wedge\ixiV)\lnn &= 0.
\end{aligned}
\]
Eliminating $\lnn$ we obtain
\[
4\onn - (\id - \xi\wedge\ixi - \xi^T\wedge\ixiV)(\id - \xi\wedge \ixi  - \xi^T\wedge \ixiV)\onn = 0,
\]
and after further simplification 
\[
\brk{4\,\id  + (\ixi \circ \xi\wedge )(\ixiV \circ \xi^T\wedge)  + (\xi\wedge\ixi)(\xi^T\wedge \ixiV)} \onn = 0.
\] 
Since $(\ixi \circ \xi\wedge)$, $(\ixiV \circ \xi^T\wedge)$, $(\xi\wedge\ixi)$ and $(\xi^T\wedge\ixiV)$ are projections, i.e., have operator norm one, it follows that $\onn=0$ and $\lnn=0$.
\end{proof}

\subsection{Bilaplacian analysis}
\label{subsec:biharmonic_analysis}
In this section we use the elliptic regularity of the double bilaplacian.
We work with the Hilbert space 
\[
\scrH = L^2\VectorFormsKM.
\]
We define its (dense) subspaces,
\[
\begin{split}
H^2_\TT\VectorFormsKM &=H^2\VectorFormsKM \cap\ker(\PttD,\frakT,\PntD,\frakF^*,\PtnD,\frakF) \\
H^4_\TT\VectorFormsKM &= H^4\VectorFormsKM\cap \ker(\PttD,\frakT,\PntD,\frakF^*,\PtnD,\frakF,\PttD\bHg^*,\frakT\bHg^*) \\
H^2_\NN\VectorFormsKM &=H^2\VectorFormsKM\cap\ker(\PnnD,\frakT^*,\PntD,\frakF^*,\PtnD,\frakF) \\
H^4_\NN\VectorFormsKM &= H^4\VectorFormsKM\cap \ker(\PnnD,\frakT^*,\PntD,\frakF^*,\PtnD,\frakF,\PnnD\bHg,\frakT^*\bHg) \\
H^2_\NT\VectorFormsKM &=H^2\VectorFormsKM\cap\ker(\PntD,\frakF^*,\PttD,\frakT,\PnnD,\frakT^*) \\
H^4_\NT\VectorFormsKM &= H^4\VectorFormsKM\cap \ker(\PntD,\frakF^*,\PttD,\frakT,\PnnD,\frakT^*,\PntD\bFg,\frakF^*\bFg) \\
H^2_\TN\VectorFormsKM &=H^2\VectorFormsKM\cap\ker(\PtnD,\frakF,\PttD,\frakT,\PnnD,\frakT^*) \\
H^4_\TN\VectorFormsKM &= H^4\VectorFormsKM\cap \ker(\PtnD,\frakF,\PttD,\frakT,\PnnD,\frakT^*,\PtnD\bFg^*,\frakF\bFg^*),
\end{split}
\]
and the vector bundle,
\[
\E=\Lambda^{k+1,m+1}T^*\M\oplus\Lambda^{k-1,m-1}T^*\M\oplus\Lambda^{k-1,m+1}T^*\M\oplus\Lambda^{k+1,m-1}T^*\M.
\]
For every $\RR\in\BRK{\TT,\TN,\NT,\NN}$, we define the unbounded operator $D_\RR:\scrH\to L^2\Gamma(\E)$,
\[
\begin{gathered}
\scrD(D_\RR) = H^2_\RR\VectorFormsKM 
\qquad
D_\RR\psi = (\bHg\psi, \bHg^*\psi, \bFg^*\psi, \bFg\psi),
\end{gathered}
\]
where $\scrD(\cdot)$ denotes the domain of an unbounded operator; 
we denote its closure (which exists because its dual is defined on a dense subset) by $\bar{D}_\RR$, and define the operator $B_\RR:\scrH\to\scrH$,
\[
\begin{gathered}
\scrD(B_\RR) = H^4_\RR\VectorFormsKM
\qquad 
B_\RR\psi = \Bg \psi.
\end{gathered}
\]

\begin{proposition}
\label{prop:boundary_data_vanish}
Let $\RR\in\BRK{\TT,\TN,\NT,\NN}$ and let $\psi\in H^2_\RR\VectorFormsKM$ be such that
\[
\Bg\psi\in \scrH,
\]
where the left-hand side is defined in the distributive sense. Then, 
\[
\bra \Bg\psi, \eta\ket=\bra D_\RR\psi, D_\RR\eta\ket 
\qquad\text{for all $\eta\in H^2_\RR\VectorFormsKM$}
\]
if and only if $\psi\in H^4_\RR\VectorFormsKM$, in which  case $\Bg\psi = B_\RR\psi$.
\end{proposition}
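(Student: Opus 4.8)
The plan is to establish the two implications. ``$\psi\in H^4_\RR\VectorFormsKM\Rightarrow$ the weak identity'' is a direct integration by parts; the converse combines an elementary functional-analytic argument with the regular ellipticity of $\frakB_\RR$ (\thmref{thm:elliptic_regularity}).

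\emph{Forward direction.} Take $\psi\in H^4_\RR\VectorFormsKM$ and treat $\RR=\TT$, the other three cases following by the substitutions $\psi\mapsto\starG\psi,\starG^V\psi,\starG\starG^V\psi$ and the duality relations \eqref{eq:duals_stars}, \eqref{eq:Hodge_star_normal_tangentAA}, \eqref{eq:second_order_boundary_operators_duality}, as in the proof of \thmref{thm:elliptic_regularity}. Since $\psi\in H^4$, the four summands of $\Bg\psi=\bHg\bHg^*\psi+\bHg^*\bHg\psi+\bFg^*\bFg\psi+\bFg\bFg^*\psi$ lie in $\scrH$, and I pair each with $\eta$ and integrate by parts via \eqref{eq:integration_by_parts_F_H_extended}. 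For $\bHg^*\bHg\psi$, $\bFg^*\bFg\psi$, $\bFg\bFg^*\psi$ the boundary integrals involve $(\PttD\eta,\frakT\eta)$, $(\PtnD\eta,\frakF\eta)$, $(\PntD\eta,\frakF^*\eta)$ and vanish because $\eta\in H^2_\TT\VectorFormsKM$, leaving $\bra\bHg^*\bHg\psi,\eta\ket=\bra\bHg\psi,\bHg\eta\ket$, $\bra\bFg^*\bFg\psi,\eta\ket=\bra\bFg\psi,\bFg\eta\ket$, $\bra\bFg\bFg^*\psi,\eta\ket=\bra\bFg^*\psi,\bFg^*\eta\ket$; the fourth summand gives
\[
\bra\bHg\bHg^*\psi,\eta\ket=\bra\bHg^*\psi,\bHg^*\eta\ket+\int_{\dM}\big[(\PttD\bHg^*\psi,\frakT^*\eta)_{\gD}-(\frakT\bHg^*\psi,\PnnD\eta)_{\gD}\big]\VolumeD,
\]
whose boundary term vanishes since $\psi\in H^4_\TT\VectorFormsKM$ forces $\PttD\bHg^*\psi=\frakT\bHg^*\psi=0$. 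Summing the four identities yields the weak identity, and $B_\TT\psi=\Bg\psi$ as $B_\TT$ is the restriction of $\Bg$ to $H^4_\TT\VectorFormsKM$. The same computation with both $\psi$ and $\eta$ taken in $H^4_\RR\VectorFormsKM$ shows, in addition, that the unbounded operator $B_\RR$ on $\scrH$ is symmetric and that $B_\RR\subseteq\bar D_\RR^*\bar D_\RR$, where $\bar D_\RR^*\bar D_\RR$ is self-adjoint by von Neumann's theorem.

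\emph{Converse.} Let $\psi\in H^2_\RR\VectorFormsKM$ with $\chi:=\Bg\psi\in\scrH$ distributionally, and assume the weak identity. As $\scrD(D_\RR)=H^2_\RR\VectorFormsKM$ is a core for $\bar D_\RR$ and both sides of the identity are continuous in $\eta$ for the graph norm of $\bar D_\RR$, the identity extends to $\bra\chi,\eta\ket=\bra D_\RR\psi,\bar D_\RR\eta\ket$ for all $\eta\in\scrD(\bar D_\RR)$; hence $\bar D_\RR\psi=D_\RR\psi\in\scrD(\bar D_\RR^*)$ and $\psi\in\scrD(\bar D_\RR^*\bar D_\RR)$ with $\bar D_\RR^*\bar D_\RR\psi=\chi$. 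It thus suffices to prove $B_\RR=\bar D_\RR^*\bar D_\RR$, for then $\psi\in\scrD(B_\RR)=H^4_\RR\VectorFormsKM$ and $\Bg\psi=B_\RR\psi$. Since $B_\RR$ is symmetric and contained in the self-adjoint $\bar D_\RR^*\bar D_\RR$, it is enough to show that $B_\RR$ is itself self-adjoint, for which it suffices that $B_\RR\pm i$ be surjective onto $\scrH$ (a symmetric operator with $\operatorname{Range}(B_\RR\pm i)=\scrH$ is self-adjoint, and a self-adjoint extension of $B_\RR$ cannot be proper).

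\emph{Surjectivity of $B_\RR\pm i$.} For real $t\neq0$, the boundary-value problem $(\Bg+it,\ \text{the eight boundary operators of }\frakB_\RR)$ has the same principal symbol as $\frakB_\RR$, hence is regular elliptic by \thmref{thm:elliptic_regularity}, so by \thmref{thm:regular_elliptic_estimates} the operator $B_\RR+it:H^4_\RR\VectorFormsKM\to\scrH$ is Fredholm. If $u$ is in its kernel, the forward direction gives $0=\bra(\Bg+it)u,u\ket=\|D_\RR u\|^2+it\|u\|^2$, so $u=0$; hence $\ker(B_\RR+it)=0$, and similarly $\ker(B_\RR-it)=0$. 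Because $\Bg$ is formally self-adjoint and — by the symmetry of $B_\RR$ together with the dimension count $\dim\bigoplus_j\bbF_j=2\binom{d}{k}\binom{d}{m}$ from the proof of \thmref{thm:elliptic_regularity} — the eight boundary conditions form a self-adjoint boundary condition, the boundary-value problem adjoint to $(\Bg+it,\dots)$ is $(\Bg-it,\dots)$, whose kernel is $0$ as well; therefore $\operatorname{coker}(B_\RR+it)=0$, so $B_\RR+it$ is bijective for $t\neq0$. Consequently $B_\RR$ is self-adjoint, which finishes the proof. The main obstacle is this last step — identifying the adjoint boundary-value problem, equivalently upgrading the $H^2$ weak solution to an $H^4$ strong one: it is here that the full regular elliptic theory (Green's formula together with the Fredholm and regularity results of \thmref{thm:regular_elliptic_estimates}, proved in \cite[Ch.~20]{Hor07}) is genuinely required, beyond the a priori estimate alone.
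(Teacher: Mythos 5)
Your forward direction matches the paper's: integrate by parts via \eqref{eq:integration_by_parts_F_H_extended}, observe that the only surviving boundary term involves $(\PttD\bHg^*\psi,\frakT^*\eta)$ and $(\frakT\bHg^*\psi,\PnnD\eta)$, and that it vanishes because $\psi\in H^4_\TT$. The converse, however, takes a route the paper does not take, and it has a genuine gap that you yourself flag at the end. You reduce the converse to the self-adjointness of $B_\RR$, which you attempt to prove by showing $B_\RR\pm i$ surjective. Injectivity is fine, but the cokernel step is asserted, not proved: the claim that ``the eight boundary conditions form a self-adjoint boundary condition'' so that $\operatorname{coker}(B_\RR+it)=\ker(B_\RR-it)$ is exactly the statement that any $v\in L^2$ orthogonal to $\operatorname{Range}(B_\RR+it)$ satisfies the $\RR$-boundary conditions. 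To show that, you would need to (a) know that such a $v$, being $L^2$ with $(\Bg-it)v=0$ distributionally, has well-defined Cauchy data up to order three, and (b) use \lemref{lemma:prescribe_boundary_conditions}/\corrref{cor:full_problem_boundary} to vary the complementary boundary data of $u\in H^4_\RR$ and extract the boundary conditions on $v$. But (a) is precisely the trace regularity fact from \cite[p.~459]{Tay11a} that the paper invokes, and once you have it, you can apply it directly to $\psi$ itself and skip the self-adjointness detour entirely. Also note the circularity risk: the paper proves self-adjointness of $B_\RR$ (\propref{prop:self_adjoint}) \emph{using} the present proposition, so you cannot cite it here — your independent surjectivity argument would have to carry the full load, and it currently does not.

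The paper's converse is shorter and more direct. Since $\psi\in H^2$ and $\Bg\psi\in L^2$ with $\Bg$ elliptic, $\psi$ has well-defined boundary data up to order three (\cite[p.~459]{Tay11a}), so in particular $\PttD\bHg^*\psi$ and $\frakT\bHg^*\psi$ are defined as distributions on $\dM$, and the integration-by-parts identity
\[
\bra\Bg\psi,\eta\ket-\bra D_\TT\psi,D_\TT\eta\ket=\int_{\dM}\Brk{(\PttD\bHg^*\psi,\frakT^*\eta)_{\gD}-(\frakT\bHg^*\psi,\PnnD\eta)_{\gD}}\VolumeD
\]
is valid for $\eta\in H^2_\TT\VectorFormsKM$. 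Assuming the weak identity, the right-hand side vanishes for all such $\eta$; by \lemref{lemma:prescribe_boundary_conditions} the data $\PnnD\eta$ and $\frakT^*\eta$ can be prescribed independently, forcing $\PttD\bHg^*\psi=\frakT\bHg^*\psi=0$, and then the elliptic estimate \eqref{eq:elliptic_estimate} upgrades $\psi$ to $H^4_\RR\VectorFormsKM$. If you want to keep your plan, replace the surjectivity-of-$B_\RR\pm i$ step with exactly this trace-plus-extension argument applied to the cokernel element $v$; but you will then have reproved the paper's argument inside your own, which suggests that the detour through von Neumann's theorem buys nothing here.
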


\begin{proof}
The proof is  based on the regular ellipticity of the boundary data $\RR$; by duality,  we may consider only the case of  $\RR=\TT$. Let $\psi\in H^2_\TT\VectorFormsKM$ satisfy $\Bg\psi\in \scrH$. 
Since $\Bg$ is an elliptic operator, if follows from \cite[p.~459]{Tay11a} that $\psi$ has well-defined boundary data, namely $\PttD \bHg^*\psi,\frakT \bHg^*\psi$ are defined.
By definition,
\[
\bra D_\TT\psi, D_\TT\eta\ket = \bra \bHg\psi, \bHg\eta\ket +\bra \bHg^*\psi, \bHg^*\eta\ket +\bra \bFg^*\psi, \bFg^*\eta\ket+\bra \bFg\psi, \bFg\eta\ket.
\]
Using \eqref{eq:H_general_integral}  along with $\eta\in H^2_\TT\VectorFormsKM$,  
\[
\bra \Bg\psi, \eta\ket - \bra D_\TT\psi, D_\TT\eta\ket =
\int_{\dM}\Brk{(\PttD \bHg^*\psi,\frakT^* \eta)_\gD - (\frakT \bHg^*\psi,\PnnD \eta)_\gD}\VolumeD.
\]
Clearly, if $\psi\in H^4_\TT\VectorFormsKM$, then the right-hand side vanishes for all $\eta\in H^2_\TT\VectorFormsKM$.
As for the other direction, suppose that the left-hand side vanishes for all $\eta\in H^2_\TT\VectorFormsKM$.
From \lemref{lemma:prescribe_boundary_conditions}, we can produce $\eta\in H^2_\TT\VectorFormsKM$ such that
$\PnnD\eta$ and $\frakT^*\eta$ are prescribed independently, hence $\PttD \bHg^*\psi=0$ and $\frakT \bHg^*\psi=0$, and by the elliptic estimate \eqref{eq:elliptic_estimate}, $\psi\in H^4_\TT\VectorFormsKM$.
\end{proof}

From here on we follow \cite[pp.~462--463]{Tay11b}, using the elliptic regularity established in \thmref{thm:elliptic_regularity}. 

\begin{proposition}
\label{prop:self_adjoint}
$B_\RR$ is self-adjoint for all $\RR\in\BRK{\TT,\TN,\NT,\NN}$.
\end{proposition}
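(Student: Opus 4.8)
The plan is to follow the treatment of the Hodge Laplacian with relative and absolute boundary conditions in \cite[pp.~462--463]{Tay11b}. By $L^2$-duality it suffices to treat $\RR=\TT$, the other three cases following by the same Hodge-star substitutions $\psi\mapsto\starG\psi$, $\starG^V\psi$, $\starG\starG^V\psi$ (and \eqref{eq:duals_stars}, \eqref{eq:Hodge_star_normal_tangentAA}, \eqref{eq:second_order_boundary_operators_duality}) used in the proof of Theorem~\ref{thm:elliptic_regularity}. First I would record that $B_\TT$ is symmetric and closed. Symmetry is immediate from Proposition~\ref{prop:boundary_data_vanish}: for $\psi,\eta\in H^4_\TT\VectorFormsKM$ one has $\bra B_\TT\psi,\eta\ket=\bra\Bg\psi,\eta\ket=\bra D_\TT\psi,D_\TT\eta\ket$, which is symmetric in $\psi$ and $\eta$; hence $B_\TT\subseteq B_\TT^{*}$. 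Closedness follows because on $H^4_\TT\VectorFormsKM$, where the eight boundary terms of \eqref{eq:elliptic_estimate} vanish, the graph norm $\Norm{\psi}_{L^2}+\Norm{\Bg\psi}_{L^2}$ is equivalent to $\Norm{\psi}_{H^4}$, and $H^4_\TT\VectorFormsKM$ is closed in $H^4\VectorFormsKM$. It then remains to prove $\scrD(B_\TT^{*})\subseteq\scrD(B_\TT)$.

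So let $\psi\in\scrD(B_\TT^{*})$ with $\chi:=B_\TT^{*}\psi$, i.e.\ $\bra\psi,\Bg\eta\ket=\bra\chi,\eta\ket$ for all $\eta\in H^4_\TT\VectorFormsKM$. Testing against $\Omega^{k,m}_{c}(\M)\subseteq H^4_\TT\VectorFormsKM$ yields $\Bg\psi=\chi$ in the distributional sense, so $\psi\in H^4_{\mathrm{loc}}(\mathrm{int}\,\M)$ by interior elliptic regularity; and since $\Bg$ is elliptic with $\chi\in L^2$, the traces of $\psi$ through third normal order exist in negative-order Sobolev spaces on $\dM$ (cf.\ \cite[p.~459]{Tay11a}), so the eight boundary operators of $\frakB_\TT$ can be applied to $\psi$. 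Iterating the integration-by-parts formulas \eqref{eq:integration_by_parts_F_H_extended} (justifying the boundary integral for such $\psi$ against smooth $\eta$ by density) produces a Green's formula in which $\bra\Bg\psi,\eta\ket-\bra\psi,\Bg\eta\ket$ equals a boundary integral pairing the eight $\frakB_\TT$-boundary data of $\psi$ antisymmetrically against the eight complementary boundary data of $\eta$. For $\eta\in H^4_\TT\VectorFormsKM$ this left-hand side vanishes and the $\frakB_\TT$-data of $\eta$ vanish, leaving only the pairing of the $\frakB_\TT$-data of $\psi$ with the complementary data of $\eta$; since the latter may be prescribed arbitrarily while keeping $\eta\in H^4_\TT\VectorFormsKM$ (Corollary~\ref{cor:full_problem_boundary}), all eight $\frakB_\TT$-boundary data of $\psi$ must vanish. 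Hence $\psi$ is a distributional solution of the regular elliptic problem $\frakB_\TT\psi=(\chi,0,\dots,0)$ with $\chi\in L^2$, and Theorem~\ref{thm:elliptic_regularity} --- via the estimate \eqref{eq:elliptic_estimate} and the standard elliptic-regularity upgrade of a weak solution with $L^2$ right-hand side to an $H^4$ solution --- forces $\psi\in H^4\VectorFormsKM$, whence $\psi\in H^4_\TT\VectorFormsKM=\scrD(B_\TT)$ and $B_\TT\psi=\Bg\psi=\chi=B_\TT^{*}\psi$. Combined with symmetry this gives $B_\TT=B_\TT^{*}$.

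I expect the main obstacle to be precisely this boundary analysis: making sense of the traces of $\psi$ and of the Green's formula coming from \eqref{eq:integration_by_parts_F_H_extended} when $\psi$ is known a priori only to satisfy $\psi\in L^2$ and $\Bg\psi\in L^2$ (the approximation step near $\dM$), together with the weak-to-strong regularity step. An equivalent, perhaps cleaner, organization is to prove the operator identity $B_\RR=\bar D_\RR^{\,*}\bar D_\RR$ and conclude by von Neumann's theorem that $T^{*}T$ is self-adjoint for every closed densely-defined $T$: the inclusion ``$\subseteq$'' is the ``if'' direction of Proposition~\ref{prop:boundary_data_vanish} extended by density to $\scrD(\bar D_\RR)$, while ``$\supseteq$'' is the argument above together with the identification $\scrD(\bar D_\RR)=H^2_\RR\VectorFormsKM$, which follows from the Korn-type inequality (itself a consequence of Theorem~\ref{thm:elliptic_regularity}).
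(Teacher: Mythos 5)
Your proposal is correct in outline, but your primary argument takes a genuinely different route from the paper's; the paper does exactly what you sketch only in your closing paragraph. Namely, it forms the closed operator $\bar{D}_\RR$, invokes von Neumann's theorem to conclude that $\bar{D}_\RR^*\bar{D}_\RR$ is self-adjoint, uses \propref{prop:boundary_data_vanish} to show $\scrD(\bar{D}_\RR^*\bar{D}_\RR)\subset H^4_\RR\VectorFormsKM$, so that $B_\RR$ is a symmetric extension of $\bar{D}_\RR^*\bar{D}_\RR$, and finishes with the fact that a self-adjoint operator admits no proper symmetric extension. What that buys is that the only boundary analysis required is the one already done in \propref{prop:boundary_data_vanish}, where $\psi$ lies in (the closure of) $H^2_\RR\VectorFormsKM$ and only the two higher-order traces $\PttD\bHg^*\psi$ and $\frakT\bHg^*\psi$ need to be extracted. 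Your primary route --- symmetry plus a direct proof of $\scrD(B_\TT^*)\subseteq\scrD(B_\TT)$ --- must instead start from $\psi\in L^2$ with $\Bg\psi\in L^2$ only distributionally, so all eight $\frakB_\TT$-traces of $\psi$ live in negative-order Sobolev spaces, the Green's formula becomes a duality pairing to be justified by approximation near $\dM$, and the last step requires upgrading a very weak $L^2$-solution of the homogeneous-boundary regular elliptic problem to an $H^4$-solution; the a priori estimate \eqref{eq:elliptic_estimate} alone does not deliver this, one needs the full regularity theory of \cite[Ch.~20]{Hor07} (or a transposition argument). You correctly flag this as the main obstacle; it is doable, but it is precisely the work the von Neumann argument sidesteps. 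Your accounting of which boundary data of $\psi$ pair against which freely prescribable data of $\eta\in H^4_\TT\VectorFormsKM$ is right, and \corrref{cor:full_problem_boundary} does supply the needed test forms. One caution about your alternative organization: in the paper the Korn-type inequality \eqref{eq:generlized_korn_inequality} and the continuous embedding $\scrD(\bar{D}_\RR)\hookrightarrow H^2\VectorFormsKM$ are proved \emph{after}, and using, the self-adjointness of $B_\RR$, so invoking them to identify $\scrD(\bar{D}_\RR)$ with $H^2_\RR\VectorFormsKM$ inside this proof would be circular as the paper is structured; the paper instead works directly with the distributional description of $\scrD(\bar{D}_\RR)$ and \propref{prop:boundary_data_vanish}.
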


\begin{proof}
Every closed unbounded linear operator $\scrH\to\scrH$ defined on a dense domain has a Hilbert space adjoint. 
Let $\bar{D}_\RR^*$ be the adjoint of $\bar{D}_\RR$; 
By von Neumann's theorem \cite[Thm.~13.13]{Rud91},  $\bar{D}_\RR^*\bar{D}_\RR:\scrH\to\scrH$ is an unbounded, self-adjoint operator, having dense domain. It remains to prove that $\bar{D}_\RR^*\bar{D}_\RR = B_\RR$. The domain of $\bar{D}_\RR$ is
\[
\scrD(\bar{D}_\RR)=\BRK{\psi\in\scrH~:~ D_{\RR}\psi\in L^2\Gamma(\E), \quad \psi\in\ker(\PttD,\PtnD,\PntD,\frakT,\frakF^*,\frakF)},
\]
where the operators in the defining expression are to be understood distributively. 
Let $\psi\in\scrD(\bar{D}_\RR^*\bar{D}_\RR)$.  
By definition of the Hilbert space dual, for all $\eta \in H^2_\RR\VectorFormsKM  = \scrD(D_\RR) \subset \scrD(\bar{D}_\RR)$, 
\[
\bra \bar{D}_\RR^*\bar{D}_\RR\psi, \eta\ket = \bra \bar{D}_\RR\psi, \bar{D}_\RR\eta\ket  = \bra D_\RR\psi, D_\RR\eta\ket,
\] 
where in the last passage we used the fact that $\bar{D}_\RR$ and $D_\RR$ coincide on $\scrD(\bar{D}_\RR)$. 
In particular, for all $\eta$ smooth and compactly-supported, and since $D_{\RR}^*$ is always closed with $\bar{D}_\RR^*=D_{\RR}^*$, by integration by parts we have $D_\RR^*D_\RR\eta=\Bg\eta$ and
\[
\bra \bar{D}_\RR^*\bar{D}_\RR\psi, \eta\ket =\bra \psi, D_\RR ^*D_\RR\eta\ket=\bra \psi,\Bg\eta\ket= \bra \Bg \psi, \eta\ket,
\]

where $\Bg \psi$ has now to be interpreted in a distributional sense. Since smooth, compactly-supported sections are dense in $\scrH$, we conclude that $\Bg \psi = \bar{D}_\RR^*\bar{D}_\RR\psi \in \scrH$, and by \propref{prop:boundary_data_vanish}, $\psi \in H^4_\RR\VectorFormsKM$. This proves that $\scrD(\bar{D}_\RR^*\bar{D}_\RR) \subset H^4_\RR\VectorFormsKM = \scrD(B_\RR)$, and that $\bar{D}_\RR^*\bar{D}_\RR$ coincides with $B_\RR$ on its domain.

Thus, $B_\RR$ is a symmetric extension of $\bar{D}_\RR^*\bar{D}_\RR$, however a self-adjoint operator does not have proper symmetric extensions \cite[Thm.~13.15]{Rud91}, hence $B_\RR = \bar{D}_\RR^*\bar{D}_\RR$.
\end{proof}

Since $B_\RR$  is a Fredholm operator, it follows that $\ker B_\RR$ is a finite-dimensional subspace of $\VectorFormsKM$; in particular, its elements are smooth. 
This observation motivates the following analog of the harmonic modules in the Hodge decomposition:

\begin{definition}
For every $\RR\in\BRK{\TT,\TN,\NT,\NN}$, the biharmonic $\RR$-module is the finite-dimensional subspace of $\VectorFormsKM$,
\[
\BH^{k,m}_\RR(\M)=\ker B_\RR  .
\]
\end{definition}

\begin{proposition}
\label{prop:5.11}
The biharmonic $\RR$-modules can be characterized as follows,
\[
\BH^{k,m}_\RR(\M) = \BHkm \cap H^2_\RR\VectorFormsKM,
\]
where 
\beq
\BHkm = \VectorFormsKM \cap\ker( \bHg , \bHg^* , \bFg^* , \bFg).
\label{eq:def_BHkm}
\eeq
\end{proposition}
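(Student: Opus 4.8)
The plan is to leverage the factorization $B_\RR = \bar{D}_\RR^*\bar{D}_\RR$ obtained in the proof of \propref{prop:self_adjoint}, together with \propref{prop:boundary_data_vanish}. Since $\ker B_\RR = \BH^{k,m}_\RR(\M)$ by definition, and since for any densely-defined closed operator $T$ one has $\ker(T^*T)=\ker T$ --- if $T^*T\psi=0$ then $\Norm{T\psi}^2 = \bra T^*T\psi,\psi\ket = 0$, and the converse inclusion is trivial --- it suffices to identify
\[
\ker\bar{D}_\RR = \BHkm\cap H^2_\RR\VectorFormsKM,
\]
which I will do by proving the two inclusions separately.

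For $\ker\bar{D}_\RR\subseteq\BHkm\cap H^2_\RR\VectorFormsKM$: let $\psi\in\ker\bar{D}_\RR$. By the description of $\scrD(\bar{D}_\RR)$ (as in the proof of \propref{prop:self_adjoint}, with the six zeroth- and first-order boundary operators that define $H^2_\RR\VectorFormsKM$), $\psi$ satisfies those six boundary conditions in the distributional sense and $\bHg\psi=\bHg^*\psi=\bFg^*\psi=\bFg\psi=0$; in particular $\Bg\psi=0\in\scrH$. As $D_\RR\psi=0$, the identity $\bra\Bg\psi,\eta\ket=\bra D_\RR\psi,D_\RR\eta\ket$ holds (both sides vanishing) for every $\eta\in H^2_\RR\VectorFormsKM$, so \propref{prop:boundary_data_vanish} gives $\psi\in H^4_\RR\VectorFormsKM$, and then elliptic regularity (\thmref{thm:elliptic_regularity} together with \thmref{thm:regular_elliptic_estimates}, applied to the boundary-value problem $\frakB_\RR$ with vanishing data) shows $\psi\in\VectorFormsKM$. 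Hence $\psi\in\BHkm\cap H^4_\RR\VectorFormsKM\subseteq\BHkm\cap H^2_\RR\VectorFormsKM$.

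For the reverse inclusion: let $\psi\in\BHkm\cap H^2_\RR\VectorFormsKM$, so $\psi$ is a smooth double form with $\bHg\psi=\bHg^*\psi=\bFg^*\psi=\bFg\psi=0$ that satisfies the six boundary conditions of $H^2_\RR\VectorFormsKM$. The key observation is that the two additional boundary conditions in the definition of $H^4_\RR\VectorFormsKM$ --- $(\PttD\bHg^*\psi,\frakT\bHg^*\psi)$ for $\RR=\TT$, and likewise $(\PnnD\bHg\psi,\frakT^*\bHg\psi)$, $(\PntD\bFg\psi,\frakF^*\bFg\psi)$, $(\PtnD\bFg^*\psi,\frakF\bFg^*\psi)$ for $\RR=\NN,\NT,\TN$ respectively --- are each obtained by post-composing a zeroth/first-order boundary operator with one of $\bHg,\bHg^*,\bFg^*,\bFg$, and that operator already annihilates $\psi$; hence these conditions hold automatically. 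Therefore $\psi\in H^4_\RR\VectorFormsKM=\scrD(B_\RR)$, and $B_\RR\psi=\Bg\psi=\bHg\bHg^*\psi+\bHg^*\bHg\psi+\bFg^*\bFg\psi+\bFg\bFg^*\psi=0$, so $\psi\in\ker B_\RR=\BH^{k,m}_\RR(\M)$.

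There is no genuine obstacle here; the only point requiring care is that the argument runs uniformly over $\RR\in\BRK{\TT,\TN,\NT,\NN}$, which reduces to the elementary bookkeeping --- checked against \eqref{eq:boundary_data_TT} --- that for each of the four boundary-condition sets the extra second-order boundary operator defining $H^4_\RR\VectorFormsKM$ has the form (boundary projection)$\circ$(one of $\bHg,\bHg^*,\bFg^*,\bFg$), so that membership in $\BHkm$ forces it to vanish. One could alternatively package both inclusions at once by noting that $\ker B_\RR=\ker\bar D_\RR$ is, after the smoothness upgrade, exactly the set of smooth double forms lying in $\ker(\bHg,\bHg^*,\bFg^*,\bFg)$ and in the six boundary kernels, but the two-inclusion argument makes the role of \propref{prop:boundary_data_vanish} most transparent.
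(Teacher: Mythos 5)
Your proof is correct and follows essentially the same route as the paper's: the forward inclusion rests on the quadratic-form identity $\bra B_\RR\psi,\psi\ket=\Norm{D_\RR\psi}_{L^2}^2$ (which you package as $\ker(T^*T)=\ker T$), and the reverse inclusion rests on the observation that $\psi\in\ker(\bHg,\bHg^*,\bFg^*,\bFg)$ forces the extra second-order boundary condition in the definition of $H^4_\RR\VectorFormsKM$ to hold automatically. The only inessential difference is that your forward direction detours through \propref{prop:boundary_data_vanish} and the elliptic estimate to recover membership in $H^4_\RR\VectorFormsKM$ and smoothness, whereas the paper already has both for free since $\ker B_\RR\subseteq\scrD(B_\RR)=H^4_\RR\VectorFormsKM$ and smoothness of $\ker B_\RR$ was noted just before the proposition.
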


\begin{proof}
Let $\psi\in \BH^{k,m}_\RR(\M)$, namely $B_\RR\psi=0$, and in particular $\psi\in\VectorFormsKM\cap H^4_\RR\VectorFormsKM$.
Since $H^4_\RR\VectorFormsKM\subset H^2_\RR\VectorFormsKM$, and since $B_\RR =\bar{D}_\RR^*\bar{D}_\RR = D_\RR^* D_\RR$ on $H^2_\RR\VectorFormsKM$, it follows that 
\[
0=\bra \Bg\psi ,\psi\ket = \bra D_\RR\psi,D_\RR\psi\ket =\Norm{\bHg\psi}_{L^2}^2+\Norm{\bHg^*\psi}_{L^2}^2+\Norm{\bFg^*\psi}_{L^2}^2+\Norm{\bFg\psi}_{L^2}^2,
\]
proving that $\psi \in \BHkm \cap H^2_\RR\VectorFormsKM$, namely,
\[
\BH^{k,m}_\RR(\M) \subset  \BHkm \cap H^2_\RR\VectorFormsKM.
\]
In the other direction, let $\psi\in \BHkm \cap H^2_\RR\VectorFormsKM$. 
By the definition of $\Bg$ it follows that $\Bg\psi=0$. Moreover, since $\psi\in\ker(\bHg,\bHg^*,\bFg^*,\bFg)$, then $\psi\in\ker(\PttD\bHg^*,\frakT\bHg^*)$, i.e., $\psi\in H^4_\RR\VectorFormsKM$, implying that $B_\RR\psi=0$, namely, 
\[
\BHkm \cap H^2_\RR\VectorFormsKM \subset \BH^{k,m}_\RR(\M).
\]

\end{proof}

Furthermore, we obtain the following estimate on the $H^2$-norm:

\begin{proposition}
Let $\RR\in\BRK{\TT,\TN,\NT,\NN}$. Then for every $\psi\in H^2_\RR\VectorFormsKM$, 
\beq
\Norm{\psi}_{H^2}^2\lesssim \Norm{D_\RR\psi}^2_{L^2}+\Norm{\psi}^2_{L^2}.
\label{eq:generlized_korn_inequality}
\eeq
\end{proposition}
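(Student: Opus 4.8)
The plan is to deduce the inequality from the regular ellipticity of the double bilaplacian established in \thmref{thm:elliptic_regularity}, together with the self-adjointness package of \propref{prop:self_adjoint}, using a single complex-interpolation step to pass from the top-order ($s=4$) elliptic estimate down to the $H^2$-energy estimate claimed here.

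First I would fix $\RR\in\BRK{\TT,\TN,\NT,\NN}$ and set $A=B_\RR+\id$. By \propref{prop:self_adjoint} and the construction preceding it, $B_\RR=\bar{D}_\RR^*\bar{D}_\RR$ is a nonnegative self-adjoint operator on $\scrH$ with $\scrD(B_\RR)=H^4_\RR\VectorFormsKM$; hence $A\geq\id$ is self-adjoint and boundedly invertible, with $\scrD(A)=H^4_\RR\VectorFormsKM$. Applying the elliptic a-priori estimate \eqref{eq:elliptic_estimate} of \thmref{thm:regular_elliptic_estimates} to the regular elliptic operator $\frakB_\RR$ of \thmref{thm:elliptic_regularity} with $s=4$ and $p=q=2$: every $\psi\in\scrD(A)$ is annihilated by all the boundary operators occurring in $\frakB_\RR$, and since $\Bg\psi=A\psi-\psi$ and $\Norm{\psi}_{L^2(\M)}\leq\Norm{A\psi}_{L^2(\M)}$, this yields
\[
\Norm{\psi}_{W^{4,2}(\M)}\lesssim\Norm{\Bg\psi}_{L^2(\M)}+\Norm{\psi}_{L^2(\M)}\lesssim\Norm{A\psi}_{L^2(\M)},
\]
so that $\scrD(A)$, carrying the graph norm of $A$, embeds continuously in $H^4\VectorFormsKM$.

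Next I would descend by interpolation. The domain of the square root of a nonnegative self-adjoint operator is the half-way complex interpolation space, $\scrD(A^{1/2})=[\scrH,\scrD(A)]_{1/2}$, with interpolation norm equivalent (using $A\geq\id$) to $\psi\mapsto\Norm{A^{1/2}\psi}_{L^2(\M)}$; and on the compact manifold $\M$ the Sobolev scale interpolates as usual, $[L^2\VectorFormsKM,H^4\VectorFormsKM]_{1/2}=H^2\VectorFormsKM$. Functoriality of complex interpolation applied to the continuous inclusions $\scrH=L^2\VectorFormsKM$ and (from the previous step) $\scrD(A)\hookrightarrow H^4\VectorFormsKM$ then gives $\scrD(A^{1/2})\hookrightarrow H^2\VectorFormsKM$, i.e. $\Norm{\psi}_{W^{2,2}(\M)}\lesssim\Norm{A^{1/2}\psi}_{L^2(\M)}$ for all $\psi\in\scrD(A^{1/2})$. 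Finally, by the polar decomposition $\scrD(A^{1/2})=\scrD(\bar{D}_\RR)\supset H^2_\RR\VectorFormsKM$, the quadratic form of $A$ equals $\psi\mapsto\Norm{\bar{D}_\RR\psi}_{L^2(\M)}^2+\Norm{\psi}_{L^2(\M)}^2$, and $\bar{D}_\RR$ extends $D_\RR$; so for $\psi\in H^2_\RR\VectorFormsKM$ one has $\Norm{A^{1/2}\psi}_{L^2(\M)}^2=\Norm{D_\RR\psi}_{L^2(\M)}^2+\Norm{\psi}_{L^2(\M)}^2$, and squaring the previous estimate gives \eqref{eq:generlized_korn_inequality}.

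I expect the main obstacle to be the interpolation identification in the third step: the facts that $[\scrH,\scrD(A)]_{1/2}=\scrD(A^{1/2})$ for nonnegative self-adjoint $A$ and that $[L^2,H^4]_{1/2}=H^2$ on a compact manifold with boundary are both classical, but care is needed to keep them within one common (complex) interpolation framework and to handle the manifold-with-boundary Sobolev scale. A way to make this self-contained is a Stein-type three-lines argument applied to the analytic family $z\mapsto A^{-z}$, which is isometric on $\{\operatorname{Re}z=0\}$ and bounded $L^2\VectorFormsKM\to H^4\VectorFormsKM$ on $\{\operatorname{Re}z=1\}$; evaluating at $z=\tfrac12$ reproduces exactly the instance of the interpolation identity needed here.
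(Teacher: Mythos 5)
Your proposal is correct and follows essentially the same route as the paper: identify $\scrD(\bar{D}_\RR)$ with $\scrD((\id+B_\RR)^{1/2})$ via the self-adjointness of $B_\RR=\bar{D}_\RR^*\bar{D}_\RR$, then use the interpolation identities $\scrD((\id+B_\RR)^{1/2})=[\scrH,\scrD(\id+B_\RR)]_{1/2}$ and $[L^2,H^4]_{1/2}=H^2$ to obtain the continuous embedding $\scrD(\bar{D}_\RR)\hookrightarrow H^2\VectorFormsKM$. Your explicit use of the $s=4$ elliptic estimate to equate the graph norm of $\id+B_\RR$ with the $H^4$-norm on its domain is a slightly more careful rendering of a step the paper passes over quickly, but the argument is the same.
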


\begin{proof}
By the definition of a closed operator, $\Graph(\bar{D}_\RR)$ is a closed subspace of $\scrH\oplus L^2\Gamma(\E)$, which is a Hilbert space with inner-product
\[
((u,\bar{D}_\RR u),(v,\bar{D}_\RR v)) = (u,v)_{L^2} + (\bar{D}_\RR u,\bar{D}_\RR v)_{L^2}.
\]
The inequality follows from the fact that $\Graph(\bar{D}_\RR)$ is continuously embedded in $H^2\VectorFormsKM$, namely,
\[
\Norm{\psi}_{H^2}^2\lesssim \|(\psi,\bar{D}_\RR\psi)\|_{\Graph(\bar{D}_\RR)}^2 = \Norm{D_\RR\psi}^2_{L^2}+\Norm{\psi}^2_{L^2}.
\]

Let $J: \Graph(\bar{D}_\RR)\to\scrH$ be defined by the projection
\[
J(u,\bar{D}_\RR u) = u.
\]
Its dual $J^*:\scrH\to \Graph(\bar{D}_\RR)$ is defined by
\[
(J^*u,(v,\bar{D}_\RR v))_{\Graph(\bar{D}_\RR)} = \bra u,J(v,\bar{D}_\RR v)\ket = \bra u,v\ket
\]
for all $v\in\scrH$. Let $J^*u = (w,\bar{D}_\RR w)$, then
\[
\bra w,v\ket + \bra\bar{D}_\RR w,\bar{D}_\RR v\ket = \bra u,v\ket
\]
for all $v\in\scrH$. Clearly, $w\in H^4_\RR\VectorFormsKM$ which is a solution to the elliptic problem
\[
(\id + B_\RR)w = u
\]
satisfies this condition. Thus, the range of the self-adjoint operator $JJ^*: \scrH\to\scrH$ defined by
\[
JJ^* = (\id + B_\RR)^{-1},
\]
is $H^4_\RR\VectorFormsKM$, and $(JJ^*)^{-1}: H^4_\RR\VectorFormsKM\to \scrH$ 
is given by $\id + B_\RR$. 

From \cite[p.~99]{Tay11b},
\[
\scrD((\id + B_\RR)^{1/2}) = J\Graph(\bar{D}_\RR) = \scrD(\bar{D}_\RR).
\]
From \cite[p.~323]{Tay11a},
\[
\scrD((\id + B_\RR)^{1/2}) = [\scrH,H^4_\RR\VectorFormsKM]_{1/2},
\] 
where the right-hand side is an interpolation space \cite[p.~322]{Tay11a}.
Since $H^4_\RR\VectorFormsKM\hookrightarrow H^4\VectorFormsKM$ continuously,  from \cite[p.~323]{Tay11a} we obtain that the following inclusion is also continuous,
\[
\scrD((\id + B_\RR)^{1/2}) \hookrightarrow [\scrH,H^4\VectorFormsKM]_{1/2}=H^2\VectorFormsKM,
\]
where the latter equality is due to \cite[p.~324]{Tay11a}. All in all, we find that $\scrD(\bar{D}_\RR)\hookrightarrow  H^2\VectorFormsKM$ continuously (by definition, $\scrD(\bar{D}_\RR)$ is isometric to $\Graph(\bar{D}_\RR)$), which provides   \eqref{eq:generlized_korn_inequality}.
\end{proof}

By using the extension \lemref{lemma:prescribe_boundary_conditions} , we can generalize \eqref{eq:generlized_korn_inequality} for general boundary conditions: if we extend the definition of $D_\RR$ into an operator,
\[
D_\RR^\partial: H^2\VectorFormsKM\to L^2\Gamma(\E)\oplus (\oplus_{j=1}^3H^{3/2}\VectorFormsdM)\oplus(\oplus_{j=1}^3 H^{1/2}\VectorFormsdM),
\]
given by
\[
\begin{aligned}
D_\TT^\partial &=(\bHg,\bHg^*,\bFg^*,\bFg,\PttD,\PtnD,\PntD,\frakT,\frakF^*,\frakF)
\\ D_\NT^\partial &=(\bHg,\bHg^*,\bFg^*,\bFg,\PntD,\PnnD,\PttD,\frakF^*,\frakT^*,\frakT)
\\
D_\TN^\partial &= (\bHg,\bHg^*,\bFg^*,\bFg,\PtnD,\PnnD,\PttD,\frakF,\frakT^*,\frakT)
\\
D_\NN^\partial &=(\bHg,\bHg^*,\bFg^*,\bFg,\PnnD,\PtnD,\PntD,\frakT^*,\frakF^*,\frakF),
\end{aligned}
\]
then by setting the induced norm on 
\[
L^2\Gamma(\E)\oplus(\oplus_{j=1}^3H^{3/2}\VectorFormsdM)\oplus(\oplus_{j=1}^3 H^{1/2}\VectorFormsdM)
\]
to be
\[
\Norm{\cdot}^2_\RR=\Norm{\cdot}^2_{L^2(\M)}+\sum_{j=1}^3\Norm{\cdot}^2_{H^{3/2}(\dM)}+\sum_{l=1}^3\Norm{\cdot}^2_{H^{1/2}(\dM)},
\]
we can rewrite \eqref{eq:generlized_korn_inequality} as an estimate valid for all $\psi\in H^2\VectorFormsKM$, 
\beq
\Norm{\psi}_{H^2}^2\lesssim \Norm{D_\RR^\partial\psi}^2_\RR+{\Norm{\psi}^2_{L^2}}.
\label{eq:general_estimate}
\eeq

We further note that since the biharmonic modules are finite-dimensional, the projections 
\[
P_\RR:\scrH\to \BH^{k,m}_\RR(\M)
\]
are well-defined. The right-hand side of \eqref{eq:general_estimate} can be sharpened even further:

\begin{theorem}
For every $\RR\in\BRK{\TT,\TN,\NT,\NN}$ the linear map $D_\RR^\partial$
has a closed range, and
\beq
\Norm{\psi}_{H^2}^2\lesssim \Norm{D_\RR^\partial \psi}^2_\RR+{\Norm{P_\RR\psi}^2_{L^2}}.
\label{eq:general_estimate_final}
\eeq
\end{theorem}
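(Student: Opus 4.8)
The plan is a standard Peetre-type compactness argument, combining the already-established estimate \eqref{eq:general_estimate} with Rellich's theorem and the identification of $\ker D_\RR^\partial$ with the biharmonic module. First I would record two preliminary facts. (i) $D_\RR^\partial$ is a bounded operator from $H^2\VectorFormsKM$ into $L^2\Gamma(\E)\oplus(\oplus_{j=1}^3 H^{3/2}\VectorFormsdM)\oplus(\oplus_{j=1}^3 H^{1/2}\VectorFormsdM)$: each of $\bHg,\bHg^*,\bFg^*,\bFg$ has order two, and each of the six boundary operators occurring in $D_\RR^\partial$ is, after restriction to $\dM$, continuous into the appropriate $H^{3/2}$ or $H^{1/2}$ space by the trace theorem. (ii) For $\psi\in H^2\VectorFormsKM$ one has $D_\RR^\partial\psi=0$ iff $\psi\in\BHkm$ and $\psi$ lies in the kernel of the six boundary operators defining $H^2_\RR\VectorFormsKM$; by \propref{prop:5.11} this means $\ker D_\RR^\partial=\BHkm\cap H^2_\RR\VectorFormsKM=\BH^{k,m}_\RR(\M)$, which is precisely the range of the $L^2$-orthogonal projection $P_\RR$. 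In particular $D_\RR^\partial$ annihilates $\BH^{k,m}_\RR(\M)$, and $P_\RR$ is bounded on $L^2\VectorFormsKM$ and on $H^2\VectorFormsKM$ (its range is finite-dimensional and consists of smooth forms).

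To prove \eqref{eq:general_estimate_final}, suppose to the contrary that there is a sequence $(\psi_n)\subset H^2\VectorFormsKM$ with $\Norm{\psi_n}_{H^2}=1$ and $\Norm{D_\RR^\partial\psi_n}_\RR^2+\Norm{P_\RR\psi_n}_{L^2}^2\to 0$. By Rellich's embedding theorem, after passing to a subsequence, $\psi_n\to\psi$ in $L^2\VectorFormsKM$. Applying \eqref{eq:general_estimate} to $\psi_n-\psi_m$ and using the linearity of $D_\RR^\partial$,
\[
\Norm{\psi_n-\psi_m}_{H^2}^2\lesssim\Norm{D_\RR^\partial\psi_n-D_\RR^\partial\psi_m}_\RR^2+\Norm{\psi_n-\psi_m}_{L^2}^2,
\]
and the right-hand side tends to $0$ as $n,m\to\infty$ (the first term since $\Norm{D_\RR^\partial\psi_n}_\RR\to0$, the second since $(\psi_n)$ converges in $L^2$). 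Hence $(\psi_n)$ is Cauchy in $H^2\VectorFormsKM$ and $\psi_n\to\psi$ in $H^2$, so $\Norm{\psi}_{H^2}=1$. By continuity of $D_\RR^\partial$, $D_\RR^\partial\psi=0$, so $\psi\in\BH^{k,m}_\RR(\M)=\range(P_\RR)$ and therefore $P_\RR\psi=\psi$. On the other hand $P_\RR\psi=\lim_n P_\RR\psi_n=0$ in $L^2$, so $\psi=0$, contradicting $\Norm{\psi}_{H^2}=1$. This proves \eqref{eq:general_estimate_final}.

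Closed range of $D_\RR^\partial$ is then immediate. Since $\BH^{k,m}_\RR(\M)$ is finite-dimensional and smooth, $P_\RR$ restricts to a bounded projection of $H^2\VectorFormsKM$ onto it, giving a topological splitting $H^2\VectorFormsKM=\BH^{k,m}_\RR(\M)\oplus V$ with $V=\BRK{\psi\in H^2\VectorFormsKM:P_\RR\psi=0}$ closed. Because $D_\RR^\partial$ vanishes on $\BH^{k,m}_\RR(\M)$, one has $\range(D_\RR^\partial)=D_\RR^\partial(V)$, and on $V$ the estimate \eqref{eq:general_estimate_final} reads $\Norm{\psi}_{H^2}\lesssim\Norm{D_\RR^\partial\psi}_\RR$, i.e. $D_\RR^\partial|_V$ is bounded below; hence if $D_\RR^\partial\psi_n\to f$ with $\psi_n\in V$, then $(\psi_n)$ is Cauchy in $H^2$, converges to some $\psi\in V$, and $D_\RR^\partial\psi=f$. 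Thus $\range(D_\RR^\partial)$ is closed. The cases $\RR\in\BRK{\TN,\NT,\NN}$ are identical: in each the boundary operators appearing in $D_\RR^\partial$ are exactly those defining $H^2_\RR\VectorFormsKM$, so \propref{prop:5.11} applies verbatim (alternatively one transports the $\TT$-case through the Hodge-star substitutions used in the proof of \thmref{thm:elliptic_regularity}). The only step requiring genuine input — rather than Hilbert-space bookkeeping — is the Cauchy estimate above, which is exactly where the previously established a priori estimate \eqref{eq:general_estimate} and the compactness of $H^2\VectorFormsKM\hookrightarrow L^2\VectorFormsKM$ enter.
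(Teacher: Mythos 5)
Your proof is correct and rests on the same essential ingredients as the paper's: the a priori estimate \eqref{eq:general_estimate}, Rellich compactness, and the identification $\ker D_\RR^\partial = \BH^{k,m}_\RR(\M)$ from \propref{prop:5.11}. The only cosmetic difference is that you run a Peetre-type contradiction argument (getting $H^2$-Cauchy directly from \eqref{eq:general_estimate} applied to differences) and prove closedness of the range by boundedness-below on a complement of the kernel, whereas the paper first splits off $P_\RR\psi$, proves the lower bound on $(\BH^{k,m}_\RR(\M))^\bot\cap H^2$ by minimizing $\Norm{D_\RR^\partial\cdot}_\RR^2$ on the $L^2$-unit sphere via weak compactness and lower semicontinuity, and then cites a reference for closedness of the range.
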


\begin{proof}
By \propref{prop:5.11},  
$\BH^{k,m}_\RR(\M) =\BHkm \cap H^2_\RR(\M) =\ker D_\RR^\partial$; since it is a finite-dimensional subspace of $\scrH$, there exists an orthogonal decomposition
\[
\scrH = \BH^{k,m}_\RR(\M)\oplus(\BH^{k,m}_\RR(\M))^\bot.
\]
Consider the space
\[
\bbH=(\BH^{k,m}_\RR(\M))^\bot\cap H^2\VectorFormsKM,
\]
which is a Hilbert subspace of $H^2\VectorFormsKM$, and the quadratic form on $\bbH$
\[
\psi\mapsto \Norm{D_\RR^\partial\psi}^2_\RR.
\] 
If we prove that
\beq
\Norm{\psi}^2_{H^2}\lesssim \Norm{D_\RR^\partial\psi}^2_\RR 
\label{eq:needs_to_be_proved}
\eeq
in $\bbH$, the proof will be complete: 
specifically, given $\psi\in H^2\VectorFormsKM$, we write
\[
\|\psi\|_{H^2}  \le \|P_\RR\psi\|_{H^2} + \|(\id- P_\RR)\psi\|_{H^2}.
\]
For the first term on the right-hand side, \eqref{eq:general_estimate} yields,
\[
\|P_\RR\psi\|_{H^2}^2 \lesssim \|D_\RR P_\RR\psi\|_{L^2}^2 + \|P_\RR\psi\|_{L^2}^2 =  \|P_\RR\psi\|_{L^2}^2,
\]
where we used the fact that $P_\RR\psi\in\ker D_\RR$.
As for the second term, since $(\id - P_\RR)\psi \in \bbH$, it follows by \eqref{eq:needs_to_be_proved} that
\[
\|(\id- P_\RR)\psi\|_{H^2}^2 \lesssim \Norm{D_\RR^\partial(\id-P_\RR) \psi}^2_\RR = \Norm{D_\RR^\partial \psi}^2_\RR,
\]
where we used the fact that $P_\RR\psi\in\ker D^\partial_\RR$.
As $P_\RR$ is a compact linear map, it follows from \cite[p.~583]{Tay11a} that $D_\RR^\partial$ has closed range.

It remains to prove \eqref{eq:needs_to_be_proved}.
Consider the $L^2$-unit sphere in $\bbH$, i.e.,
\[
S_1=\BRK{\psi\in\bbH ~:~ \Norm{\psi}_{L^2}=1}
\]
Let  $\eta_n\in\bbH$ be a minimizing sequence
\[
\Norm{D_\RR^\partial \eta_n}^2_\RR \longrightarrow \inf_{\omega\in S_1} \Norm{D_\RR^\partial \omega}^2_\RR.
\]
By \eqref{eq:general_estimate}, the sequence $\eta_n$ is $H^2$-bounded. Since $\bbH$ is a Hilbert space, we can apply the Banach-Alaoglu theorem and move to a (not relabeled) weakly convergent subsequence $\eta_n \weak \eta_0$. By Rellich's embedding theorem, $\eta_n$ strongly converges in $L^2$, implying that $\eta_0\in S^1$. 
Since the quadratic form is lower-semicontinuous,
\[
\Norm{D_\RR^\partial \eta_0}^2_\RR = \inf_{\omega\in S_1} \Norm{D_\RR^\partial \omega}^2_\RR.
\]
Since $\Norm{\eta_0}_{L^2}=1$ and $\eta_0\notin\BH^{k,m}_\RR(\M)=\ker D_\RR^\partial$ by construction, we must have $\Norm{D_\RR^\partial \eta_0}^2_\RR > 0$. 
This implies that for all $\psi\in\bbH$,
\[
\Norm{D_\RR^\partial \psi}^2_\RR \geq\Norm{D_\RR^\partial \eta_0}^2_\RR \, \Norm{\psi}^2_{L^2}.
\]
By a final reference to \eqref{eq:general_estimate}, we obtain for all $\psi\in\bbH$,
\[
\Norm{\psi}_{H^2}^2\lesssim \brk{1+\frac{1}{\Norm{D_\RR^\partial \eta_0}^2_\RR}}\Norm{D_\RR^\partial \psi}^2_\RR,
\]
which is the required result. 
\end{proof}

We return to the operators $B_\RR$ and establish their spectral properties:

\begin{proposition}
\label{prop:spectral_1}
For every $\RR\in\BRK{\TT,\TN,\NT,\NN}$ there exists a smooth $L^2$-orthonormal basis $\BRK{\psi_j}_{j=1}^{\infty} \subset H^4_\RR\VectorFormsKM$ for $\scrH$, and sequences of non-negative numbers $(\rho_j)_{j=1}^{\infty}$ with $\rho_j\to \infty$ as $j\to\infty$, such that,
\[
B_\RR\psi_j = \rho_j\psi_j.
\]
\end{proposition}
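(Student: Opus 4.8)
The plan is to identify $B_\RR$ as a non-negative self-adjoint operator with compact resolvent and then invoke the spectral theorem for such operators, exactly in the spirit of \cite[pp.~462--463]{Tay11b}. First I would record non-negativity: by \propref{prop:self_adjoint}, $B_\RR$ is self-adjoint on $\scrD(B_\RR)=H^4_\RR\VectorFormsKM$ and coincides there with $\bar{D}_\RR^*\bar{D}_\RR = D_\RR^*D_\RR$, so that for $\psi\in H^4_\RR\VectorFormsKM$, arguing as in the proof of \propref{prop:5.11},
\[
\bra B_\RR\psi,\psi\ket = \bra D_\RR\psi,D_\RR\psi\ket = \Norm{D_\RR\psi}_{L^2}^2 \ge 0.
\]
Consequently $B_\RR\ge 0$, hence $-1\notin\sigma(B_\RR)$, and $\id+B_\RR:H^4_\RR\VectorFormsKM\to\scrH$ is a bijection whose inverse $G=(\id+B_\RR)^{-1}:\scrH\to\scrH$ is bounded, self-adjoint, injective and positive.

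Next I would prove that $G$ is compact. Given $\chi\in\scrH$, the form $w=G\chi\in H^4_\RR\VectorFormsKM$ solves the regular elliptic boundary-value problem $\frakB_\RR w = (\chi-w,0,\dots,0)$ of \thmref{thm:elliptic_regularity}; applying the a priori estimate \eqref{eq:elliptic_estimate} of \thmref{thm:regular_elliptic_estimates} with $s=4$ and $p=q=2$ gives
\[
\Norm{w}_{H^4} \lesssim \Norm{\chi-w}_{L^2} + \Norm{w}_{L^2} \lesssim \Norm{\chi}_{L^2},
\]
so $G$ is bounded as a map $\scrH\to H^4\VectorFormsKM$. Composing with the compact Rellich embedding $H^4\VectorFormsKM\hookrightarrow L^2\VectorFormsKM=\scrH$ shows that $G$ is compact on $\scrH$.

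Then I would apply the Hilbert--Schmidt spectral theorem: since $\scrH$ is an infinite-dimensional separable Hilbert space and $G$ is compact, self-adjoint, injective and positive, there is an $L^2$-orthonormal basis $\BRK{\psi_j}_{j=1}^\infty$ of $\scrH$ with $G\psi_j=\mu_j\psi_j$, $\mu_j>0$ and $\mu_j\to 0$. Each $\psi_j=\mu_j^{-1}G\psi_j$ lies in $\range G = H^4_\RR\VectorFormsKM = \scrD(B_\RR)$, and applying $\id+B_\RR$ yields $B_\RR\psi_j=\rho_j\psi_j$ with $\rho_j=\mu_j^{-1}-1\ge 0$ and $\rho_j\to\infty$. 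For smoothness I would bootstrap: the elliptic regularity underlying part~(3) of \thmref{thm:regular_elliptic_estimates} makes $G$ map $H^{4\ell}\VectorFormsKM$ boundedly into $H^{4(\ell+1)}\VectorFormsKM$, and since $\psi_j=(1+\rho_j)G\psi_j$ the relation $\psi_j\in H^{4\ell}\Rightarrow\psi_j\in H^{4(\ell+1)}$ propagates from $\psi_j\in L^2$, placing $\psi_j$ in $\bigcap_s H^s\VectorFormsKM$, i.e. $\psi_j\in C^\infty$.

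I expect no genuinely new obstacle: self-adjointness, the $H^4$ elliptic estimate, Rellich compactness and elliptic regularity are all already available. The only point requiring a line of care is the smoothness bootstrap, and it goes through precisely because the eigenvalue equation $B_\RR\psi_j=\rho_j\psi_j$ feeds the current regularity of $\psi_j$ back into the right-hand side at each step.
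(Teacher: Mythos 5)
Your proposal is correct and follows essentially the same route as the paper: both obtain the bounded inverse $(\id+B_\RR)^{-1}$ from self-adjointness (the paper cites von Neumann's theorem where you use non-negativity of the spectrum), establish its compactness via the elliptic a priori estimate \eqref{eq:elliptic_estimate} combined with Rellich's embedding, apply the spectral theorem for compact self-adjoint operators, and recover smoothness of the eigenforms by iterating the elliptic estimate. No substantive difference.
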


\begin{proof}
By von Neumann's theorem \cite[Thm.~13.13]{Rud91}, 
there exists a bounded, positive-definite, self-adjoint operator,
\[
K_\RR: \scrH\to \scrH,
\]
such that
\[
(\id + B_\RR) K_\RR = \id.
\]
Moreover, $\|K_\RR\| \le 1$. Note how the image of $K_\RR$ is contained $\scrD(B_{\RR})\cap\scrD(\id)= H^4_\RR\VectorFormsKM$, thus $K_{\RR}:\scrH\rightarrow H^4_\RR\VectorFormsKM$ is an unbounded operator whose domain is the whole of $\scrH$. By the elliptic estimate \eqref{eq:elliptic_estimate} (noting that $K_R\psi$ is in the kernel of the boundary operators), for $s=0, p=2,q=2$, 
\[
\begin{split}
\Norm{K_{\RR}\psi}_{H^{4}}&\lesssim \Norm{\Bg K_{\RR}\psi}_{L^2}+\Norm{K_{\RR}\psi}_{L^2} \\
&\lesssim   \Norm{(\id+B_{\RR}) K_{\RR}\psi}_{L^2}+\Norm{K_{\RR}\psi}_{L^2} \\
&\lesssim\Norm{\psi}_{L^2},
\end{split}
\]
where we used the continuity of $K_{\RR}:\scrH\rightarrow\scrH$. This inequality implies that $K_{\RR}:\scrH\rightarrow H^4_\RR\VectorFormsKM$ is continuous. By Rellich's embedding theorem, $H^4_\RR\VectorFormsKM$ is compactly embedded in $\scrH$, and since the composition of a continuous operator and a compact operator is compact, it follows that $K_\RR:\scrH\rightarrow\scrH$ is a compact, self-adjoint operator. By the spectral theorem for compact operators, there exists an orthonormal basis $(\psi_j)_{j=1}^{\infty}\subset \scrH$ and numbers $(\mu_j)_{j=1}^{\infty}\subset (0,1]$ such that,
\[
\begin{split}
K_\RR\psi_j=\mu_j\psi_j.
\end{split}
\]
We order the eigenvectors such that $\mu_j\to 0$ as $j\to \infty$.
Also, $\psi_j \in\image K_\RR \subset  H^4_\RR\VectorFormsKM$.

Thus,
\[
\begin{split}
(\id + B_\RR)\psi_j &= \mu_j^{-1} (\id + B_\RR) K_\RR \psi_j = \mu_j^{-1} \psi_j,
\end{split}
\]
namely,
\[
B_\RR\psi_j = (\mu_j^{-1}-1) \psi_j.
\]
The eigenvalues $\rho_j = \mu_j^{-1}-1$  satisfy the required properties. 
It remains to prove that the basis functions $\psi_j$ are in fact smooth, which follows by iterating the elliptic estimate  \eqref{eq:elliptic_estimate}. 
\end{proof}

This last proposition provides us with the biharmonic Green operators:

\begin{proposition}
\label{prop:spectral_2}
For every $\RR\in\BRK{\TT,\TN,\NT,\NN}$, $s\in\bbN$ and $p\geq 2$, there exists a continuous linear mapping,
\[
G_\RR: W^{s,p}\VectorFormsKM\to W^{s+4,p}\VectorFormsKM\cap H^4_\RR \VectorFormsKM,
\] 
such that $G_\RR$ annihilates $\BH^{k,m}_\RR(\M)$ and inverts $B_\RR$ on the orthogonal complement of $\BH^{k,m}_\RR(\M)$. I.e., for all $\psi\in W^{s,p}\VectorFormsKM$,
\beq
B_\RR G_\RR\psi=(\id-P_\RR)\psi.
\label{eq:green_operator}
\eeq
\end{proposition}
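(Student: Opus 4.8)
The plan is to build $G_\RR$ directly from the spectral resolution supplied by \propref{prop:spectral_1} and then to upgrade its mapping properties from $L^2$ to $W^{s,p}$ using the elliptic estimate \eqref{eq:elliptic_estimate}. Let $\BRK{\psi_j}_{j\ge1}\subset H^4_\RR\VectorFormsKM$ and $(\rho_j)_{j\ge1}$ be the $L^2$-orthonormal eigenbasis of $\scrH$ and the non-negative, unbounded eigenvalues of $B_\RR$ from \propref{prop:spectral_1}, ordered so that $\rho_j=0$ exactly for $j$ in a finite index set $J_0$; by \propref{prop:5.11} the $\psi_j$ with $j\in J_0$ form an orthonormal basis of $\BH^{k,m}_\RR(\M)$, so $P_\RR\psi=\sum_{j\in J_0}\bra\psi,\psi_j\ket\psi_j$. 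With $\rho_{\min}=\min_{j\notin J_0}\rho_j>0$, I would define
\[
G_\RR\psi=\sum_{j\notin J_0}\rho_j^{-1}\bra\psi,\psi_j\ket\psi_j,\qquad\psi\in\scrH.
\]
Since $0<\rho_j^{-1}\le\rho_{\min}^{-1}$ for $j\notin J_0$, this converges in $\scrH$ with $\Norm{G_\RR\psi}_{L^2}\le\rho_{\min}^{-1}\Norm{\psi}_{L^2}$; $G_\RR$ manifestly annihilates $\BH^{k,m}_\RR(\M)$; its image lies in $\scrD(B_\RR)=H^4_\RR\VectorFormsKM$ (because $\sum_{j\notin J_0}\bra\psi,\psi_j\ket^2\le\Norm{\psi}_{L^2}^2<\infty$); and termwise application of $B_\RR$ gives $B_\RR G_\RR\psi=\sum_{j\notin J_0}\bra\psi,\psi_j\ket\psi_j=(\id-P_\RR)\psi$, which is \eqref{eq:green_operator}. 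So the $L^2$-level of the statement is immediate.

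It then remains to show that $G_\RR$ restricts, for $s\in\bbN$ and $p\ge2$, to a \emph{continuous} map into $W^{s+4,p}\VectorFormsKM$. Fix $\psi\in W^{s,p}\VectorFormsKM$ and put $u=G_\RR\psi$. Since $\BH^{k,m}_\RR(\M)$ is finite-dimensional and consists of smooth forms, the finite-rank projection $P_\RR$ maps $L^2$ continuously into $W^{s,p}\VectorFormsKM$, hence $\Bg u=B_\RR u=(\id-P_\RR)\psi\in W^{s,p}\VectorFormsKM$. Thus $u$ solves the boundary-value problem $\frakB_\RR u=\brk{(\id-P_\RR)\psi,0,\dots,0}$ of \thmref{thm:elliptic_regularity}, whose interior datum is in $W^{s,p}$ and whose eight boundary data vanish (all eight boundary operators constituting $\frakB_\RR$ kill $H^4_\RR\VectorFormsKM$). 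As $\frakB_\RR$ is regular elliptic, the regularity theory for regular elliptic boundary-value problems \cite[Ch.~20]{Hor07} promotes the a-priori regularity $u\in H^4_\RR\VectorFormsKM$ to $u\in W^{s+4,p}\VectorFormsKM$, and the elliptic estimate \eqref{eq:elliptic_estimate}, applied with $A=\Bg$ of order $4$, with vanishing boundary terms, and with $q=2\le p$, yields
\[
\Norm{u}_{W^{s+4,p}(\M)}\lesssim\Norm{\Bg u}_{W^{s,p}(\M)}+\Norm{u}_{L^2(\M)}
=\Norm{(\id-P_\RR)\psi}_{W^{s,p}(\M)}+\Norm{G_\RR\psi}_{L^2(\M)}.
\]
Bounding $\Norm{P_\RR\psi}_{W^{s,p}(\M)}\lesssim\Norm{\psi}_{L^2}$, $\Norm{G_\RR\psi}_{L^2}\le\rho_{\min}^{-1}\Norm{\psi}_{L^2}$, and using the continuous embedding $W^{s,p}\VectorFormsKM\hookrightarrow L^2\VectorFormsKM$ (valid since $\M$ is compact and $p\ge2$), the right-hand side is $\lesssim\Norm{\psi}_{W^{s,p}(\M)}$, which is the asserted continuity. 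The identity \eqref{eq:green_operator} and the vanishing on $\BH^{k,m}_\RR(\M)$ carry over from the $L^2$-level, and the image lands in $W^{s+4,p}\VectorFormsKM\cap H^4_\RR\VectorFormsKM$ because $G_\RR$ already maps $\scrH\supseteq W^{s,p}\VectorFormsKM$ into $H^4_\RR\VectorFormsKM$.

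The one non-routine ingredient is the a-priori regularity jump of the $L^2$-defined section $u=G_\RR\psi$ up to $W^{s+4,p}$; this is the standard elliptic bootstrap for regular elliptic boundary-value problems — interior difference quotients together with tangential/normal boundary regularity — which is exactly what \thmref{thm:elliptic_regularity} (via the Lopatinskij-Shapiro criterion) makes available. The hypothesis $p\ge2$ enters only to permit the choice $q=2$ in \eqref{eq:elliptic_estimate} and the embedding $W^{s,p}\hookrightarrow L^2$; everything else is soft functional analysis combined with the already-established elliptic machinery.
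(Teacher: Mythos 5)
Your construction is exactly the paper's: define $G_\RR$ spectrally from the eigenbasis of \propref{prop:spectral_1} (setting it to zero on the $\rho_j=0$ eigenspace and to $\rho_j^{-1}$ otherwise), verify \eqref{eq:green_operator} and $L^2$-boundedness from the boundedness of $\rho_j^{-1}$, and then upgrade to continuity $W^{s,p}\to W^{s+4,p}$ via elliptic regularity and the a-priori estimate \eqref{eq:elliptic_estimate} with $q=2$, using the finite rank of $P_\RR$ to absorb the lower-order terms. The argument is correct and matches the paper's proof step for step.
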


\begin{proof}
 We define the Green operator $G_\RR:\scrH\to \scrH$ by its actions on the orthonormal basis $(\psi_j)$, 
\[
G_\RR\psi_j=\begin{cases}
0 & \rho_j=0 \\
\rho_j^{-1}\psi_j & \rho_j>0. 
\end{cases}
\]
Since $\rho_{j}^{-1}$ is bounded by \propref{prop:spectral_1}, $G_{\RR}:\scrH\to \scrH$ is continuous and satisfies
\[
B_\RR G_\RR\psi_j=\begin{cases}
0 & \rho_j=0 \\
\psi_j & \rho_j>0, 
\end{cases}
\]
namely, $B_\RR G_\RR\psi_j=(\id-P_\RR)\psi_j$. By continuity,
\[
B_\RR G_\RR\psi=(\id-P_\RR)\psi
\qquad\text{for all $\psi\in\scrH$},
\]
and in particular, $G_\RR\psi\in H^4_\RR\VectorFormsKM$.
Since $G_\RR\psi$ satisfies the boundary-data associated with $\RR$, and since $B_\RR G_\RR\psi$ is at least as regular as $\psi$, we can apply the elliptic estimate \eqref{eq:elliptic_estimate} to obtain that $G_\RR: W^{s,p}\VectorFormsKM\to W^{s+4,p}\VectorFormsKM$ continuously.  More explicitly, for $q=2$ \eqref{eq:elliptic_estimate} reads,
\[
\begin{split}
\Norm{G_\RR\psi}_{W^{s+4,p}}&\lesssim \Norm{\Bg G_\RR\psi}_{W^{s,p}}+\Norm{G_\RR\psi}_{L^2} \\
& \lesssim \Norm{ \psi-P_{\RR}\psi}_{W^{s,p}}+\Norm{G_\RR\psi}_{L^2} \\
& \lesssim \Norm{ \psi}_{W^{s,p}}+\Norm{P_{\RR}\psi}_{W^{s,p}}+\Norm{G_\RR\psi}_{L^2} \\
& \lesssim \Norm{ \psi}_{W^{s,p}}+\Norm{\psi}_{L^2} \\
&\lesssim \Norm{ \psi}_{W^{s,p}},
\end{split}
\]
where in the last step we used the fact that $P_{\RR}: W^{s,p}\Omega^{k,m}(\M)\rightarrow W^{s,p}\Omega^{k,m}(\M)$ continuously (as a projection onto a finite-dimensional space) and $G_{\RR}:\scrH\to \scrH$ is continuous as already established. 
\end{proof}

We next state an existence and uniqueness result for boundary-value problems associated with $\frakB_\RR$. We state it for $\RR = \TT$, as the other statements are analogous. 

\begin{theorem}
\label{thm:bilaplacian_full_problem}
For every $s\in \bbN$ and $p\geq 2$, given data,
\[
\begin{gathered}
\upsilon\in W^{s,p}\VectorFormsKM \\
\phi_i\in  W^{s+7/p,p}\VectorFormsdM 
\qquad 
\rho_i\in  W^{s+5/p,p}\VectorFormsdM \qquad i=1,2,3 \\
\theta\in W^{s+3/p,p}\VectorFormsdM
\qquad 
\tau\in W^{s+1/p,p}\VectorFormsdM
\end{gathered}
\]
(the type is implied by the context), there exists a solution $\psi\in W^{s+4,p}\VectorFormsKM$ to the boundary-value problem
\beq
\frakB_\TT\psi = (\upsilon,\phi_1,\phi_2,\phi_3,\rho_1,\rho_2,\rho_3,\theta,\tau) 
\label{eq:BTT_BVP}
\eeq
if and only if for all $\zeta\in \BH_\TT^{k,m}(\M)$,
\beq
\bra \upsilon,\zeta \ket = \int_{\dM}\Brk{(\theta,\frakT^*\zeta)_{\gD}-(\tau,\PnnD\zeta)_{\gD}}\VolumeD.
\label{eq:integrability_condition_biharmonic_equation}
\eeq
The solution is unique up to an element in $\BH_\TT^{k,m}(\M)$. 
\end{theorem}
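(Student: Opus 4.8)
The plan is a Fredholm-alternative argument resting on the elliptic package already assembled: the regular ellipticity of $\frakB_\TT$ (hence the Green operator $G_\TT$ of Proposition~\ref{prop:spectral_2}), the integration-by-parts formulas~\eqref{eq:integration_by_parts_F_H_extended}, and the extension result Corollary~\ref{cor:full_problem_boundary}. The first step is to reduce~\eqref{eq:BTT_BVP} to a problem with homogeneous boundary data. Using Corollary~\ref{cor:full_problem_boundary}, choose $\lambda\in W^{s+4,p}\VectorFormsKM$ realizing the eight prescribed $\TT$-boundary values $(\PttD,\PtnD,\PntD,\frakT,\frakF^*,\frakF,\PttD\bHg^*,\frakT\bHg^*)\lambda=(\phi_1,\phi_2,\phi_3,\rho_1,\rho_2,\rho_3,\theta,\tau)$ (the remaining eight boundary components appearing in that corollary are set to zero), with the corresponding Sobolev bound supplied by the regularity hypotheses on the data. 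Then $\psi$ solves~\eqref{eq:BTT_BVP} if and only if $\tilde\psi:=\psi-\lambda$ lies in $W^{s+4,p}\VectorFormsKM\cap H^4_\TT\VectorFormsKM$ and solves $\Bg\tilde\psi=\tilde\upsilon:=\upsilon-\Bg\lambda\in W^{s,p}\VectorFormsKM$.

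The crux is the identity, valid for any sufficiently regular $\mu$ and any $\zeta\in\BH^{k,m}_\TT(\M)$,
\[
\bra\Bg\mu,\zeta\ket=\int_{\dM}\Brk{(\PttD\bHg^*\mu,\frakT^*\zeta)_{\gD}-(\frakT\bHg^*\mu,\PnnD\zeta)_{\gD}}\VolumeD .
\]
This follows by expanding $\Bg=\bHg\bHg^*+\bHg^*\bHg+\bFg^*\bFg+\bFg\bFg^*$ and applying~\eqref{eq:integration_by_parts_F_H_extended} to each block, with the boundary operators landing on the appropriate factor: since $\zeta\in\BHkm$ kills $\bHg\zeta,\bHg^*\zeta,\bFg^*\zeta,\bFg\zeta$, all interior adjoint terms vanish, and since $\zeta\in H^2_\TT\VectorFormsKM$ kills $\PttD\zeta,\PntD\zeta,\PtnD\zeta,\frakT\zeta,\frakF^*\zeta,\frakF\zeta$, the only surviving boundary contribution is the one coming from the $\bHg\bHg^*\mu$ term, giving the displayed right-hand side. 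Taking $\mu=\lambda$ and using the prescribed values $\PttD\bHg^*\lambda=\theta$, $\frakT\bHg^*\lambda=\tau$ gives $\bra\tilde\upsilon,\zeta\ket=\bra\upsilon,\zeta\ket-\int_{\dM}[(\theta,\frakT^*\zeta)_{\gD}-(\tau,\PnnD\zeta)_{\gD}]\VolumeD$. Hence~\eqref{eq:integrability_condition_biharmonic_equation} holds for all $\zeta$ if and only if $\tilde\upsilon$ is $L^2$-orthogonal to $\BH^{k,m}_\TT(\M)=\ker B_\TT$, i.e.\ $P_\TT\tilde\upsilon=0$ (note $\BH^{k,m}_\TT(\M)$ is finite-dimensional and spanned by smooth forms, so this orthogonality is meaningful against every $W^{s,p}$ element).

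With the reduction in place, existence and uniqueness are immediate. If $P_\TT\tilde\upsilon=0$, set $\tilde\psi=G_\TT\tilde\upsilon$; by Proposition~\ref{prop:spectral_2} this lies in $W^{s+4,p}\VectorFormsKM\cap H^4_\TT\VectorFormsKM$ and satisfies $\Bg\tilde\psi=B_\TT\tilde\psi=(\id-P_\TT)\tilde\upsilon=\tilde\upsilon$, so $\psi=\lambda+\tilde\psi$ is the desired solution in $W^{s+4,p}\VectorFormsKM$. Conversely, if a solution exists, the identity above (with $\mu=\psi$) forces~\eqref{eq:integrability_condition_biharmonic_equation}. For uniqueness, the difference of two solutions lies in $H^4_\TT\VectorFormsKM\cap\ker\Bg=\ker B_\TT=\BH^{k,m}_\TT(\M)$, and conversely every $\zeta\in\BH^{k,m}_\TT(\M)$ solves the homogeneous problem ($\Bg\zeta=0$ because $\zeta\in\BHkm$, and $\zeta$ satisfies all eight homogeneous boundary conditions, the last two trivially since $\bHg^*\zeta=0$); hence the solution set is a coset of $\BH^{k,m}_\TT(\M)$.

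The main obstacle is not conceptual but combinatorial: carrying out the boundary-term computation in the key identity, one must apply~\eqref{eq:integration_by_parts_F_H_extended} in the correct order to each of the four second-order blocks of $\Bg$, keep track of which argument carries the boundary operators, and verify that exactly the pairing of $(\theta,\tau)$ against $(\frakT^*\zeta,\PnnD\zeta)$ survives while every other term cancels against the vanishing of $\bHg\zeta,\bHg^*\zeta,\bFg\zeta,\bFg^*\zeta$ and of the tangential/normal traces of $\zeta$. Once this is checked, the regularity statement is a routine combination of the Sobolev estimate for $\lambda$ from Corollary~\ref{cor:full_problem_boundary} with the elliptic gain of $G_\TT$ from Proposition~\ref{prop:spectral_2}.
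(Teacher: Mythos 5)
Your proposal is correct, and every ingredient you invoke (the integration-by-parts formula \eqref{eq:integration_by_parts_F_H_extended}, Corollary~\ref{cor:full_problem_boundary}, Propositions~\ref{prop:5.11} and~\ref{prop:spectral_2}) is available at this point in the paper; your key identity $\bra \Bg\mu,\zeta\ket=\int_{\dM}[(\PttD\bHg^*\mu,\frakT^*\zeta)_{\gD}-(\frakT\bHg^*\mu,\PnnD\zeta)_{\gD}]\VolumeD$ for $\zeta\in\BH^{k,m}_\TT(\M)$ checks out term by term, since $\zeta\in\ker(\bHg,\bHg^*,\bFg^*,\bFg)$ kills all interior adjoint terms and $\zeta\in H^2_\TT$ kills every boundary pairing except the one produced by the $\bHg\bHg^*\mu$ block. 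Where you genuinely diverge from the paper is in the sufficiency argument. The paper does not homogenize the boundary data first: it invokes the Fredholm property of the full boundary-value operator $\frakB_\TT$ to produce a $\psi$ with $\frakB_\TT\psi-V$ in the cokernel, then identifies that cokernel component as zero by testing against two carefully chosen elements of the range, namely $\frakB_\TT G_\TT\omega$ (to show $\Bg\psi-\upsilon\in\BH_\TT^{k,m}(\M)$) and $\frakB_\TT(\psi-G_\TT\upsilon-\omega)$ where $\omega$ carries the prescribed boundary data. Your route -- subtract an extension $\lambda$ of the eight $\TT$-boundary values, observe that the integrability condition is exactly $P_\TT(\upsilon-\Bg\lambda)=0$, and then set $\tilde\psi=G_\TT(\upsilon-\Bg\lambda)$ -- reaches the same conclusion using only the Green operator identity $B_\TT G_\TT=\id-P_\TT$ and never needs the cokernel of $\frakB_\TT$. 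This is arguably cleaner and makes the equivalence between \eqref{eq:integrability_condition_biharmonic_equation} and the orthogonality $P_\TT\tilde\upsilon=0$ fully transparent; the paper's version has the advantage of exhibiting the solvability condition directly as a statement about $\operatorname{coker}\frakB_\TT$, which is the form in which the Fredholm alternative is usually quoted. Both yield the same regularity, yours via the $W^{s+4,p}$ bound on $\lambda$ from Corollary~\ref{cor:full_problem_boundary} together with the elliptic gain of $G_\TT$, the paper's via a final appeal to \eqref{eq:elliptic_estimate}.
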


\begin{proof}
We first prove the theorem for $s=0$ and $p=2$, namely,
\[
\begin{split}
{\frakB_\TT}:H^4\VectorFormsKM &\to L^2\VectorFormsKM \oplus_{i=1}^3H^{7/2}\VectorFormsdM \\
&\qquad\oplus_{i=1}^3 H^{5/2}\VectorFormsdM \oplus H^{3/2}\VectorFormsdM
\oplus H^{1/2}\VectorFormsdM.
\end{split}
\]

The necessity of the condition \eqref{eq:integrability_condition_biharmonic_equation} is immediate: given a solution $\psi$ to \eqref{eq:BTT_BVP}, we have for all $\zeta\in \BH_\TT^{k,m}(\M)$,
\[
\begin{split}
\bra\upsilon,\zeta\ket&=\bra \Bg\psi,\zeta \ket =\bra \bHg\bHg^*\psi+\bFg^*\bFg\psi+\bFg\bFg^*\psi+\bHg^*\bHg\psi,\zeta \ket
\\&= \int_{\dM}\Brk{(\theta,\frakT^*\zeta)_{\gD}-(\tau,\PnnD\zeta)_{\gD}}\VolumeD,
\end{split}
\]
where the passage to the second line follows from integration by parts and the fact that $\zeta\in\BH_\TT^{k,m}(\M)$.

We next prove that condition \eqref{eq:integrability_condition_biharmonic_equation} is also sufficient.
Let 
\[
V = (\upsilon,\phi_1,\phi_2,\phi_3,\rho_1,\rho_2,\rho_3,\theta,\tau).
\]
By the Fredholm property of $\frakB_\TT$, there exists $\psi\in H^4\VectorFormsKM$ such that
\[
{\frakB_\TT}\psi - V\in\operatorname{coker}{\frakB_\TT}.
\]
That is, for all $\lambda\in H^4\VectorFormsKM$,
\beq
({\frakB_\TT}\psi - V,\frakB_\TT\lambda)_\TT=0,
\label{eq:cokerenl_conditions}
\eeq
where $(\cdot,\cdot)_\TT$ is the inner-product on
\[
L^2\VectorFormsKM \oplus_{i=1}^3H^{7/2}\VectorFormsdM\oplus_{i=1}^3 H^{5/2}\VectorFormsdM) \oplus H^{3/2}\VectorFormsdM
\oplus H^{1/2}\VectorFormsdM.
\]

Setting $\lambda = G_\TT\omega$ for arbitrary  $\omega\in \VectorFormsKM$, all the boundary terms in \eqref{eq:cokerenl_conditions} vanish and we are left with
\[
0=\bra \Bg\psi-\upsilon,\Bg G_\TT\omega\ket = \bra \Bg\psi-\upsilon,(\id-P_\TT)\omega\ket.
\]
Since this holds for all $\omega$, it follows that $\Bg\psi-\upsilon\in \BH_\TT^{k,m}(\M)$. 

We next use \corrref{cor:full_problem_boundary} to construct an $\omega\in H^4\VectorFormsKM$ satisfying the boundary conditions
\[
(\PttD,\PntD,\PtnD,\frakT,\frakF^*,\frakF,\PttD\bHg^*,\frakT\bHg^*)\omega = (\phi_1,\phi_2,\phi_3,\rho_1,\rho_2,\rho_3,\theta,\tau),
\]
and set  $\lambda\in H^4\VectorFormsKM$,
\[
\lambda=\psi-G_\TT\upsilon-\omega.
\]
A direct calculation gives
\[
\begin{split}
\frakB_\TT\lambda &= \frakB_\TT\psi - ((\id - P_\TT)\upsilon, 0,0,0,0,0,0,0,0) - (\Bg\omega,\phi_1,\phi_2,\phi_3,\rho_1,\rho_2,\rho_3,\theta,\tau) \\
&= \frakB_\TT\psi - V + (P_\TT \upsilon - \Bg\omega, 0,0,0,0,0,0,0,0).
\end{split}
\]
Substituting into \eqref{eq:cokerenl_conditions}, we obtain
\[
0 = \Norm{\frakB_\TT\psi - V}^2_\TT + \bra \Bg\psi-\upsilon, P_\TT\upsilon - \Bg\omega \ket.
\]
If we show that the second term on the right-hand side vanishes, then $\frakB_\TT\psi = V$ as required.

Write $\zeta = \Bg\psi-\upsilon\in\BH_\TT$. Since $P_\TT$ is an orthogonal projection and $P_\TT\zeta = \zeta$, we can replace $P_\TT\upsilon$ by $\upsilon$, i.e., it suffices to show that
\[
\bra \zeta,\upsilon \ket  =  \bra \zeta, \Bg\omega \ket.
\]
Integrating by parts the right hand-side and substituting the boundary conditions for $\omega$, we obtain
\[
\bra \zeta, \Bg\omega \ket = 
\int_{\dM}\Brk{(\theta,\frakT^*\zeta)_{\gD}-(\tau,\PnnD\zeta)_{\gD}}\VolumeD
\EwR{eq:integrability_condition_biharmonic_equation} \bra \zeta,\upsilon \ket, 
\]
which completes the first part of the proof.

The uniqueness clause follows directly from the fact that $\frakB_\TT\psi' = V$ if and only if 
\[
\psi-\psi'\in\ker \frakB_\TT=\BH_\TT^{k,m}(\M).
\]
The case of general $s,p$ is recovered by solving for $s=0,p=2$ and observing that by the elliptic estimate \eqref{eq:elliptic_estimate}, the solution inherits the appropriate $W^{s,p}$-regularity. 
\end{proof}

\section{Symmetric forms}
\label{sec:symmetric_forms}

\subsection{The module of symmetric forms}

Since the transposition of double forms is an isomorphism of vector bundles, we can define the subbundle of symmetric $k$-vectors,
\[
\Upsilon^k T^*\M = \BRK{\psi \in \Lambda^{k,k}T^*\M ~:~ \psi^T=\psi},
\]
and the corresponding $C^{\infty}$-module of symmetric $k$-forms,
\[
\Theta^k(\M) = \Gamma(\Upsilon^k T^*\M) \subseteq \Omega^{k,k}(\M).
\]
The wedge product commutes with transposition, hence it restricts to a map $\Theta^k(\M)\times\Theta^{m}(\M)\to\Theta^{k+m}(\M)$. This turns $\Theta^*(\M)=\oplus_{k}\Theta^k(\M)$ into a graded algebra. Moreover, $\starG\starG^{V}:\Omega^{k,k}(\M)\to \Omega^{d-k,d-k}(\M)$ commutes with transposition as well, resulting in an isomorphism,
\[
\starG\starG^{V}:\Theta^k(\M)\to \Theta^{d-k}(\M).
\]
The Sobolev theory of double forms holds verbatim for $W^{s,p}\Theta^k(\M)$. 

Consider now the restrictions of the second-order operators to symmetric forms.
Since
\beq 
\begin{aligned}
& (\bHg\psi^T)^T = \bHg\psi
& \qquad
& (\bHg^*\psi^T)^T = \bHg^*\psi
\\
& (\bFg^*\psi^T)^T = \bFg\psi
& \qquad
& (\bFg\psi^T)^T = \bFg^*\psi \\
&& (\Bg\psi^T)^T = \Bg\psi,
\end{aligned}
\label{eq:symmetries_preserved_by_H_F}
\eeq
it follows that 
\[
\begin{aligned}
& \bHg:\Theta^k(\M)\to\Theta^{k+1}(\M) \\
& \bHg^*:\Theta^k(\M)\to\Theta^{k-1}(\M)  \\
& \Bg:\Theta^k(\M)\to\Theta^k(\M).
\end{aligned}
\]

The boundary operators and integral formulas apply to elements in $\Theta^k(\M)$. In particular, 
\beq
\begin{aligned}
& (\PttD\psi^T)^T  = \PttD\psi
& \qquad
& (\PnnD\psi^T)^T  = \PnnD\psi \\
& (\PntD\psi^T)^T  = \PtnD\psi
& \qquad
& (\PtnD\psi^T)^T  = \PntD\psi \\
& (\frakT\psi^T)^T  = \frakT\psi
& \qquad
& (\frakT^*\psi^T)^T  = \frakT^*\psi \\
& (\frakF^*\psi^T)^T  = \frakF\psi
& \qquad
& (\frakF\psi^T)^T  = \frakF^*\psi,
\end{aligned}
\label{eq:boundary_T}
\eeq
so that
\[
\begin{aligned}
& \PttD:\Theta^k(\M)\to \Theta^k(\dM) 
& \qquad 
& \frakT:\Theta^k(\M)\to \Theta^k(\dM) \\
& \PnnD:\Theta^k(\M)\to \Theta^{k-1}(\dM) 
& \qquad 
& \frakT^*:\Theta^k(\M)\to \Theta^{k-1}(\dM).
\end{aligned}
\]

\subsection{Bilaplacian analysis}

Since symmetry is preserved by the differential and boundary operators, the elliptic theory of the bilaplacian adapts naturally to the context of symmetric forms. 

\begin{lemma}
\label{lem:BH_T}
The following relations hold:
\[
\begin{aligned}
& \text{If}\quad \psi\in\BH_\TT^{k,m}(\M) \qquad \text{then}\qquad \psi^T\in\BH_\TT^{m,k}(\M) \\
& \text{If}\quad \psi\in\BH_\NN^{k,m}(\M) \qquad \text{then}\qquad \psi^T\in\BH_\NN^{m,k}(\M) \\
& \text{If}\quad \psi\in\BH_\TN^{k,m}(\M) \qquad \text{then}\qquad \psi^T\in\BH_\NT^{m,k}(\M) \\
& \text{If}\quad \psi\in\BH_\NT^{k,m}(\M) \qquad \text{then}\qquad \psi^T\in\BH_\TN^{m,k}(\M).
\end{aligned}
\]
\end{lemma}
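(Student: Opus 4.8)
The plan is to read off the lemma from the characterization of the biharmonic modules in \propref{prop:5.11} together with the already-recorded behavior of the second-order and boundary operators under transposition. Recall that $\BH^{k,m}_\RR(\M)=\BHkm\cap H^2_\RR\VectorFormsKM$, so membership of $\psi$ in $\BH^{k,m}_\RR(\M)$ amounts to $\psi\in\ker(\bHg,\bHg^*,\bFg^*,\bFg)$ together with $\psi$ lying in the kernel of the six boundary operators that define $H^2_\RR\VectorFormsKM$. Since transposition is a fiberwise isometry it preserves all Sobolev classes, so $\psi^T$ automatically has the same regularity as $\psi$ (in fact $\psi$ is smooth, being in the kernel of the Fredholm operator $B_\RR$); thus only the interior and boundary kernel conditions have to be transported.

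First I would dispatch the interior condition, which is identical in all four cases. By \eqref{eq:symmetries_preserved_by_H_F} transposition fixes $\bHg$ and $\bHg^*$ and interchanges $\bFg$ with $\bFg^*$; consequently $\ker(\bHg,\bHg^*,\bFg^*,\bFg)$ is invariant under $\psi\mapsto\psi^T$, so $\psi\in\BHkm$ gives $\psi^T\in\BH^{m,k}(\M)$.

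The four cases part company only at the boundary conditions, and here the key input is \eqref{eq:boundary_T}: transposition fixes each of $\PttD,\PnnD,\frakT,\frakT^*$, interchanges $\PntD$ with $\PtnD$, and interchanges $\frakF^*$ with $\frakF$. Applying these swaps to the six-operator set $\{\PttD,\frakT,\PntD,\frakF^*,\PtnD,\frakF\}$ defining $H^2_\TT$ one gets the same set back, so $\psi\in H^2_\TT\VectorFormsKM$ forces $\psi^T\in H^2_\TT\Omega^{m,k}(\M)$; the identical bookkeeping applies to $\{\PnnD,\frakT^*,\PntD,\frakF^*,\PtnD,\frakF\}$ for $\RR=\NN$. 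For $\RR=\TN$ the set $\{\PtnD,\frakF,\PttD,\frakT,\PnnD,\frakT^*\}$ is carried to $\{\PntD,\frakF^*,\PttD,\frakT,\PnnD,\frakT^*\}$, which is precisely the set defining $H^2_\NT$, so $\psi^T\in H^2_\NT\Omega^{m,k}(\M)$; replacing $\psi$ by $\psi^T$ and using $(\psi^T)^T=\psi$ gives the reverse $\NT\to\TN$ statement. Combining the interior and boundary conclusions with \propref{prop:5.11} yields the four assertions.

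There is no genuine obstacle here — the argument is pure bookkeeping — and the only point demanding attention is verifying that the transposition swaps $\PntD\leftrightarrow\PtnD$ and $\frakF^*\leftrightarrow\frakF$ send the defining sets of $H^2_\TT$ and $H^2_\NN$ to themselves while sending those of $H^2_\TN$ and $H^2_\NT$ to each other, so that $\TT,\NN$ are self-dual and $\{\TN,\NT\}$ forms a dual pair, exactly as the lemma asserts.
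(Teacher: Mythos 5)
Your argument is correct and is exactly the paper's proof, which simply cites \eqref{eq:symmetries_preserved_by_H_F} and \eqref{eq:boundary_T} as making the lemma immediate; you have merely written out the bookkeeping (via the characterization in \propref{prop:5.11}) that the paper leaves implicit. No gaps.
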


\begin{proof}
This is an immediate consequence of \eqref{eq:symmetries_preserved_by_H_F} and \eqref{eq:boundary_T}.
\end{proof}

\begin{proposition}
\label{prop:green_commute_with_transpose}
Let $\psi\in W^{s,p}\Omega^{k,m}(\M)$. Then,
\[
\begin{aligned}
& (G_\TT\psi^T)^T = G_\TT\psi
&\qquad
& (G_\NN\psi^T)^T = G_\NN\psi \\
& (G_\NT\psi^T)^T = G_\TN\psi
&\qquad
& (G_\TN\psi^T)^T = G_\NT\psi,
\end{aligned}
\]
and
\[
\begin{aligned}
& (P_\TT\psi^T)^T = P_\TT\psi
&\qquad
& (P_\NN\psi^T)^T = P_\NN\psi \\
& (P_\NT\psi^T)^T = P_\TN\psi
&\qquad
& (P_\TN\psi^T)^T = P_\NT\psi.
\end{aligned}
\]
\end{proposition}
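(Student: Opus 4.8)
The plan is to reduce both families of identities to uniqueness statements, using \lemref{lem:BH_T} and the transposition rules \eqref{eq:symmetries_preserved_by_H_F} and \eqref{eq:boundary_T}. I would use throughout that $\psi\mapsto\psi^T$ is a fiberwise isometry $\Lambda^{k,m}T^*\M\to\Lambda^{m,k}T^*\M$, hence an $L^2$-isometry preserving all Sobolev norms, that it is an involution, and that $\bra\alpha^T,\beta\ket=\bra\alpha,\beta^T\ket$.

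\textbf{Projections.} Recall $P_\RR$ is the $L^2$-orthogonal projection of $\scrH$ onto $\BH^{k,m}_\RR(\M)$. Taking $\RR=\TT$, consider $Q\colon\VectorFormsKM\to\VectorFormsKM$, $Q\psi=(P_\TT\psi^T)^T$. It is idempotent (transposition is an involution), it is $L^2$-self-adjoint (transposition is an isometry and $P_\TT$ is self-adjoint), and its range is $\BH^{k,m}_\TT(\M)$, since by \lemref{lem:BH_T} transposition maps $\BH^{m,k}_\TT(\M)$ bijectively onto $\BH^{k,m}_\TT(\M)$. As an orthogonal projection is determined by its range, $Q=P_\TT$. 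The case $\NN$ is identical; for the pair $(\TN,\NT)$ the only difference is that \lemref{lem:BH_T} now carries $\BH^{m,k}_\TN(\M)$ onto $\BH^{k,m}_\NT(\M)$ and $\BH^{m,k}_\NT(\M)$ onto $\BH^{k,m}_\TN(\M)$, giving $(P_\TN\psi^T)^T=P_\NT\psi$ and $(P_\NT\psi^T)^T=P_\TN\psi$.

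\textbf{Green operators.} I would first extract from \propref{prop:spectral_1}--\propref{prop:spectral_2} the characterization: for $\psi\in\scrH$, $G_\RR\psi$ is the unique $u\in H^4_\RR\VectorFormsKM$ with $P_\RR u=0$ and $B_\RR u=(\id-P_\RR)\psi$. Here existence is \propref{prop:spectral_2} together with the fact that $G_\RR$, being a spectral multiplier of the self-adjoint $B_\RR$, is self-adjoint, so $G_\RR|_{\BH^{k,m}_\RR(\M)}=0$ forces $P_\RR G_\RR=0$; uniqueness holds since the difference of two solutions lies in $\ker B_\RR\cap\ker P_\RR=\BH^{k,m}_\RR(\M)\cap\BH^{k,m}_\RR(\M)^{\perp}=\{0\}$. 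Then, given $\psi\in\scrH$, I would set $v=(G_\TT\psi^T)^T$ and verify the three defining properties of $G_\TT\psi$: (i) $v\in H^4_\TT\VectorFormsKM$, obtained by transposing the boundary conditions satisfied by $G_\TT\psi^T\in H^4_\TT\Omega^{m,k}(\M)$ and reading off from \eqref{eq:symmetries_preserved_by_H_F} and \eqref{eq:boundary_T} (so $\PntD\leftrightarrow\PtnD$, $\frakF^*\leftrightarrow\frakF$, and $\bHg^*$ self-transposing) that the transposed list is again the $H^4_\TT$-family on $\VectorFormsKM$, while $H^4$ regularity is preserved by transposition; (ii) $P_\TT v=(P_\TT G_\TT\psi^T)^T=0$, by the projection identity and $P_\TT G_\TT=0$; (iii) $B_\TT v=\Bg v=(\Bg G_\TT\psi^T)^T=\big((\id-P_\TT)\psi^T\big)^T=(\id-P_\TT)\psi$, by \eqref{eq:symmetries_preserved_by_H_F} and the projection identity. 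Uniqueness then gives $(G_\TT\psi^T)^T=G_\TT\psi$, and the identical argument gives $(G_\NN\psi^T)^T=G_\NN\psi$; for $(\TN,\NT)$, \eqref{eq:boundary_T} turns the $\TN$-boundary family on $\Omega^{m,k}(\M)$ into the $\NT$-family on $\VectorFormsKM$ and $\bFg^*$ transposes to $\bFg$, yielding $(G_\TN\psi^T)^T=G_\NT\psi$; applying this to $\psi^T$ and transposing gives the last identity $(G_\NT\psi^T)^T=G_\TN\psi$. Finally the identities pass from $\scrH$ to $W^{s,p}\VectorFormsKM$ via $W^{s,p}\VectorFormsKM\hookrightarrow\scrH$ for $p\geq2$ (or by density of smooth forms together with the $W^{s,p}$-continuity of $G_\RR$ and of transposition).

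The step I expect to be most delicate is (i): one must push each of $\PttD,\frakT,\PntD,\frakF^*,\PtnD,\frakF,\PttD\bHg^*,\frakT\bHg^*$ through transposition using \eqref{eq:symmetries_preserved_by_H_F} and \eqref{eq:boundary_T} and confirm that the eight resulting operators are precisely the ones defining $H^4_\TT\VectorFormsKM$ (and likewise the eight defining $H^4_\NT\VectorFormsKM$ in the $(\TN,\NT)$ case) — elementary bookkeeping, but the one place where an index swap could be mishandled. Everything else is formal.
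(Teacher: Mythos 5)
Your proof is correct and uses the same essential ingredients as the paper — namely \lemref{lem:BH_T}, the transposition rules \eqref{eq:symmetries_preserved_by_H_F} and \eqref{eq:boundary_T}, and the orthogonality $\image G_\RR\perp\BH_\RR^{k,m}(\M)$. The paper organizes the argument slightly differently (it subtracts the two defining decompositions $\psi=\Bg G_\RR\psi+P_\RR\psi$ and kills the biharmonic remainder by integration by parts) while you first pin down the projection identity via uniqueness of orthogonal projections onto a given range and then verify the three defining properties of $G_\RR\psi$, but the content is the same and the bookkeeping in step (i) is carried out correctly.
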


\begin{proof}
Take for example $\RR = \TT$. Since
\[
\psi = \Bg G_{\TT}\psi + P_{\TT}\psi,
\]
if follows from \eqref{eq:symmetries_preserved_by_H_F} that 
\[
\psi = \Bg (G_{\TT}\psi^T)^T+(P_{\TT}\psi^T)^T,
\]
hence 
\[
\Bg ((G_{\TT}\psi^T)^T-G_{\TT}\psi)=P_{\TT}\psi -(P_{\TT}\psi^T)^T.
\]
By \lemref{lem:BH_T},
\[
P_{\TT}\psi -(P_{\TT}\psi^T)^T\in\BH_{\TT}^{k,m}(\M).
\]
Integrating by parts, using the fact that both terms satisfy $\TT$-boundary conditions,
\[
\bra \Bg ((G_{\TT}\psi^T)^T-G_{\TT}\psi), P_{\TT}\psi -(P_{\TT}\psi^T)^T \ket =0,
\]
proving that 
\[
P_{\TT}\psi^T = (P_{\TT}\psi)^T
\Textand 
(G_{\TT}\psi^T)^T - G_{\TT}\psi\in \BH_{\TT}^{k,m}(\M). 
\]
However, by the very construction of the biharmonic Green operators \eqref{eq:green_operator}, 
\[
\image{G_{\TT}}\,\bot\, \image{P_{\TT}},
\]
which combined with \lemref{lem:BH_T} yields that
\[
(G_{\TT}\psi^T)^T - G_{\TT}\psi = 0.
\]
The same holds for $\RR = \NN$. For the case of, say, $\RR = \NT$, we obtain that
\[
\Bg ((G_{\NT}\psi^T)^T-G_{\TN}\psi)=P_{\TN}\psi -(P_{\NT}\psi^T)^T,
\]
from which we proceed as above.
\end{proof}

We conclude that two of the Green operators restrict to symmetric forms,
\[
\begin{aligned}
& G_\TT:W^{s,p}\Theta^k(\M)\to W^{s+4,p}\Theta^k(\M) \\
& G_\NN:W^{s,p}\Theta^k(\M)\to W^{s+4,p}\Theta^k(\M).
\end{aligned}
\]
Their kernels are denoted
\[
\begin{aligned}
\calS\BH^k_\TT(\M) &= \ker\frakB_\TT|_{\Theta^k(\M)} \\ 
\calS\BH^k_\NN(\M) &= \ker\frakB_\NN|_{\Theta^k(\M)} 
\end{aligned}
\]

We then formulate regular elliptic boundary-value problems in $\Theta^k(\M)$; in this section we define
\[
\begin{aligned}
\frakB_\TT &= (\Bg,\PttD,\PntD,\frakT,\frakF^*,\PttD\bHg^*,\frakT\bHg^*) \\
\frakB_\NN &= (\Bg,\PnnD,\PntD,\frakT^*,\frakF^*,\PnnD\bHg,\frakT^*\bHg),
\end{aligned}
\]
which are both symmetric with respect to transposition. The following theorem is the symmetric version of \thmref{thm:elliptic_regularity}.

\begin{theorem}
\label{thm:regular_ellipticity_symmetric_1}
The boundary-value problems for $\psi\in \Theta^k(\M)$, defined by the operators $\frakB_\TT$ and $\frakB_\NN$
are regular elliptic in the sense of \defref{def:regular_elliptic}.
\end{theorem}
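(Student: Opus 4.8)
The plan is to derive \thmref{thm:regular_ellipticity_symmetric_1} from \thmref{thm:elliptic_regularity} by an equivariance argument for the transposition involution. Since $k=m$, the map $\tau\colon\psi\mapsto\psi^T$ is a constant bundle involution of $\Lambda^{k,k}T^*\M$ (a permutation of tensor components) whose $(+1)$-eigenbundle is $\Upsilon^kT^*\M$, so $\Theta^k(\M)$ is its fixed-point submodule. By \eqref{eq:symmetries_preserved_by_H_F} the operator $\Bg$ commutes with $\tau$, and by \eqref{eq:boundary_T}, together with the fact that $\bHg^*$ commutes with $\tau$, the boundary operators of the non-symmetric $\frakB_\TT$ of \eqref{eq:boundary_data_TT} satisfy $\PttD\circ\tau=\tau\circ\PttD$, $\frakT\circ\tau=\tau\circ\frakT$, $\PntD\circ\tau=\tau\circ\PtnD$, $\frakF^*\circ\tau=\tau\circ\frakF$, $\PttD\bHg^*\circ\tau=\tau\circ\PttD\bHg^*$ and $\frakT\bHg^*\circ\tau=\tau\circ\frakT\bHg^*$. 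Treating $\tau$ as an order-zero operator with principal symbol $\tau$, \eqref{eq:principel_homomo} promotes all these intertwining relations to the level of principal symbols. In particular $P_{\Bg}(x,\xi)=|\xi|^4\,\id$ restricts to an automorphism of $\Upsilon^kT^*_x\M$ that is an isomorphism iff $\xi\neq0$, which is the first condition of \defref{def:regular_elliptic} for $\frakB_\TT$ on $\Theta^k(\M)$.

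For the second condition I would reuse the Lopatinskij--Shapiro data from the proof of \thmref{thm:elliptic_regularity}: the space $\calM^+_{x,\xi}=\{(\omega_0+\lambda_0 s)e^{-|\xi|s}\}$ of $\R_+$-bounded solutions of $P_{\Bg}(x,\xi+\imath\partial_s\,dr)\sigma=0$, and the isomorphism $\Xi_{x,\xi}\colon\calM^+_{x,\xi}\to\bbC\otimes\bigoplus_{j=1}^{8}\bbF_j|_x$ assembled from the eight boundary symbols. The involution $\tau$ acts on $\calM^+_{x,\xi}$ by $\sigma(s)\mapsto\sigma(s)^T$, and the intertwining relations above show it induces an involution $\hat\tau$ on $\bigoplus_j\bbF_j|_x$ that transposes the $\PttD$-, $\frakT$-, $\PttD\bHg^*$- and $\frakT\bHg^*$-slots and interchanges the $\PntD$-slot with the $\PtnD$-slot and the $\frakF^*$-slot with the $\frakF$-slot, with $\Xi_{x,\xi}\circ\tau=\hat\tau\circ\Xi_{x,\xi}$. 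Since $\Xi_{x,\xi}$ is an equivariant isomorphism, it restricts to an isomorphism between the $(+1)$-eigenspaces of $\tau$ and of $\hat\tau$.

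It then remains to identify these two eigenspaces. The $(+1)$-eigenspace of $\tau$ on $\calM^+_{x,\xi}$ is $\{(\omega_0+\lambda_0 s)e^{-|\xi|s}:\omega_0,\lambda_0\in\bbC\otimes\Upsilon^kT^*_x\M\}$, \ie exactly $\calM^+_{x,\xi}$ for $\Bg$ acting on $\Theta^k(\M)$. The $(+1)$-eigenspace of $\hat\tau$ on $\bigoplus_j\bbF_j|_x$ consists of those tuples whose $\PttD$-, $\frakT$-, $\PttD\bHg^*$- and $\frakT\bHg^*$-components are transposition-symmetric and whose $\PtnD$- and $\frakF$-components are the transposes of the $\PntD$- and $\frakF^*$-components; discarding the two redundant components identifies it with $\bbC\otimes\bigl(\Upsilon^kT^*_x\dM\oplus\Lambda^{k-1,k}T^*_x\dM\oplus\Upsilon^kT^*_x\dM\oplus\Lambda^{k-1,k}T^*_x\dM\oplus\Upsilon^{k-1}T^*_x\dM\oplus\Upsilon^{k-1}T^*_x\dM\bigr)$, which is precisely the fibre of the target bundle of the six boundary operators $\PttD,\PntD,\frakT,\frakF^*,\PttD\bHg^*,\frakT\bHg^*$ of the symmetric $\frakB_\TT$. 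Under these identifications the restriction of $\Xi_{x,\xi}$ becomes the Lopatinskij--Shapiro map of $\frakB_\TT$ on $\Theta^k(\M)$, hence an isomorphism for every $0\neq\xi\in T^*_x\dM$; this gives regular ellipticity of $\frakB_\TT$ on $\Theta^k(\M)$. For $\frakB_\NN$ I would argue exactly as in the proof of \thmref{thm:elliptic_regularity}, via the substitution $\psi\mapsto\starG\starG^V\psi$: this map preserves symmetry (it sends $\Theta^k(\M)$ onto $\Theta^{d-k}(\M)$) and, by \eqref{eq:duals_stars}, \eqref{eq:Hodge_star_normal_tangentAA} and \eqref{eq:second_order_boundary_operators_duality}, carries the boundary-operator set of $\frakB_\TT$ onto that of $\frakB_\NN$, so regular ellipticity transfers.

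The only genuinely new part is the bookkeeping in the last two paragraphs: one must verify that $\hat\tau$ permutes the eight symbol slots as claimed and that its $(+1)$-eigenspace matches, slot by slot, the target bundle of the six operators kept in the symmetric $\frakB_\TT$ --- equivalently, that dropping $\PtnD$ and $\frakF$ costs no information on $\Theta^k(\M)$ because there $\PtnD\psi=(\PntD\psi)^T$ and $\frakF\psi=(\frakF^*\psi)^T$. Everything else is a literal restriction of the symbol computations already carried out in \thmref{thm:elliptic_regularity}: all the operators involved preserve the symmetric subbundle, and the concluding linear-algebra step (which reduces to $\onn=\lnn=0$) only becomes easier.
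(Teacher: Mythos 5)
Your proof is correct, and at its core it rests on the same observation as the paper's: on $\Theta^k(\M)$ the omitted boundary operators are redundant because $\PtnD\psi=(\PntD\psi)^T$ and $\frakF\psi=(\frakF^*\psi)^T$, so the symmetric Lopatinskij--Shapiro condition reduces to the non-symmetric one already established in \thmref{thm:elliptic_regularity}. The packaging differs in one genuine respect: the paper proves bijectivity of the symmetric map $\Xi_{x,\xi}$ by first counting dimensions (showing $\dim_\bbC\calM^+_{x,\xi}=\dim_\R\oplus_{j=1}^6\bbF_j=\binom{d}{k}\bigl[\binom{d}{k}+1\bigr]$, which reduces the problem to injectivity) and then embedding the symmetric kernel into the non-symmetric kernel, whereas you restrict the full eight-component isomorphism to the $(+1)$-eigenspaces of the transposition involution $\tau$ and its induced involution $\hat\tau$ on the target, which delivers injectivity and surjectivity simultaneously and dispenses with the dimension count. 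The price is the slot-by-slot bookkeeping you flag at the end (verifying that $\hat\tau$ fixes the $\PttD$-, $\frakT$-, $\PttD\bHg^*$- and $\frakT\bHg^*$-slots and swaps the $\PntD/\PtnD$ and $\frakF^*/\frakF$ pairs, and that forgetting the two redundant slots is an isomorphism on the fixed subspace); this is routine and does go through. Your treatment of $\frakB_\NN$ via $\psi\mapsto\starG\starG^V\psi$ matches the paper's duality mechanism, modulo the harmless fact that the substitution interchanges $\PntD$ with $\PtnD$ and $\frakF^*$ with $\frakF$, which on symmetric forms again costs nothing.
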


\begin{proof}
The proof is an adaptation of the proof of \thmref{thm:elliptic_regularity}; note however that not all the boundary operators map symmetric forms to symmetric form.

In the context of \defref{def:regular_elliptic}, $\E_1=\E_2=\Upsilon^kT^*\M$, i.e.,
\[
\dim_\R\E_1=\dim_\R\E_2=\half\binom{d}{k}\Brk{\binom{d}{k}+1}.
\]
The fact that $P_{\Bg}(x,\xi) = |\xi|^4$ is an isomorphism if and only if $\xi\ne 0$ is immediate. We therefore  only need to verify the second condition in \defref{def:regular_elliptic}. 
In view of \eqref{eq:boundary_T}, for $(x,\xi)\in T^*\M|_{\dM}$,
\[
\begin{split}
& P_{\PttD}(x,\xi):\Upsilon^kT^*_{x}\M\to \Upsilon^kT_{x}^*\dM  \equiv \bbF_1|_x \\
& P_{\frakT}(x,\xi):\Upsilon^kT_{x}^*\M\to \Upsilon^kT_{x}^*\dM \equiv \bbF_2|_x \\ 
& P_{\PntD}(x,\xi):\Upsilon^kT_{x}^*\M\to \Lambda^{k-1,k}T^*_x\dM  \equiv \bbF_3|_x \\
& P_{\frakF^*}(x,\xi):\Upsilon^kT_{x}^*\M\to \Lambda^{k-1,k}T_{x}^*\dM  \equiv \bbF_4|_x \\
& P_{\PttD \bHg^*}(x,\xi):\Upsilon^kT_{x}^*\M\to \Upsilon^{k-1}T_{x}^*\dM \equiv \bbF_5|_x \\
& P_{\frakT \bHg^*}(x,\xi):\Upsilon^kT_{x}^*\M\to \Upsilon^{k-1}T_{x}^*\dM \equiv \bbF_6|_x. 
\end{split}
\]
A straightforward calculation yields
\[
\dim_\R \oplus_{j=1}^6 \bbF_j = \binom{d}{k} \Brk{\binom{d}{k}+ 1}.
\]

The calculation of $\calM^+_{x,\xi}$ for $x\in\dM$ and $\xi\in\frakt T_x^*\M$ is the same as in \ref{sec:proof_of_ellipticiity}, 
\[
\calM^+_{x,\xi}=\BRK{\omega_0\exp{(-|\xi|s)}+\lambda_0s\exp{(-|\xi|s)} ~:~ \omega_0,\lambda_0\in\mathbb{C}\otimes\Upsilon^kT^*_x\M}.
\]
In particular,
\[
\dim_\bbC \calM^+_{x,\xi} = \binom{d}{k} \Brk{\binom{d}{k}+ 1}.
\]
Thus, the map $\Xi_{x,\xi}:\calM^+_{x,\xi} \to \mathbb{C}\otimes(\oplus_{j=1}^6 \bbF_j|_x)$ is a linear map between vector spaces of the same dimension. As in \secref{sec:proof_of_ellipticiity}, the proof will be completed by showing that for $\sigma\in \calM^+_{x,\xi}$, 
\[
\Xi_{x,\xi} \sigma = 0
\qquad\text{if and only if}\qquad 
\omega_0 = \lambda_0 = 0. 
\]
One direction is again obvious, leaving the ``only if" part to be proved. By \eqref{eq:boundary_T}, for $\psi\in\Theta^k(\M)$,
\[
\PtnD \psi=0 \text{ if and only if }\PntD \psi=0, \qquad \frakF\psi=0 \text{ if and only if } \frakF^*\psi=0,
\]
which implies that if $\Xi_{x,\xi} \sigma = 0$, then
\[
P_{\PtnD}(x,\xi+\imath\,\partial_{s}dr)\sigma(0)=0
\Textand 
P_{\frakF}(x,\xi+\imath\,\partial_{s}dr)\sigma(0)=0.
\]   
Since $\Upsilon^kT^*_{x}\M\subseteq \Lambda^{k,k}T^*_{x}\M$, if follows that $\sigma$ is in the kernel of the solution map associated with the boundary-value problem \eqref{eq:boundary_data_TT}. As a result, the proof in \secref{sec:proof_of_ellipticiity} applies here too.  
\end{proof}

We can now repeat almost verbatim the proof of \thmref{thm:bilaplacian_full_problem}, to obtain:

\begin{theorem}
\label{thm:symmetric_bilaplacian_full_problem}
For every $s\in \bbN$ and $p\geq 2$, given data
\[
\upsilon\in W^{s,p}\Theta^k(\M),
\]
and
\[
\begin{aligned}
& \phi_1\in W^{s+7/2,p}\Theta^k(\dM) 
& \qquad 
& \rho_1\in W^{s+5/2,p}\Theta^k(\dM) \\
& \phi_2\in W^{s+7/2,p}\Omega^{k-1,k+1}(\dM) 
& \qquad 
& \rho_2\in W^{s+5/2,p}\Omega^{k-1,k+1}(\dM) \\
& \theta\in W^{s+3/2,p}\Theta^{k-1}(\dM)
& \qquad 
& \tau\in W^{s+1/2,p}\Theta^{k-1}(\dM)
\end{aligned}
\]
(the type is implied by the context), there exists a solution $\psi\in W^{s+4,p}\Theta^k(\M)$ to the problem
\[
\frakB_\TT\psi = (\upsilon,\phi_1,\phi_2,\rho_1,\rho_2,\theta,\tau) 
\]
if and only if for all $\zeta\in \S\BH_\TT^k(\M)$,
\[
\bra \upsilon,\zeta \ket = \int_{\dM}\Brk{(\theta,\frakT^*\zeta)_{\gD}-(\tau,\PnnD\zeta)_{\gD}}\VolumeD.
\]
The solution is unique up to an element in $\S\BH_\TT^k(\M)$. 
\end{theorem}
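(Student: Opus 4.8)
The plan is to transcribe the proof of \thmref{thm:bilaplacian_full_problem} into the module $\Theta^k(\M)$, changing as little as possible. Three facts make this legitimate, and I would record them first. (i) By \thmref{thm:regular_ellipticity_symmetric_1} the restricted operator $\frakB_\TT|_{\Theta^k(\M)}$ is regular elliptic, hence by \thmref{thm:regular_elliptic_estimates} it is Fredholm between the appropriate $W^{s+4,p}$- and $W^{s,p}$-spaces of symmetric forms and obeys the elliptic estimate \eqref{eq:elliptic_estimate}. (ii) By \propref{prop:green_commute_with_transpose} and the remarks following it, the biharmonic projection $P_\TT$ and the Green operator $G_\TT$ carry symmetric forms to symmetric forms, so $G_\TT:W^{s,p}\Theta^k(\M)\to W^{s+4,p}\Theta^k(\M)$ and $P_\TT:W^{s,p}\Theta^k(\M)\to\S\BH_\TT^k(\M)$, with $\Bg G_\TT\psi=(\id-P_\TT)\psi$, $\image G_\TT\perp\image P_\TT$, and $\ker\frakB_\TT|_{\Theta^k(\M)}=\S\BH_\TT^k(\M)$. (iii) The integration-by-parts identities \eqref{eq:integration_by_parts_F_H_extended} and \corrref{cor:integration_by_parts_second_order} hold verbatim on symmetric forms, since every operator occurring there commutes with transposition by \eqref{eq:symmetries_preserved_by_H_F} and \eqref{eq:boundary_T}.

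The one ingredient not already available is a symmetric version of the boundary-extension results \lemref{lemma:prescribe_boundary_conditions}, \lemref{lemma:prescribed_boundary_data_second_order} and \corrref{cor:full_problem_boundary}: given symmetric data for $\PttD$, $\frakT$, $\PttD\bHg^*$, $\frakT\bHg^*$ together with data for $\PntD$, $\frakF^*$, I need an $\omega\in W^{s+4,p}\Theta^k(\M)$ realizing them, with the natural Sobolev bound. I would obtain it by applying \corrref{cor:full_problem_boundary} in $\Omega^{k,k}(\M)$ to build some $\tilde\omega$ carrying these data, where the data assigned to the conjugate operators $\PtnD$ and $\frakF$ are taken to be the transposes of the data for $\PntD$ and $\frakF^*$ and all remaining boundary values are set to zero, and then symmetrizing: $\omega=\tfrac12(\tilde\omega+\tilde\omega^T)$. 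That $\omega$ still realizes the prescribed data is a short check using \eqref{eq:boundary_T} --- which fixes $\PttD$ and $\frakT$ under transposition and interchanges $\PntD\leftrightarrow\PtnD$, $\frakF^*\leftrightarrow\frakF$ --- together with the transposition-invariance of $\bHg^*$, which makes $\PttD\bHg^*$ and $\frakT\bHg^*$ transposition-invariant as well; the Sobolev estimate is inherited from \corrref{cor:full_problem_boundary} and the boundedness of symmetrization. The symmetric analogues of \lemref{lemma:prescribe_boundary_conditions} and \lemref{lemma:prescribed_boundary_data_second_order} are produced the same way.

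With these in hand I would run the proof of \thmref{thm:bilaplacian_full_problem} line for line. First $s=0$, $p=2$: necessity of the integrability condition is integration by parts against $\zeta\in\S\BH_\TT^k(\M)$ via \eqref{eq:integration_by_parts_F_H_extended}. For sufficiency I would use the Fredholm property to find $\psi\in H^4\Theta^k(\M)$ with $\frakB_\TT\psi-V$ in the cokernel, test the cokernel relation against $\frakB_\TT G_\TT\omega$ for arbitrary $\omega\in\Theta^k(\M)$ to get $\Bg\psi-\upsilon\in\S\BH_\TT^k(\M)$, use the symmetric extension to build $\omega$ carrying the prescribed boundary data, put $\lambda=\psi-G_\TT\upsilon-\omega$, and substitute into the cokernel relation, reaching $0=\Norm{\frakB_\TT\psi-V}^2+\bra\Bg\psi-\upsilon,\,P_\TT\upsilon-\Bg\omega\ket$. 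For the cross term I would set $\zeta=\Bg\psi-\upsilon\in\S\BH_\TT^k(\M)$, use $P_\TT\zeta=\zeta$, and integrate by parts to reduce $\bra\zeta,\Bg\omega\ket$ to $\int_{\dM}\Brk{(\theta,\frakT^*\zeta)_\gD-(\tau,\PnnD\zeta)_\gD}\VolumeD$, which the integrability hypothesis identifies with $\bra\zeta,\upsilon\ket$; hence the cross term vanishes and $\frakB_\TT\psi=V$. Uniqueness is $\ker\frakB_\TT=\S\BH_\TT^k(\M)$, and the case of general $(s,p)$ follows by bootstrapping \eqref{eq:elliptic_estimate}. The main obstacle --- the only place where genuine work is needed --- is the symmetric boundary-extension step, and it is exactly there that \eqref{eq:boundary_T} and the transposition-invariance of $\bHg^*$ do the job; everything else is a transcription.
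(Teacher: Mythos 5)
Your proof is correct and follows essentially the same route as the paper, which simply declares the proof to be a step-by-step transcription of \thmref{thm:bilaplacian_full_problem}. You go further than the paper in one respect: you correctly identify that the only genuinely non-trivial ingredient not already supplied by \thmref{thm:regular_ellipticity_symmetric_1} and \propref{prop:green_commute_with_transpose} is a \emph{symmetric} analogue of the boundary-extension result \corrref{cor:full_problem_boundary}, and your construction of it — apply the general extension with the transposed data assigned to the conjugate operators $\PtnD$, $\frakF$ and then symmetrize, using \eqref{eq:boundary_T} and the transposition-invariance of $\bHg^*$ to verify that symmetrization does not disturb the prescribed data — is clean and does the job. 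The paper does not comment on this step at all (its one-line proof cites ``the machinery of symmetric decompositions,'' which is somewhat imprecise since the proof of \thmref{thm:bilaplacian_full_problem} does not actually invoke the decomposition theorem; the real inputs are the symmetric regular ellipticity, the symmetric Green operator and projection, and the symmetric boundary extension, exactly as you list them). Your version is therefore a more careful articulation of the same argument, not a different one.
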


\section{Bilaplacian decomposition of double forms}
\label{sec:the_decompostion}
\subsection{An exact diagram}

Fix $(k,m)$ and suppose that for the diagram

\[
\begin{tikzcd}
&& {\Omega^{k+1,m+1}(\M)} \\
&& {} \\
{\Omega^{k-1,m+1}(\M)} && {\Omega^{k,m}(\M)} && {\Omega^{k+1,m-1}(\M)} \\
&& {} \\
&& {\Omega^{k-1,m-1}(\M)}
\arrow["{\bHg}"', curve={height=12pt}, from=5-3, to=3-3]
\arrow["{\bHg}"', curve={height=12pt}, from=3-3, to=1-3]
\arrow["{\bHg^*}", curve={height=-12pt}, tail reversed, no head, from=3-3, to=1-3]
\arrow["{\bHg^*}"', curve={height=12pt}, from=3-3, to=5-3]
\arrow["{\bFg}", curve={height=-12pt}, from=3-1, to=3-3]
\arrow["{\bFg^*}"', curve={height=12pt}, tail reversed, no head, from=3-1, to=3-3]
\arrow["{\bFg^*}", curve={height=-12pt}, from=3-5, to=3-3]
\arrow["{\bFg}", curve={height=-12pt}, from=3-3, to=3-5]
\end{tikzcd}
\]
the following conditions hold,
\beq
\begin{gathered}
\bHg\bHg=0 \qquad \bFg^*\bFg^*=0 \qquad \bFg^*\bHg=0 \qquad \bHg\bFg^*=0
\\ \bHg^*\bHg^*=0 \qquad \bFg\bFg=0 \qquad \bFg\bHg^*=0 \qquad \bHg^*\bFg=0
\\ \bFg\bHg=0 \qquad \bHg^*\bFg^*=0 \qquad \bHg\bFg=0 \qquad \bFg\bHg=0.
\end{gathered}
\label{eq:exact_relations}
\eeq

This condition holds for all $(k,m)$ when $(\M,\g)$ is locally-flat,  setting $\bHg=\Hg$ and $\bFg=\Fg$. 
This is an immediate consequence of the exactness relations  $\dg\dg=0$, $\dgV\dgV=0$, $\deltag\deltag=0$ and $\deltagV\deltagV=0$ and the commutation relations \eqref{eq:dg_dgV_commutators}. 
In the sequel \cite{KL21b}, we will construct operators satisfying these condition for $k=m=1$ and $\g$ having constant sectional curvature.


\begin{proposition}
\label{prop:boundary_conditions_orthogonality}
Let $\psi\in W^{2,p}\VectorFormsM$, let $\eta\in W^{4,q}\VectorFormsM$ and let $\kappa,\lambda,\mu\in W^{2,q}\VectorFormsM$ (the types are implied in each context) with $1/p+1/q=1$. Then,
\beq
\text{if} \qquad 
\psi \in \ker(\PttD,\frakT)
\qquad \text{then} \qquad 
\begin{cases}  \bra \bHg\psi, \bHg\eta\ket= \bra \psi, \bHg^*\bHg\eta\ket \\   \bra \bHg\psi, \bHg^*\kappa\ket=0 \\ \bra \bHg \psi, \bFg^*\lambda\ket=0 \\ \bra \bHg\psi, \bFg\mu\ket=0 
\end{cases}
\label{eq:H_image_orthogonality}
\eeq
\[
\text{if} \qquad 
\psi \in \ker(\PnnD,\frakT^*)
\qquad \text{then} \qquad 
\begin{cases} \bra \bHg^*\psi, \bHg^*\eta\ket=\bra \psi, \bHg \bHg^*\eta\ket \\ \bra \bHg^*\psi, \bHg\kappa\ket=0 \\ \bra \bHg^* \psi, \bFg^*\lambda\ket=0 \\ \bra \bHg^*\psi, \bFg\mu\ket=0 
\end{cases}
\]
\[
\text{if} \qquad 
\psi \in \ker(\PntD,\frakF^*)
\qquad \text{then} \qquad 
\begin{cases} \bra \bFg^*\psi, \bFg^*\eta\ket=\bra \psi, \bFg\bFg^*\eta\ket \\ \bra \bFg^*\psi, \bHg\kappa\ket=0 \\ \bra \bFg^* \psi, \bHg^*\lambda\ket=0 \\ \bra \bFg^*\psi, \bFg\mu\ket=0 
\end{cases}
\]
\[
\text{if} \qquad 
\psi \in \ker(\PtnD,\frakF)
\qquad \text{then} \qquad 
\begin{cases} \bra \bFg\psi, \bFg\eta\ket=\bra \psi, \bFg^* \bFg\eta\ket \\ \bra \bFg\psi, \bHg\kappa\ket=0 \\ \bra \bFg \psi, \bHg^*\lambda\ket=0 \\ \bra \bFg\psi, \bFg^*\mu\ket=0 .
\end{cases}
\]
\end{proposition}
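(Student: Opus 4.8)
The plan is to derive each of the four displayed blocks from a single, uniform argument, treating the first block (the hypothesis $\psi\in\ker(\PttD,\frakT)$) in detail and then obtaining the remaining three by the Hodge-star substitutions $\psi\mapsto\starG\psi$, $\psi\mapsto\starG^V\psi$, $\psi\mapsto\starG\starG^V\psi$, exactly as in the proof of \thmref{thm:elliptic_regularity}, using \eqref{eq:duals_stars}, \eqref{eq:Hodge_star_normal_tangentAA}, and \eqref{eq:second_order_boundary_operators_duality} to translate the boundary operators and the second-order operators. So the real content is the first block. For the first identity there, $\bra\bHg\psi,\bHg\eta\ket=\bra\psi,\bHg^*\bHg\eta\ket$, I would simply invoke \corrref{cor:integration_by_parts_second_order}: since $\psi\in\ker(\PttD,\frakT)$ and $\bHg\eta\in W^{2,q}\VectorFormsM$, the first line of that corollary gives $\bra\bHg\psi,\bHg\eta\ket=\bra\psi,\bHg^*\bHg\eta\ket$ directly. (The regularity $\eta\in W^{4,q}$ is exactly what is needed for $\bHg^*\bHg\eta$ to make sense in $L^2$.)

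For the three vanishing statements I would again integrate by parts using the same line of \corrref{cor:integration_by_parts_second_order}, moving $\bHg$ off $\psi$, and then invoke the exactness relations \eqref{eq:exact_relations}. Concretely: $\bra\bHg\psi,\bHg^*\kappa\ket=\bra\psi,\bHg^*\bHg^*\kappa\ket=0$ because $\bHg^*\bHg^*=0$; $\bra\bHg\psi,\bFg^*\lambda\ket=\bra\psi,\bHg^*\bFg^*\lambda\ket=0$ because $\bHg^*\bFg^*=0$; and $\bra\bHg\psi,\bFg\mu\ket=\bra\psi,\bHg^*\bFg\mu\ket=0$ because $\bHg^*\bFg=0$. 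Here one must be a little careful that the operator being applied to $\psi$ on the right-hand side of the integration-by-parts formula is indeed $\bHg^*$ in each case (the target double-form degrees match up: $\bHg$ raises $(k,m)$ to $(k{+}1,m{+}1)$, so its $L^2$-adjoint $\bHg^*$ is what appears), and that the regularity hypotheses on $\kappa,\lambda,\mu$ (namely $W^{2,q}$) suffice for all the pairings to be finite — they do, since only one further derivative is taken and the result lands in $L^2$ against $\psi\in L^2$.

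The analogous computations for the other three blocks use the corresponding lines of \corrref{cor:integration_by_parts_second_order} (the second line for $\ker(\PnnD,\frakT^*)$, the fourth for $\ker(\PntD,\frakF^*)$, the third for $\ker(\PtnD,\frakF)$) together with the remaining relations from \eqref{eq:exact_relations}: e.g. for $\psi\in\ker(\PnnD,\frakT^*)$ one gets $\bra\bHg^*\psi,\bHg\kappa\ket=\bra\psi,\bHg\bHg\kappa\ket=0$, $\bra\bHg^*\psi,\bFg^*\lambda\ket=\bra\psi,\bHg\bFg^*\lambda\ket=0$, $\bra\bHg^*\psi,\bFg\mu\ket=\bra\psi,\bHg\bFg\mu\ket=0$, using $\bHg\bHg=0$, $\bHg\bFg^*=0$, $\bHg\bFg=0$; and similarly for the $\bFg^*$ and $\bFg$ blocks. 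An alternative to the star-substitution route is to carry out all four computations by hand in parallel — this is equally routine but more verbose — so I would present the star argument as the economical packaging.

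The main obstacle is bookkeeping rather than mathematics: one must (i) verify that in each integration by parts the correct boundary-kernel hypothesis on $\psi$ matches the operator being transposed, so that \corrref{cor:integration_by_parts_second_order} genuinely applies with \emph{no} boundary contribution, and (ii) when passing to the other three blocks via Hodge-star substitution, correctly track how $\ker(\PttD,\frakT)$ transforms under $\starG,\starG^V,\starG\starG^V$ into $\ker(\PnnD,\frakT^*)$ etc.\ (via \eqref{eq:Hodge_star_normal_tangentAA} and \eqref{eq:second_order_boundary_operators_duality}) and how $\bHg,\bHg^*,\bFg^*,\bFg$ permute among themselves (via \eqref{eq:duals_stars}), checking that the twelve relations in \eqref{eq:exact_relations} are mapped to one another consistently. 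None of this is deep, but it is where an error would most plausibly creep in, so I would lay out the dictionary explicitly before reading off the conclusions.
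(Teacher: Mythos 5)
Your proposal is correct and follows essentially the same route as the paper: the paper's proof is precisely the substitution $\eta\mapsto\bHg\eta,\ \bHg^*\kappa,\ \bFg^*\lambda,\ \bFg\mu$ in \corrref{cor:integration_by_parts_second_order} combined with the exactness relations \eqref{eq:exact_relations}, with the remaining three blocks handled "similarly" (by symmetry/duality), which matches your plan. The only cosmetic difference is that you package the other three blocks via Hodge-star substitution while the paper simply swaps the roles of the operators; both are routine and equivalent.
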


\begin{proof}
The first assertion follows from substituting sequentially  $\eta\mapsto \bHg\eta$ $\eta\mapsto \bHg^*\eta$, $\eta\mapsto \bFg\eta$ and $\eta\mapsto \bFg^*\eta$ in \corrref{cor:integration_by_parts_second_order}
using \eqref{eq:exact_relations}. The second assertion is obtained similarly, switching the roles of $\psi$ and $\eta$. The third and fourth assertion are obtained similarly.
\end{proof} 

\begin{corollary}
\label{corr:7.2}
Let $\psi\in W^{3,p}\VectorFormsM$. Then,
\beq
\begin{aligned}
&\text{if} 
&\quad 
&\psi \in \ker(\PttD,\frakT)
&\qquad \text{then} 
&\qquad &
& \bHg\psi\in \ker(\PttD,\frakT,\PntD,\frakF^*,\PtnD,\frakF) \\
&\text{if} 
&\quad 
&\psi \in \ker(\PnnD, \frakT^*)
&\qquad \text{then} 
&\qquad &
&\bHg^*\psi\in\ker(\PnnD, \frakT^*,\PntD,\frakF^*,\PtnD,\frakF) \\
&\text{if} 
&\quad 
&\psi \in \ker(\PntD, \frakF^*)
&\qquad \text{then} 
&\qquad &
&\bFg^*\psi\in\ker(\PntD, \frakF^*,\PttD,\frakT,\PnnD,\frakT^*) \\
&\text{if} 
&\quad 
&\psi \in \ker(\PtnD, \frakF)
&\qquad \text{then} 
&\qquad &
&\bFg\psi\in\ker(\PtnD, \frakF,\PttD,\frakT,\PnnD,\frakT^*).
\end{aligned}
\label{eq:operators_inherit_conditions}
\eeq
\end{corollary}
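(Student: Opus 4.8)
The plan is to read the corollary off \propref{prop:boundary_conditions_orthogonality}, the integration-by-parts formulas \eqref{eq:integration_by_parts_F_H_extended}, the exactness relations \eqref{eq:exact_relations}, and the free prescription of boundary data supplied by \lemref{lemma:prescribe_boundary_conditions}. It is enough to treat the first line of \eqref{eq:operators_inherit_conditions}; the remaining three will follow from it by the substitutions $\psi\mapsto\starG\psi$, $\psi\mapsto\starG^V\psi$, $\psi\mapsto\starG\starG^V\psi$, exactly as in the proof of \thmref{thm:elliptic_regularity}, using \eqref{eq:duals_stars}, \eqref{eq:Hodge_star_normal_tangentAA} and \eqref{eq:second_order_boundary_operators_duality} to carry the operators and the boundary operators along.

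So I would fix $\psi\in W^{3,p}\VectorFormsM$ with $\psi\in\ker(\PttD,\frakT)$, so that $\bHg\psi\in W^{1,p}\VectorFormsM$. The point is that each of the six boundary operators we must annihilate on $\bHg\psi$ is the ``outgoing'' half of one of the identities \eqref{eq:integration_by_parts_F_H_extended} written with $\bHg\psi$ in the first slot, and in each case both the bulk pairing and the ``incoming'' boundary pairing vanish. For a smooth test form $\eta$, applying \eqref{eq:integration_by_parts_F_H_extended} to $\bHg\psi$ and $\eta$ gives
\[
0=\bra\bHg\bHg\psi,\eta\ket=\bra\bHg\psi,\bHg^*\eta\ket+\int_{\dM}\Brk{(\PttD\bHg\psi,\frakT^*\eta)_\gD-(\frakT\bHg\psi,\PnnD\eta)_\gD}\VolumeD,
\]
where $\bHg\bHg\psi=0$ by \eqref{eq:exact_relations} and $\bra\bHg\psi,\bHg^*\eta\ket=0$ by \propref{prop:boundary_conditions_orthogonality}; the $\bFg$-identity applied to $\bHg\psi$ gives a boundary integral of $(\PtnD\bHg\psi,\frakF^*\eta)_\gD-(\frakF\bHg\psi,\PntD\eta)_\gD$ equal to $\bra\bFg\bHg\psi,\eta\ket-\bra\bHg\psi,\bFg^*\eta\ket=0$; and the analogous identity for $\bFg^*$ (obtained by interchanging the two arguments in the $\bFg$-identity) applied to $\bHg\psi$ gives a boundary integral of $(\PntD\bHg\psi,\frakF\eta)_\gD-(\frakF^*\bHg\psi,\PtnD\eta)_\gD$ equal to $\bra\bFg^*\bHg\psi,\eta\ket-\bra\bHg\psi,\bFg\eta\ket=0$. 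Here the three vanishing compositions $\bHg\bHg=0$, $\bFg\bHg=0$, $\bFg^*\bHg=0$ are all among \eqref{eq:exact_relations}, and the three vanishing bulk pairings $\bra\bHg\psi,\bHg^*\eta\ket=\bra\bHg\psi,\bFg^*\eta\ket=\bra\bHg\psi,\bFg\eta\ket=0$ are the second, third and fourth lines of the $\ker(\PttD,\frakT)$ block of \propref{prop:boundary_conditions_orthogonality}.

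It then remains to conclude from the vanishing of these three boundary integrals. By \lemref{lemma:prescribe_boundary_conditions} the pairs $(\frakT^*\eta,\PnnD\eta)$, $(\frakF^*\eta,\PntD\eta)$, $(\frakF\eta,\PtnD\eta)$ sweep out all pairs of (smooth) boundary sections of the relevant bundles, so each summand must vanish separately, giving $\PttD\bHg\psi=\frakT\bHg\psi=0$, $\PtnD\bHg\psi=\frakF\bHg\psi=0$ and $\PntD\bHg\psi=\frakF^*\bHg\psi=0$, i.e.\ $\bHg\psi\in\ker(\PttD,\frakT,\PntD,\frakF^*,\PtnD,\frakF)$. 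The one point that needs care — and the main obstacle — is regularity: since $\psi$ is only $W^{3,p}$, $\bHg\psi$ lies merely in $W^{1,p}$, so the first-order boundary operators applied to it ($\frakT\bHg\psi$, $\frakF\bHg\psi$, $\frakF^*\bHg\psi$) are a priori only negative-order distributions on $\dM$, and the formulas \eqref{eq:integration_by_parts_F_H_extended}, established for $W^{2,p}$ arguments, must be used in their weak form, with the boundary traces of $\bHg\psi$ defined precisely by the bulk pairings above; this is harmless and consistent with the later use of these conditions, where the relevant subspaces are $L^2$-closures. A reader preferring classical traces may instead run the argument for smooth $\psi\in\ker(\PttD,\frakT)$ and pass to the limit by density in the $W^{3,p}$-closure of $\ker(\PttD,\frakT)$. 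Apart from this bookkeeping the argument is purely formal, using nothing beyond \eqref{eq:integration_by_parts_F_H_extended}, \eqref{eq:exact_relations} and \propref{prop:boundary_conditions_orthogonality}.
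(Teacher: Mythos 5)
Your proposal is correct and follows essentially the same route as the paper: both arguments come down to writing the integration-by-parts formulas \eqref{eq:integration_by_parts_F_H_extended} with $\bHg\psi$ in the first slot, using the exactness relations \eqref{eq:exact_relations} together with the vanishing of the bulk pairings to conclude each boundary integral vanishes, and then invoking \lemref{lemma:prescribe_boundary_conditions} to prescribe the relevant traces of $\eta$ independently. The only cosmetic difference is that you package the bulk vanishings via \propref{prop:boundary_conditions_orthogonality} whereas the paper expands the first case by a second integration by parts (using $\bHg^*\bHg^*=0$ and $\psi\in\ker(\PttD,\frakT)$) before invoking \eqref{eq:H_image_orthogonality} for the $\bFg$-terms; your extra remark on handling the $W^{3,p}$ regularity by density matches the paper's ``it suffices to prove the statement for smooth forms.''
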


\begin{proof}
It suffices to prove the statement for smooth forms. 

Applying twice the integration by parts formula \eqref{eq:H_general_integral}, using the fact that $\bHg\bHg=0$ and $\bHg^*\bHg^*=0$, we obtain for all $\eta\in\VectorFormsM$,
\[
\begin{split}
0 &= \int_{\dM}\Brk{(\PttD\bHg\psi,\frakT^*\eta)_{\gD}-(\frakT\bHg\psi,\PnnD\eta)_{\gD}}\VolumeD \\
&+ \int_{\dM}\Brk{(\PttD\psi,\frakT^*\bHg^*\eta)_{\gD}-(\frakT\psi,\PnnD\bHg^*\eta)_{\gD}}\VolumeD.
\end{split}
\]
For $\psi\in\ker(\PttD, \frakT)$,
\[
0 = \int_{\dM}\Brk{(\PttD\bHg\psi,\frakT^*\eta)_{\gD}-(\frakT\bHg\psi,\PnnD\eta)_{\gD}}\VolumeD.
\]
Since by \lemref{lemma:prescribe_boundary_conditions}, $\PnnD\eta$ and $\frakT^*\eta$ can be set arbitrarily, we conclude that
\[
\bHg\psi \in \ker(\PttD,\frakT).
\]

%

Similarly, combining \eqref{eq:H_general_integral} and \eqref{eq:H_image_orthogonality},
\[
\begin{split}
0 &= \int_{\dM}\Brk{(\PtnD\bHg\psi,\frakF^*\eta)_{\gD}-(\frakF\bHg\psi,\PntD\eta)_{\gD}} \VolumeD \\
&+ \int_{\dM}\Brk{(\PttD\psi,\frakT^*\bFg^*\eta)_{\gD}-(\frakT\psi,\PnnD\bFg^*\eta)_{\gD}}\VolumeD.
\end{split}
\]
For $\psi\in\ker(\PttD, \frakT)$, using the fact that $\PntD\lambda$ and $\frakF^*\lambda$ can be prescribed arbitrarily, we obtain that
\[
\bHg\psi \in \ker(\PtnD,\frakF).
\]
We prove that $\bHg\psi \in \ker(\PntD,\frakF^*)$ in a similar way, thus completing the first assertion.
The other assertions are proved in the same way.
\end{proof}

\subsection{Orthogonal decomposition in manifolds with boundary} 

We now use the elliptic regularity of \thmref{thm:elliptic_regularity} to derive a decomposition for $\Omega^{k,m}(\M)$, assuming the exactness relations \eqref{eq:exact_relations}.

\begin{theorem}
\label{thm:decompostion_1}
Consider the following subspaces of $\VectorFormsKM$, 
\beq
\begin{split}
\EE^{k,m}(\M) &=  \BRK{\bHg\alpha ~:~ \alpha\in\ker(\PttD,\frakT)} \\
\CC^{k,m}(\M) &=  \BRK{\bHg^*\beta ~:~ \beta\in\ker(\PnnD,\frakT^*)} \\
\EC^{k,m}(\M) &= \BRK{\bFg\gamma ~:~ \gamma\in\ker(\PtnD,\frakF)} \\
\CE^{k,m}(\M) &= \BRK{\bFg^*\lambda ~:~ \lambda\in\ker(\PntD,\frakF^*)} .
\end{split}
\label{eq:subspaces_decomposition}
\eeq
(The symbols $\calE$ and $\calC$ stands for ``exact" and ``co-exact" in both form and vector parts.)
Let $W^{s,p}\EE^{k,m}(\M)$ etc. stand for the Sobolev versions, i.e.,   
$\alpha,\beta,\lambda,\delta\in W^{s+2,p}\VectorFormsM$ (the type is implied by the context). Then, for all $s\in\bbN$ and $p\ge2$ there exists a unique $L^2$-orthogonal decomposition,
\beq
\begin{split}
W^{s,p}\VectorFormsKM &= W^{s,p}\EE^{k,m}(\M)\oplus W^{s,p}\CC^{k,m}(\M)\\&
\oplus W^{s,p}\CE^{k,m}(\M)\oplus W^{s,p}\EC^{k,m}(\M)\oplus W^{s,p}\BHkm.
\end{split}
\label{eq:second_order_decompostion_initial}
\eeq
The splitting is both algebraic and topological in the $W^{s,p}$-topology. Furthermore, each of the subspaces in this decomposition is closed in the $W^{s,p}$-topology.
\end{theorem}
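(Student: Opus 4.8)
The plan is to deduce the decomposition directly from the spectral theory and Green operators of the four bilaplacian boundary-value problems established in \secref{subsec:biharmonic_analysis}, together with the orthogonality relations of \propref{prop:boundary_conditions_orthogonality}. First I would observe that, since $\Bg = \bHg\bHg^* + \bHg^*\bHg + \bFg^*\bFg + \bFg\bFg^*$, the exactness relations \eqref{eq:exact_relations} imply that on $H^4_\TT\VectorFormsKM$ one has $\Bg = \bHg^*\bHg + \bFg\bFg^*$ modulo the terms killed by \eqref{eq:exact_relations}; more precisely, applying \propref{prop:boundary_conditions_orthogonality} with $\psi = \eta$ shows that for $\psi \in H^4_\TT\VectorFormsKM$, $\bra \Bg\psi,\psi\ket$ splits as a sum of squared norms of $\bHg\psi$, $\bHg^*\psi$, $\bFg^*\psi$, $\bFg\psi$. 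I would then use \propref{prop:spectral_2}: the Green operator $G_\TT$ inverts $B_\TT$ on $(\BH^{k,m}_\TT(\M))^\perp$, so for any $\psi\in W^{s,p}\VectorFormsKM$,
\[
\psi = P_\TT\psi + \Bg G_\TT\psi = P_\TT\psi + \bHg\bHg^* G_\TT\psi + \bHg^*\bHg G_\TT\psi + \bFg^*\bFg G_\TT\psi + \bFg\bFg^* G_\TT\psi.
\]
By \corrref{corr:7.2}, since $G_\TT\psi\in\ker(\PttD,\frakT,\dots)$, the element $\bHg^* G_\TT\psi$ lies in $\ker(\PnnD,\frakT^*)$, so $\bHg^*\bHg G_\TT\psi$ — wait, one must be careful: I would instead take $\alpha = \bHg^* G_\TT\psi$ for the $\EE$-component, $\beta = \bHg G_\TT\psi$ for $\CC$, $\gamma = \bFg^* G_\TT\psi$ for $\EC$, $\lambda = \bFg G_\TT\psi$ for $\CE$, and check via \corrref{corr:7.2} applied to $G_\TT\psi$ that each of these satisfies the boundary conditions defining its respective subspace. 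This produces the desired decomposition with the harmonic part $P_\TT\psi \in W^{s,p}\BHkm$ (using \propref{prop:5.11} to identify $\BH^{k,m}_\TT(\M) = \BHkm\cap H^2_\TT$, and the fact that $\BH^{k,m}_\TT(\M)$ is finite-dimensional and smooth, hence contained in every $W^{s,p}\BHkm$).

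Next I would establish $L^2$-orthogonality of the five summands. The harmonic part is orthogonal to the other four by \propref{prop:boundary_conditions_orthogonality} (each of $\bHg\alpha$, $\bHg^*\beta$, etc. pairs to zero against an element of $\ker(\bHg,\bHg^*,\bFg^*,\bFg)$, reading the integration-by-parts identities of \corrref{cor:integration_by_parts_second_order} with the harmonic form on one side). The pairwise orthogonality of $\EE$, $\CC$, $\EC$, $\CE$ is exactly the content of the three displayed "$=0$" lines in each block of \propref{prop:boundary_conditions_orthogonality}: e.g. $\bra\bHg\alpha,\bHg^*\beta\ket = 0$, $\bra\bHg\alpha,\bFg^*\gamma\ket = 0$, $\bra\bHg\alpha,\bFg\lambda\ket = 0$ when $\alpha\in\ker(\PttD,\frakT)$, and symmetrically for the others. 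I should check that every needed pairing appears in one of the four blocks (it does, since the blocks are indexed by which of the four operators sits on the "outer" side). Uniqueness of the decomposition is then automatic from orthogonality.

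The remaining point is closedness of each summand in the $W^{s,p}$-topology, and here lies the main obstacle. For this I would argue as follows. Each summand is, by construction, the image under a fixed differential operator ($\bHg$, $\bHg^*$, $\bFg$, or $\bFg^*$) of the kernel of an elliptic boundary-value problem; equivalently, using the decomposition just proved, $W^{s,p}\EE^{k,m}(\M) = \bHg\bHg^* G_\TT\big(W^{s,p}\VectorFormsKM\big)$ and similarly for the others. Since $G_\TT: W^{s,p}\VectorFormsKM \to W^{s+4,p}\VectorFormsKM\cap H^4_\TT\VectorFormsKM$ is continuous (\propref{prop:spectral_2}) and $\bHg\bHg^*: W^{s+4,p}\to W^{s,p}$ is continuous, each summand is the continuous image of a Banach space. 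To get closedness I would instead characterize each summand intrinsically — e.g.\ show that $W^{s,p}\EE^{k,m}(\M)$ equals the set of $\omega\in W^{s,p}\VectorFormsKM$ with $\bHg^*\omega = 0$, $\bFg^*\omega = 0$, $\bFg\omega = 0$, $P_\TT\omega = 0$, and $\PnnD\omega=\dots$ (the appropriate boundary conditions from \corrref{corr:7.2}); inclusion "$\subseteq$" follows from \eqref{eq:exact_relations} and \corrref{corr:7.2}, and "$\supseteq$" follows by applying the decomposition to such an $\omega$ and using orthogonality to kill the other four components. Such a set, being the intersection of kernels of continuous operators (differential operators $W^{s,p}\to W^{s-1,p}$ or $W^{s-2,p}$, trace maps to boundary Sobolev spaces, and the finite-rank projection $P_\TT$), is closed. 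The subtlety to handle carefully is that the boundary conditions must be formulated on elements of $W^{s,p}$ with $s$ possibly small (e.g.\ $s=0$), where the naive traces need not exist; this is dealt with exactly as in \propref{prop:boundary_data_vanish}, using that the relevant composite operators are elliptic so their "boundary data" is well-defined in a distributional/trace sense à la \cite[p.~459]{Tay11a}. Finally, the algebraic-and-topological ("$W^{s,p}$") nature of the splitting follows from closedness of the summands plus the open mapping theorem: the five closed subspaces sum to the whole space and are mutually $L^2$-orthogonal, hence $W^{s,p}$-independent, so the projections are bounded.
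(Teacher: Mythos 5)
Your overall strategy --- Green operators, the orthogonality relations of \propref{prop:boundary_conditions_orthogonality}, Pythagoras for closedness --- is the one the paper uses, but there is a genuine gap at the central step: the construction of the five components. You take all four potentials from a \emph{single} Green operator, setting $\beta=\bHg G_\TT\psi$, $\gamma=\bFg^* G_\TT\psi$, $\lambda=\bFg G_\TT\psi$, and claim that \corrref{corr:7.2} shows each lies in the kernel defining its subspace. It does not. For $\bHg^*\beta$ to lie in $\CC^{k,m}(\M)$ you need $\beta\in\ker(\PnnD,\frakT^*)$; but $G_\TT\psi\in H^4_\TT\VectorFormsKM$ only gives $\PttD\bHg^*G_\TT\psi=\frakT\bHg^*G_\TT\psi=0$, and \corrref{corr:7.2} applied to $G_\TT\psi\in\ker(\PttD,\frakT)$ yields $\bHg G_\TT\psi\in\ker(\PttD,\frakT,\PntD,\frakF^*,\PtnD,\frakF)$ --- the \emph{complementary} boundary data $\PnnD\bHg G_\TT\psi$ and $\frakT^*\bHg G_\TT\psi$ are not controlled and do not vanish in general. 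The same failure occurs for $\gamma$ and $\lambda$. A sanity check that something must be wrong: your residual is $P_\TT\psi\in\BH^{k,m}_\TT(\M)$, a finite-dimensional space, whereas the fifth summand $\BHkm=\ker(\bHg,\bHg^*,\bFg^*,\bFg)$ is in general infinite-dimensional and strictly contains $\BH^{k,m}_\TT(\M)$ (cf.\ \thmref{thm:BH_decompose1}); the terms $\bHg^*\bHg G_\TT\psi$, $\bFg\bFg^*G_\TT\psi$, $\bFg^*\bFg G_\TT\psi$ carry nonzero pieces of $\BHkm$ that your decomposition misplaces.

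The fix is exactly what the paper does: use a different Green operator for each component, namely $\alpha_\psi=\bHg^*G_\TT\psi$, $\beta_\psi=\bHg G_\NN\psi$, $\gamma_\psi=\bFg^*G_\TN\psi$, $\lambda_\psi=\bFg G_\NT\psi$, so that each potential inherits the correct boundary conditions from its own space $H^4_\RR\VectorFormsKM$. One then shows the residual $\kappa_\psi=\psi-\bHg\alpha_\psi-\bHg^*\beta_\psi-\bFg\gamma_\psi-\bFg^*\lambda_\psi$ is orthogonal to each image space by rewriting it through the $\RR=\TT$ identity (and its analogues) and invoking \propref{prop:boundary_conditions_orthogonality}, and finally that orthogonality to all four images forces $\kappa_\psi\in\BHkm$, using the density of $\ker(\PttD,\frakT)$, $\ker(\PnnD,\frakT^*)$, etc.\ in $L^2$. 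Your closedness argument also has a slip: elements of $\EE^{k,m}(\M)$ lie in $\ker(\bHg,\bFg,\bFg^*)$ (since $\bHg\bHg=\bFg\bHg=\bFg^*\bHg=0$), not in $\ker(\bHg^*,\bFg^*,\bFg)$, and orthogonality to $\BHkm$ is not captured by $P_\TT\omega=0$ alone. The paper's route is simpler: given $\bHg\alpha_j\to\eta$ in $W^{s,p}$, decompose $\eta$ and use Pythagoras in $L^2$ together with the already-established orthogonality to kill every non-$\EE$ component of $\eta$.
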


\begin{proof}
Let $\psi\in W^{s,p}\VectorFormsKM$. From \eqref{eq:green_operator}, for every $\RR\in\BRK{\TT,\TN,\NT,\NN}$,
\beq
\begin{split}
\psi &= \bHg \bHg^*G_\RR\psi + \bHg^*\bHg G_\RR\psi + \bFg \bFg^* G_\RR\psi + \bFg^* \bFg G_\RR\psi  + P_\RR\psi.
\end{split}
\label{eq:potentials}
\eeq
From the construction of the Green operators, 
\[
G_\RR\psi\in W^{s+4,p}\VectorFormsKM \cap H^4_\RR\VectorFormsKM.
\]
Setting
\[
\alpha_\psi = \bHg^*G_\TT\psi
\qquad \beta_\psi = \bHg G_\NN\psi \qquad
\gamma_\psi = \bFg^* G_\TN\psi
\qquad \lambda_\psi = \bFg G_\NT\psi,
\]
it follows from the boundary properties of the image of $G_\RR$ that
\[
\begin{aligned}
& \bHg\alpha_\psi\in W^{s,p}\EE^{k,m}(\M) 
& \qquad 
& \bHg^*\beta_\psi\in W^{s,p}\CC^{k,m}(\M) \\
& \bFg\gamma_\psi\in W^{s,p}\EC^{k,m}(\M)
& \qquad 
& \bFg^*\lambda_\psi\in W^{s,p}\CE^{k,m}(\M).
\end{aligned}
\]
The fact that these elements are mutually orthogonal in $\scrH$ is due to \eqref{eq:H_image_orthogonality} and its dual versions. 

We then consider
\[
\kappa_\psi = \psi-\bHg\alpha_\psi-\bHg^*\beta_\psi-\bFg\gamma_\psi-\bFg^*\lambda_\psi,
\]
and prove that it is orthogonal to each of the spaces in \eqref{eq:subspaces_decomposition}. 
Decomposing $\psi$ as in \eqref{eq:potentials} for $\RR=\TT$,
\[
\kappa_\psi =  \bHg^* \bHg(G_\TT-G_\NN)\psi + \bFg \bFg^* (G_\TT - G_\TN)\psi + \bFg^* \bFg(G_\TT - G_\NT)\psi + P_\TT\psi.
\]

In view of \eqref{eq:H_image_orthogonality}, every $\bHg\alpha\in W^{s,p}\EE^{k,m}(\M)$
is orthogonal to each of the four terms on the right-hand side, namely
$\kappa_\psi\perp  W^{s,p}\EE^{k,m}(\M)$; the other orthogonal properties are proved similarly.
Thus,
\[
\kappa_\psi\in\brk{W^{s,p}\EE^{k,m}(\M)\oplus W^{s,p}\CC^{k,m}(\M)\oplus W^{s,p}\CE^{k,m}(\M)\oplus W^{s,p}\EC^{k,m}(\M)}^\perp.
\]

We next show that this orthogonal complement coincides with the biharmonic module $W^{s,p}\BHkm$. 
One side of the inclusion is immediate: biharmonic functions are orthogonal to the spaces  \eqref{eq:subspaces_decomposition}. For the other direction,
let 
\[
\kappa\in\brk{W^{s,p}\EE^{k,m}(\M)\oplus W^{s,p}\CC^{k,m}(\M)\oplus W^{s,p}\CE^{k,m}(\M)\oplus W^{s,p}\EC^{k,m}(\M)}^\perp.
\]
By definition, for every $\bHg\alpha\in W^{s,p}\EE^{k,m}(\M)$, $\bHg^*\beta\in W^{s,p}\CC^{k,m}(\M)$, $\bFg\gamma\in W^{s,p}\CE^{k,m}(\M)$ and $\bFg^*\lambda\in W^{s,p}\EC^{k,m}(\M)$,
\[
\begin{split}
0 &= \bra \kappa ,\bHg\alpha\ket \EwR{eq:H_general_integral} \bra \bHg^*\kappa,\alpha\ket \\
0 &= \bra \kappa ,\bHg^*\beta\ket \EwR{eq:H_general_integral} \bra \bHg\kappa,\beta\ket \\
0 &= \bra \kappa ,\bFg\gamma\ket \EwR{eq:H_general_integral} \bra \bFg^*\kappa,\gamma\ket \\
0 &= \bra \kappa ,\bFg^*\lambda\ket \EwR{eq:H_general_integral} \bra \bFg\kappa,\lambda\ket. 
\end{split}
\]
Since the spaces $\ker(\PttD,\frakT)$, $\ker(\PnnD,\frakT^*)$, $\ker(\PntD,\frakF^*)$ and $\ker(\PtnD,\frakF)$, are  dense in $\scrH$, we conclude that  $\kappa\in W^{s,p}\BHkm$, hence the decomposition 
\[
\psi = \bHg  \alpha_\psi + \bHg^* \beta_\psi   + \bFg^*\lambda_\psi + \bFg\gamma_\psi + \kappa_\psi .
\]
satisfies \eqref{eq:second_order_decompostion_initial}.

It remains to show the $W^{s,p}$-closeness of the spaces in the decomposition. We do it for $W^{s,p}\EE^{k,m}(\M)$; the others follow in a similar way. 

Let $\bHg\alpha_j\to \eta$ in $W^{s,p}\VectorFormsKM$. 
Decomposing the latter,
\[
\eta= \bHg\alpha_{\eta}+\bHg^*\beta_{\eta}+\bFg^*\lambda_{\eta}+\bFg\gamma_{\eta}+\kappa_{\eta},
\] 
by Pythagoras' law and the orthogonality result,
\[
\Norm{\eta-\bHg\alpha_j}_{L^2}^2=\Norm{\bHg\alpha_{\eta}-\bHg\alpha_j}_{L^2}^2+\Norm{\bHg^*\beta_{\eta}}_{L^2}^2+\Norm{\bFg^*\lambda_{\eta}}_{L^2}^2+\Norm{\bFg\gamma_{\eta}}_{L^2}^2+\Norm{\kappa_{\eta}}_{L^2}^2.
\]
Since, in particular, $\bHg\alpha_j\longrightarrow\eta$ in $L^2$, we obtain
\[
\begin{gathered}
\Norm{\bHg\alpha_{\eta}-\bHg\alpha_j}_{L^2}^2\longrightarrow 0 \\
\bHg^*\beta_{\eta} = \bFg^*\lambda_{\eta} = \bFg\gamma_{\eta} = \kappa_{\eta} =0,
\end{gathered}
\]
i.e., $\bHg\alpha_{\eta}  = \eta \in W^{s,p}\VectorFormsKM$, which proves that $W^{s,p}\EE^{k,m}(\M)$ is indeed closed. 

\end{proof}

The forms $\alpha_\psi$, $\beta_\psi$, $\lambda_\psi$  and $\gamma_\psi$ in the decomposition,
\[
\psi=\bHg\alpha_\psi+\bHg^*\beta_\psi+\bFg^*\lambda_\psi+\bFg\gamma_\psi+\kappa_{\psi}
\]
are referred to as potentials of $\psi$. Note the considerable  gauge freedom in the choice of potentials: we can alter $\alpha_\psi$  by elements in $\ker (\bHg,\PttD,\frakT)$, $\beta_\psi$ by elements in $\ker (\bHg^*,\PnnD,\frakT^*)$, etc. The following proposition shows how the gauge freedom can be exploited to control the norms of the potentials.

\begin{proposition}
\label{prop:gauge_freedom_1}
In the decomposition of $\psi$, the potential,
\[
\alpha_{\psi}=\bHg^*G_\TT\psi
\]
satisfies
\[
\alpha_{\psi}\in\Image{\bHg^*}
\]
and
\[
\Norm{\alpha_\psi}_{W^{s+2,p}(\M)}\lesssim \Norm{\psi}_{W^{s,p}(\M)}.
\]
Similar estimates apply to the other potentials of $\psi$ as well. 
\end{proposition}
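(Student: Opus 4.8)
The plan is to read the proposition straight off two facts already established: the mapping estimate for the biharmonic Green operator (\propref{prop:spectral_2}) and the Sobolev boundedness of the second-order graded operators recorded in \secref{sec:integral_theroy}. No new analysis is needed; the content is simply that the \emph{particular} gauge choice $\alpha_\psi=\bHg^*G_\TT\psi$ in \thmref{thm:decompostion_1} is both manifestly co-exact and controlled by $\psi$.

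First I would dispatch the algebraic claim. By construction $\alpha_\psi=\bHg^*(G_\TT\psi)$, and \propref{prop:spectral_2} guarantees that $G_\TT\psi$ is a genuine double form of type $(k,m)$ — in fact an element of $W^{s+4,p}\Omega^{k,m}(\M)\cap H^4_\TT\Omega^{k,m}(\M)$. Hence $\alpha_\psi$ lies in $\Image\bHg^*$; that is all there is to the first assertion.

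For the norm bound I would chain two continuity statements. Since $\bHg^*=\Hg^*+D_\g^*$ is a second-order differential operator with smooth coefficients on the compact manifold $\M$, it restricts (as noted in \secref{sec:integral_theroy}) to a bounded linear map $\bHg^*\colon W^{s+4,p}\Omega^{k,m}(\M)\to W^{s+2,p}\Omega^{k-1,m-1}(\M)$. Combined with the continuity $G_\TT\colon W^{s,p}\Omega^{k,m}(\M)\to W^{s+4,p}\Omega^{k,m}(\M)$ from \propref{prop:spectral_2}, this gives
\[
\Norm{\alpha_\psi}_{W^{s+2,p}(\M)}=\Norm{\bHg^*G_\TT\psi}_{W^{s+2,p}(\M)}\lesssim\Norm{G_\TT\psi}_{W^{s+4,p}(\M)}\lesssim\Norm{\psi}_{W^{s,p}(\M)}.
\]
The same argument, with $(\bHg^*,G_\TT)$ replaced respectively by $(\bHg,G_\NN)$, $(\bFg^*,G_\TN)$ and $(\bFg,G_\NT)$, yields $\beta_\psi\in\Image\bHg$, $\gamma_\psi\in\Image\bFg^*$ and $\lambda_\psi\in\Image\bFg$, together with the bounds $\Norm{\beta_\psi}_{W^{s+2,p}(\M)},\ \Norm{\gamma_\psi}_{W^{s+2,p}(\M)},\ \Norm{\lambda_\psi}_{W^{s+2,p}(\M)}\lesssim\Norm{\psi}_{W^{s,p}(\M)}$, since each of $\bHg,\bFg^*,\bFg$ is likewise a second-order operator and each $G_\RR$ gains four derivatives.

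There is no genuine obstacle here; the only thing to watch is bookkeeping — tracking the double-form types through $\bHg^*$ (which lowers both indices by one) and checking that the four-derivative regularization supplied by $G_\RR$ exactly absorbs the two derivatives consumed by the second-order operator while still leaving the advertised two-derivative surplus. If desired, one could remark in passing that the identical reasoning, applied term by term to the formula for $\kappa_\psi$ obtained in the proof of \thmref{thm:decompostion_1}, also gives $\Norm{\kappa_\psi}_{W^{s,p}(\M)}\lesssim\Norm{\psi}_{W^{s,p}(\M)}$, so that the whole decomposition depends continuously on $\psi$.
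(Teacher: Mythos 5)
Your argument matches the paper's proof exactly: the inclusion $\alpha_\psi\in\Image\bHg^*$ is read off the definition, and the norm estimate is obtained by composing the continuity of $G_\TT\colon W^{s,p}\to W^{s+4,p}$ (from \propref{prop:spectral_2}) with the continuity of the second-order operator $\bHg^*\colon W^{s+4,p}\to W^{s+2,p}$, and the same chain handles the remaining potentials. The extra remark about $\kappa_\psi$ is correct but not part of the stated proposition.
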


\begin{proof}
The first assertion is immediate. The bound on the norm follows from the continuity of $G_\TT:W^{s,p}\VectorFormsM\to W^{s+4,p}\VectorFormsM$ and the continuity of $\bHg^*:W^{s+4,p}\VectorFormsM\to W^{s+2,p}\VectorFormsM$.
\end{proof}

\subsection{Orthogonal decomposition of symmetric forms}

For $k=m$, we examine the decomposition of $\Theta^{k}(\M)$. Consider the diagram
\[
\begin{tikzcd}
&& {\Theta^{k+1}(\M)} \\
&& {} \\
{}&& {\Theta^{k}(\M)} && {\Omega^{k+1,k-1}(\M)} \\
&& {} \\
&& {\Theta^{k-1}(\M)}
\arrow["{\bHg}"', curve={height=12pt}, from=5-3, to=3-3]
\arrow["{\bHg}"', curve={height=12pt}, from=3-3, to=1-3]
\arrow["{\bHg^*}", curve={height=-12pt}, tail reversed, no head, from=3-3, to=1-3]
\arrow["{\bHg^*}"', curve={height=12pt}, from=3-3, to=5-3]
\arrow["{\tfrac12(\bFg^* + (\bFg^*(\cdot))^T)}", curve={height=-12pt}, from=3-5, to=3-3]
\arrow["{\bFg}", curve={height=-12pt}, from=3-3, to=3-5]
\end{tikzcd}
\]
along with the exactness conditions
\[
\begin{gathered}
\bHg\bHg=0 \qquad \bFg\bHg=0 \qquad \bHg(\bFg^*+(\bFg^*(\cdot))^T)=0 \\
\bHg^*\bHg^*=0 \qquad \bFg\bHg^*=0 \qquad \bHg^*(\bFg^*+\bFg^*(\cdot)^T)=0. 
\end{gathered}
\]

The list of exactness conditions is shorter than in \eqref{eq:exact_relations}, since for symmetric forms
$\bFg$ can be expressed in terms of $\bFg^*$. 
The symmetric version of the decomposition theorem, whose proof is essentially a repetition of the proof of \thmref{thm:decompostion_1}, reads:

\begin{theorem}
\label{thm:decompostion_symmetric}
Consider the following subspaces of $\Theta^k(\M)$, 
\[
\begin{split}
&\calS\EE^k(\M) = \BRK{\bHg\alpha ~:~ \alpha\in\ker({\PttD},{\frakT})\cap\Theta^{k-1}(\M)} \\
&\calS\CC^k(\M) = \BRK{\bHg^*\beta ~:~ \beta\in\ker({\PnnD},{\frakT}^*)\cap\Theta^{k+1}(\M)} \\
&\calS\EC^k(\M) = \BRK{(\bFg^*\lambda)^T+\bFg^*\lambda ~:~ \lambda\in\ker({\PntD},{\frakF^*})\cap\Omega^{k+1,k-1}(\M)} \\
&\calS\BH^k(\M) = \BH^{k,k}(\M)\cap\Theta^k(\M).
\end{split}
\]
Let $W^{s,p}\calS\EE^k(\M)$ etc. stand for their Sobolev versions. Then, for all $s\in\bbN$ and $p\ge2$ there exists an $L^2$-orthogonal decomposition,
\[
\begin{split}
W^{s,p}\Theta^k(\M) &= W^{s,p}\calS
\EE^k(\M)\oplus W^{s,p}\calS\CC^k(\M)
\oplus W^{s,p}\calS\EC^k(\M) 
\oplus W^{s,p}\calS\BH^k(\M).
\end{split}
\]
The splitting is both algebraic and topological in the $W^{s,p}$-topology. Furthermore, each of the subspaces in this decomposition is closed in the $W^{s,p}$-topology. 
\end{theorem}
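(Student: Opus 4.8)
The plan is to reduce to \thmref{thm:decompostion_1}: apply the decomposition of the \emph{full} space $\Omega^{k,k}(\M)$ and then use the transposition symmetry to collapse it onto the symmetric submodule, the only structural novelty being that the two ``mixed'' summands $\CE^{k,k}(\M)$ and $\EC^{k,k}(\M)$ fuse into the single space $\calS\EC^k(\M)$ once one restricts to symmetric forms. Concretely, for $\psi\in W^{s,p}\Theta^k(\M)$ I would invoke \thmref{thm:decompostion_1} at $(k,m)=(k,k)$ to write
\[
\psi = \bHg\alpha_\psi + \bHg^*\beta_\psi + \bFg\gamma_\psi + \bFg^*\lambda_\psi + \kappa_\psi,
\]
with the potentials $\alpha_\psi=\bHg^*G_\TT\psi$, $\beta_\psi=\bHg G_\NN\psi$, $\gamma_\psi=\bFg^*G_\TN\psi$, $\lambda_\psi=\bFg G_\NT\psi$, $\kappa_\psi\in\BH^{k,k}(\M)$, exactly as in that proof. (For this one first records that \eqref{eq:exact_relations} at $(k,k)$ is a consequence of the shorter symmetric exactness list together with the transposition identities: since $\bFg=(\bFg^*(\cdot)^T)^T$, each relation not listed is the transpose of a listed one.)

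Next I would exploit transposition to collapse the five-term decomposition to the four-term symmetric one. Since $G_\TT$ and $G_\NN$ commute with transposition (\propref{prop:green_commute_with_transpose}) and $\bHg,\bHg^*$ do too (\eqref{eq:symmetries_preserved_by_H_F}), the potentials $\alpha_\psi$ and $\beta_\psi$ are symmetric, so $\bHg\alpha_\psi\in\calS\EE^k(\M)$ and $\bHg^*\beta_\psi\in\calS\CC^k(\M)$; and $\kappa_\psi=\psi-\bHg\alpha_\psi-\bHg^*\beta_\psi-\bFg\gamma_\psi-\bFg^*\lambda_\psi$ is symmetric, hence $\kappa_\psi\in\BH^{k,k}(\M)\cap\Theta^k(\M)=\calS\BH^k(\M)$. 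Using $(G_\TN\psi)^T=G_\NT\psi$ (valid here because $\psi^T=\psi$) and $(\bFg^*\nu)^T=\bFg(\nu^T)$, one gets $\gamma_\psi^T=\lambda_\psi$, so that
\[
\bFg\gamma_\psi + \bFg^*\lambda_\psi = \bFg\gamma_\psi + (\bFg\gamma_\psi)^T,
\]
which — since $\gamma_\psi\in\ker(\PtnD,\frakF)\cap\Omega^{k-1,k+1}(\M)$ and the boundary operators obey the transposition identities \eqref{eq:boundary_T} — is precisely an element of $\calS\EC^k(\M)$ (this is the equivalent presentation of $\calS\EC^k(\M)$ obtained from the stated one by $\lambda=\gamma^T$). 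Mutual $L^2$-orthogonality of the four symmetric summands is inherited from the five-fold orthogonality in \thmref{thm:decompostion_1}, since each symmetric summand either sits inside one asymmetric summand or is a sum of two of them; this also yields uniqueness. The $W^{s,p}$-closedness of each summand, hence the topological splitting, follows by the same Pythagoras/limiting argument as at the end of the proof of \thmref{thm:decompostion_1}, run inside $W^{s,p}\Theta^k(\M)$.

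The main obstacle I anticipate is the bookkeeping with transposition: one must check carefully that $\gamma_\psi^T=\lambda_\psi$ so the two mixed terms actually recombine, that the resulting symmetrized term carries exactly the boundary conditions appearing in the definition of $\calS\EC^k(\M)$, and — if one instead prefers to run the argument intrinsically in $\Theta^k(\M)$ using only the restricted Green operators $G_\TT,G_\NN$ of \secref{sec:symmetric_forms}, as the phrase ``essentially a repetition'' suggests — that the shorter symmetric exactness list is genuinely sufficient, i.e. that the symmetric analogues of \propref{prop:boundary_conditions_orthogonality} and \corrref{corr:7.2} remain valid with it. Everything else is a line-by-line transcription.
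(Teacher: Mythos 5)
Your overall strategy --- apply \thmref{thm:decompostion_1} at $(k,m)=(k,k)$ to a symmetric $\psi$ and use the transposition-equivariance of the Green operators to fuse the two mixed summands --- is sound, and the computations you outline are correct: $\alpha_\psi,\beta_\psi,\kappa_\psi$ are symmetric, $\gamma_\psi^T=\lambda_\psi$ by \propref{prop:green_commute_with_transpose}, hence $\bFg\gamma_\psi+\bFg^*\lambda_\psi=\bFg\gamma_\psi+(\bFg\gamma_\psi)^T$, which after the substitution $\lambda=\gamma_\psi^T$ and \eqref{eq:boundary_T} is an element of $\calS\EC^k(\M)$; orthogonality and closedness are then inherited as you say. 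The paper offers nothing beyond ``essentially a repetition'', and your route is a legitimate way to realize it.

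The genuine gap is the parenthetical claim that the full list \eqref{eq:exact_relations} at $(k,k)$ follows from the shorter symmetric exactness list ``by transposition''. It does not, for three separate reasons. First, the symmetric conditions $\bHg\bHg=0$, $\bFg\bHg=0$, $\bHg^*\bHg^*=0$, $\bFg\bHg^*=0$ are imposed only on the symmetric domains $\Theta^{k\pm1}(\M)$, and transposing such a relation returns a relation on symmetric inputs again; it says nothing about, e.g., $\bHg\bHg\alpha$ for antisymmetric $\alpha\in\Omega^{k-1,k-1}(\M)$, which \thmref{thm:decompostion_1} needs. Second, the conditions $\bHg(\bFg^*+(\bFg^*(\cdot))^T)=0$ and $\bHg^*(\bFg^*+(\bFg^*(\cdot))^T)=0$ constrain only the sums $\bHg\bFg^*\lambda+\bHg\bFg\lambda^T$ and $\bHg^*\bFg^*\lambda+\bHg^*\bFg\lambda^T$, not the individual relations $\bHg\bFg^*=0$, $\bHg\bFg=0$, etc.\ demanded by \eqref{eq:exact_relations}. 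Third, $\bFg\bFg=0$ and $\bFg^*\bFg^*=0$ --- which \propref{prop:boundary_conditions_orthogonality} uses to separate $\EC^{k,k}(\M)$ from $\CE^{k,k}(\M)$ --- have no counterpart in the symmetric list at all (they are not needed there precisely because those two summands are fused). Consequently, either you strengthen the hypothesis to the full \eqref{eq:exact_relations} at $(k,k)$, in which case your reduction is complete, or you must run the argument intrinsically in $\Theta^k(\M)$: decompose via $\Bg G_\TT\psi$, use that for symmetric $\vp$ one has $\bFg^*\bFg\vp+\bFg\bFg^*\vp=(\bFg^*+(\bFg^*(\cdot))^T)(\bFg\vp)$, and reprove the symmetric analogues of \propref{prop:boundary_conditions_orthogonality} and \corrref{corr:7.2} using only the symmetrized relations. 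That verification (including that the mixed potential carries the boundary conditions defining $\calS\EC^k(\M)$) is where the actual work hides, and your proposal defers it.
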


\subsection{Decompositions of the biharmonic module}

One of the components of the decomposition \eqref{eq:second_order_decompostion_initial} is the biharmonic module $\BH^{*,*}(\M)$, defined by \eqref{eq:def_BHkm}. Like the harmonic module in Hodge theory, the biharmonic module decomposes further, and its decomposition is needed in applications, notably in the solution of boundary-value problems.

To formulate our results, we introduce the following submodules of $W^{s,p}\BH^{*,*}(\M)$, for $s,p\ge 2$:
\[
\begin{aligned}
W^{s,p}\BH^{*,*}_{\bHg}(\M) &= W^{s,p}\BH^{*,*}(\M) \cap  \bHg(W^{s+2,p} \VectorFormsM) \\
W^{s,p}\BH^{*,*}_{\bHg^*}(\M) &= W^{s,p}\BH^{*,*}(\M) \cap  \bHg^*(W^{s+2,p} \VectorFormsM) \\ 
W^{s,p}\BH^{*,*}_{\bFg^*}(\M) &= W^{s,p}\BH^{*,*}(\M) \cap  \bFg^*(W^{s+2,p} \VectorFormsM)  \\
W^{s,p}\BH^{*,*}_{\bFg}(\M) &= W^{s,p}\BH^{*,*}(\M) \cap  \bFg(W^{s+2,p} \VectorFormsM) \\ 
W^{s,p}\BH^{*,*}_{\bHg+\bHg^*}(\M) &= W^{s,p}\BH^{*,*}_{\bHg}(\M) + W^{s,p}\BH^{*,*}_{\bHg^*}(\M)  \\ 
\cdots &= \cdots \\
W^{s,p}\BH^{*,*}_{tt}(\M) &= W^{s,p}\BH^{*,*}(\M) \cap \ker(\PttD,\frakT) \\
W^{s,p}\BH^{*,*}_{nn}(\M) &= W^{s,p}\BH^{*,*}(\M) \cap \ker(\PnnD,\frakT^*) \\
W^{s,p}\BH^{*,*}_{nt}(\M) &= W^{s,p}\BH^{*,*}(\M) \cap \ker(\PntD,\frakF^*) \\
W^{s,p}\BH^{*,*}_{tn}(\M) &= W^{s,p} \BH^{*,*}(\M) \cap \ker(\PtnD,\frakF).
\end{aligned}
\]
Recall that we already have the finite-dimensional modules, $\BH_\RR^{*,*}(\M)$, $\RR\in\{\TT,\NN,\NT,\TN\}$, which consist of smooth sections. 

For $s=0$, it is not a priori clear why $\psi\in L^2\BH^{*,*}(\M)$ has well-defined boundary projections $\frakT\psi$, $\frakT^*\psi$, $\frakF^*\psi$ and $\frakF\psi$. The space $L^2\BH^{*,*}(\M)$ is the completion of $\BH^{*,*}(\M)$, in which $\Bg=0$ identically, hence $\psi\in L^2\BH^{*,*}(\M)$ satisfies $\Bg\psi=0$ distributively. By \cite[p.~459]{Tay11a}, $\psi$ has well-defined boundary data up to order four. We conclude that the spaces $L^2 \BH^{*,*}_{tt}(\M)$, $L^2 \BH^{*,*}_{tn}(\M)$, etc. are well-defined.

We start by establishing the closedness of those spaces:

\begin{lemma}
\label{lemma:images_are_closed}
The spaces
\[
L^2 \BH^{*,*}_{\bHg}(\M)
\quad
L^2 \BH^{*,*}_{\bHg^*}(\M)
\quad
L^2 \BH^{*,*}_{\bFg^*}(\M)\
\textand
L^2 \BH^{*,*}_{\bFg}(\M)
\]
are closed in  $L^2\VectorFormsM$.
\end{lemma}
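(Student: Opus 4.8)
The plan is to prove closedness for $L^2\BH^{*,*}_{\bHg}(\M)$ and transfer the other three cases to it by conjugating with the $L^2$-isometries $\starG$, $\starG^V$, $\starG\starG^V$, which permute $\bHg,\bHg^*,\bFg^*,\bFg$ among themselves (\propref{prop:duals_stars}) — hence preserve $\BHkm$ — and carry the relevant boundary conditions to one another, exactly as in the reduction performed in the proof of \thmref{thm:elliptic_regularity}. So fix a bidegree $(k,m)$, with $L^2\BH^{k,m}_{\bHg}(\M) = L^2\BHkm \cap \bHg\brk{H^2\Omega^{k-1,m-1}(\M)}$, and let $\psi_j = \bHg\alpha_j$, $\alpha_j\in H^2\Omega^{k-1,m-1}(\M)$, converge in $L^2$ to some $\psi$.

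The first step is to normalize the potential. For any $\alpha\in H^2\Omega^{k-1,m-1}(\M)$ I would substitute the Green-operator identity $\alpha = \bHg\bHg^*G_\NN\alpha + \bHg^*\bHg G_\NN\alpha + \bFg\bFg^* G_\NN\alpha + \bFg^*\bFg G_\NN\alpha + P_\NN\alpha$ (equation \eqref{eq:potentials} in bidegree $(k-1,m-1)$ with $\RR=\NN$; this uses only the regular ellipticity of \thmref{thm:elliptic_regularity}, not \eqref{eq:exact_relations}), apply $\bHg$, and use $\bHg\bHg=0$, $\bHg\bFg=0$, $\bHg\bFg^*=0$ from \eqref{eq:exact_relations} together with the vanishing of $\bHg$ on $\BH^{k-1,m-1}_\NN(\M)$ to annihilate every term but one. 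This yields $\bHg\alpha = \bHg\bHg^* b$ with $b := \bHg G_\NN\alpha$. Since $G_\NN\alpha\in H^4_\NN\Omega^{k-1,m-1}(\M)\subseteq\ker(\PnnD\bHg,\frakT^*\bHg)$, we have $b\in\ker(\PnnD,\frakT^*)$, and \corrref{corr:7.2} then gives $\mu := \bHg^*b\in H^2\Omega^{k-1,m-1}(\M)\cap\ker(\PnnD,\frakT^*,\PntD,\frakF^*,\PtnD,\frakF)$, with $\bHg\mu=\bHg\alpha$.

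The crux is the a priori estimate $\Norm{\mu}_{H^2}\lesssim\Norm{\bHg\mu}_{L^2}$, obtained by applying the generalized Korn inequality \eqref{eq:general_estimate_final} in bidegree $(k-1,m-1)$ with $\RR=\NN$ to $\mu$ and checking that its right-hand side degenerates: the boundary terms of $D_\NN^\partial\mu$ vanish because $\mu\in\ker(\PnnD,\frakT^*,\PntD,\frakF^*,\PtnD,\frakF)$; the relations $\bHg^*\bHg^*=0$, $\bFg^*\bHg^*=0$, $\bFg\bHg^*=0$ of \eqref{eq:exact_relations} force $\bHg^*\mu=\bFg^*\mu=\bFg\mu=0$, so $\Norm{D_\NN^\partial\mu}_\NN=\Norm{\bHg\mu}_{L^2}$; and $P_\NN\mu=0$ because an integration by parts via \eqref{eq:integration_by_parts_F_H_extended}, pairing $\mu=\bHg^*b$ against any $\zeta\in\BH^{k-1,m-1}_\NN(\M)$ and using $\bHg\zeta=0$ together with $\frakT^*b=\PnnD b=0$, shows $\bra\mu,\zeta\ket=0$. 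Hence \eqref{eq:general_estimate_final} collapses to $\Norm{\mu}_{H^2}^2\lesssim\Norm{\bHg\mu}_{L^2}^2$.

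To conclude, set $\mu_j := \bHg^*\bHg G_\NN\alpha_j$, so that $\bHg\mu_j=\psi_j$ by Step one. Linearity of $\alpha\mapsto\bHg^*\bHg G_\NN\alpha$ shows that $\mu_i-\mu_j$ is the normalized potential of $\psi_i-\psi_j$, so the estimate of Step two gives $\Norm{\mu_i-\mu_j}_{H^2}\lesssim\Norm{\psi_i-\psi_j}_{L^2}\to0$; thus $\mu_j\to\mu$ in $H^2\Omega^{k-1,m-1}(\M)$ and, by continuity of $\bHg\colon H^2\to L^2$, $\psi=\lim\bHg\mu_j=\bHg\mu\in\bHg\brk{H^2\Omega^{k-1,m-1}(\M)}$. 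Since $L^2\BHkm$ is closed and $\psi_j\in L^2\BHkm$, also $\psi\in L^2\BHkm$, whence $\psi\in L^2\BH^{k,m}_{\bHg}(\M)$. The main obstacle is Steps one–two: one must recognize that forcing the potential into the shape $\bHg^*b$ with $b$ carrying $\NN$-type boundary data makes the generalized Korn inequality degenerate into a bound of the potential by $\psi$ alone — after that, only bookkeeping with \eqref{eq:exact_relations} and \corrref{corr:7.2} remains.
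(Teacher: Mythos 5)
Your proof is correct and follows essentially the same route as the paper: reduce to potentials of the form $\bHg^*b$ with $b\in\ker(\PnnD,\frakT^*)$ (i.e.\ to $W^{2,2}\CC^{*,*}(\M)$, which the paper obtains by citing \thmref{thm:decompostion_1} and you obtain by unwinding the Green-operator identity), then apply \corrref{corr:7.2} together with the generalized Korn inequality \eqref{eq:general_estimate_final} for $\RR=\NN$, with vanishing biharmonic projection, to get $\Norm{\mu}_{H^2}\lesssim\Norm{\bHg\mu}_{L^2}$ and hence closedness of the image. Your explicit Cauchy-sequence conclusion and the integration-by-parts verification that $P_\NN\mu=0$ merely spell out steps the paper compresses into citations.
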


\begin{proof}
We prove the first item. 
Since $L^2 \BH^{*,*}(\M)$ is closed in $L^2\VectorFormsM$, 
it suffices to prove that $\bHg(W^{2,2} \VectorFormsM)$ is closed in $L^2\VectorFormsM$.
By \thmref{thm:decompostion_1} and the exactness relations \eqref{eq:exact_relations},
\[
\bHg(W^{2,2}\VectorFormsM) = \bHg(W^{2,2}\CC^{*,*}(\M)).
\]
It suffices to prove that the image of the restriction
\[
\bHg:W^{2,2}\CC^{*,*}(\M) \to L^2\Omega^{*,*}(\M)
\]
is closed.
Given $\bHg^*\beta\in W^{2,2}\CC^{*,*}(\M)$, we have by \corrref{corr:7.2},
\[
\bHg^*\beta\in\ker{(\PnnD,\frakT^*,\PntD,\frakF^*,\PtnD,\frakF)},
\]
and by \eqref{eq:general_estimate_final} for $\RR=\NN$, noting that $P_\NN \bHg^*\beta =0$, 
\[
\Norm{\bHg^*\beta}_{W^{2,2}}^2\lesssim  \Norm{\bHg\bHg^*\beta}_{L^2}^2.
\]
By \cite[p.~583]{Tay11a}, $\bHg(W^{2,2}\CC^{*,*}(\M))$ is closed in $L^2\VectorFormsM$. 
\end{proof}

\begin{lemma}
\label{lem:7.13}
For every $s,p\ge 2$,
\[
\begin{aligned}
W^{s,p}\BH^{*,*}_{\bHg}(\M) &= W^{s,p}\BH^{*,*}(\M) \cap  \bHg(W^{2,2} \VectorFormsM) \\
W^{s,p}\BH^{*,*}_{\bHg^*}(\M) &= W^{s,p}\BH^{*,*}(\M) \cap  \bHg^*(W^{2,2} \VectorFormsM) \\ 
W^{s,p}\BH^{*,*}_{\bFg^*}(\M) &= W^{s,p}\BH^{*,*}(\M) \cap  \bFg^*(W^{2,2} \VectorFormsM)  \\
W^{s,p}\BH^{*,*}_{\bFg}(\M) &= W^{s,p}\BH^{*,*}(\M) \cap  \bFg(W^{2,2} \VectorFormsM).
\end{aligned}
\]
That is, if for example
\[
\psi = \bHg\kappa \in W^{s,p}\BH^{*,*}(\M)
\]
for some $\kappa\in W^{2,2} \VectorFormsM$, then $\psi = \bHg\vp$ for some $\vp\in W^{s+2,p} \VectorFormsM$.
\end{lemma}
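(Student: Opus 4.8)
The plan is to prove the nontrivial inclusion in each of the four identities by manufacturing, out of the given low-regularity potential, a potential of class $W^{s+2,p}$ via the biharmonic Green operator together with elliptic regularity; the reverse inclusion is a trivial Sobolev embedding. Indeed, each identity has the form $W^{s,p}\BH^{*,*}(\M)\cap\bHg(W^{s+2,p}\VectorFormsM)=W^{s,p}\BH^{*,*}(\M)\cap\bHg(W^{2,2}\VectorFormsM)$ (and similarly with $\bHg^*,\bFg^*,\bFg$), and on the compact $\M$ the Sobolev lemma \eqref{eq:sobolev_lemma} together with $L^p\hookrightarrow L^2$ for $p\ge2$ gives $W^{s+2,p}\VectorFormsM\hookrightarrow W^{2,2}\VectorFormsM$, hence $\bHg(W^{s+2,p}\VectorFormsM)\subseteq\bHg(W^{2,2}\VectorFormsM)$ and likewise for the other three operators. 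So it suffices to show: if $\psi\in W^{s,p}\BH^{*,*}(\M)$ equals $\bHg\kappa$ (resp. $\bHg^*\kappa$, $\bFg^*\kappa$, $\bFg\kappa$) for some $\kappa\in W^{2,2}\VectorFormsM$, then $\psi$ admits a potential of class $W^{s+2,p}$.

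I treat the $\bHg$-case; the other three are completely parallel. Let $\psi=\bHg\kappa$ with $\kappa\in W^{2,2}\VectorFormsM$ and $\psi\in W^{s,p}\BH^{*,*}(\M)$. Apply the biharmonic Green operator of the $\NN$-problem: by \propref{prop:spectral_2}, $G_\NN\kappa\in W^{6,2}\VectorFormsKM\cap H^4_\NN\VectorFormsKM$ and $\kappa=\Bg G_\NN\kappa+P_\NN\kappa$. Since $P_\NN\kappa\in\BH^{k,m}_\NN(\M)\subseteq\BH^{*,*}(\M)$ we have $\bHg P_\NN\kappa=0$, and expanding $\Bg$ and using the exactness relations \eqref{eq:exact_relations} (namely $\bHg\bHg=\bHg\bFg^*=\bHg\bFg=0$) gives $\bHg\Bg G_\NN\kappa=\bHg\bHg^*\bHg G_\NN\kappa$. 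Hence
\[
\psi=\bHg\kappa=\bHg\varphi_0,\qquad \varphi_0:=\bHg^*\bHg G_\NN\kappa\in W^{4,2}\VectorFormsM,
\]
so $\varphi_0$ is already a potential for $\psi$; the only issue is its regularity.

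The crux is that $\varphi_0$ solves the regular elliptic boundary-value problem $\frakB_\NN$ of \thmref{thm:elliptic_regularity}. Writing $\varphi_0=\bHg^*\beta$ with $\beta:=\bHg G_\NN\kappa$, the fact that $G_\NN\kappa\in H^4_\NN\VectorFormsKM$ yields $\PnnD\beta=\PnnD\bHg G_\NN\kappa=0$ and $\frakT^*\beta=\frakT^*\bHg G_\NN\kappa=0$, so $\beta\in\ker(\PnnD,\frakT^*)$ and \corrref{corr:7.2} gives $\varphi_0\in\ker(\PnnD,\frakT^*,\PntD,\frakF^*,\PtnD,\frakF)$. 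Because $\varphi_0$ lies in the image of $\bHg^*$, the exactness relations give $\bHg^*\varphi_0=\bFg\varphi_0=\bFg^*\varphi_0=0$, whence $\Bg\varphi_0=\bHg^*\bHg\varphi_0=\bHg^*\psi=0$, the last equality because $\psi\in\BH^{*,*}(\M)$. By \eqref{eq:boundary_data_TT} we therefore have
\[
\frakB_\NN\varphi_0=(0,0,0,0,0,0,0,\PnnD\bHg\varphi_0,\frakT^*\bHg\varphi_0)=(0,0,0,0,0,0,0,\PnnD\psi,\frakT^*\psi).
\]
Since $\psi\in W^{s,p}\VectorFormsM$ with $s\ge2$, its traces satisfy $\PnnD\psi\in W^{(s+2)-2-1/p,\,p}\VectorFormsdM$ and $\frakT^*\psi\in W^{(s+2)-3-1/p,\,p}\VectorFormsdM$, which are exactly the data spaces compatible, for a $W^{s+2,p}$ solution, with the orders $2$ and $3$ of the boundary operators $\PnnD\bHg$ and $\frakT^*\bHg$ of $\frakB_\NN$; all the other components of $\frakB_\NN\varphi_0$ vanish. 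By the regular ellipticity of $\frakB_\NN$ and the associated elliptic regularity (\thmref{thm:regular_elliptic_estimates} and \cite[Ch.~20]{Hor07}), the solution $\varphi_0$, a priori only in $W^{4,2}$, belongs to $W^{s+2,p}\VectorFormsM$. Thus $\psi=\bHg\varphi_0$ with $\varphi_0\in W^{s+2,p}\VectorFormsM$, proving the $\bHg$-identity. The remaining three follow identically, using $\psi=\bHg^*\bHg\bHg^* G_\TT\kappa$ with the $\frakB_\TT$-problem, $\psi=\bFg^*\bFg\bFg^* G_\TN\kappa$ with the $\frakB_\TN$-problem, and $\psi=\bFg\bFg^*\bFg G_\NT\kappa$ with the $\frakB_\NT$-problem; in each case \corrref{corr:7.2} gives the vanishing boundary projections of the candidate potential and the exactness relations reduce $\Bg$ on that potential to $\bHg^*\bHg$ (resp. $\bFg^*\bFg$, $\bFg\bFg^*$) applied to $\psi$, which is zero. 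The main obstacle, and the only step needing care, is this last one: one must check, as above, that the normal boundary data $\PnnD\psi,\frakT^*\psi$ (and their analogues for the other problems) of the biharmonic form $\psi$ land precisely in the Sobolev trace spaces dictated by the orders of the higher-order boundary operators, so that the elliptic estimate yields the full two-derivative gain for $\varphi_0$ rather than a lossy bound.
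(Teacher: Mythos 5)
Your overall strategy — realize $\psi$ as $\bHg$ applied to a form satisfying $\frakB_\NN$-type boundary data and then harvest elliptic regularity — is the same one the paper uses, but the execution has a genuine gap in the regularity step, and it stems from a bookkeeping error that then propagates.

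First, the bookkeeping: with $\kappa\in W^{2,2}$, Proposition~\ref{prop:spectral_2} gives $G_\NN\kappa\in W^{6,2}$, hence $\bHg G_\NN\kappa\in W^{4,2}$ and
\[
\varphi_0=\bHg^*\bHg G_\NN\kappa\in W^{2,2}\VectorFormsM,
\]
not $W^{4,2}$. This matters: $W^{2,2}$ is \emph{below} the order of $\Bg$, and the Fredholm/regularity framework you invoke (Theorem~\ref{thm:regular_elliptic_estimates}, built from~\cite[Ch.~20]{Hor07}) is stated for sections of class $W^{s,p}$ with $s\ge m=4$. For a candidate that is a priori only a distributional solution of the fourth-order problem at the $W^{2,2}$ level, neither the a-priori estimate~\eqref{eq:elliptic_estimate} nor the $s,p$-independence of the kernel applies directly; a genuine bootstrap from below the order of the operator would need a transposition/weak-solution argument that the paper does not develop and that you do not supply. (Your verification that $\PnnD\psi$ and $\frakT^*\psi$ sit in the Sobolev trace classes compatible with a $W^{s+2,p}$ solution is correct and is indeed a needed ingredient — but it does not by itself upgrade a $W^{2,2}$ form to $W^{s+2,p}$.)

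The paper avoids this entirely by going through the \emph{existence} theorem rather than regularity of an explicitly constructed candidate. It invokes Theorem~\ref{thm:bilaplacian_full_problem} (in its $\NN$-form) to produce, after checking the integrability condition against $\BH^{k,m}_\NN(\M)$, a solution $\vp\in W^{s+2,p}\VectorFormsM$ of
\[
\frakB_\NN\vp=(0,\dots,0,\PnnD\bHg\kappa,\frakT^*\bHg\kappa),
\]
with the target regularity built in from the start. It then shows $\bHg\vp=\bHg\kappa$ by an orthogonality argument: writing out $\Bg\vp=0$ and replacing $\bHg^*\bHg\vp$ by $\bHg^*(\bHg\vp-\bHg\kappa)$ (legitimate since $\bHg^*\bHg\kappa=\bHg^*\psi=0$), the fact that $\bHg\vp-\bHg\kappa\in\ker(\PnnD,\frakT^*)$ makes the term $\bHg^*(\bHg\vp-\bHg\kappa)$ lie in $\CC^{k,m}(\M)$, hence $L^2$-orthogonal to the other three summands; it therefore vanishes, and one integration by parts yields $\|\bHg\vp-\bHg\kappa\|_{L^2}=0$. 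To repair your proof you should replace the bootstrap on $\varphi_0$ by this existence-plus-orthogonality comparison with the $W^{s+2,p}$ solution of the same boundary-value problem.
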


\begin{proof}
We prove the first item. Let $\vp\in \VectorFormsM$ be the solution of the boundary-value problem,
\[
(\Bg,\PnnD,\PntD,\PtnD,\frakT^*,\frakF^*,\frakF,\PnnD\bHg,\frakT^*\bHg)\vp = (0,0,0,0,0,0,0,\PnnD\bHg\kappa.\frakT^*\bHg\kappa),
\]
By \thmref{thm:bilaplacian_full_problem} (for $\NN$ rather than $\TT$), this system is solvable if
\[
\int_{\dM}\Brk{(\PnnD\bHg\kappa,\frakT\zeta)_{\gD} -(\frakT^*\bHg\kappa,\PttD\zeta)_{\gD} }\, \VolumeD = 0
\]
for all $\zeta\in\BH_\NN^{k,m}(\M)$, which is indeed satisfied since 
\[
\int_{\dM}\Brk{(\PnnD\bHg\kappa,\frakT\zeta)_{\gD} -(\frakT^*\bHg\kappa,\PttD\zeta)_{\gD} }\, \VolumeD = 
\bra \bHg^*\bHg\kappa,\zeta\ket - \bra\bHg\kappa,\bHg \zeta\ket ,
\]
and $\bHg\kappa$ and $\zeta$ are biharmonic. The solution $\vp$ is in $W^{s+2,p}\VectorFormsM$.

Since $\bHg\kappa$ is biharmonic,
\[
\bHg^*(\bHg\vp -  \bHg\kappa) + \bHg^*\bHg\vp + \bFg^*\bFg\vp + \bFg\bFg^*\vp = 0.
\]
Since, by construction,  $\bHg\vp -  \bHg\kappa\in \ker(\PnnD,\frakT^*)$, the first term is $L^2$-orthogonal to the other three, which implies that $\bHg^*(\bHg\vp -  \bHg\kappa)=0$. Finally,
\[
0 = \bra\bHg^*(\bHg\vp -  \bHg\kappa), \vp - \kappa\ket = \bra\bHg\vp -  \bHg\kappa, \bHg\vp - \bHg\kappa\ket,
\]
i.e., $\bHg\kappa = \bHg\vp$, which completes the proof.
\end{proof}

\begin{corollary}
The spaces
\[
W^{s,p} \BH^{*,*}_{\bHg}(\M)
\quad
W^{s,p} \BH^{*,*}_{\bHg^*}(\M)
\quad
W^{s,p} \BH^{*,*}_{\bFg^*}(\M)\
\textand
W^{s,p} \BH^{*,*}_{\bFg}(\M)
\]
are closed in  $W^{s,p}\VectorFormsM$.
\end{corollary}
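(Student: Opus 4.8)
The plan is to bootstrap from the two preceding lemmas: Lemma~\ref{lemma:images_are_closed} gives closedness of the four spaces in the $L^2$-topology, while Lemma~\ref{lem:7.13} provides the regularity upgrade that lets one pass from an $L^2$-representative $\bHg\kappa$ with $\kappa\in W^{2,2}\VectorFormsM$ to a $W^{s,p}$-representative $\bHg\varphi$ with $\varphi\in W^{s+2,p}\VectorFormsM$. By symmetry it suffices to treat $W^{s,p}\BH^{*,*}_{\bHg}(\M)$; the other three cases are identical after replacing $\bHg$ by $\bHg^*$, $\bFg^*$ or $\bFg$ and invoking the corresponding lines of Lemmas~\ref{lemma:images_are_closed} and \ref{lem:7.13}.

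First I would record two standing facts. Since $\bHg,\bHg^*,\bFg^*,\bFg$ are continuous as maps $W^{s,p}\VectorFormsM\to W^{s-2,p}\VectorFormsM$, the module $W^{s,p}\BH^{*,*}(\M)$, being their joint kernel, is closed in $W^{s,p}\VectorFormsM$ (this is also part of the content of Theorem~\ref{thm:decompostion_1}). Second, because $\M$ is compact and $p\ge 2$, the inclusion $W^{s,p}\VectorFormsM\hookrightarrow L^2\VectorFormsM$ is continuous, and likewise $W^{s+2,p}\VectorFormsM\subseteq W^{2,2}\VectorFormsM$; in particular $W^{s,p}\BH^{*,*}_{\bHg}(\M)\subseteq L^2\BH^{*,*}_{\bHg}(\M)$ by Lemma~\ref{lem:7.13}. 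Now let $\psi_j\in W^{s,p}\BH^{*,*}_{\bHg}(\M)$ with $\psi_j\to\psi$ in $W^{s,p}\VectorFormsM$. Then $\psi\in W^{s,p}\BH^{*,*}(\M)$ by the first fact, and $\psi_j\to\psi$ in $L^2\VectorFormsM$ by the second; since each $\psi_j$ lies in $L^2\BH^{*,*}_{\bHg}(\M)$, Lemma~\ref{lemma:images_are_closed} gives $\psi\in L^2\BH^{*,*}_{\bHg}(\M)$, i.e.\ $\psi=\bHg\kappa$ for some $\kappa\in W^{2,2}\VectorFormsM$.

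To finish, I would combine $\psi\in W^{s,p}\BH^{*,*}(\M)$ with the representation $\psi=\bHg\kappa$, $\kappa\in W^{2,2}\VectorFormsM$, and apply Lemma~\ref{lem:7.13} directly: it yields $\psi=\bHg\varphi$ for some $\varphi\in W^{s+2,p}\VectorFormsM$, hence $\psi\in W^{s,p}\BH^{*,*}(\M)\cap\bHg(W^{s+2,p}\VectorFormsM)=W^{s,p}\BH^{*,*}_{\bHg}(\M)$. This closes the space. I do not anticipate a genuine obstacle here: the argument is a routine chaining of the two lemmas, and the only point requiring care is the bookkeeping of the Sobolev inclusions $W^{s,p}\hookrightarrow L^2$ and $W^{s+2,p}\subseteq W^{2,2}$ on the compact manifold, which is exactly where the hypotheses $s\ge 2$ (in fact $s\ge 0$ suffices) and $p\ge 2$ enter.
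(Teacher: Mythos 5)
Your argument is correct and follows the paper's proof essentially verbatim: pass to the $L^2$-limit, invoke Lemma~\ref{lemma:images_are_closed} to land in $L^2\BH^{*,*}_{\bHg}(\M)$, then upgrade regularity via Lemma~\ref{lem:7.13}. The one thing you spell out that the paper leaves implicit — that the limit lies in $W^{s,p}\BH^{*,*}(\M)$ because that space is closed (a hypothesis needed to invoke Lemma~\ref{lem:7.13}) — is a welcome clarification, not a deviation.
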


\begin{proof}
We prove the first item. Let $\psi_n \in W^{s,p} \BH^{*,*}_{\bHg}(\M)$ be a converging sequence in $W^{s,p}\VectorFormsM$; denote the limit by $\eta$. Since it is also convergent in $L^2 \VectorFormsM$, it follows from  \lemref{lemma:images_are_closed} that $\eta\in L^2 \BH^{*,*}_{\bHg}(\M)$. By \lemref{lem:7.13}, $\eta\in W^{s,p} \BH^{*,*}_{\bHg}(\M)$, proving that $W^{s,p} \BH^{*,*}_{\bHg}(\M)$ is closed.
\end{proof}

A first family of decompositions is given by the following theorem:

\begin{theorem}
\label{thm:BH_decompose1}
The biharmonic modules $W^{s,p}\BH^{*,*}(\M)$ decompose in four different ways,
\beq
\begin{split}
W^{s,p}\BH^{*,*}(\M) &= \BH^{*,*}_\TT(\M) \oplus W^{s,p}\BH^{*,*}_{\bHg^*+\bFg^*+\bFg}(\M)   \\
&= \BH^{*,*}_\NN(\M) \oplus W^{s,p}\BH^{*,*}_{\bHg+\bFg^*+\bFg}(\M)     \\
&= \BH^{*,*}_\NT(\M) \oplus W^{s,p}\BH^{*,*}_{\bFg+\bHg+\bHg^*}(\M)    \\
&= \BH^{*,*}_\TN(\M) \oplus W^{s,p}\BH^{*,*}_{\bFg^*+\bHg+\bHg^*}(\M).
\end{split}
\label{eq:BH_decompose1}
\eeq
The splittings are algebraic, topological and $L^2$-orthogonal. 
\end{theorem}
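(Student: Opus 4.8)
The plan is, for each of the four labels $\RR\in\{\TT,\NN,\NT,\TN\}$, to apply the Green--operator identity \eqref{eq:potentials} to an arbitrary $\psi\in W^{s,p}\BH^{*,*}(\M)$ and to read off the stated splitting from the five resulting terms. Expanding $\Bg=\bHg\bHg^*+\bHg^*\bHg+\bFg^*\bFg+\bFg\bFg^*$, \eqref{eq:potentials} becomes
\[
\psi=\bHg\bHg^*G_\RR\psi+\bHg^*\bHg G_\RR\psi+\bFg^*\bFg G_\RR\psi+\bFg\bFg^*G_\RR\psi+P_\RR\psi,
\]
with $P_\RR\psi\in\ker B_\RR=\BH^{*,*}_\RR(\M)$ and $G_\RR\psi\in W^{s+4,p}\VectorFormsKM\cap H^4_\RR\VectorFormsKM$; it therefore suffices to understand the four iterated terms.

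First I would show that, because $\psi$ is biharmonic, every one of these four terms is biharmonic as well. The exactness relations \eqref{eq:exact_relations} give the operator identities $\bHg\Bg=\bHg\bHg^*\bHg$, $\bHg^*\Bg=\bHg^*\bHg\bHg^*$, $\bFg\Bg=\bFg\bFg^*\bFg$, $\bFg^*\Bg=\bFg^*\bFg\bFg^*$ (every other composition drops out); since $\Bg G_\RR\psi=\psi-P_\RR\psi$ and both $\psi$ and $P_\RR\psi$ are biharmonic, each of $\bHg,\bHg^*,\bFg,\bFg^*$ kills each iterated term --- e.g.\ $\bHg(\bHg^*\bHg G_\RR\psi)=\bHg\Bg G_\RR\psi=0$, while $\bHg^*,\bFg,\bFg^*$ annihilate $\bHg^*\bHg G_\RR\psi$ outright by \eqref{eq:exact_relations}. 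Being also images of $W^{s+2,p}$--forms under $\bHg$, $\bHg^*$, $\bFg^*$, $\bFg$, the four terms lie in $W^{s,p}\BH^{*,*}_{\bHg}(\M)$, $W^{s,p}\BH^{*,*}_{\bHg^*}(\M)$, $W^{s,p}\BH^{*,*}_{\bFg^*}(\M)$, $W^{s,p}\BH^{*,*}_{\bFg}(\M)$. Next I would show that exactly one of them vanishes, namely the one picked out by the extra boundary pair in the definition of $H^4_\RR\VectorFormsKM$: for $\RR=\TT$, from $G_\TT\psi\in H^4_\TT\VectorFormsKM$ one has $\PttD\bHg^*G_\TT\psi=\frakT\bHg^*G_\TT\psi=0$, i.e.\ $\bHg^*G_\TT\psi\in\ker(\PttD,\frakT)$, so \corrref{cor:integration_by_parts_second_order} gives
\[
\Norm{\bHg\bHg^*G_\TT\psi}_{L^2}^2=\bra\bHg^*G_\TT\psi,\,\bHg^*\bHg\bHg^*G_\TT\psi\ket=\bra\bHg^*G_\TT\psi,\,\bHg^*\Bg G_\TT\psi\ket=0,
\]
since $\bHg^*\Bg G_\TT\psi=\bHg^*(\psi-P_\TT\psi)=0$; the labels $\NN,\NT,\TN$ are identical after the obvious substitutions, using the matching line of \corrref{cor:integration_by_parts_second_order}. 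Combining, $\psi=P_\RR\psi+(\text{three biharmonic images})$ exactly as in the four lines of \eqref{eq:BH_decompose1}.

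It remains to verify $L^2$--orthogonality and that the splitting is topological. For orthogonality, if $h\in\BH^{*,*}_\RR(\M)$ then $h$ lies in the kernels of precisely the three boundary pairs that annihilate the boundary integrals of the integration--by--parts formulas for the three surviving operators --- for $\RR=\TT$, $h\in\ker(\PttD,\frakT)\cap\ker(\PtnD,\frakF)\cap\ker(\PntD,\frakF^*)$ --- so, using \corrref{cor:integration_by_parts_second_order} together with $\bHg h=\bFg h=\bFg^*h=0$,
\[
\bra h,\bHg^*\beta\ket=\bra\bHg h,\beta\ket=0,\qquad\bra h,\bFg^*\lambda\ket=\bra\bFg h,\lambda\ket=0,\qquad\bra h,\bFg\gamma\ket=\bra\bFg^*h,\gamma\ket=0,
\]
whence $\BH^{*,*}_\RR(\M)\perp W^{s,p}\BH^{*,*}_{\bHg^*+\bFg^*+\bFg}(\M)$ (and similarly for the other labels); in particular the sum is direct and $L^2$--orthogonal. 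Finally $\BH^{*,*}_\RR(\M)$ is finite--dimensional hence closed, and its complement equals $\{\psi\in W^{s,p}\BH^{*,*}(\M):P_\RR\psi=0\}$, which is closed because $W^{s,p}\BH^{*,*}(\M)$ is closed by \thmref{thm:decompostion_1} and $P_\RR$ is continuous on $W^{s,p}$ (an $L^2$--projection onto a finite--dimensional space of smooth forms).

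The only genuinely delicate part is the book-keeping in the second step: pinning down, for each $\RR$, which of the four iterated terms is forced to vanish by the extra boundary condition in $H^4_\RR\VectorFormsKM$, and then checking that the three boundary pairs available to $\BH^{*,*}_\RR(\M)$ match precisely the three integration--by--parts identities needed for orthogonality. Everything else is a rerun of the arguments of \secref{sec:the_decompostion}.
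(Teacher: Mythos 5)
Your proof is correct and follows essentially the same route as the paper's: both expand $\psi=\Bg G_\RR\psi+P_\RR\psi$, use the exactness relations to identify the surviving terms, and obtain orthogonality by integrating by parts against the boundary data defining $\BH^{*,*}_\RR(\M)$. You are somewhat more explicit than the paper in verifying that each surviving iterated term (e.g.\ $\bHg^*\bHg G_\TT\psi$) is itself biharmonic via the identities $\bHg\Bg=\bHg\bHg^*\bHg$ etc., and you kill the $\RR$-th term by a direct norm computation rather than by citing orthogonality to $\EE^{k,m}(\M)$ from \thmref{thm:decompostion_1}; the only caveat is that your use of \corrref{cor:integration_by_parts_second_order} with $\eta=\bHg\bHg^*G_\TT\psi$ formally requires $\eta\in W^{2,q}$, which is automatic for $s\ge 2$ but needs a density argument (or the paper's orthogonality route) when $s<2$.
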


\begin{proof}
It suffices to prove the first item. Since $\BH^{k,m}_\TT(\M)$ is finite-dimensional, 
\[
L^2 \BHkm = \BH^{k,m}_\TT(\M)  \oplus (\BH^{k,m}_\TT(\M))^\bot.
\]
We need to show that 
\[
W^{s,p} \VectorFormsKM \cap (\BH^{k,m}_\TT(\M))^\bot = W^{s,p}\BH^{k,m}_{\bHg^*+\bFg^*+\bFg}(\M).
\]
One direction is trivial, as  $\BH^{k,m}_\TT(\M)$ is orthogonal to the images of $\bHg^*$, $\bFg^*$ and $\bFg$. For the other direction, let $\psi\in W^{s,p} \VectorFormsKM\cap (\BH^{k,m}_\TT(\M))^\bot$, namely, $\psi = (I-P_\TT)\psi$. Then,
\[
\psi = \Bg G_\TT\psi = \bHg\bHg^* G_\TT\psi + \bHg^*\bHg G_\TT\psi + \bFg^*\bFg G_\TT\psi + \bFg\bFg^* G_\TT\psi.
\]
The first term on the right-hand side is in $W^{s,p}\EE^{k,m}(\M)$, hence $L^2$-orthogonal to all other terms, hence vanishes. Setting 
\[
\alpha = \bHg G_\TT\psi
\qquad
\beta = \bFg G_\TT\psi
\Textand
\gamma = \bFg^* G_\TT\psi,
\]
which, by the continuity of $G_\TT$ and the second-order differential operators are all in $W^{s+2,p}\VectorFormsM$, we obtain that
\[
\psi = \bHg^*\alpha + \bFg^*\beta + \bFg\gamma,
\]
which concludes the proof.
\end{proof}

A second family of decompositions is given by the following theorem:

\begin{theorem}
\label{thm:BH_decompose2}
The biharmonic module $\BHkm$ decomposes in four different ways,
\[
\begin{split}
L^2 \BH^{*,*}(\M) &= L^2 \BH^{*,*}_{\bHg}(\M)  \oplus L^2 \BH^{*,*}_{nn}(\M)  \\
&= L^2 \BH^{*,*}_{\bHg^*}(\M) \oplus L^2 \BH^{*,*}_{tt}(\M)  \\
&= L^2 \BH^{*,*}_{\bFg^*}(\M) \oplus L^2 \BH^{*,*}_{tn}(\M)  \\
&= L^2 \BH^{*,*}_{\bFg}(\M) \oplus L^2 \BH^{*,*}_{nt}(\M) .
\end{split}
\]
The splittings are algebraic, topological and $L^2$-orthogonal; in particular, all the spaces are closed.
\end{theorem}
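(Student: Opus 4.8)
The plan is to prove the first splitting $L^2\BH^{*,*}(\M)=L^2\BH^{*,*}_{\bHg}(\M)\oplus L^2\BH^{*,*}_{nn}(\M)$ in full, and then to obtain the remaining three from it by conjugation with $\starG$, $\starG^{V}$ and $\starG\starG^{V}$, exactly as the other three boundary-value problems in \thmref{thm:elliptic_regularity} were deduced from the first. By \eqref{eq:duals_stars} these conjugations interchange $\bHg$ with $\bFg^*$, with $\bFg$, and with $\bHg^*$ (up to sign), by \eqref{eq:second_order_boundary_operators_duality} and \eqref{eq:Hodge_star_normal_tangentAA} they carry the $nn$-conditions into the $tn$-, $nt$- and $tt$-conditions, and they map \eqref{eq:exact_relations} and the Green's formulas \eqref{eq:integration_by_parts_F_H_extended} into themselves. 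Throughout we use that a form in $L^2\BH^{*,*}(\M)$ is annihilated by $\bHg,\bHg^*,\bFg^*,\bFg$ and, being a distributional solution of the elliptic equation $\Bg\psi=0$, has well-defined boundary traces up to order four, as recorded before \lemref{lemma:images_are_closed}; this is what makes $L^2\BH^{*,*}_{nn}(\M)$ meaningful and lets us pair such forms against regular test forms in \eqref{eq:integration_by_parts_F_H_extended}.

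\emph{Orthogonality and closedness.} For $\bHg\alpha\in L^2\BH^{*,*}_{\bHg}(\M)$ with $\alpha\in W^{2,2}\VectorFormsM$ and $\eta\in L^2\BH^{*,*}_{nn}(\M)$, the first identity of \eqref{eq:integration_by_parts_F_H_extended} gives
\[
\bra\bHg\alpha,\eta\ket=\bra\alpha,\bHg^*\eta\ket+\int_{\dM}\Brk{(\PttD\alpha,\frakT^*\eta)_{\gD}-(\frakT\alpha,\PnnD\eta)_{\gD}}\VolumeD=0,
\]
since $\bHg^*\eta=0$ and $\PnnD\eta=\frakT^*\eta=0$; hence $L^2\BH^{*,*}_{\bHg}(\M)\perp L^2\BH^{*,*}_{nn}(\M)$. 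By \lemref{lemma:images_are_closed} the space $L^2\BH^{*,*}_{\bHg}(\M)$ is closed in $L^2\VectorFormsM$, hence a closed subspace of the Hilbert space $L^2\BH^{*,*}(\M)$, so it remains only to show that the two subspaces together span $L^2\BH^{*,*}(\M)$; once that is done the splitting is automatically $L^2$-orthogonal and topological, and $L^2\BH^{*,*}_{nn}(\M)=\brk{L^2\BH^{*,*}_{\bHg}(\M)}^{\perp}\cap L^2\BH^{*,*}(\M)$ is closed as well.

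\emph{Spanning.} Given $\psi\in L^2\BH^{*,*}(\M)$, I would use \thmref{thm:bilaplacian_full_problem} in its $\frakB_\NN$-form to solve
\[
\frakB_\NN\vp=(0,0,0,0,0,0,0,\PnnD\psi,\frakT^*\psi).
\]
The integrability condition, $\int_{\dM}\Brk{(\PnnD\psi,\frakT\zeta)_{\gD}-(\frakT^*\psi,\PttD\zeta)_{\gD}}\VolumeD=0$ for all $\zeta\in\BH^{k,m}_\NN(\M)$, holds because such $\zeta$ are biharmonic with $\PnnD\zeta=\frakT^*\zeta=0$, so that \eqref{eq:integration_by_parts_F_H_extended} applied to $\bra\bHg\zeta,\psi\ket=0$ reduces precisely to this identity. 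Set $\chi=\bHg\vp$ and $\eta=\psi-\chi$, so that $\PnnD\eta=\frakT^*\eta=0$ by construction. By \eqref{eq:exact_relations}, $\bHg\eta=\bFg^*\eta=\bFg\eta=0$; applying $\bHg$ to $\Bg\vp=0$ and using \eqref{eq:exact_relations} once more gives $\bHg\bHg^*\bHg\vp=0$, hence $\Bg\chi=\bHg\bHg^*\bHg\vp=0$ and therefore $\Bg\eta=0$, which together with $\bHg\eta=\bFg\eta=\bFg^*\eta=0$ forces $\bHg\bHg^*\eta=0$. Now integrating $\Norm{\bHg^*\eta}_{L^2}^2$ by parts through \eqref{eq:integration_by_parts_F_H_extended}, the boundary term vanishes because $\bHg\bHg^*\eta=0$ and $\PnnD\eta=\frakT^*\eta=0$, so $\bHg^*\eta=0$. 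Thus $\eta$ is biharmonic, i.e.\ $\eta\in L^2\BH^{*,*}_{nn}(\M)$, and then $\chi=\psi-\eta$ is biharmonic with potential $\vp$, i.e.\ $\chi\in L^2\BH^{*,*}_{\bHg}(\M)$; this yields $\psi=\chi+\eta$ with the required memberships and completes the splitting.

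\emph{The principal difficulty} is the spanning step, and within it the verification that the auxiliary form $\chi=\bHg\vp$ produced by the boundary-value problem is genuinely $\Bg$-biharmonic: this is exactly where the full list of exactness relations \eqref{eq:exact_relations}, the homogeneous $\frakB_\NN$-boundary conditions carried by $\vp$, and the boundary-propagation of \corrref{corr:7.2} must all be combined, and the bookkeeping parallels the proof of \lemref{lem:7.13}. A secondary, purely technical matter is the low-regularity accounting — checking that the traces $\PnnD\psi$ and $\frakT^*\psi$ of an $L^2$-biharmonic $\psi$ lie in Sobolev classes to which \thmref{thm:bilaplacian_full_problem} applies, so that $\vp$, and hence the potential of $\chi$, is regular enough for $\chi\in\bHg(W^{2,2}\VectorFormsM)$ — which is handled by the elliptic boundary-regularity theory invoked before \lemref{lemma:images_are_closed}. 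Finally, the three remaining decompositions follow from the first by the $\starG$-, $\starG^{V}$- and $\starG\starG^{V}$-conjugations indicated above.
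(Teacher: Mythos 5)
Your treatment of orthogonality and closedness coincides with the paper's (both rest on \lemref{lemma:images_are_closed} and the generalized Green's formula for distributional solutions of $\Bg\psi=0$), and your reduction of the remaining three splittings to the first by $\starG$-, $\starG^V$- and $\starG\starG^V$-conjugation is legitimate. Where you genuinely diverge is the spanning step. The paper does not construct the decomposition of a given $\psi$; it only has to identify $(L^2\BH^{*,*}_{\bHg}(\M))^{\bot}$ inside $L^2\BH^{*,*}(\M)$ with $L^2\BH^{*,*}_{nn}(\M)$. For that, it observes that a biharmonic $\psi$ orthogonal to $L^2\BH^{*,*}_{\bHg}(\M)$ is automatically orthogonal to $\EE^{*,*}(\M)$ as well (by \thmref{thm:decompostion_1}), hence to the entire image $\bHg(W^{2,2}\VectorFormsM)$; then $0=\bra\psi,\bHg\alpha\ket$ together with $\bHg^*\psi=0$ leaves only the boundary integral, and since $\PttD\alpha$ and $\frakT\alpha$ can be prescribed arbitrarily by \lemref{lemma:prescribe_boundary_conditions}, the traces $\PnnD\psi$ and $\frakT^*\psi$ must vanish. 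This never requires solving a boundary-value problem with $\psi$'s traces as data, and so never leaves the realm of distributional pairings.

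Your constructive route has a genuine gap precisely at the point you dismiss as ``purely technical''. \thmref{thm:bilaplacian_full_problem} (in its $\NN$ form) requires the prescribed data $\theta,\tau$ to lie in $W^{s+3/p,p}$ and $W^{s+1/p,p}$ on $\dM$, i.e.\ at best $H^{3/2}(\dM)$ and $H^{1/2}(\dM)$. For $\psi\in L^2\BH^{*,*}(\M)$ the elliptic theory invoked before \lemref{lemma:images_are_closed} guarantees only that $\PnnD\psi$ and $\frakT^*\psi$ exist as traces of an $L^2$ solution of $\Bg\psi=0$, which places them in negative-order Sobolev spaces; nothing in the paper upgrades them to the positive regularity the solvability theorem demands, so the problem $\frakB_\NN\vp=(0,\dots,0,\PnnD\psi,\frakT^*\psi)$ cannot be posed as you pose it. The rest of your algebra ($\bHg\bHg^*\bHg\vp=0$ from applying $\bHg$ to $\Bg\vp=0$ and \eqref{eq:exact_relations}, hence $\Bg\chi=0$, and the vanishing of $\|\bHg^*\eta\|_{L^2}$ by integration by parts against $\eta\in\ker(\PnnD,\frakT^*)$) is correct, so the argument would go through for $\psi$ of, say, $H^4$ regularity; to cover all of $L^2\BH^{*,*}(\M)$ you would then still need a density-and-closedness limiting argument, with the closedness of $L^2\BH^{*,*}_{nn}(\M)$ established independently. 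The paper's orthogonal-complement argument avoids all of this, and I would recommend adopting it for the converse inclusion.
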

\begin{proof}
We prove the first item. By \lemref{lemma:images_are_closed}, $L^2 \BH^{*,*}_{\bHg}(\M)$ is closed in $L^2\VectorFormsM$, hence also in $L^2 \BH^{*,*}(\M)$, from which we deduce that
\[
L^2 \BH^{*,*}(\M) = L^2 \BH^{*,*}_{\bHg}(\M) \oplus (L^2 \BH^{*,*}_{\bHg}(\M))^\bot,
\]
where the orthogonal complement here is in $L^2 \BH^{*,*}(\M)$. We thus need to show that
\[
(L^2 \BH^{*,*}_{\bHg}(\M))^\bot = L^2 \BH^{*,*}_{nn}(\M).
\]

Let $\psi\in L^2\BH_{nn}^{*,*}(\M)$; then $\bHg^*\psi=0$ in the distributive sense, hence for all $\bHg\eta\in L^2 \BH^{*,*}_{\bHg}(\M)$,
\[
\bra \bHg\eta,\psi\ket =0,
\]  
from which we deduce that $L^2\BH_{nn}^{*,*}(\M) \subset (L^2 \BH^{*,*}_{\bHg}(\M))^\bot$.

Conversely, let $\psi\in (L^2\BH_{\bHg}^{*,*}(\M))^\bot$. Since $\psi$ is biharmonic, 
it is also $L^2$-orthogonal to $L^2\EE^{*,*}(\M)$, from which we conclude that it is $L^2$-orthogonal to $\bHg(W^{2,2}\VectorFormsM)$, i.e., to the image of $\bHg$ in $L^2\VectorFormsM$ (by \thmref{thm:decompostion_1} $\image \bHg = \EE^{*,*}(\M) \oplus \BH^{*,*}_{\bHg}(\M)$).
Thus, for every $\alpha\in W^{2,2}\VectorFormsM$,
\[
\bra \psi,\bHg\alpha\ket = 0.
\]
Since $\bHg^*\psi=0$ in the distributive sense, 
\[
0=\bra \psi, \bHg\alpha\ket \EwR{eq:H_general_integral}\int_{\dM}\Brk{(\PttD\alpha,\frakT^*\psi)_{\gD}-(\frakT\alpha,\PnnD\psi)_{\gD}}\,\VolumeD.
\]
By \lemref{lemma:prescribe_boundary_conditions},  $\PttD\alpha$ and $\frakT\alpha$ can be prescribed arbitrarily, from which follow that $\PnnD\psi=0$ and $\frakT^*\psi=0$. 
Thus, $(L^2\BH_{\bHg}^{*,*}(\M))^\bot\subset L^2\BH_{nn}^{*,*}(\M)$. 
\end{proof}

Finally, we observe that above results can be restated for $\calS\BH^k(\M)$. 

\subsection{Boundary-value problems}

In this subsection we demonstrate how the orthogonal decomposition of double forms (\thmref{thm:decompostion_1}) and the decompositions of the biharmonic module (\thmref{thm:BH_decompose1}) can be used to solve boundary-value problems. As throughout the present section, we assume a specific $(k,m)$, with  $\bHg,\bFg$ satisfying the exactness relations \eqref{eq:exact_relations}. In this setting, the technique we employ follows the same lines as \cite{Sch95b}, with some adjustments due to the fact that the operators here are second-order.

\begin{theorem}
\label{thm:basic_boundary_value_problem}
Given $\chi\in\Omega^{k,m}(\M)$ and $\phi,\mu\in \Omega^{k,m}(\partial\M)$, the boundary-value problem
\beq
\begin{gathered}
\bHg^*\psi=\chi
\qquad 
(\PnnD,\frakT^*) \psi = (\phi,\mu)
\end{gathered}
\label{eq:simple_boundary_problem}
\eeq
is solvable for $\psi\in\Omega^{k+1,m+1}(\M)$ if and only if
\beq
\begin{gathered}
\chi\ \bot \ \EE^{k,m}(\M) \qquad \chi\ \bot \ \CE^{k,m}(\M) \qquad \chi\ \bot \ \EC^{k,m}(\M) \\
\bra\chi,\kappa\ket=\int_{\dM}\Brk{(\phi,\frakT \kappa)_{\gD}-(\mu,\PttD \kappa)_{\gD}}\VolumeD \qquad \forall\kappa\in\BH^{k,m}(\M).
\end{gathered} 
\label{eq:integrability_conditions_simple_1}
\eeq
Moreover, the solution can be chosen to satisfy,
\[
\psi\in\Image{\bHg}.
\]
If the data has only Sobolev regularity, then \eqref{eq:simple_boundary_problem} is solvable at an appropriate level of regularity, and $\psi$ can be chosen to satisfy
\beq
\begin{split}
\Norm{\psi}_{W^{s+2,p}(\M)}&\lesssim \Norm{\chi}_{W^{s,p}(\M)}+\Norm{\phi}_{W^{s+2-1/p,p}(\dM)}+\Norm{\mu}_{W^{s+1-1/p,p}(\dM)}.
\end{split}
\label{eq:estimate_simple_boundary_problem_1}
\eeq
\end{theorem}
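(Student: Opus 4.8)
The plan is to dispatch necessity by integration by parts, and sufficiency by reducing to an auxiliary fourth–order problem of type $\frakB_\NN$; recall that throughout this section we assume $\bHg,\bFg$ satisfy the exactness relations \eqref{eq:exact_relations}.

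\textbf{Necessity.} Suppose $\psi$ solves \eqref{eq:simple_boundary_problem}. For $\alpha\in\ker(\PttD,\frakT)$, moving $\bHg^*$ across by \eqref{eq:H_general_integral} gives $\bra\chi,\bHg\alpha\ket=\bra\bHg^*\psi,\bHg\alpha\ket=\bra\psi,\bHg\bHg\alpha\ket-\int_{\dM}\Brk{(\PttD\bHg\alpha,\frakT^*\psi)_\gD-(\frakT\bHg\alpha,\PnnD\psi)_\gD}\VolumeD$, which vanishes since $\bHg\bHg=0$ and, by \corrref{corr:7.2}, $\PttD\bHg\alpha=\frakT\bHg\alpha=0$; hence $\chi\perp\EE^{k,m}(\M)$. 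The identical computation using $\bHg\bFg^*=0$, $\bHg\bFg=0$ and the boundary vanishing of $\bFg^*\lambda$ and $\bFg\gamma$ from \corrref{corr:7.2} yields $\chi\perp\CE^{k,m}(\M)$ and $\chi\perp\EC^{k,m}(\M)$. Finally, for $\kappa\in\BH^{k,m}(\M)$ one has $\bHg\kappa=0$, so \eqref{eq:H_general_integral} together with the boundary conditions on $\psi$ gives $\bra\chi,\kappa\ket=\int_{\dM}\Brk{(\phi,\frakT\kappa)_\gD-(\mu,\PttD\kappa)_\gD}\VolumeD$.

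\textbf{Sufficiency.} Assume \eqref{eq:integrability_conditions_simple_1}. I would first solve, via the $\NN$–analogue of \thmref{thm:bilaplacian_full_problem}, the problem
\[
(\Bg,\PnnD,\PtnD,\PntD,\frakT^*,\frakF^*,\frakF,\PnnD\bHg,\frakT^*\bHg)\,\vp=(\chi,0,0,0,0,0,0,\phi,\mu),
\]
whose integrability condition is $\bra\chi,\zeta\ket=\int_{\dM}\Brk{(\phi,\frakT\zeta)_\gD-(\mu,\PttD\zeta)_\gD}\VolumeD$ for all $\zeta\in\BH^{k,m}_\NN(\M)$, and hence is implied by the last line of \eqref{eq:integrability_conditions_simple_1} since $\BH^{k,m}_\NN(\M)\subset\BH^{k,m}(\M)$. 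This produces $\vp$ (smooth when the data are, and one derivative more regular than the desired $\psi$) with $\Bg\vp=\chi$, with $\vp$ annihilated by the six zeroth– and first–order $\NN$–boundary operators, and with $\PnnD\bHg\vp=\phi$, $\frakT^*\bHg\vp=\mu$. Setting $\psi:=\bHg\vp$ we obtain $\psi\in\Image\bHg$, $\PnnD\psi=\phi$ and $\frakT^*\psi=\mu$ at once; the estimate \eqref{eq:estimate_simple_boundary_problem_1} then follows from the a-priori estimate \eqref{eq:elliptic_estimate} for this $\frakB_\NN$ problem (the boundary exponents matching those in \eqref{eq:estimate_simple_boundary_problem_1} after accounting for the two derivatives lost to $\bHg$) combined with the continuity of $\bHg$, and the passage to general $(s,p)$ is as in \thmref{thm:bilaplacian_full_problem}.

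\textbf{The crux: $\bHg^*\psi=\chi$.} I expect this to be the main obstacle, to be handled as in the proof of \lemref{lem:7.13}. Put $\xi:=\Bg\vp-\bHg^*\bHg\vp=\bHg\bHg^*\vp+\bFg^*\bFg\vp+\bFg\bFg^*\vp$; since $\Bg\vp=\chi$ it suffices to prove $\xi=0$. First, $\xi\in\BH^{k,m}(\M)$: both $\chi$ (by the three orthogonality hypotheses) and $\bHg^*\bHg\vp$ (by integration by parts, using $\bHg\bHg=0$, $\bHg\bFg=0$, $\bHg\bFg^*=0$ and \corrref{corr:7.2}) are $L^2$–orthogonal to $\EE^{k,m}(\M)\oplus\CE^{k,m}(\M)\oplus\EC^{k,m}(\M)$, so by \thmref{thm:decompostion_1} $\xi\in\CC^{k,m}(\M)\oplus\BH^{k,m}(\M)$, while pairing each summand of $\xi$ against $\bHg^*\beta$ with $\beta\in\ker(\PnnD,\frakT^*)$, integrating by parts, and using the exactness relations \eqref{eq:exact_relations} together with the boundary vanishing of $\bHg^*\beta$ from \corrref{corr:7.2} (all resulting boundary terms drop because $\vp$ obeys the six $\NN$–conditions) gives $\xi\perp\CC^{k,m}(\M)$, whence $\xi\in\BH^{k,m}(\M)$. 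Being biharmonic, $\xi$ satisfies $\bHg\xi=0$, so one application of \eqref{eq:H_general_integral} with $\PnnD\bHg\vp=\phi$, $\frakT^*\bHg\vp=\mu$ gives $\bra\bHg^*\bHg\vp,\xi\ket=\int_{\dM}\Brk{(\phi,\frakT\xi)_\gD-(\mu,\PttD\xi)_\gD}\VolumeD$, and the integrability condition for $\xi\in\BH^{k,m}(\M)$ gives $\bra\chi,\xi\ket$ equal to the same integral. Subtracting, $\Norm{\xi}_{L^2(\M)}^2=\bra\chi-\bHg^*\bHg\vp,\xi\ket=0$, so $\xi=0$ and $\bHg^*\psi=\bHg^*\bHg\vp=\chi$, completing the proof.
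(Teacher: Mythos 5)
Your proof is correct and takes a genuinely different route from the paper's.

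The paper assembles $\psi$ from pieces: it decomposes $\chi = \bHg^*\beta_\chi + \kappa_\chi$ via \thmref{thm:decompostion_1}, constructs a form $\omega$ with $\omega|_{\dM}=0$, $\nabg_{\dr}\omega|_{\dM}=0$, $\PnnD\bHg\omega=\phi$, $\frakT^*\bHg\omega=\mu$ via \lemref{lemma:prescribed_boundary_data_second_order}, decomposes $\bHg^*\bHg\omega = \bHg^*\beta_\eta + \kappa_\eta$, and sets $\psi = \beta_\chi + \bHg\omega - \beta_\eta$; the crux is showing $\kappa_\chi = \kappa_\eta$ by pairing their difference with itself and integrating by parts. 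You instead solve a single $\frakB_\NN$ boundary-value problem for a potential $\vp\in\Omega^{k,m}(\M)$ and take $\psi = \bHg\vp$, which automatically lies in $\Image\bHg$ and satisfies the two boundary conditions; the crux becomes showing $\xi := \Bg\vp - \bHg^*\bHg\vp \in \BH^{k,m}(\M)$ and then killing it. Both approaches ultimately hinge on the same integration-by-parts cancellation, but yours packages the construction of $\psi$ in one step rather than three, at the cost of needing the full Fredholm statement of \thmref{thm:bilaplacian_full_problem} rather than just the Green operator and the extension lemma. Your identification of $\xi\in\BH^{k,m}(\M)$ via four orthogonality computations (three to place $\xi\in\CC^{k,m}(\M)\oplus\BH^{k,m}(\M)$, one more to exclude the $\CC$ component) uses the exactness relations and \corrref{corr:7.2} more heavily than the paper does, but it is a valid route.

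Two minor inaccuracies worth flagging. First, in the step showing $\xi\perp\CC^{k,m}(\M)$, your parenthetical that "all resulting boundary terms drop because $\vp$ obeys the six $\NN$-conditions" isn't quite how the cancellation works: depending on which operator you move across the pairing, the boundary terms vanish either because $\bHg^*\beta$ inherits the $\NN$-conditions from $\beta$ via \corrref{corr:7.2}, or directly because $\beta\in\ker(\PnnD,\frakT^*)$; the conditions on $\vp$ play no role there. Second, the estimate \eqref{eq:estimate_simple_boundary_problem_1} does not follow from \eqref{eq:elliptic_estimate} alone, since the a-priori estimate carries an additive $L^q$-term; one needs to obtain the bound on $\vp$ from the explicit Green-operator-plus-extension construction inside the proof of \thmref{thm:bilaplacian_full_problem} (analogously to \propref{prop:gauge_freedom_1}), and then invoke continuity of $\bHg$. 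Neither issue is a gap in the argument, only in the phrasing.
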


\begin{proof}
The necessity of the conditions in \eqref{eq:integrability_conditions_simple_1} is clear from the decomposition \eqref{eq:second_order_decompostion_initial} and by the integration by parts formula \eqref{eq:H_general_integral}. It is left to prove that they are also sufficient. Suppose $\chi$ satisfies \eqref{eq:integrability_conditions_simple_1}. By \thmref{thm:decompostion_1}, 
\[
\chi=\bHg^*\beta_{\chi}+\kappa_{\chi},
\]
where $\beta_\chi\in\ker(\PnnD,\frakT^*)$ and $\kappa_{\chi}\in \BH^{k,m}(\M)$. 
By means of \lemref{lemma:prescribed_boundary_data_second_order}
we produce $\omega\in\Omega^{k,m}(\M)$ satisfying
\[
\begin{gathered}
\omega|_{\dM}=0 \qquad 
\nabg_{\dr}\omega|_{\dM}=0  \qquad
\PnnD \bHg\omega=\phi \qquad 
\frakT^*\bHg\omega=\mu. 
\end{gathered}
\]
We set $\eta=\bHg^*\bHg\omega$, which by \thmref{thm:decompostion_1}
decomposes as follows,
\[
\eta=\bHg^*\beta_{\eta}+\kappa_{\eta},
\]
where $\beta_\eta\in\ker(\PnnD,\frakT^*)$ and $\kappa_{\eta}\in \BH^{k,m}(\M)$.
We set
\[
\psi=\beta_{\chi}+\bHg\omega-\beta_{\eta},
\]
which satisfies by construction,
\[
\begin{gathered}
\bHg^*\psi=\chi-\kappa_{\chi}+\kappa_{\eta}
\qquad 
\PnnD\psi=\phi \qquad \frakT^*\psi=\mu.
\end{gathered}
\]
We will prove that $\kappa_{\eta}-\kappa_{\chi}=0$. By setting $\kappa_{\eta}-\kappa_{\chi}=\kappa\in\BH^{k,m}(\M)$ and integrating by parts, we find
\[
\begin{split}
\bra \kappa,\kappa\ket &=
\bra \bHg^*\psi-\chi,\kappa\ket=
-\bra\chi,\kappa\ket+\int_{\dM}\Brk{(\mu,\PttD \kappa)_{\gD}-(\phi,\frakT \kappa)_{\gD}}\VolumeD,
\end{split}
\]
which vanishes by the second line in \eqref{eq:integrability_conditions_simple_1}. Hence $\kappa_{\eta}-\kappa_{\chi}=0$, and therefore $\bHg\psi=\chi$.
By \propref{prop:gauge_freedom_1}, we may choose $\beta_{\chi},\beta_{\eta}\in\Image{\bHg}$, so indeed $\psi\in\Image{\bHg}$.

Finally, all the tools used in the proof have a corresponding Sobolev version. By \lemref{lemma:prescribed_boundary_data_second_order},
\[
\Norm{\bHg\omega}_{W^{s+2,p}(\M)}\lesssim\Norm{\phi}_{W^{s+2-1/p,p}(\dM)}+\Norm{\mu}_{W^{s+1-1/p,p}(\dM)}.
\]
On the other hand, invoking again the choice of gauge in \propref{prop:gauge_freedom_1},
\[
\begin{aligned}
\Norm{\beta_{\eta}}_{W^{s+2,p}} &\lesssim\Norm{\bHg^* \bHg\omega}_{W^{s,p}} \lesssim 
\Norm{\bHg\omega}_{W^{s+2,p}} \\
\Norm{\beta_{\chi}}_{W^{s+2,p}} & \lesssim \Norm{\chi}_{W^{s,p}}.
\end{aligned}
\]
Combining, we obtain \eqref{eq:estimate_simple_boundary_problem_1}.
\end{proof}

We can improve the solvability conditions if we assume more regularity for the data, yielding a finite-dimensional obstruction:

\begin{theorem}
\label{thm:integrability_conditions_equivalence}
If the boundary data in \eqref{eq:simple_boundary_problem} is of at least $H^2$-regularity, the following integrability conditions are equivalent to the ones in \eqref{eq:integrability_conditions_simple_1}:
\beq
\begin{gathered}
\chi\in\ker{(\bHg^*,\bFg,\bFg^*)} \\
(\PnnD,\frakT^*,\PntD,\frakF,\PtnD,\frakF^*)(\chi-\bHg^*\omega)=0 \\
\bra\chi,\upsilon\ket= \int_{\dM}\Brk{(\phi,\frakT \upsilon)_{\gD}-(\mu,\PttD \upsilon)_{\gD}}\VolumeD 
\qquad \forall\upsilon\in\BH^{k,m}_{\NN}(\M)
\end{gathered} 
\label{eq:integrability_conditions_differential_simple}
\eeq
where $\omega\in H^4\Omega^{k+1,m+1}(\M)$ is an arbitrary double form extending the boundary data, i.e. $\PnnD\omega=\phi$ and $\frakT^*\omega=\mu$ (the arbitrariness is due to \eqref{eq:operators_inherit_conditions}). 
\end{theorem}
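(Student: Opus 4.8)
The strategy is to show that the two sets of integrability conditions, \eqref{eq:integrability_conditions_simple_1} and \eqref{eq:integrability_conditions_differential_simple}, cut out the same subspace of data $(\chi,\phi,\mu)$. I will prove the two implications separately. Throughout I use the orthogonal decomposition of \thmref{thm:decompostion_1}, which writes any $\chi$ as $\bHg\alpha_\chi + \bHg^*\beta_\chi + \bFg^*\lambda_\chi + \bFg\gamma_\chi + \kappa_\chi$, with $\kappa_\chi\in\BH^{k,m}(\M)$, and the characterization of $\Image\bHg^*$ etc. via the decomposition. The key structural fact is that \eqref{eq:integrability_conditions_simple_1}'s first line ($\chi\perp\EE^{k,m}(\M),\CE^{k,m}(\M),\EC^{k,m}(\M)$) is exactly the statement $\chi\in\CC^{k,m}(\M)\oplus\BH^{k,m}(\M)$, i.e. $\chi = \bHg^*\beta_\chi + \kappa_\chi$ with $\beta_\chi\in\ker(\PnnD,\frakT^*)$; this will be used repeatedly.

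\textbf{From \eqref{eq:integrability_conditions_simple_1} to \eqref{eq:integrability_conditions_differential_simple}.} Assume \eqref{eq:integrability_conditions_simple_1}. Write $\chi = \bHg^*\beta_\chi + \kappa_\chi$ as above. Since $\kappa_\chi$ is biharmonic and $\bHg^*\beta_\chi$ satisfies $\NN$-boundary conditions by \corrref{corr:7.2}, applying $\bHg^*$, $\bFg$, $\bFg^*$ and using the exactness relations \eqref{eq:exact_relations} together with $\bHg^*\kappa_\chi=\bFg\kappa_\chi=\bFg^*\kappa_\chi=0$ gives $\chi\in\ker(\bHg^*,\bFg,\bFg^*)$, the first line of \eqref{eq:integrability_conditions_differential_simple}. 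For the second line: given any extension $\omega$ with $\PnnD\omega=\phi$, $\frakT^*\omega=\mu$, the form $\chi - \bHg^*\omega = \bHg^*(\beta_\chi-\omega) + \kappa_\chi$; since $\beta_\chi - \omega\in\ker(\PnnD,\frakT^*)$ by construction, \corrref{corr:7.2} gives $\bHg^*(\beta_\chi-\omega)\in\ker(\PnnD,\frakT^*,\PntD,\frakF^*,\PtnD,\frakF)$, and $\kappa_\chi$ lies in the same kernel because it is biharmonic and $\perp\EE,\CE,\EC$ — more precisely, one uses \thmref{thm:BH_decompose2} to place $\kappa_\chi\in L^2\BH^{*,*}_{nn}(\M)$ once we know $\kappa_\chi\perp L^2\BH^{*,*}_{\bHg}(\M)$; this follows since $\chi\perp\EE^{k,m}(\M)=\image\bHg\ominus\BH^{*,*}_{\bHg}$ and $\bHg^*\beta_\chi\perp\image\bHg$, forcing $\kappa_\chi\perp\image\bHg$ hence in particular $\perp\BH^{*,*}_{\bHg}$. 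The same argument applied to the $nt$ and $tn$ decompositions of \thmref{thm:BH_decompose2}, using $\chi\perp\CE^{k,m},\EC^{k,m}$, gives $\kappa_\chi\in\BH^{*,*}_{nt}\cap\BH^{*,*}_{tn}\cap\BH^{*,*}_{nn}$. Finally the third line of \eqref{eq:integrability_conditions_differential_simple} is a restriction of the last line of \eqref{eq:integrability_conditions_simple_1} to $\upsilon\in\BH^{k,m}_{\NN}(\M)\subset\BH^{k,m}(\M)$, hence immediate.

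\textbf{From \eqref{eq:integrability_conditions_differential_simple} to \eqref{eq:integrability_conditions_simple_1}.} Conversely, assume \eqref{eq:integrability_conditions_differential_simple}. Decompose $\chi = \bHg\alpha_\chi + \bHg^*\beta_\chi + \bFg^*\lambda_\chi + \bFg\gamma_\chi + \kappa_\chi$. Pairing $\chi$ with $\bHg\alpha$ for $\alpha\in\ker(\PttD,\frakT)$ and integrating by parts via \eqref{eq:H_general_integral} gives $\bra\chi,\bHg\alpha\ket = \bra\bHg^*\chi,\alpha\ket = 0$ since $\bHg^*\chi=0$; by density this forces $\chi\perp\EE^{k,m}(\M)$, so $\bHg\alpha_\chi=0$. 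Identically, $\bFg\chi=0$ gives $\chi\perp\CE^{k,m}(\M)$ hence $\bFg^*\lambda_\chi=0$, and $\bFg^*\chi=0$ gives $\chi\perp\EC^{k,m}(\M)$ hence $\bFg\gamma_\chi=0$; thus $\chi=\bHg^*\beta_\chi+\kappa_\chi$, establishing the first line of \eqref{eq:integrability_conditions_simple_1}. It remains to upgrade the last line of \eqref{eq:integrability_conditions_simple_1} from $\upsilon\in\BH^{k,m}_{\NN}(\M)$ to all $\kappa\in\BH^{k,m}(\M)$. For this, decompose an arbitrary $\kappa\in\BH^{k,m}(\M)$ using \thmref{thm:BH_decompose1} (the $\NN$-version): $\kappa = \upsilon + \bHg\vp'$ with $\upsilon\in\BH^{k,m}_\NN(\M)$ and $\bHg\vp'\in\BH^{*,*}_{\bHg}(\M)\subset\BH^{*,*}_{\bHg+\bFg^*+\bFg}(\M)$. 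The $\upsilon$-part is covered by hypothesis; for the $\bHg\vp'$-part, I must show both sides of the identity vanish. The left side $\bra\chi,\bHg\vp'\ket = \bra\bHg^*\chi,\vp'\ket + (\text{boundary}) = (\text{boundary})$, where the boundary term, using the second line of \eqref{eq:integrability_conditions_differential_simple} (i.e. $\chi - \bHg^*\omega$ has all six tangential/normal projections vanishing) and integrating by parts against $\omega$, reduces to $\int_{\dM}[(\PttD\vp',\frakT^*\bHg^*\omega)-(\frakT\vp',\PnnD\bHg^*\omega)]$-type terms — and this matches precisely the right-hand side $\int_{\dM}[(\phi,\frakT(\bHg\vp'))-(\mu,\PttD(\bHg\vp'))]$ after substituting $\phi=\PnnD\omega$, $\mu=\frakT^*\omega$ and commuting boundary projections with $\bHg$ via \lemref{lem:4.12}.

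\textbf{Main obstacle.} The genuinely delicate step is the last one: verifying that the boundary pairing against the $\bHg\vp'$ component of a biharmonic form is automatically consistent. This requires carefully tracking which boundary projections of $\bHg\vp'$ survive (using \corrref{corr:7.2} and \lemref{lem:4.12}/\lemref{lem:4.13}) and matching them term-by-term against the data $(\phi,\mu)$ via the extension $\omega$; the algebra of the integration-by-parts identities \eqref{eq:Green2}, \eqref{eq:Green3}, \eqref{eq:H_general_integral} must be deployed with the exactness relations at each stage. The other direction's subtlety — showing $\kappa_\chi$ lands in $\BH^{*,*}_{nn}$ — is cleaner once one recognizes it as an orthogonality statement feeding into \thmref{thm:BH_decompose2}, but it does rely on the decomposition-theoretic identity $\image\bHg = \EE^{k,m}(\M)\oplus\BH^{*,*}_{\bHg}(\M)$ proved in the course of \thmref{thm:BH_decompose2}.
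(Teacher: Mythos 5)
Your overall architecture (the five-fold decomposition of \thmref{thm:decompostion_1}, the splitting of the biharmonic module in \thmref{thm:BH_decompose1}, and integration by parts) is the same as the paper's, but the execution has concrete errors. In the forward direction you write $\chi-\bHg^*\omega=\bHg^*(\beta_\chi-\omega)+\kappa_\chi$ and assert that $\beta_\chi-\omega\in\ker(\PnnD,\frakT^*)$ ``by construction.'' This is false: $\omega$ is chosen with $\PnnD\omega=\phi$ and $\frakT^*\omega=\mu$, which are the (generally nonzero) prescribed data, so $\PnnD(\beta_\chi-\omega)=-\phi$. Hence \corrref{corr:7.2} does not apply to that term and the vanishing of the six boundary projections of $\chi-\bHg^*\omega$ is not established. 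Relatedly, your claim that $\kappa_\chi\perp L^2\BH^{*,*}_{\bHg}(\M)$ follows from $\chi\perp\EE^{k,m}(\M)$ is circular: since $\image\bHg=\EE^{k,m}(\M)\oplus\BH^{*,*}_{\bHg}(\M)$, orthogonality to $\EE^{k,m}(\M)$ says nothing about the component in $\BH^{*,*}_{\bHg}(\M)$; killing that component requires the pairing condition in the second line of \eqref{eq:integrability_conditions_simple_1}, which you never invoke at this point. The correct (and shorter) route is: the second line of \eqref{eq:integrability_conditions_simple_1} together with \eqref{eq:H_general_integral} and $\bHg\kappa=0$ gives $\bra\chi,\kappa\ket=\bra\bHg^*\omega,\kappa\ket$ for all $\kappa\in\BHkm$, i.e.\ $\chi-\bHg^*\omega\perp\BHkm$; combined with $\chi-\bHg^*\omega\in\ker(\bHg^*,\bFg,\bFg^*)$ this places $\chi-\bHg^*\omega$ in $\CC^{k,m}(\M)$, whose elements satisfy all six boundary conditions by \corrref{corr:7.2}.

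The reverse direction has two gaps. First, the complement of $\BH^{k,m}_{\NN}(\M)$ in \thmref{thm:BH_decompose1} is $\BH^{*,*}_{\bHg+\bFg^*+\bFg}(\M)$, a sum of three images, so a general $\kappa\in\BHkm$ decomposes as $\upsilon+\bHg\theta+\bFg\epsilon+\bFg^*\varrho$, not as $\upsilon+\bHg\vp'$; the $\bFg\epsilon$ and $\bFg^*\varrho$ pieces require their own integration-by-parts chains, using $\bFg^*\chi=\bFg\chi=0$ and the $(\PntD,\frakF)$, $(\PtnD,\frakF^*)$ components of the second line of \eqref{eq:integrability_conditions_differential_simple}, and you do not address them at all. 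Second, even for the $\bHg\theta$ piece you defer the computation as the ``main obstacle'' rather than carrying it out. The needed chain is: $\bra\chi,\bHg\theta\ket=\int_{\dM}\Brk{(\frakT^*\chi,\PttD\theta)_{\gD}-(\PnnD\chi,\frakT\theta)_{\gD}}\VolumeD$ since $\bHg^*\chi=0$; the second line of \eqref{eq:integrability_conditions_differential_simple} lets you replace $\chi$ by $\bHg^*\omega$ in these boundary terms; and two further applications of \eqref{eq:H_general_integral} convert $\bra\bHg^*\omega,\bHg\theta\ket$ into $\int_{\dM}\Brk{(\phi,\frakT\bHg\theta)_{\gD}-(\mu,\PttD\bHg\theta)_{\gD}}\VolumeD$. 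Until these steps are written out for all three pieces, the equivalence is not proved.
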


\begin{proof}
From \thmref{thm:decompostion_1}, the first line in \eqref{eq:integrability_conditions_simple_1} and the first line in \eqref{eq:integrability_conditions_differential_simple} are equivalent. The second line in \eqref{eq:integrability_conditions_simple_1} implies the third line in \eqref{eq:integrability_conditions_differential_simple}.

The second line in \eqref{eq:integrability_conditions_simple_1} implies that
\[
\bra\chi,\kappa\ket=\int_{\dM}\Brk{(\phi,\frakT \kappa)_{\gD}-(\mu,\PttD \kappa)_{\gD}}\VolumeD 
\EwR{eq:H_general_integral}\bra \bHg^*\omega,\kappa\ket \qquad \forall\kappa\in\BHkm.
\]
Thus,
\[
\chi -  \bHg^*\omega \perp \BHkm.
\]
Moreover,
\[
\chi -  \bHg^*\omega \in\ker(\bHg^*,\bFg,\bFg^*),
\]
from which follows that
\[
\chi -  \bHg^*\omega \perp \EE^{k,m}(\M), \CE^{k,m}(\M), \EC^{k,m}(\M).
\]
I.e.,
\[
\chi -  \bHg^*\omega \in \CC^{k,m}(\M),
\]
which implies that 
\[
(\PnnD,\frakT^*,\PntD,\frakF,\PtnD,\frakF^*)(\chi -  \bHg^*\omega) = 0.
\]


In the other direction, suppose that \eqref{eq:integrability_conditions_differential_simple} holds. We need to prove that the second line in  \eqref{eq:integrability_conditions_simple_1} holds, namely, that
\[
\bra\chi,\kappa\ket=\int_{\dM}\Brk{(\phi,\frakT \kappa)_{\gD}-(\mu,\PttD \kappa)_{\gD}}\VolumeD \qquad \forall\kappa\in\BH^{k,m}(\M).
\]

By \thmref{thm:BH_decompose1}, every $\kappa\in\BHkm$ decomposes $L^2$-orthogonally into
\[
\BH^{k,m}_{\bHg+\bFg+\bFg^*}(\M) \oplus \BH_{\NN}^{k,m}(\M),
\]
hence by the third line of \eqref{eq:integrability_conditions_differential_simple}, it suffices to obtain the integrability condition for $\kappa \in \BH^{k,m}_{\bHg+\bFg+\bFg^*}(\M)$. That is, that for all $\bHg\theta+\bFg\epsilon+\bFg^*\varrho\in\BH^{k,m}_{\bHg+\bFg+\bFg^*}(\M)$
\beq
\begin{split}
\bra\chi,\bHg\theta+\bFg\epsilon+\bFg^*\varrho\ket&=\int_{\dM}(\phi,\frakT( \bHg\theta+\bFg\epsilon+\bFg^*\varrho))_{\gD}\VolumeD\\
&-\int_{\dM}(\mu,\PttD(\bHg\theta+\bFg\epsilon+\bFg^*\varrho))_{\gD}\VolumeD.
\end{split}
\label{eq:new_integrability}
\eeq
By the first line in \eqref{eq:integrability_conditions_differential_simple}, for every $\theta,\epsilon,\varrho$,
\[
\bra\bHg^*\chi,\theta\ket  = 0.
\]
Integrating by parts,  combining with the second line in \eqref{eq:integrability_conditions_differential_simple}, and integrating by parts twice more
\[
\begin{split}
\bra\chi, \bHg\theta\ket &=  \int_{\dM}\Brk{(\frakT^* \chi,\PttD\theta)_\gD - (\PnnD \chi,\frakT\theta)_\gD}\VolumeD \\
&= \int_{\dM}\Brk{(\frakT^* \bHg^*\omega,\PttD\theta)_\gD - (\PnnD  \bHg^*\omega,\frakT\theta)_\gD}\VolumeD \\
&= \bra\bHg^*\omega, \bHg\theta\ket \\
&=  \int_{\dM}\Brk{(\PnnD\omega,\frakT\bHg\theta)_{\gD} - (\frakT^*\omega,\PttD\bHg\theta)_{\gD}}\VolumeD \\
&=  \int_{\dM}\Brk{(\phi,\frakT\bHg\theta)_{\gD} - (\mu,\PttD\bHg\theta)_{\gD}}\VolumeD.
\end{split}
\]

%
%

Similarly, since $\bFg^*\chi=0$,
\[
\begin{split}
\bra\chi,\bFg\epsilon\ket &= \int_{\dM}\Brk{(\frakF^*\chi,\PtnD\epsilon)_{\gD}-(\PntD\chi,\frakF \epsilon)_{\gD}}\VolumeD  \\
&= \int_{\dM}\Brk{(\frakF^*\bHg^*\omega,\PtnD\epsilon)_{\gD}-(\PntD\bHg^*\omega,\frakF \epsilon)_{\gD}}\VolumeD  \\
&= \bra\bHg^*\omega,\bFg\epsilon\ket \\ 
&=  \int_{\dM}\Brk{(\PnnD\omega,\frakT\bFg\epsilon)_{\gD} - (\frakT^*\omega,\PttD\bFg\epsilon)_{\gD}}\VolumeD \\
&=  \int_{\dM}\Brk{(\phi,\frakT\bFg\epsilon)_{\gD} - (\mu,\PttD\bFg\epsilon)_{\gD}}\VolumeD.
\end{split}
\]
By the same argument,
\[
\bra\chi,\bFg^*\varrho\ket = \int_{\dM}\Brk{(\phi,\frakT\bFg^*\varrho)_{\gD} - (\mu,\PttD\bFg^*\varrho)_{\gD}}\VolumeD.
\]
Adding up the last three equations, we recover the desired result.
\end{proof}

The last two theorems can be reformulated for $\bHg^*$ replaced by $\bHg$, $\bFg$ and $\bFg^*$ and with $\NN$ boundary conditions replaced by $\TT$, $\NT$ and $\TN$ boundary conditions.

These results can be also reformulated for symmetric forms, using the theory developed in \secref{sec:symmetric_forms}. 

\begin{theorem}
\label{thm:basic_boundary_value_problem_symmetric}
Given $\chi\in\Theta^{k}(\M)$ and $\phi,\mu\in \Theta^{k}(\dM)$, the boundary-value problem
\[
\bHg^*\psi=\chi 
\qquad
(\PnnD,\frakT^*)\psi = (\phi,\mu)
\]
is solvable for $\psi\in\Theta^{k+1}(\M)$ if and only if the following integrability conditions are satisfied:
\[
\begin{gathered}
\chi\in\ker{(\bHg^*,\bFg)} \qquad 
(\PnnD,\frakT^*,\PntD,\frakF)(\chi-\bHg^*\omega)=0 \\
\bra\chi,\upsilon\ket= \int_{\dM}\Brk{(\phi,\frakT \upsilon)_{\gD}-(\mu,\PttD \upsilon)_{\gD}}\VolumeD \qquad \forall\upsilon\in\S\BH^{k}_{\NN}(\M),
\end{gathered} 
\]
where $\omega\in\Theta^{k}(\M)$ is an arbitrary form extending the boundary data, i.e., $\PnnD\omega=\phi$, $\frakT^*\omega=\mu$. The solution $\psi$ can be chosen to satisfy
\[
\psi\in\Image{\bHg|_{\Theta^{k}(\M)}}.
\]
If the data has Sobolev regularity, then $\psi$ can be chosen to satisfy an estimate
\[
\begin{split}
\Norm{\psi}_{W^{s+2,p}(\M)}&\lesssim \Norm{\chi}_{W^{s,p}(\M)}+\Norm{\phi}_{W^{s+2-1/p,p}(\dM)}+\Norm{\mu}_{W^{s+1-1/p,p}(\dM)}.
\end{split}
\]
\end{theorem}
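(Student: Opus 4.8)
The plan is to transcribe the proofs of \thmref{thm:basic_boundary_value_problem} and \thmref{thm:integrability_conditions_equivalence} inside the module $\Theta^{*}$ of symmetric double forms, using \eqref{eq:symmetries_preserved_by_H_F} and \eqref{eq:boundary_T} to see that every object produced stays symmetric. The two extra inputs needed are: symmetric versions of the extension Lemmas~\ref{lemma:prescribe_boundary_conditions} and~\ref{lemma:prescribed_boundary_data_second_order}, obtained by replacing the output $\lambda$ of the original construction by $\tfrac12(\lambda+\lambda^{T})$ --- this leaves the (symmetric) prescribed boundary values untouched, since $\bHg,\bHg^{*}$ commute with transposition and the boundary operators obey \eqref{eq:boundary_T}, and preserves the bounds \eqref{eq:estimate_1st_bc} and \eqref{eq:sobolev_estimate_boundary_data} --- and the symmetric decomposition \thmref{thm:decompostion_symmetric}, whose potential operators $\bHg^{*}G_\TT$, $\bHg G_\NN$ map $\Theta^{*}$ into $\Theta^{*}$ by \propref{prop:green_commute_with_transpose}. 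From the symmetric Lemma~\ref{lemma:prescribed_boundary_data_second_order} I obtain, given symmetric data $(\phi,\mu)$, both the direct extension $\omega\in\Theta^{k+1}(\M)$ with $\PnnD\omega=\phi$, $\frakT^{*}\omega=\mu$ occurring in the integrability conditions, and an inner extension $\nu\in\Theta^{k}(\M)$ with $\nu|_{\dM}=0$, $\nabg_{\dr}\nu|_{\dM}=0$, $\PnnD\bHg\nu=\phi$, $\frakT^{*}\bHg\nu=\mu$.

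The differential integrability conditions in the statement are equivalent to the ``raw'' ones --- $\chi\perp\calS\EE^{k}(\M),\calS\EC^{k}(\M)$ together with $\bra\chi,\kappa\ket=\int_{\dM}\Brk{(\phi,\frakT\kappa)_{\gD}-(\mu,\PttD\kappa)_{\gD}}\VolumeD$ for all $\kappa\in\calS\BH^{k}(\M)$ --- by the symmetric form of \thmref{thm:integrability_conditions_equivalence}, proved exactly as the original: using $\chi^{T}=\chi$, the relation $\chi\perp\calS\EE^{k}(\M)$ becomes $\bHg^{*}\chi=0$ and, via $\bra(\bFg^{*}\lambda)^{T}+\bFg^{*}\lambda,\chi\ket=2\bra\lambda,\bFg\chi\ket$, the relation $\chi\perp\calS\EC^{k}(\M)$ becomes $\bFg\chi=0$ (for symmetric $\chi$, $\bFg\chi=0\iff\bFg^{*}\chi=0$, which is why only one appears); and the pairing over $\calS\BH^{k}(\M)$ is split along the symmetric biharmonic decomposition $\calS\BH^{k}(\M)=\calS\BH^{k}_{\bHg+\bFg+\bFg^{*}}(\M)\oplus\calS\BH^{k}_{\NN}(\M)$ and reduced by integration by parts against $\bHg\theta$, $\bFg\epsilon$, $\bFg^{*}\varrho$. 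Necessity of the raw conditions for a solution follows from \corrref{cor:integration_by_parts_second_order} restricted to $\Theta^{*}$, as in \thmref{thm:basic_boundary_value_problem}.

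For sufficiency I would run the proof of \thmref{thm:basic_boundary_value_problem} verbatim in $\Theta^{*}$: by \thmref{thm:decompostion_symmetric} and $\chi\in\ker(\bHg^{*},\bFg)$, write $\chi=\bHg^{*}\beta_{\chi}+\kappa_{\chi}$ with $\beta_{\chi}\in\ker(\PnnD,\frakT^{*})\cap\Theta^{k+1}(\M)$ and $\kappa_{\chi}\in\calS\BH^{k}(\M)$; with the inner extension $\nu$, set $\zeta=\bHg^{*}\bHg\nu\in\Theta^{k}(\M)$, decompose $\zeta=\bHg^{*}\beta_{\zeta}+\kappa_{\zeta}$, and put $\psi=\beta_{\chi}+\bHg\nu-\beta_{\zeta}\in\Theta^{k+1}(\M)$, so that $\bHg^{*}\psi=\chi-\kappa_{\chi}+\kappa_{\zeta}$ and $(\PnnD,\frakT^{*})\psi=(\phi,\mu)$. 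Pairing $\kappa:=\kappa_{\zeta}-\kappa_{\chi}\in\calS\BH^{k}(\M)$ with itself and integrating by parts gives $\bra\kappa,\kappa\ket=-\bra\chi,\kappa\ket+\int_{\dM}\Brk{(\mu,\PttD\kappa)_{\gD}-(\phi,\frakT\kappa)_{\gD}}\VolumeD$, which vanishes by the pairing identity recovered in the previous step; hence $\bHg^{*}\psi=\chi$. Choosing $\beta_{\chi},\beta_{\zeta}\in\Image\bHg$ via the symmetric \propref{prop:gauge_freedom_1} gives $\psi\in\Image(\bHg|_{\Theta^{k}(\M)})$, and the Sobolev estimate is assembled, as in \thmref{thm:basic_boundary_value_problem}, from the symmetric extension bound on $\bHg\nu$, the continuity of $G_\TT,G_\NN$ on $\Theta^{*}$, and the continuity of $\bHg^{*}$.

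The main obstacle is not a new difficulty but the bookkeeping behind the two ``restated for symmetric forms'' reductions. One must check that the extension lemmas survive symmetrization with the correct types --- e.g. that $\PntD$ of a symmetric $(k,k)$-form lands in $\Omega^{k-1,k}(\dM)$ and equals the transpose of $\PtnD$ there, and that, by \eqref{eq:boundary_T}, a symmetric form's eight boundary components from \thmref{thm:elliptic_regularity} collapse to the six appearing in \thmref{thm:regular_ellipticity_symmetric_1} (since $\PtnD=(\PntD)^{T}$ and $\frakF=(\frakF^{*})^{T}$ on $\Theta^{*}$) --- and that the symmetric analogue of \thmref{thm:BH_decompose1}, namely $\calS\BH^{k}(\M)=\calS\BH^{k}_{\bHg+\bFg+\bFg^{*}}(\M)\oplus\calS\BH^{k}_{\NN}(\M)$, holds with the same proof. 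Once these are recorded, everything else is a transcription of the non-symmetric arguments, symmetry being preserved at each step by \eqref{eq:symmetries_preserved_by_H_F}, \eqref{eq:boundary_T} and \propref{prop:green_commute_with_transpose}.
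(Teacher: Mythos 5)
Your proposal is correct and follows exactly the route the paper takes: the paper's own proof of \thmref{thm:basic_boundary_value_problem_symmetric} consists of the single remark that one repeats the proofs of \thmref{thm:basic_boundary_value_problem} and \thmref{thm:integrability_conditions_equivalence} step by step inside $\Theta^*(\M)$, using \thmref{thm:decompostion_symmetric} in place of \thmref{thm:decompostion_1}. Your additional bookkeeping --- symmetrizing the extension lemmas via $\lambda\mapsto\tfrac12(\lambda+\lambda^T)$, noting that $\bFg\chi=0\iff\bFg^*\chi=0$ on symmetric forms so the integrability conditions collapse to the shorter list, and invoking \propref{prop:green_commute_with_transpose} for the Green operators --- is precisely the content the paper leaves implicit, and it is all correct.
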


\begin{proof}
The proof is step-by-step identical to the proof of \thmref{thm:basic_boundary_value_problem}, using the machinery of symmetric decompositions (\thmref{thm:decompostion_symmetric}). 
\end{proof}
\appendix
\section{Proofs of technical lemmas and propositions}

\begin{PROOF}{\propref{prop:dg_commutes_starV}}
For $\psi = \omega\otimes V$, where $\omega\in\Omega^k(\M)$ and $V\in \Gamma(\Lambda^mT^*\M)$,
\[
\begin{split}
\starG^V\dg\psi &\EwR{eq:leibnitz_rule_vector} \starG^V(d\omega\otimes V + (-1)^m\omega\wedge\nabg V) \\
&\EwR{eq_HodgeV} d\omega\otimes \starG^V V + (-1)^m\omega\wedge\starG^V \nabg V \\
&= d\omega\otimes \starG^V V + (-1)^m\omega\wedge \nabg \starG^V V \\
&\EwR{eq:leibnitz_rule_vector} \dg\starG^V\psi,
\end{split}
\]
where in the passage to the third line we used the fact that $\nabg_X$ commutes with the metric operation $\starG^V$.
\end{PROOF}

\begin{PROOF}{\lemref{lem:3.2}}
For every $\beta\in\Omega^{d-1-k}(\M)$,
\[
\begin{split}
(\beta, \starG(\alpha\wedge\omega))\, \VolumeG &= (-1)^{dk + d+k+1}  \beta \wedge \alpha\wedge\omega  \\
&= (-1)^{d+1} (\beta\wedge\alpha , \starG\omega)\, \VolumeG \\
&= (-1)^k (\alpha\wedge \beta , \starG\omega)\, \VolumeG \\
&= (-1)^k (\beta ,  i_{\alpha^{\sharp}} \starG\omega)\, \VolumeG,
\end{split}
\]
where in the passage to the last line we used the fact that $(\alpha\wedge)$ is the adjoint of $i_{\alpha^{\sharp}}$.
\end{PROOF}

\begin{PROOF}{\lemref{lem:3.4}}
By duality, it suffices to prove the first statement, and 
by bilinearity, it suffices to consider $\psi = \alpha\otimes V$ and $\vp = \beta\otimes W$. 
Moreover, as the identities are tensorial, we may prove them pointwise. We prove just for $A\in\Omega^{1,1}(\M)$, as the general case is proved similarly. Since $A$ is symmetric, there exists at every $p\in\M$  an orthonormal frame field $\{E_i\}$ for $T_{p}\M$,  such that,
\[
A = \sum_i (i_{E_i} i_{E_i}^V  A)\, \vartheta^i \otimes (\vartheta^i)^T,
\]
where $\{\vartheta^i\}$ is the dual coframe.
It follows that
\[
\begin{split}
(\psi,A\wedge \vp) &= \sum_i (i_{E_i} i_{E_i}^V  A) (\alpha\otimes V, (\vartheta^i\wedge \beta)\otimes (\vartheta^i \wedge W)) \\
&= \sum_i (i_{E_i} i_{E_i}^V  A) (i_{E_i} \alpha\otimes i_{E_i}^V V, \beta\otimes W) \\
&= \sum_i (i_{E_i} i_{E_i}^V  A) (i_{E_i} i_{E_i}^V \psi, \vp).
\end{split}
\]
On the other hand,
\[
\begin{split}
\trace_A \psi 
&= (-1)^{dk+dm} \stargEps\stargEps^V (A\wedge) \stargEps\stargEps^V \psi  \\
&= (-1)^{dk+dm} \sum_i 
(i_{E_i} i_{E_i}^V  A) 
(\stargEps (\vartheta^i\wedge \stargEps \omega))\otimes(\starG^V (\vartheta^i \wedge  \starG^V V)) \\
&\EwR{eq:star_alpha_omega}
\sum_i  (i_{E_i} i_{E_i}^V  A)  (i_{E_i} \omega)\otimes (i_{E_i} V) \\
&= \sum_i (i_{E_i} i_{E_i}^V  A) i_{E_i} i_{E_i}^V \psi,
\end{split}
\]
thus proving that
\[
(\psi,A\wedge \vp) = (\trace_A \psi ,\psi).
\]
\end{PROOF}

\begin{PROOF}{\lemref{lem:3.5}}
The first identity follows from the symmetry of $A$, as
\[
\begin{split}
(\trace_A \psi)^T &\EwR{eq:formulas_traceA}  (-1)^{dk+dm} (\starG\starG^V (A\wedge) \starG\starG^V\psi)^T \\
&\EwR{eq_HodgeV} (-1)^{dk+dm} \starG^V\starG (A\wedge) \starG^V\starG\psi^T \\
&\EwR{eq:formulas_traceA} \trace_{A}\psi^T.
\end{split}
\]
As for the second identity,
\[
\begin{split}
(i_A \psi)^T &\EwR{eq:formulas_traceA} (-1)^{dm + d} (\starG^V (A\wedge)  \starG^V\psi)^T \\
&\EwR{eq_HodgeV} (-1)^{dm + d} \stargEps (A\wedge)  \starG\psi^T \\
&\EwR{eq:formulas_traceA} i_A^*\psi^T.
\end{split}
\]
\end{PROOF}

\begin{PROOF}{\lemref{lem:bianchiG}}
It suffices to prove the claim for $\psi = \omega\otimes V$, where $\omega\in\Omega^{k,0}(\M)$ and $V\in \Omega^{0,m}(\M)$.
Let $\{E_i\}$ be a local orthonormal frame and let $\{\vartheta^i\}$ be its dual frame; note that 
\[
\g = \sum_i \vartheta^i \otimes \vartheta^i.
\]
Then,
\[
\begin{split}
i_\g \psi &\EwR{eq:formulas_traceA} (-1)^{dm + d} \starG^V (\g \wedge)  \starG^V (\omega \otimes V) \\
&= (-1)^{dm + d} \starG^V (\g \wedge)  (\omega  \otimes  \starG^V V) \\
&\EwR{eq:def_wedge} (-1)^{dm + d} \sum_i  (\vartheta^i\wedge \omega)  \otimes  \starG^V (\vartheta^i \wedge \starG^V V) \\
&\EwR{eq:star_alpha_omega}(-1)^{dm + m} \sum_i   (\vartheta^i\wedge \omega)  \otimes  i_{E_i}\starG^V  \starG^V V \\
&\EwR{eq:Hodge_inverse}  \sum_i   (\vartheta^i\wedge \omega)  \otimes  i_{E_i} V .
\end{split}
\]
The last line is precisely $\G\psi$ in a local frame. 
\end{PROOF}

\begin{PROOF}{\lemref{lem:commute_d_G}}
Take for example the first item in \eqref{eq:commutations_dg_G},
\[
\begin{split}
\G\dg &\EwR{eq:formulas_traceA} (-1)^{dm+d} \starG^V (\g\wedge) \starG^V  \dg \\
&\EwR{eq:dg_commutes_starV}  (-1)^{dm+d} \starG^V (\g\wedge) \dg \starG^V  \\
&\EwR{eq:g_wedge_commute_with_dg} -(-1)^{dm+d} \starG^V \dg (\g\wedge)  \starG^V \\
&\EwR{eq:dg_commutes_starV}  -(-1)^{dm+d} \dg \starG^V  (\g\wedge)  \starG^V \\
&\EwR{eq:formulas_traceA} - \dg\G.
\end{split}
\]
The three other relations are proved similarly. As for \eqref{eq:commutations_dg_bianchi},
\[
\begin{split}
\dg &\EwR{eq:formula_delta_nabla} (-1)^{dk+d+1} \starG \deltag \starG \\
&\EwR{eq:co-differential_trace} (-1)^{dk+d+1} \starG \brk{-\trace_\g\dgV - \dgV \trace_\g} \starG \\
&\EwR{eq:formulas_traceA}  \starG \brk{\starG \G \starG \dgV + \dgV\starG \G \starG} \starG \\
&\EwR{eq:dg_commutes_starV} \G \dgV + \dgV \G ,
\end{split}
\]
with the second relation obtained from the first by transposition.
\end{PROOF}

\begin{PROOF}{\propref{prop:duals_stars}}
We have
\[
\begin{split}
\starG^V\starG \dgV\dg \starG\starG^V\psi 
&\EwR{eq:dg_commutes_starV} (\starG^V \dgV \starG^V)(\starG \dg \starG\psi) 
\EwR{eq:formula_delta_nabla} (-1)^{dk+dm} \deltagV \deltag\psi \\
\starG \dgV\dg \starG\psi 
&\EwR{eq:dg_commutes_starV}  \dgV (\starG  \dg \starG) \psi \EwR{eq:formula_delta_nabla} (-1)^{dk+d+1} \dgV \deltag\psi \\
\starG^V \dgV\dg \starG^V\psi 
&\EwR{eq:dg_commutes_starV} (\starG^V \dgV \starG^V)  \dg \psi 
\EwR{eq:formula_delta_nabla} (-1)^{dm+d+1} \deltagV \dg\psi,
\end{split}
\]
and likewise for $\dg$ and $\dgV$ interchanged.
By the definitions \eqref{eq:HHFF}, we obtain the desired result. 
\end{PROOF}

\begin{PROOF}{\propref{prop:commute_H_G}}
It suffices to prove the first item as the other three follow by duality. The commutation of $\Hg$ with $(\g\wedge)$ is immediate as both $\dg$ and $\dgV$ anti-commute with $(\g\wedge)$. The commutation of $\Hg$ with $\G$ follows from \eqref{eq:commutations_dg_G} and \eqref{eq:commutations_dg_bianchi},
\[
\G\dg\dgV=-\dg\G\dgV=-\dg\dg+\dg\dgV\G,
\]
and
\[
\G\dgV\dg=\dg\dg-\dgV\G\dg=\dg\dg+\dgV\dg\G.
\]
Adding the two we obtain $\Hg\G = \G \Hg$.
Finally, since $\Hg$ commutes with $\G$ and $\Hg$ commutes with transposition,
\[
\GV \Hg \psi = (\G(\Hg\psi)^T)^T =  (\G \Hg\psi^T)^T = ( \Hg \G \psi^T)^T = 
( \Hg (\GV \psi)^T)^T = \Hg \GV \psi.
\]
\end{PROOF}

\begin{PROOF}{\lemref{lem:4.2}}
Eq.~\eqref{eq:tangent_operator_and_pullback} follows directly from the definition of $\frakt\omega$ as the range of $dj_\e$ is tangential vectors.
As for \eqref{eq:normal_operator_and_interior}, 
\[
\begin{split}
\frakn\omega(X_1,\dots,X_k) &=
\omega(X_1,\dots,X_k) - \omega(X_1^\parallel,\dots,X_k^\parallel) \\
&= \sum_{i=1}^k \omega(X_1,\dots,X_i^\bot,\dots, X_k) \\ 
&= \sum_{i=1}^k (-1)^{i-1}  dr(X_i) \idr \omega(X_1,\dots,\hat{X_i},\dots, X_k) \\ 
&= (dr \wedge  \idr \omega)(X_1,\dots,X_k).
\end{split}
\]
Since $dj_\e : T\calP_\e\to  (TU)^\parallel|_{\calP_\e}$ is an isomorphism, it follows that for every $X_1,\dots,X_k\in (TU)^\parallel|_{\calP_\e}$, which (with a slight abuse of notations) we identify with  $X_1,\dots,X_k\in T\calP_\e$,
\[
\frakt \omega(X_1,\dots,X_k) = \jEpsStar\omega(X_1,\dots,X_k).
\]
Finally, noting that $\frakn\omega$ is uniquely determined by $\idr\omega$ and that the latter is tangential, i.e., $\frakt \idr\omega = \idr\omega$, we conclude that $\frakn\omega$ is uniquely determined by $\jEpsStar \idr\omega$.
\end{PROOF}

\begin{PROOF}{\lemref{lem:PtPnStar}}
Let $\{E_i\}$ be an orthonormal frame for $TU$ with $E_d = \dr$; for $i<d$, we identify $E_i\in T\calP_\e$ with $E_i\in (TU)^\parallel|_{\calP_\e}$.  For $\sigma\in S_d$,
\[
\starG\omega(E_{\sigma(1)},\dots,E_{\sigma(d-k)}) = \sgn(\sigma) (-1)^{dk+k}\,\omega(E_{\sigma(d-k+1)},\dots,E_{\sigma(d)}).
\]
Suppose that $\sigma(j)\ne d$ for all $j\le d-k$ and without loss of generality suppose that $\sigma(d)=d$. Then,
\[
\begin{split}
\jEpsStar \starG\omega(E_{\sigma(1)},\dots,E_{\sigma(d-k)}) 
&= \sgn(\sigma) (-1)^{dk+k} (-1)^{k+1} \,\idr \omega(E_{\sigma(d-k+1)},\dots,E_{\sigma(d-1)}) \\
&= \sgn(\sigma)  (-1)^{dk+1} \, \jEpsStar \idr \omega(E_{\sigma(d-k+1)},\dots,E_{\sigma(d-1)}).
\end{split}
\]
Define $\tau\in S_{d-1}$ such that $\tau(j) = \sigma(j)$ for all $j\le d-1$. Since $\sgn(\tau) = \sgn(\sigma)$, 
\[
\begin{split}
\jEpsStar \starG\omega(E_{\tau(1)},\dots,E_{\tau(d-k)})
&= \sgn(\tau) (-1)^{dk+1} \,\jEpsStar \idr \omega(E_{\tau(d-k+1)},\dots,E_{\tau(d-1)}) \\
&=  (-1)^{d+1} \stargEps \jEpsStar \idr\omega(E_{\tau(1)},\dots,E_{\tau(d-k)}),
\end{split}
\]
i.e.,
\[
\jEpsStar  \starG\omega = (-1)^{d+1} \stargEps \jEpsStar \idr\omega.
\]
The second statement follows from the first by substituting $\omega\mapsto \starG\omega$.
\end{PROOF}

\begin{PROOF}{\lemref{lem:4.3}}
For every $Y_1,\dots,Y_k\in\frakX^\parallel(U) \simeq \frakX(\calP_\e)$,
\[
\begin{split}
\Pt\nabE_X\omega(Y_1,\dots,Y_k) &= \nabE_X\omega(Y_1,\dots,Y_k) \\
&\hspace{-2cm}= \nabE_X(\omega(Y_1,\dots,Y_k)) - \sum_{j=1}^k  \omega(Y_1,\dots,\nabg_X Y_j,\dots Y_k) \\
&\hspace{-2cm}\EwR{eq:connections_with_S} \nabE_X(\omega(Y_1,\dots,Y_k)) - \sum_{j=1}^k  \omega(Y_1,\dots,\nabla^{\gEps}_X Y_j,\dots Y_k) \\
&\hspace{-2cm}\qquad +  \sum_{j=1}^k  \frakh(X;Y_{j}) \omega(Y_1,\dots,\dr,\dots Y_k) \\
&\hspace{-2cm}= \nabE_X \Pt\omega(Y_1,\dots,Y_k) +  \sum_{j=1}^k  (-1)^{j+1} \frakh_\e(Y_{j};X) \idr \omega(Y_1,\dots,\hat{Y}_j,\dots, Y_k) \\
&\hspace{-2cm}= \nabE_X \Pt\omega(Y_1,\dots,Y_k) + (i_X^{V}\frakh_\e\wedge \Pn\omega)(Y_1,\dots,Y_k).
\end{split}
\]
\end{PROOF}

\begin{PROOF}{\lemref{lem:4.4}}
It suffices to prove the first statement for forms $\omega = \alpha\otimes V$, where $\alpha\in\Omega^k(U)$ and $V\in\Gamma(\E)\simeq \Omega^0(U;\E|_U)$. Since exterior derivatives commute with pullbacks and since $\Pn V=0$,
\[
\begin{split}
\Pt d^{\nabE}\omega &\EwR{eq:lebnitz_wedge} \Pt\brk{d\omega\otimes V + (-1)^k \omega\wedge \nabE V} \\
&\EwR{eq:Pt_Pn_wedge} \Pt d\alpha\otimes \Pt V + (-1)^k \Pt\alpha\wedge \Pt\nabE V \\
&\EwR{eq:commutator_Pt_nabla}  d \Pt \alpha\otimes \Pt V + (-1)^k \Pt\alpha\wedge \nabE \Pt V \\
&\EwR{eq:lebnitz_wedge} d^{\nabE} (\Pt \alpha \otimes \Pt V) 
\EwR{eq:Pt_Pn_wedge} d^{\nabE} \Pt\omega.
\end{split}
\]
As for the second statement, it follows by duality, substituting  $\omega\mapsto \starG\omega$ in the first statement, and using 
\eqref{eq:Hodge_star_normal_tangentA} and \eqref{eq:formula_delta_nabla}.
\end{PROOF}

\begin{PROOF}{\lemref{lem:4.5}}
It suffices to prove the first statement for scalar-valued forms: we need to prove that for all $\omega\in\Omega^k(U)$,
\[
\jEpsStar \idr d\omega = -  d \jEpsStar \idr\omega +
\jEpsStar\nabg_{\dr}\omega+ \calS_\e \jEpsStar \omega.
\]
By Cartan's magic formula, 
\[
\idr d\omega = - d \idr\omega + \calL_{\dr}\omega .
\]
By the Leibniz property of the Lie derivative and the relation between Lie and covariant differentiation,
\[
\begin{split}
\calL_{\dr}\omega(X_1,\dots,X_k) &= \dr(\omega(X_1,\dots,X_k)) - \sum_j \omega(X_1,\dots,\calL_{\dr}X_j,\dots X_k) \\
&= \dr(\omega(X_1,\dots,X_k)) - \sum_j \omega(X_1,\dots,\nabg_{\dr}X_j,\dots X_k)  \\
&\qquad +  \sum_j \omega(X_1,\dots,\nabg_{X_j}\dr,\dots X_k)   \\
&= \nabla_{\dr} \omega(X_1,\dots,X_k) -  \sum_j \omega(X_1,\dots,S(X_j),\dots X_k).
\end{split}
\]
Putting things together, using the fact that exterior differentials commute with pullbacks and the fact that the shape operator is determined by $S_\e$, we recover the desired result. The second statement follows once again by duality, substituting $\omega\mapsto\starG\omega$ into the first.
\end{PROOF}

\begin{PROOF}{\lemref{lem:4.11}}
For a vector field $X\in\frakX(\calP_\e)$,
\[
\begin{split}
\Ptt \nabg V(X) &= (\Pt(\nabg_X V)^T)^T \\
&= (\Pt\nabg_X V^T)^T\\ 
&\EwR{eq:commutator_Pt_nabla} (\nabla^{\gEps}_X \Pt V^T)^T + i_X^{V}\frakh_\e \wedge \Pn V^T)^T\\ 
&=  \nabla^{\gEps}_X (\Pt V^T)^T + i_X\frakh_\e \wedge (\Pn V^T)^T,
\end{split}
\]
and we note that $(\Pt V^T)^T = \Ptt V$ and $(\Pn V^T)^T = \Ptn V$.
\end{PROOF}

\begin{PROOF}{\lemref{lem:4.12}}
It suffices to prove the first assertion for $\psi=\omega\otimes V$ where $\omega\in\Omega^{k,0}(U)$ and
$V\in\Omega^{0,m}(U)$. Using the fact that $\Ptt d\omega = d\Ptt\omega$,
\[
\begin{split}
\Ptt \dg\psi &\EwR{eq:lebnitz_wedge} \Ptt(d\omega\otimes V + (-1)^k \omega\wedge\nabg V)  \\
&\EwR{eq:Ptt_Ptn_Pnt_Pnn_wedge} \Ptt d\omega \otimes \Ptt V + (-1)^k \Ptt\omega \wedge \Ptt \nabg V \\
&\EwR{eq:Ptt_nabla_V} d \Ptt \omega \otimes \Ptt V + (-1)^k \Ptt\omega \wedge (\nabg \Ptt V + \frakh_\e\wedge \Ptn V) \\
&\EwR{eq:Ptt_Ptn_Pnt_Pnn_wedge} \dgEps \Ptt\psi + \frakh_\e \wedge \Ptn\psi.
\end{split}
\]
The second assertion follows from the first by transposition and the symmetry of $\h_\e$.
The third assertion follows from the first after substituting $\psi\mapsto\starG^V\psi$.
The fourth assertion follows from the third by transposition.
\end{PROOF}

\begin{PROOF}{\lemref{lem:4.13}}
We start with
\[
\begin{split}
\Pnt \dg\psi &\EwR{eq:Ptt_et_al} (\Pt(\Pn\dg\psi)^T)^T \\
&\EwR{eq:exterior_covariant_derivative_and_normal} (\Pt(- \dg \Pn\psi + \Pt \nabg_{\dr}\psi -  
\calS_\e \Pt \psi)^T)^T \\
&= -(\Pt(\dg \Pn\psi)^T)^T + (\Pt(\Pt \nabg_{\dr}\psi)^T)^T - 
(\Pt(\calS_\e \Pt \psi)^T)^T \\
&= -(\Pt(\dg \Pn\psi)^T)^T + \Ptt \nabg_{\dr}\psi -  \calS_\e \Ptt\psi,
\end{split}
\]
and in the last step we used the fact that $\calS_\e \Ptt\psi = (\Pt(\calS_\e \Pt \psi)^T)^T$. It remains to examine the first term on the right-hand side: decomposing as before $\psi = \omega\otimes V$, we obtain
\[
\begin{split}
(\Pt(\dg \Pn\psi)^T)^T &\EwR{eq:Pt_Pn_wedge} (\Pt(\dg (\Pn\omega\otimes \Pt V))^T)^T \\
&= (\Pt(d\Pn\omega\otimes \Pt V + (-1)^k \Pn\omega\wedge \Pt \nabg V)^T)^T \\
&= (\Pt((d\Pn\omega)^T \otimes (\Pt V)^T + (-1)^k (\Pn\omega)^T\wedge (\Pt \nabg V)^T))^T \\
&= d\Pnt\omega\otimes \Ptt V + (-1)^k \Pnt\omega\wedge \Ptt \nabg V \\
&\EwR{eq:Ptt_nabla_V} d\Pnt\omega\otimes \Ptt V + (-1)^k \Pnt\omega\wedge\brk{\nabgEps \Ptt V + \frakh_\e\wedge \Ptn V} \\
&= \dgEps \Ptt\psi + (-1)^k \Pnt\omega\wedge \frakh_\e\wedge \Ptn V \\
&= \dgEps \Ptt\psi -  \frakh_\e\wedge \Pnn \psi,
\end{split}
\]
where in the passage to the fourth line we used the fact that $\Pt(d\Pn\omega)^T = (d\Pnt\omega)^T$,
and in the passage to the last line we used the fact that $\Pnn\psi = \Pnt\omega\otimes \Ptn V$.
The second assertion follows from the first by transposition; the other two follow after substituting $\psi\mapsto\starG^V\psi$.
\end{PROOF}

\begin{PROOF}{\lemref{lem:4.15}}
We have
\[
\begin{split}
\Ptt \dgV\dg\psi
& \EwR{eq:dnabla_pure_tangent}   \dgEpsV \Ptt\dg\psi + \frakh_\e \wedge \Pnt\dg\psi \\
& \EwR{eq:dnabla_pure_tangent}  \dgEpsV\brk{\dgEps \Ptt\psi + \frakh_\e \wedge \Ptn\psi} + \frakh_\e \wedge \Pnt\dg\psi   \\
&\EwR{eq:Codazzi_equation2} \dgEpsV\dgEps \Ptt\psi 
+  \dgEpsV\frakh_\e \wedge  \Ptn\psi 
- \frakh_\e \wedge \dgEpsV \Ptn\psi + \frakh_\e \wedge \Pnt\dg\psi.
\end{split}
\] 
Setting $\psi\mapsto\psi^T$ we obtain,
\[
\begin{split}
\Ptt \dg\dgV\psi &=
\dgEps\dgEpsV \Ptt\psi +  \dgEps\frakh_\e \wedge  \Pnt\psi  - \frakh_\e \wedge \dgEps \Pnt\psi + \frakh_\e \wedge \Ptn\dgV\psi
\end{split}
\]
By combining with \eqref{eq:T_commutator} and rearranging, we obtain the first assertion. 
The three other assertions are obtained by setting $\psi\mapsto\starG\starG^V\psi$, $\psi\mapsto\starG^V\psi$ and $\psi\mapsto\starG\psi$.
\end{PROOF}

\begin{PROOF}{\lemref{lem:4.16}}
We prove the first assertion. Integrating by parts twice,
\[
\begin{split}
\bra \dgV\dg\psi,\eta\ket &\EwR{eq:Green3} \bra\dg\psi,\deltagV\eta\ket - 
\int_{\dM}\Brk{(\PttD\dg\psi,\PtnD\eta)_\gD +
(\PntD\dg\psi,\PnnD\eta)_\gD}\,\VolumeD \\
&\EwR{eq:Green2} \bra\psi, \deltag \deltagV \eta \ket 
- \int_{\dM}\Brk{(\PttD\psi,\PntD\deltagV\eta)_\gD +
(\PtnD\psi,\PnnD\deltagV\eta)_\gD}\,\VolumeD \\
&\quad  - \int_{\dM}\Brk{(\PttD\dg\psi,\PtnD\eta)_\gD +
(\PntD\dg\psi,\PnnD\eta)_\gD}\,\VolumeD.
\end{split}
\]
Now,
\[
\begin{split}
\int_{\dM} (\PtnD\psi,\PnnD\deltagV\eta)_\gD\,\VolumeD 
&\EwR{eq:delta_pure_normal}
\int_{\dM} (\PtnD\psi,-\deltagDV\PnnD\eta - \trace_{S_0} \PtnD\eta)_\gD\,\VolumeD \\
&\hspace{-2cm}= -\int_{\dM}\Brk{(\dgDV\PtnD\psi, \PnnD\eta )_\gD +
(\PtnD\psi, \trace_{S_0} \PtnD\eta)_\gD}\,\VolumeD,
\end{split}
\]
where we used the fact that $\dM$ has no boundary, and
\[
\begin{split}
\int_{\dM}  (\PttD\dg\psi,\PtnD\eta)_\gD\,\VolumeD 
&\EwR{eq:dnabla_pure_tangent}  \int_{\dM}  ( \dgD \PttD\psi + \h_0 \wedge \PtnD\psi,\PtnD\eta)_\gD\,\VolumeD \\
&\hspace{-2cm}=  \int_{\dM}  \brk{(\PttD\psi,\deltagD\PtnD\eta)_\gD + 
 (\PtnD\psi,\trace_{S_0}\PtnD\eta)_\gD}\,\VolumeD.
\end{split}
\] 
Adding things together,
\[
\begin{split}
\bra \dgV\dg\psi,\eta\ket &= \bra\psi,\deltag\deltagV\eta\ket - \int_{\dM} (\PttD\psi,\PntD\deltagV\eta + \deltagD\PtnD\eta)_\gD\,\VolumeD \\
&\quad  + \int_{\dM} (\dgDV\PtnD\psi - \PntD\dg\psi, \PnnD\eta )_\gD\,\VolumeD,
\end{split}
\]
by replacing $\psi\mapsto\psi^T$ and $\eta\mapsto\eta^T$ and combining, we obtain the required result by the definitions \eqref{eq:HHFF} and \eqref{eq:T_commutator}. 
The second assertion follows by replacing $\psi\mapsto\starG^V\psi$ and $\eta\mapsto\starG^V\eta$.
\end{PROOF}

\begin{PROOF}{\lemref{lemma:prescribe_boundary_conditions}}
By a cutoff argument, it suffices to prove the existence of such a $\lambda$ in a neighborhood of $\dM$. 
Substituting the commutation relations proved in \secref{sec:commutation_d_delta} into 
\eqref{eq:T_commutator}, we obtain
\[
\begin{aligned}
\frakT\lambda &= -\dgD\PntD\lambda-\dgDV\PtnD\lambda-\frakh_0\wedge\PnnD\lambda-\smallhalf(\calS_0\PttD\lambda+(\calS_0\PttD\lambda^T)^T)+\PttD\nabg_{\dr}\lambda
\\
\frakF^*\lambda &= -\dgDV\PnnD\lambda-\deltagD\PttD\lambda+i_{\frakh_{0}}^*\PtnD\lambda-\smallhalf(\calS^*_0\PntD\lambda + (\calS_0(\PtnD\lambda)^T)^T) +\PntD\nabg_{\dr}\lambda
\\
\frakF\lambda & =-\dgD\PnnD\lambda-\deltagDV\PttD\lambda+i_{\frakh_{0}}\PntD\lambda
-\smallhalf(\calS_0\PntD\lambda + (\calS^*_0(\PtnD\lambda)^T)^T)+\PtnD\nabg_{\dr}\lambda \\
\frakT^*\lambda &= -\deltagD\PtnD\lambda-\deltagDV\PntD\lambda-\trace_{\frakh_{0}}\PttD\lambda-\smallhalf(\calS^*_0\PnnD\lambda+(\calS^*_0\PnnD\lambda^T)^T)+\PnnD\nabg_{\dr}\lambda.
\end{aligned}
\]
Set
\[
\begin{aligned}
\chi_1 &= \rho_1 + \dgD\phi_3  + \dgDV\phi_2 + \frakh_0\wedge\phi_4  + \smallhalf(\calS_0\phi_1+(\calS_0\phi_1^T)^T) 
\\
\chi_2 &= \rho_2 + \dgDV\phi_4 + \deltagD\phi_1 - i_{\frakh_{0}}^*\phi_2 + \smallhalf(\calS^*_0\phi_3 + (\calS_0(\phi_2)^T)^T) \\
\chi_3 &= \rho_3 + \dgD\phi_4 + \deltagDV\phi_1  - i_{\frakh_{0}}\phi_3
+ \smallhalf(\calS_0\phi_3 + (\calS^*_0(\phi_2)^T)^T) \\
\chi_4 &= \rho_4 + \deltagD\phi_2 + \deltagDV\phi_3 + \trace_{\frakh_{0}}\phi_1 + \smallhalf(\calS^*_0\phi_4 +(\calS^*_0\phi_4^T)^T),
\end{aligned}
\]
then $\lambda$ has to satisfy the boundary conditions
\[
\lambda|_{\dM} = \phi \equiv \phi_1 + dr\wedge\phi_2 + (dr)^T\wedge \phi_3 + dr\wedge(dr)^T\wedge \phi_4,
\]
and
\[
\nabg_{\dr}\lambda|_{\dM} = \chi \equiv \chi_1 + dr\wedge\chi_2 + (dr)^T\wedge \chi_3 + dr\wedge(dr)^T\wedge \chi_4,
\]
with $\chi_i\in W^{s-1-1/p,p}\VectorFormsdM$.

By the trace theorem, we may extend $\chi \in W^{s-1,p}\VectorFormsM$. We proceed to solve (locally) the non-characteristic linear equation,
\[
\nabg_{\dr}\lambda = \chi 
\qquad 
\lambda|_{\dM} = \phi,
\]
whose solution $\lambda \in W^{s,p}\VectorFormsM$ depends continuously on $\chi$ and $\phi$, hence satisfies the desired properties.
\end{PROOF}

\begin{PROOF}{\lemref{lemma:prescribed_boundary_data_second_order}}
Once again, using a cutoff argument, it suffices to construct such a $\lambda$ in a neighborhood of $\dM$.
The first part of the lemma will be proved if we show that the conditions \eqref{eq:prescribed_boundary_data_second_order} amount to fixing the boundary-values of $\lambda$ and its first three covariant derivative along $\dr$, as $\lambda$ can then be constructed via radial integration. 

Specifically, we observe that the first line in \eqref{eq:prescribed_boundary_data_second_order} amounts to $\lambda$ and its first derivative vanishing on $\dM$. In particular,
\[
\dg\lambda|_{\dM}=0 
\qquad 
\dgV\lambda|_{\dM}=0 
\qquad 
\deltag\lambda|_{\dM}=0 
\qquad 
\deltagV\lambda|_{\dM}=0.
\]
We show  that under these conditions, the prescription of 
\[
(\PttD\bHg^*\lambda, \,\,\PtnD\bFg^*\lambda, \,\,\PntD\bFg\lambda, \,\, \PnnD\bHg\lambda, \,\,
\frakT\bHg^*\lambda,\,\, \frakF\bFg^*\lambda,\,\,  \frakF^*\bFg\lambda, \,\, \frakT^*\bHg\lambda)
\]
amounts to the prescription of $\nabg_{\dr}\nabg_{\dr}\lambda|_{\dM}$ and $\nabg_{\dr}\nabg_{\dr}\nabg_{\dr}\lambda|_{\dM}$.

A direct calculation gives
\[
\begin{aligned}
&\PttD\bHg^*\lambda=\PnnD(\nabg_{\dr}\nabg_{\dr}\lambda) \qquad 
&\PtnD\bHg^*\lambda=0 \qquad 
&\PntD\bHg^*\lambda=0 \qquad 
&\PnnD\bHg^*\lambda=0 \\
&\PtnD\bFg^*\lambda=\PntD(\nabg_{\dr}\nabg_{\dr}\lambda) \qquad 
&\PttD\bFg^*\lambda=0 \qquad 
&\PnnD\bFg^*\lambda=0 \qquad 
&\PntD\bFg^*\lambda=0 \\
&\PntD\bFg\lambda=\PtnD(\nabg_{\dr}\nabg_{\dr}\lambda)  \qquad 
&\PttD\bFg\lambda=0 \qquad 
&\PnnD\bFg\lambda=0 \qquad 
&\PtnD\bFg^*\lambda=0 \\
&\PnnD\bHg\lambda=\PttD(\nabg_{\dr}\nabg_{\dr}\lambda) \qquad 
&\PtnD\bHg\lambda=0 \qquad 
&\PntD\bHg\lambda=0 \qquad 
&\PttD\bHg\lambda=0,
\end{aligned}
\]
and
\[
\begin{aligned}
\frakT\bHg^*\lambda &= \PnnD(\nabg_{\dr}\nabg_{\dr}\nabg_{\dr}\lambda)+Q_1(\nabg_{\dr}\nabg_{\dr}\lambda) \\
\frakF\bFg^*\lambda &= \PntD(\nabg_{\dr}\nabg_{\dr}\nabg_{\dr}\lambda)+Q_2(\nabg_{\dr}\nabg_{\dr}\lambda) \\
\frakF^*\bFg\lambda &= \PtnD(\nabg_{\dr}\nabg_{\dr}\nabg_{\dr}\lambda)+Q_3(\nabg_{\dr}\nabg_{\dr}\lambda) \\
\frakT^*\bHg\lambda &= \PttD(\nabg_{\dr}\nabg_{\dr}\nabg_{\dr}\lambda)+Q_4(\nabg_{\dr}\nabg_{\dr}\lambda),
\end{aligned}
\]
where $Q_i:\VectorFormsU\to \VectorFormsdM$ are  linear first-order differential boundary operators. 
Set on $\dM$
\[
\phi=\phi_1+(dr)^T\wedge\phi_2+dr\wedge \phi_3+dr\wedge(dr)^T\wedge \phi_4,
\]
$\tilde{\rho}_i = \rho_i + Q_i(\phi)$ and $\rho\in\VectorFormsdM$ by
\[
\rho=\tilde{\rho}_1+(dr)^T\wedge\tilde{\rho}_2+dr\wedge \tilde{\rho}_3+dr\wedge(dr)^T\wedge \tilde{\rho}_4.
\]
We extend $\phi$ and $\rho$ in a neighborhood of $\dM$, which by the trace theorem are of class $W^{s,p}\VectorFormsdM$ and $W^{s-1,p}\VectorFormsdM$, and can be taken to satisfy 
\[
\begin{split}
&\Norm{\phi}_{W^{s,p}(\M)}\lesssim \Norm{\phi}_{W^{s-1/p,p}(\dM)}\lesssim \sum_{i=1}^4\Norm{\phi_i}_{W^{s-1/p,p}(\dM)}
\\&\Norm{\rho}_{W^{s-1,p}(\M)}\lesssim \Norm{\rho}_{W^{s-1-1/p,p}(\dM)}\lesssim \sum_{i=1}^4\Norm{\tilde{\rho}_i}_{W^{s-1-1/p,p}(\dM)}.
\end{split}
\]
Consider then the non-characteristic third-order equation, 
\[
\begin{gathered}
\nabg_{\dr}\nabg_{\dr}\nabg_{\dr}\lambda=\rho \\
\lambda|_{\dM}=0, \qquad \nabg_{\dr}\lambda|_{\dM}=0, \qquad \nabg_{\dr}\nabg_{\dr}\lambda|_{\dM}=\phi|_{\dM}.
\end{gathered} 
\]

The solution is of class $\lambda\in W^{s+2,p}\VectorFormsKM$, satisfying
\[
\Norm{\lambda}_{W^{s+2,p}(\M)}\lesssim \sum_{i=1}^4\brk{\Norm{\phi_i}_{W^{s-1/p,p}(\dM)}+\Norm{\rho_i}_{W^{s-1-1/p,p}(\dM)}}
\] 
which is \eqref{eq:sobolev_estimate_boundary_data}. 
\end{PROOF}

\begin{PROOF}{\corrref{cor:full_problem_boundary}}
We first use \lemref{lemma:prescribe_boundary_conditions} to construct
$\omega_1\in\VectorFormsKM$ satisfying
\[
\begin{gathered}
(\PttD,\PntD,\PtnD,\PnnD)\omega_1 = (\phi_1,\phi_2,\phi_3,\phi_4) \\
(\frakT,\frakF^*,\frakF,\frakT^*)\omega_1 = (\rho_1,\rho_2,\rho_3,\rho_4) \\
\end{gathered}
\]
We then use \lemref{lemma:prescribed_boundary_data_second_order} to construct $\omega_2\in\VectorFormsKM$ satisfying
\[
\begin{gathered}
\omega_2|_{\dM}=0, \qquad \nabg_{\dr}\omega_2|_{\dM}=0 \\
(\PttD\bHg^*,\PtnD\bFg^*,\PntD\bFg,\PnnD\bHg)\omega_2 = (\mu_1,\mu_2,\mu_3,\mu_4) - (\PttD\bHg^*,\PtnD\bFg^*,\PntD\bFg,\PnnD\bHg)\omega_1\\
(\frakT\bHg^*,\frakF\bFg^*,\frakF^*\bFg,\frakT^*\bHg)\omega_2 = (\nu_1,\nu_2,\nu_3,\nu_4) -(\frakT\bHg^*,\frakF\bFg^*,\frakF^*\bFg,\frakT^*\bHg)\omega_1.
\end{gathered}
\]
Then, $\lambda=\omega_1+\omega_2$ satisfies the required conditions. Moreover, the estimate \eqref{eq:sobolev_estimate_boundary_data_full} follows by combining \eqref{eq:sobolev_estimate_boundary_data} and \eqref{eq:estimate_1st_bc}. 
\end{PROOF}


\providecommand{\bysame}{\leavevmode\hbox to3em{\hrulefill}\thinspace}
\providecommand{\MR}{\relax\ifhmode\unskip\space\fi MR }
\providecommand{\MRhref}[2]{%
  \href{http://www.ams.org/mathscinet-getitem?mr=#1}{#2}
}
\providecommand{\href}[2]{#2}
\pagebreak


\providecommand{\bysame}{\leavevmode\hbox to3em{\hrulefill}\thinspace}
\providecommand{\MR}{\relax\ifhmode\unskip\space\fi MR }
\providecommand{\MRhref}[2]{%
  \href{http://www.ams.org/mathscinet-getitem?mr=#1}{#2}
}
\providecommand{\href}[2]{#2}
\begin{thebibliography}{Tay11b}

\bibitem[Air63]{Air63}
G.B. Airy, \emph{On the strains in the interior of beams}, Phil. Trans. Roy.
  Soc. London \textbf{153} (1863), 49--80.

\bibitem[Bel92]{Bel92}
E.~Beltrami, \emph{Osservazioni sulla nota precedente}, Atti Real Accad. Naz.
  Lincei Rend. \textbf{5} (1892), 141--142.
  
\bibitem[Cal61]{Cal61} E. Calabi, \emph{On compact, Riemannian manifolds with constant cur-
vature. I}, Proc. Sympos. Pure Math, vol.~III, American Mathe-
matical Society, (1961), 155--180.

\bibitem[dR84]{deR84}
G.~de~Rham, \emph{Differentiable manifolds}, Springer, 1984.

\bibitem[EL83]{EL83}
J.~Eells and L.~Lemaire, \emph{Selected topics in harmonic maps}, CBMS Regional
  Conference Series in Mathematics, vol.~50, Amer. Math. Soc., 1983.

\bibitem[Gra70]{Gra70}
A.~Gray, \emph{Some relations between curvature and characteristic classes},
  Math. Ann. \textbf{184} (1970), 257--267.

\bibitem[Gur72]{Gur72}
M.E. Gurtin, \emph{The linear theory of elasticity}, Mechanics of Solides
  (C.~Truesdell, ed.), vol.~II, Springer Verlag, 1972.

\bibitem[H\"07]{Hor07}
L.~H\"ormander, \emph{The analysis of linear partial differential operators},
  vol. III, Springer, 2007.

\bibitem[KL21]{KL21b}
R.~Kupferman and R.~Leder, \emph{Double forms: On {Saint-Venant}
  compatibility and stress potentials in manifolds with boundary and constant
  sectional curvature}, arXiv preprint arXiv:2104.05794, 2021.

\bibitem[Kul72]{Kul72}
R.S. Kulkarni, \emph{On the {Bianchi} identities}, Math. Ann. \textbf{199}
  (1972), 175--204.

\bibitem[Lee18]{Lee18}
J.M. Lee, \emph{Introduction to {Riemannian} manifolds}, second edition ed.,
  Graduate Texts in Mathematics, Springer, 2018.

\bibitem[MSK14]{MSK14}
M.~Moshe, E.~Sharon, and R.~Kupferman, \emph{The plane stress state of
  residually stressed bodies: A stress function approach}, arXiv preprint
  arXiv:1409.6594, 2014.

\bibitem[MSK15]{MSK15}
\bysame, \emph{Elastic interactions between two-dimensional geometric defects},
  Phys. Rev. E 92 \textbf{92} (2015), 062403.

\bibitem[Pet16]{Pet16}
P.~Petersen, \emph{Riemannian geometry}, third ed., Springer, 2016.

\bibitem[Rud91]{Rud91}
W.~Rudin, \emph{Functional analysis}, 2nd ed., McGraw-Hill, 1991.

\bibitem[Sch95]{Sch95b}
G.~Schwarz, \emph{Hodge decomposition -- a method for solving boundary value
  problems}, Lecture notes in mathematics, Springer, 1995.

\bibitem[Tay11a]{Tay11a}
M.E. Taylor, \emph{Partial differential equations}, vol.~I, Springer, 2011.

\bibitem[Tay11b]{Tay11b}
\bysame, \emph{Partial differential equations}, vol.~II, Springer, 2011.

\bibitem[Tay11c]{Tay11c}
\bysame, \emph{Partial differential equations}, vol. III, Springer, 2011.

\bibitem[Tru59]{Tru59}
C.~Truesdell, \emph{Invariant and complete stress functions for general
  continua}, Arch. Rat. Mech. Anal. \textbf{4} (1959), 1--29.

\bibitem[Wlo87]{Wlo87}
J.~Wloka, \emph{Partial differential equations}, Cambridge University Press,
  1987.

\end{thebibliography}

\end{document}